\newcounter{nameOfYourChoice}
\tikzset{>=latex,shift left/.style ={commutative diagrams/shift left={#1}},
  shift right/.style={commutative diagrams/shift right={#1}}}
\newcolumntype{L}{>{\arraybackslash}X}
\theoremstyle{plain}
\newtheorem{theorem}{Theorem}[section]
\theoremstyle{remark}
\newtheorem{remark}[theorem]{Remark}
\theoremstyle{plain}
\newtheorem{corollary}[theorem]{Corollary}
\newtheorem{lemma}[theorem]{Lemma}
\newtheorem{proposition}[theorem]{Proposition}
\newtheorem{definition}[theorem]{Definition}
\newtheorem{assumption}[theorem]{Assumption}
\numberwithin{equation}{section}
\def\N{{\mathbb N}}
\def\R{{\mathbb R}}
\def\T{{\mathbb T}}
\newcommand{\E}{{\mathbb E}}
\renewcommand{\P}{{\mathbb P}}
\newcommand{\F}{{\mathscr F}}
\newcommand{\g}{\gamma}
\newcommand{\om}{\omega}
\renewcommand{\O}{\Omega}
\renewcommand{\a}{\kappa}
\newcommand{\loc}{{\rm loc}}
\newcommand{\Tor}{\mathbb{T}}
\newcommand{\Dom}{\mathcal{O}}
\newcommand{\A}{\mathcal{A}}
\newcommand{\D}{\mathscr{D}}
\newcommand{\wt}{\widetilde}
\newcommand{\Tr}{\mathrm{Tr}}
\newcommand{\Xap}{X^{\mathrm{Tr}}_{\a,p}}
\newcommand{\Xapcrit}{X^{\mathrm{Tr}}_{\a_{\crit},p}}
\newcommand{\one}{{{\bf 1}}}
\newcommand{\embed}{\hookrightarrow}
\newcommand{\s}{\delta}
\renewcommand{\div}{\mathrm{div}}
\renewcommand{\l}{\langle}
\renewcommand{\r}{\rangle}
\newcommand{\crit}{\mathrm{c}}
\newcommand{\reg}{\delta}
\newcommand{\norm}[1]{{\left\vert\kern-0.25ex\left\vert\kern-0.25ex\left\vert #1
    \right\vert\kern-0.25ex\right\vert\kern-0.25ex\right\vert}}
\renewcommand{\emptyset}{\varnothing}
\newcommand{\gap}{\varepsilon}
\newcommand{\Progress}{\mathscr{P}}
\newcommand{\wh}{\widehat}
\newcommand{\rnoise}{g}
\newcommand{\btwod}{b}
\newcommand{\am}{a}
\newcommand{\bm}{b}
\newcommand{\Borel}{\mathscr{B}}
\newcommand{\Constant}{R}
\newcommand{\vone}{v}
\newcommand{\vtwo}{v'}
\newcommand{\hstar}{h_d}
\newcommand{\V}{\mathcal{V}}
\newcommand{\U}{\mathcal{U}}
\newcommand{\AS}{A}
\newcommand{\BS}{B}
\newcommand{\FS}{\Phi}
\newcommand{\GS}{\Gamma}
\newcommand{\ellip}{\nu}
\newcommand{\fp}{\mathsf{f}}
\newcommand{\Fp}{\mathsf{F}}
\newcommand{\gp}{\mathsf{g}}
\newcommand{\X}{\mathscr{X}}
\newcommand{\uu}{\overline{u}}
\newcommand{\vv}{\overline{v}}
\newcommand{\W}{\mathcal{O}}
\newcommand{\psim}{\psi_{\star}}
\newcommand{\xm}{x_{\star}}
\newcommand{\dd}{{\rm d}}
\begin{document}

\author{Antonio Agresti}
\address{Institute of Science and Technology Austria (ISTA), Am Campus 1, 3400 Klosterneuburg, Austria} \email{antonio.agresti92@gmail.com}

\author{Mark Veraar}
\address{Delft Institute of Applied Mathematics\\
Delft University of Technology \\ P.O. Box 5031\\ 2600 GA Delft\\The
Netherlands.} \email{M.C.Veraar@tudelft.nl}

\thanks{The first author has received funding from the European Research Council (ERC) under the Eu\-ropean Union’s Horizon 2020 research and innovation programme (grant agreement No 948819) \includegraphics[height=0.4cm]{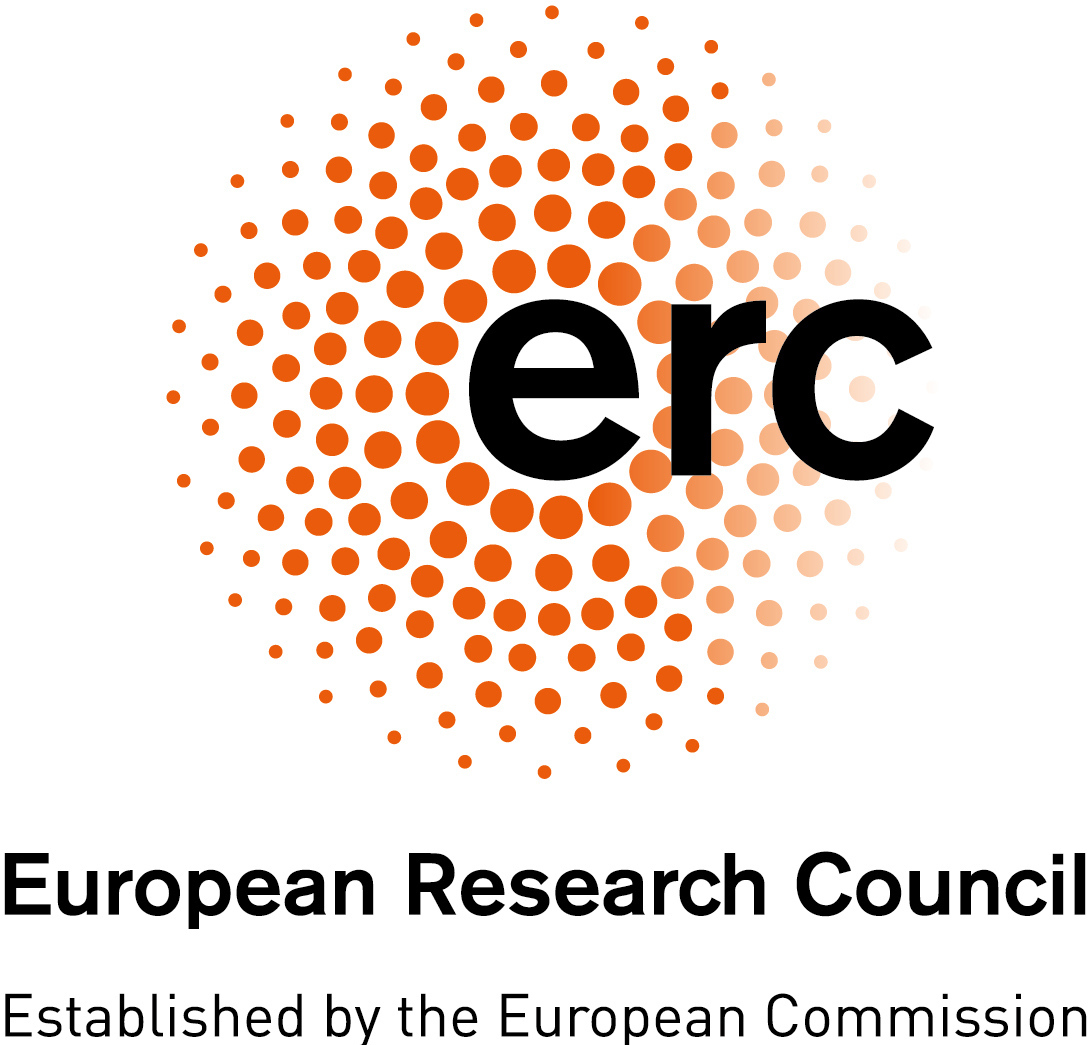}\,\includegraphics[height=0.4cm]{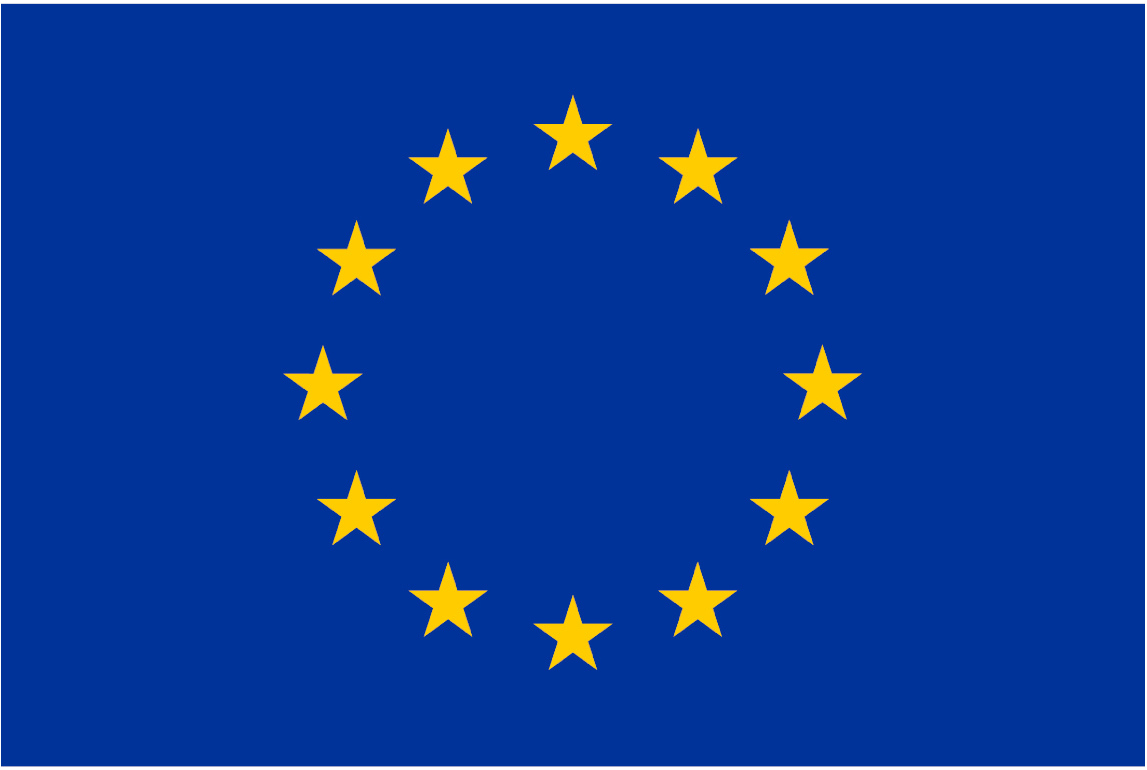}. The second author is supported by the VICI subsidy VI.C.212.027 of the Netherlands Organisation for Scientific Research (NWO)}

\date\today

\title[Reaction-diffusion equations with transport noise]{Reaction-diffusion equations with transport\\ noise and critical superlinear diffusion:\\ Local well-posedness and positivity}

\keywords{Stochastic partial differential equations, reaction-diffusion equations, systems, transport noise, stochastic evolution equations, local and global well-posedness, critical spaces, regularity, positivity.}

\subjclass[2010]{Primary: 60H15, Secondary: 35B65, 35K57, 35K90, 35R60, 35B44, 35A01, 58D25}

\begin{abstract}
In this paper we consider a class of stochastic reaction-diffusion equations.
We provide local well-posedness, regularity, blow-up criteria and positivity of solutions. The key novelties of this work are related to the use transport noise, critical spaces and the proof of higher order regularity of solutions -- even in case of non-smooth initial data.
Crucial tools are $L^p(L^q)$-theory, maximal regularity estimates and sharp blow-up criteria.
We view the results of this paper as a general toolbox for establishing global well-posedness for a large class of reaction-diffusion systems of practical interest, of which many are completely open. In our follow-up work \cite{AVreaction-global}, the results of this paper are applied in the specific cases of the Lotka-Volterra equations and the Brusselator model.
\end{abstract}

\maketitle
\setcounter{tocdepth}{1}

\tableofcontents

\section{Introduction}
\label{s:intro}

In this paper we investigate local/global existence, uniqueness, (sharp) blow-up criteria, positivity and regularity of solutions to  the following stochastic reaction-diffusion equations with transport noise
\begin{equation}
\label{eq:reaction_diffusion_system_intro}
\left\{
\begin{aligned}
&\dd u_i -\ellip_i\Delta u_i \,\dd t= \Big[\div(F_i(\cdot, u)) +f_{i}(\cdot, u)\Big]\,\dd t + \sum_{n\geq 1}  \Big[(b_{n,i}\cdot \nabla) u_i+ \rnoise_{n,i}(\cdot,u) \Big]\,\dd w_t^n, \\
&u_i(0)=u_{i,0},
\end{aligned}
\right.
\end{equation}
where $i\in \{1,\dots,\ell\}$ for some integer $\ell\geq 1$. For simplicity we restrict ourselves to the $d$-dimensional torus $\Tor^d$, but we expect that many results can be extended to $\R^d$, and even to bounded smooth domains with suitable boundary conditions.
The unknown process is denoted by $u=(u_i)_{i=1}^{\ell}:[0,\infty)\times \O\times \Tor^d\to \R^{\ell}$, $(w^n)_{n\geq 1}$ is a sequence of standard independent Brownian motions on a filtered probability space, $F_i,f_i,g_{n,i}$ are given nonlinearities and
$$
(b_{n,i}\cdot\nabla) u_i:=\sum_{j=1}^d b^j_{n,i} \partial_j u_i.
$$
The nonlinearities $F_i,f_i,g_{n,i}$ are assumed to have polynomial growth. Moreover, the leading operator $\nu_i \Delta$ can be replaced by $\div(a_i\cdot\nabla)$. This is included in the main results of this work, as this is useful for reformulating \eqref{eq:reaction_diffusion_system_intro} with Stratonovich noise instead, see \eqref{eq:stratonovich_correction} below. Lower order terms in the differential operators can be allowed as well, and they can be included in the nonlinearities $f,F$ and $g$.

\subsection{Deterministic setting}\label{ss:mass}
Systems of PDEs of the form \eqref{eq:reaction_diffusion_system_intro} with $b=0$ and $g=0$, are usually called
reaction-diffusion equations. Such equations can be used to model a wide class of physical phenomena ranging from chemical reactions to predatory-prey systems, as well as phase separation processes. Further examples can be found in the standard reference \cite{R84_global} and in the recent survey  \cite{P10_survey}.
In the deterministic case there are many global well-posedness results available (see \cite{CGV19, FMT20, K90, P10_survey, PSY19,R84_global} and references therein). In particular, many important systems with rather weak forms of coercivity are included, but some structure is essential. As a matter of fact, existence of global \emph{smooth} solutions to \eqref{eq:reaction_diffusion_system_intro} (or more generically global well-posedness) under polynomial growth and smoothness assumptions, positivity, and mass preservation, is known to be false \cite[Section 4]{P10_survey}. This shows that even in the deterministic setting, the problem of global well-posedness is rather delicate.
Under additional entropy structures, existence of global \emph{renormalized} solutions has been established in \cite{F15_global_renormalized}. Such solutions have a rather poor regularity in time and space.  Moreover, the uniqueness of such solutions is still open, see also \cite{F17_weak_strong_uniqueness}
 for the weaker notion of weak-strong uniqueness.

Next we discuss a well-known example of reaction-diffusion equations arising in the study of chemical reactions.
For an integer $\ell\geq 1$ and two collections of nonnegative integers $(q_i)_{i=1}^{\ell},(p_i)_{i=1}^{\ell}$ (note that either $q_i=0$ or $p_i=0$ for some $i$ is allowed), consider the (reversible) chemical reaction:
\begin{equation}
\label{eq:chemical_reaction_chemistry_formula}
q_1 U_1 +\dots + q_{\ell} U_{\ell} \xrightleftharpoons[R_-]{R_+}
p_1 U_1+\dots + p_{\ell} U_{\ell},
\end{equation}
where $R_{\pm}$ are the reaction rates and $(U_i)_{i=1}^{\ell}$ are chemical substances. Let $u_i$ be the concentration of the substance $U_i$, and let $\ellip_i>0$ be its diffusivity.
The \emph{law of mass action} postulates that the concentration $u_i$ satisfies the deterministic version of \eqref{eq:reaction_diffusion_system_intro} with
\begin{equation}
\label{eq:chemical_reactions}
f_i(\cdot,u)=(p_i -q_i)\Big(R_+\prod_{j=1}^{\ell} u_j^{q_j}-R_-\prod_{j=1}^{\ell} u_j^{p_j} \Big), \ \ \ i\in \{1,\dots,\ell\}.
\end{equation}
The results of the current paper applies to \eqref{eq:reaction_diffusion_system_intro} with $f_i$ as in  \eqref{eq:chemical_reactions}. From a modeling point of view, especially in the context of chemical reactions, it is natural to ask for \emph{mass conservation} along the flow, i.e. $\int_{\T^d} u(t,x) \, \dd x$ is constant in time.
For the special case of \eqref{eq:chemical_reactions}, the mass conservation turns out to be equivalent to the existence of strictly positive constants $(\alpha_i)_{i=1}^{\ell}$ such that
\[
\sum_{i=1}^{\ell}\alpha_i(q_i-p_i)=0.
\]
Weaker notions of mass conservation are also employed, see property (M) in \cite{P10_survey}.
Although it will be not needed in most of our results below, mass conservation will be used in Theorem \ref{t:global_small_data} to provide a simple proof of global existence of (sufficiently) smooth solutions to \eqref{eq:reaction_diffusion_system_intro} for small initial data.

\subsection{Stochastic setting}
A lot of work has already been done on {\em stochastic} reaction-diffusion equations already (see \cite{C03,Cer05,CR05,DKZ19,F91,KN19,KvN12,M18,S21,S21_dissipative,S21_superlinear_no_sign} and references therein). Unfortunately, little is known for weakly dissipative systems in which the equations are coupled through nonlinear terms such as $\pm u_i^{p_i} u_j^{q_j}$. These type of nonlinearities are very common since they model reaction terms (e.g.\ Lotka-Volterra equations or chemical reactions). In the follow-up paper \cite{AVreaction-global} we prove global well-posedness for some of the important concrete systems, including the above mentioned Lotka-Volterra equations and the Brusselator model. For these applications we refer to \cite[Section 5]{AVreaction-global}.
Although with our methods we can include a general class of equations, the full setting of \cite{FMT20} and \cite[Section 3]{P10_survey} seems still completely out of reach.

The results of the current paper can be seen as a first natural step in the study of global well-posedness of weakly dissipative systems. Some general global well-posedness results will already be included in case of small initial data. Moreover, we obtain several new regularization effects, sharp blow-up criteria, and positivity results. Each of the above turns out to be crucial for proving the global well-posedness results in our follow-up work \cite{AVreaction-global}. Indeed, higher order regularity is needed to apply stochastic calculus pointwise in space. This is essential in checking energy estimates and mass conservation type conditions, which in turn can often be combined with blow-up criteria to show global existence of smooth solutions. Positivity often plays a central role in these calculations as it provides a uniform lower bound and important information on the sign of the nonlinear terms.

Let us briefly explain the main difference of our setting to the given literature on stochastic reaction-diffusion equations. To the best of our knowledge, only few papers consider superlinear diffusion (e.g.\ $\pm u_i^{p_i} u_j^{q_j}$). Also very few papers allow transport noise (i.e. $(b_{n,i}\cdot \nabla) u_i\, \dd w^n$), which seems motivated by small scale turbulence (see Subsection \ref{ss:transport_noise} below). Furthermore, there is very little $L^p$-theory for stochastic reaction-diffusion equations available. $L^p$-theory with $p>2$ is often essential when dealing with either (rough) Kraichnan noise, or nonlinearities of higher order growth and dimensions $d\geq 2$. In particular, to establish global well-posedness, $L^p$-energy estimates are typically needed for large $p$ when working with $d\geq 2$. For applications to concrete systems we refer to \cite[Section 3-5]{AVreaction-global}.
Moreover, in our work, weighted $L^p(L^q)$-theory turns out to be key in proving results on higher order regularity of solutions.

\subsection{A derivation via separation of scales}
\label{ss:transport_noise}
In this subsection we explain where the transport term in \eqref{eq:reaction_diffusion_system_intro} comes from and where it has been considered before.
In fluid dynamics, transport noise is typically used to model turbulent phenomena (see e.g.\ \cite{BCF92,F_intro,F15_book,FL19,MR01,MiRo04}), and it is usually referred to as
Kraichnan's noise due to his seminal works on turbulent flows \cite{K68,K94}.

In engineering applications, highly turbulent flows are often employed to \emph{improve} the development and efficiency of chemical reactions (compared to reactions occurring in more ``regular" flows), see e.g.\
\cite{FB18_turbulence,MS06_turbulence,VC12_turbulence,ZCB20_turbulence}.
We refer to \cite{GE63_turbulence,KD86_turbulence_chemical_reactions,LW76,MC98,SH91_turbulence} for further applications and discussions concerning turbulent flows and chemical reactions.
Here, to motivate the transport noise we follow  the heuristic argument in \cite[Subsection 1.2]{FL19}.
We refer to \cite{DP22,FP21} for (different) situations where the argument below can be made rigorous.
Before going further, let us note that our setting only requires some H\"older smoothness of $b_{n,i}^k$, and thus we are able to include the Kraichnan noise with arbitrary small correlation parameter (see \cite{AV20_NS, MiRo04} and \cite[Section 5]{GY21}). In particular this includes the case where $b_{n,i}^k$ reproduces the  Kolmogorov spectrum of turbulence according to \cite[pp.\ 427 and 436]{MK99_simplified}.

Suppose that \eqref{eq:reaction_diffusion_system_intro} models a chemical reaction taking place in a fluid where $u_i$'s are the concentration of the reactants. As commented below \eqref{eq:chemical_reaction_chemistry_formula}, from a deterministic view-point, one can consider the model:
\begin{equation}
\label{eq:reaction_diffusion_det}
\partial_t u_i =\ellip_i\Delta u_i + (v_i\cdot\nabla) u_i+\div(F_i(\cdot, u)) +f_{i}(\cdot, u),
\end{equation}
where $v_i$ models the transport effects of the fluid, $f_i$ is as in \eqref{eq:chemical_reactions} and $F_i$ is a given nonlinearity (of polynomial growth) modeling conservative source terms. In this situation, as in \cite[Subsection 1.2]{FL19}, we can assume that $v_i$ splits into a ``Small" and  ``Large" part:
\begin{equation}
\label{eq:decomposition_flow_intro}
v_i=v_i^{(L)}+v_i^{(S)}.
\end{equation}
In a turbulent regime, the small component $v_i^{(S)}$ varies in time very rapidly compared to the larger one $v_i^{(L)}$.  Therefore, in some sense, $v_i^{(S)}$ models turbulent phenomena (for instance to thermal fluctuations if the reaction is related to combustion processes).
In practice, there is no efficient way to model the small scale component. Hence, the latter is often modeled as an approximation of white noise:
\begin{equation}
\label{eq:small_scale_ansatz}
v_i^{(S)}=\sum_{n\geq 1} b_{n,i} \dot{w}^n_t,
\end{equation}
where, $(w^n)_{n\geq 1}$ is a sequence of independent standard Brownian motions. In case of incompressible flows, one also has the divergence free-condition:
\begin{equation}
\label{eq:flow_div_condition}
\div\, b_{n,i}:=\sum_{j=1}^d  \partial_j b^j_{n,i}=0,\ \  \text{ for all }n\geq 1.
\end{equation}
Using the ansatz \eqref{eq:small_scale_ansatz} for the small scale behavior of $v_i=v_i^{(L)}+v_i^{(S)}$ in \eqref{eq:reaction_diffusion_det} one obtains \eqref{eq:reaction_diffusion_system_intro}.

The same heuristic argument can be used also in other contexts. For instance, in the case of the famous Lotka-Volterra equations (see e.g.\ \cite[Subsection 5.2]{AVreaction-global}), which model the dynamics of predatory-prey systems, the flow $v_i$ may model migratory phenomena of the $i$-th species. In particular, in \eqref{eq:decomposition_flow_intro} the term
$v^{(L)}_i$ takes into account the large scale movements of the $i$-th species while $v^{(S)}_i$ models small fluctuations of the movements due to local effects (e.g.\ unusual dryness of the fields, adverse weather events and local changes of the territory).

It is worth to mention two other properties of transport noise.
Assume that $u_i$ satisfies \eqref{eq:reaction_diffusion_system_intro} with $g_{n,i}\equiv 0$.
Firstly, if \eqref{eq:flow_div_condition} holds, then at least formally the \emph{total mass} $\sum_{ i=1}^{ \ell}\int_{\Tor^d} u_i(t,x)\,\dd x$  is controlled \emph{pathwisely} along the flow provided $\sum_{i=1}^{\ell} f_i(\cdot,u)\lesssim 1+\sum_{i=1}^{\ell} u_i$, which is typical in deterministic theory, see \eqref{eq:chemical_reaction_chemistry_formula} and \cite[(M)]{P10_survey}. To see this it is enough to take $\int_{\Tor^d} \cdot \,\dd x$ in the first equation of \eqref{eq:reaction_diffusion_system_intro}. Secondly, if the positivity preserving condition of the deterministic theory holds (see e.g.\ \cite[(P)]{P10_survey}), then also the flow induced by \eqref{eq:reaction_diffusion_system_intro} is positive preserving (see Theorem \ref{thm:positivity}). Here we do not need \eqref{eq:flow_div_condition}.

From a mathematical point of view, there is no reason to prefer the It\^o's formulation rather than a Stratonovich one in \eqref{eq:reaction_diffusion_system_intro}.
In our paper, we are able to deal with both situations as we will consider \eqref{eq:reaction_diffusion_system_intro} with $\ellip_i \Delta u_i$ replaced by $\div(a_i\cdot\nabla u_i)+ (r_i\cdot\nabla) u_i$. To see this it is enough to recall that (at least formally in case $g_{n,i}\equiv 0$)
\begin{equation}
\label{eq:stratonovich_correction}
(b_{n,i}\cdot \nabla)u_i \circ \dd w_t^n
=[\div(a_{b,i}\cdot\nabla u_i)+ (r_{b,i} \cdot\nabla)u_i]\, \dd t+ (b_{n,i}\cdot \nabla)u_i \,\dd w_t^n
\end{equation}
where $a_{b,i}:=(\frac{1}{2}\sum_{n\geq 1} b_{n,i}^jb_{n,i}^k)_{j,k=1}^d$ and
$r_{b,i}:=(-\frac{1}{2}\sum_{n\geq 1}(\div\,b_{n,i})b_{n,i}^j)_{j=1}^d$.

In the context of SPDEs, transport noise has attracted much attention in the last decades. Indeed, under structural assumptions on the $b_{n,i}$'s one can show that the solution $u$ to a certain SPDE has better properties than its deterministic counterparts. This phenomena is usually called \emph{regularization by noise}, see e.g.\ \cite{FGP10,F15_book} and the references therein for further details. Let us mention two situations where it occurs:
\begin{itemize}
\item Delayed blow-up \cite{FGL21a,FL19}.
\item Dissipation enhancement and/or stabilization \cite{FGL21_dissipation,GY21,Luo21}.
\end{itemize}

To the best of our knowledge, none of the above phenomena have been shown in the general context of reaction-diffusion equations. In the follow-up paper \cite{Agr22} first steps are being made by the first author.

\subsection{Scaling and criticality}
\label{sss:scaling}
Before we discuss our main results in more detail, it is instructive to analyze the scaling property of \eqref{eq:reaction_diffusion_system_intro} in the following ``toy'' situation:
$$
\ell=1,\quad f(u)=|u|^{h-1}u, \quad F(u)= e |u|^{\frac{h-1}{2}} u,\quad g_n(u)=\theta_n |u|^{\frac{h-1}{2}}u,
$$
where $h>1$, $e\in \R^d$ and $\theta=(\theta_n)_{n\geq 1}\in \ell^2$.
Note that the growth of $f$ and $F,g$ are related. Reasoning as \cite[Subsection 5.2.2]{AV19_QSEE_1}, one can see that solutions to \eqref{eq:reaction_diffusion_system_intro} are (locally) invariant under the mapping
\begin{equation*}
u\mapsto \lambda^{1/(h-1)}u(\lambda \cdot,\lambda^{1/2}\cdot), \qquad \lambda>0,
\end{equation*}
and that the Besov spaces $B^{\frac{d}{q}-\frac{2}{h-1}}_{q,p}(\Tor^d)$ and Lebesgue spaces $L^{\frac{d}{2}(h-1)}(\Tor^d)$ are (locally) invariant under the induced mapping on the initial data $u_0\mapsto\lambda^{1/(h-1)} u_0(\lambda^{1/2}\cdot)=:u_{0,\lambda}$.
More precisely, the homogeneous version of such spaces are invariant under the map $u_0\mapsto u_{0,\lambda}$:
$$
\|u_{0,\lambda}\|_{\dot{B}^{\frac{d}{q}-\frac{2}{h-1}}_{q,p}(\R^d)}
\eqsim \|u_{0}\|_{\dot{B}^{\frac{d}{q}-\frac{2}{h-1}}_{q,p}(\R^d)},
\quad
\|u_{0,\lambda}\|_{L^{\frac{d}{2}(h-1)}(\R^d)}
\eqsim \|u_{0}\|_{L^{\frac{d}{2}(h-1)}(\R^d)}.
$$
The \emph{Sobolev index} of the spaces $B^{\frac{d}{q}-\frac{2}{h-1}}_{q,p}$ and $L^{\frac{d}{2}(h-1)}$
is
 $-\frac{2}{h-1}$ and therefore it is independent of $d,q$ (and $p$ in case of Besov spaces).
This number will appear several times in the paper and it will gives distinction between the ``critical" and ``non-critical" situation.

Although the above choice seems very restrictive, the above can be thought of as a ``toy example'' for the case of $F,f,g$ with polynomial growth of order $h>1$, i.e.\ as $u\to \infty$
$$|F(u)|+\|(g_n(u))_{n\geq 1}\|_{\ell^2}\lesssim |u|^{\frac{h+1}{2}} \qquad \text{ and } \qquad |f(u)|\lesssim |u|^h.
$$

\subsection{Overview}
Below we give an overview of the results of the current paper.
In the manuscript we consider a (slightly) generalized version of \eqref{eq:reaction_diffusion_system_intro}, namely \eqref{eq:reaction_diffusion_system} below.
\begin{itemize}
\item Local well-posedness in critical spaces of \eqref{eq:reaction_diffusion_system} - see Theorem \ref{t:reaction_diffusion_global_critical_spaces} and Proposition \ref{prop:local_continuity}.
\item Instantaneous regularization of solutions to \eqref{eq:reaction_diffusion_system} -  see Theorems \ref{t:reaction_diffusion_global_critical_spaces} and \ref{t:high_order_regularity}.
\item (Sharp) blow-up criteria for  \eqref{eq:reaction_diffusion_system} -  see Theorem \ref{t:blow_up_criteria}.
\item Positivity of solutions to \eqref{eq:reaction_diffusion_system} - see Theorem \ref{thm:positivity}.

\item Global well-posedness for small initial data - see Theorems \ref{t:global_small_data}.
\end{itemize}
Although we formulate the main results only for $d\geq 2$, a detailed explanation on the simpler case $d=1$ case can be found in Section \ref{sec:1dcase}.  The special case $p=q=2$ is presented separately in Section \ref{sec:p=q=2} as it requires a different argument. Finally, there is an appendix on the maximum principle for scalar SPDEs in Appendix \ref{s:maximum}. The latter plays a crucial role in the positivity of the solution to \eqref{eq:reaction_diffusion_system}.

The proofs of the above results are based on our recent theory on stochastic evolution equations \cite{AV19_QSEE_1, AV19_QSEE_2}. It was already applied to stochastic Navier-Stokes equations \cite{AV20_NS} and a large class of SPDEs which fit into a variational setting \cite{AV22_variational}. The current paper is the first in a series of papers in which we apply our new framework to reaction-diffusion equations. In the companion papers \cite{AV22_quasi,AVreaction-global}, based on the analysis worked out in this paper, we prove global well-posedness results in several cases, and extend some of the results to the quasilinear case.
Finally, we mention that the local well-posedness and positivity results proven in the current paper have been already used by the first author in \cite{Agr22}  to prove delay of the blow–up of strong solutions and to establish an enhanced diffusion effect in presence of sufficiently intense transport noise.

\subsection{Notation}
Here we collect some notation which will be used throughout the paper. Further notation will be introduced where needed. We write $A \lesssim_P B$ (resp.\ $A \gtrsim_P B$) whenever there is a constant $C>0$ depending only on $P$  such that $A\leq C B$ (resp.\ $A\geq C B$). We write $C(P)$ if the constant $C$ depends only on $P$.

Let $p\in (1,\infty)$ and $\a\in (-1,p-1)$, we denote by $w_{\a}$ the \emph{weight} $w_{\a}(t)=|t|^{\a}$ for $t\in\R$.
For a Banach space $X$ and an interval $I=(a,b)\subseteq \R$, $L^p(a,b,w_{\a};X)$ denotes the set of all strongly measurable maps $f:I\to X$ such that
$$
\|f\|_{L^p(a,b,w_{\a};X)}:= \Big(\int_a^b \|f(t)\|_{X}^p  w_{\a}(t) \,\dd t\Big)^{1/p}<\infty.
$$
Furthermore, $W^{1,p}(a,b,w_{\a};X)\subseteq L^p(a,b,w_{\a};X)$ denotes the set of all $f$ such that $f'\in L^p(a,b,w_{\a};X)$ (here the derivative is taken in the distributional sense) and we set
$$
\|f\|_{W^{1,p}(a,b,w_{\a};X)}:=
\|f\|_{L^p(a,b,w_{\a};X)}+
\|f'\|_{L^p(a,b,w_{\a};X)}.
$$

Let $(\cdot,\cdot)_{\theta,p}$ and $[\cdot,\cdot]_{\theta}$ be the real and complex interpolation functor, respectively. We refer to  \cite{Analysis1,Tr1,BeLo} for details on interpolation and functions spaces. For each  $\theta\in (0,1)$, we set
$$
H^{\theta,p}(a,b,w_{\a};X) := [L^p(a,b,w_{\a};X),W^{1,p}(a,b,w_{\a};X)]_{\theta}.
$$
In the unweighted case, i.e.\ $\a=0$, we set $ H^{\theta,p}(a,b;X) :=H^{\theta,p}(a,b,w_{0};X) $ and similar.
For $\A\in \{L^p,H^{\theta,p},W^{1,p}\}$, we denote by $\A_{\loc}(a,b,w_{\a};X)$ (resp.\ $\A_{\loc}([a,b),w_{\a};X)$) the set of all strongly measurable maps $f:(c,d)\to X$ such that $f\in\A(c,d,w_{\a};X)$ for all $c,d\in(a,b)$ (resp.\ $f\in\A(a,c,w_{\a};X)$ for all $c\in (a,b)$).

The $d$-dimensional (flat) torus is denoted by $\Tor^d$ where $d\geq 1$.
For $K\geq 1$ and $\theta_1, \theta_2\in (0,1)$, $C^{\theta_1, \theta_2}_{\loc}((a,b)\times \Tor^d;\R^\ell)$ denotes the space of all maps $v:(a,b)\times \Tor^d\to \R^\ell$ such that for all $a<c<d<b$ we have
\[
|v(t, x) - v(t', x')|\lesssim_{c,d} |t-t'|^{\theta_1} + |x-x'|^{\theta_2}, \ \text{ for all }t,t'\in [c,d], \ x,x'\in \Tor^d.
\]
This definition is extended to $\theta_1,\theta_2\geq 1$ by requiring that the partial derivatives $\partial^{\alpha,\beta}v$ (with $\alpha\in \N$ and $\beta\in \N^d$) exist and are in $C^{\theta_1-|\alpha|, \theta_2-|\beta|}_{\loc}((a,b)\times\Tor^d;\R^{\ell})$ for all
$\alpha \leq \lfloor \theta_1 \rfloor$ and $\sum_{i=1}^d \beta_i\leq \lfloor \theta_2 \rfloor$.

We will also need the Besov spaces $B^{s}_{q,p}(\T^d;\R^\ell)$ and Bessel potential spaces $H^{s,q}(\T^d;\R^{\ell})$ to formulate our main results. These spaces can be defined by real and complex interpolation or more directly using Littlewood-Paley decompositions (see \cite[Section 6.6]{Saw} and \cite{SchmTr}).
Throughout the paper section, to abbreviate the notation, we often write $L^q,H^{s,q},B^{s}_{q,p}$ instead of $L^q(\Tor^d;\R^{\ell}),H^{s,q}(\Tor^d;\R^{\ell}),B^{s}_{q,p}(\Tor^d;\R^{\ell})$ if no confusion seems likely.

Finally we collect the main probabilistic notation. In the paper we fix a filtered probability space $(\O,\mathscr{A},(\F_t)_{t\geq 0}, \P)$ and we denote by $\E[\cdot]=\int_{\O}\cdot\,\dd \P$ the expected value. A map $\sigma:\O\to [0,\infty]$ is called a stopping time if $\{\sigma\leq t\}\in \F_t$ for all $t\geq 0$. For two stopping times $\sigma$ and $\tau$, we let
$$
[\tau,\sigma]\times \O:=\{(t,\om)\in [0,\infty)\times \O\,:\, \tau(\om)\leq t\leq \sigma(\om)\}.
$$
Similar definition holds for
$[\tau,\sigma)\times \O$,
$(\tau,\sigma)\times \O$ etc.
Finally, $\Progress$ denotes the progressive $\sigma$-algebra on the above mentioned probability space.

\subsubsection*{Acknowledgments}

The authors thank the referee and Udo B\"ohm for helpful comments.

\section{Statement of the main results}
\label{s:main_results}
In this section we state our main results on local well-posedness, regularity, blow-up criteria, and positivity for systems of reaction-diffusion equations on the $d$-dimensional torus $\Tor^d$. The results will be presented in a very flexible setting. This has the advantage that using the results of this paper, one can address global well-posedness issues in an efficient way by checking sharp blow-up criteria. Regularity and positivity often play a crucial role in dealing with these issues. As mentioned in the introduction, in the stochastic case there are many important cases in which global well-posedness is completely open. Using our new framework we are able to settle some of these problems in \cite{AVreaction-global}.

The proofs of the main results are postponed to Section \ref{s:proof_local_existence_regularity} and are based on our abstract framework developed in \cite{AV19_QSEE_1, AV19_QSEE_2}, and our recent maximal regularity estimates \cite{AV21_SMR_torus}.

Consider the following system of stochastic reaction-diffusion equations:
\begin{equation}
\label{eq:reaction_diffusion_system}
\left\{
\begin{aligned}
&\dd u_i -\div(a_i\cdot\nabla u_i) \,\dd t
= \Big[\div(F_i(\cdot, u)) +f_i(\cdot, u)\Big]\,\dd t \\
&\qquad \qquad \qquad \qquad \ \ \ \
+ \sum_{n\geq 1}  \Big[(b_{n,i}\cdot \nabla) u_i+ \rnoise_{n,i}(\cdot,u) \Big]\,\dd w_t^n, & \text{ on }\Tor^d,\\
&u_i(0)=u_{0,i},  & \text{ on }\Tor^d,
\end{aligned}
\right.
\end{equation}
where $i\in \{1,\dots,\ell\}$ and $\ell\geq 1$ is an integer.
Here $u=(u_i)_{i=1}^{\ell}:[0,\infty)\times \O\times \Tor^d\to \R^\ell$ is the unknown process, $(w^n)_{n\geq 1}$ is a sequence of standard independent Brownian motions on the above mentioned filtered probability space and
$$
\div (a_i\cdot\nabla u_i):=\sum_{j,k=1}^d \partial_j(a^{j,k}_i \partial_k u_i), \qquad
(b_{n,i}\cdot\nabla) u_i:=\sum_{j=1}^d b^j_{n,i} \partial_j u_i.
$$
As explained in the Subsection \ref{ss:transport_noise}, the coefficients $b_{n,i}^j$ model small scale turbulent effects; while the coefficients $a^{j,k}_i$
model inhomogeneous conductivity and may also take into account the It\^o correction in case of Stratonovich noise (see \eqref{eq:stratonovich_correction}). Note that the SPDEs \eqref{eq:reaction_diffusion_system} are coupled only through the nonlinearities $F$, $f$ and $g$, but there is no cross interactions in the diffusion terms $\div (a_i\cdot\nabla u_i)$ and $(b_{n,i}\cdot \nabla) u_i$, which is a standard assumption in reaction-diffusion systems.
The absence of cross-diffusion in the deterministic part $\div(a_i\cdot\nabla u_i)\,\dd t $ can be weakened in all the results below expect for Theorem \ref{thm:positivity}, where in its proof we argue component-wise.

Lower order terms in the leading differential operators in \eqref{eq:reaction_diffusion_system}  can be included as well. Since they can be modeled through the nonlinearities $F$, $f$, and $g$ as well, we do not have to write them explicitly.

\subsection{Assumptions and definitions}
\label{ss:reactionmain}
In this subsection we collect the main assumptions and definitions. Additional assumptions will be employed where needed. Below $\Borel$ and $\Progress$ denotes the Borel and the progressive $\sigma$-algebra, respectively. The space $H^{\alpha,q}(\T^d;Y)$ denotes the Bessel potential space with smoothness $\alpha$ and integrability $q$, defined on $\T^d$ with values in the Banach space $Y$.

\begin{assumption}
\label{ass:reaction_diffusion_global}
Let $d\geq 2$ and $\ell\geq 1$ be integers. We say that Assumption \ref{ass:reaction_diffusion_global}$(p,q,h,\s)$ holds if $p\in (2,\infty)$, $q\in [2,\infty)$, $h>1$, $\s\in [1, 2)$ and for all $i\in \{1,\dots,\ell \}$ the following hold:
\begin{enumerate}[{\rm(1)}]
\item\label{it:reaction_diffusion_global1} For each $j,k\in \{1,\dots,d\}$, $\am^{j,k}_i:\R_+\times \O\times \Tor^d\to \R$, $b_{i}^j:=(\bm^{j}_{n,i})_{n\geq 1}:\R_+\times \O\times \Tor^d\to \ell^2$ are $\Progress\otimes \Borel(\Tor^d)$-measurable.
\item\label{it:regularity_coefficients_reaction_diffusion}
There exist $N>0$ and $\alpha>\max\{\frac{d}{\rho},\s-1\}$ where $\rho \in [2,\infty)$
such that a.s.\ for all $t\in \R_+$ and $j,k\in \{1,\dots,d\}$,
\begin{align*}
\|\am^{j,k}_i(t,\cdot)\|_{H^{\alpha,\rho}(\Tor^d)}+\|(\bm^{j}_{n,i}(t,\cdot))_{n\geq 1}\|_{H^{\alpha,\rho}(\Tor^d;\ell^2)}
&\leq N.
\end{align*}
\item\label{it:ellipticity_reaction_diffusion} There exists $\ellip_i>0$ such that, a.s.\ for all $t\in \R_+$, $x\in \Tor^d$ and $\xi\in \R^d$,
$$
\sum_{j,k=1}^d \Big(a_i^{j,k}(t,x)-\frac{1}{2}\sum_{n\geq 1} b^j_{n,i}(t,x)b^k_{n,i}(t,x)\Big)
 \xi_j \xi_k
\geq  \ellip_i |\xi|^2.
$$
\item\label{it:growth_nonlinearities} For all $j\in \{1,\dots,d\}$,  the maps
\begin{align*}
F_i^j, \ f_i:\R_+\times \O\times \Tor^d\times \R\to \R,&\\
g_i:=(g_{n,i})_{n\geq 1}:\R_+\times \O\times \Tor^d\times \R\to \ell^2,&
\end{align*}
are $\Progress\otimes \Borel(\Tor^d)\otimes \Borel(\R)$-measurable. Set $F_i:=(F_i^j)_{j=1}^d$. Assume that
\begin{equation*}
F_i^j(\cdot,0), \ f_i(\cdot,0)\in L^{\infty}(\R_+\times \O\times \Tor^d),\quad
g_i(\cdot,0)\in L^{\infty}(\R_+\times \O\times \Tor^d;\ell^2),
\end{equation*}
and a.s.\ for all $t\in \R_+$, $x\in \Tor^d$ and $y\in\R$,
\begin{align*}
|f_i(t,x,y)-f_i(t,x,y')|
&\lesssim (1+|y|^{h-1}+|y'|^{h-1})|y-y'|,\\
|F_i(t,x,y)-F_i(t,x,y')|
&\lesssim (1+|y|^{\frac{h-1}{2}}+|y'|^{\frac{h-1}{2}})|y-y'|,
\\ \|g_i(t,x,y)-g_i(t,x,y')\|_{\ell^2}
&\lesssim (1+|y|^{\frac{h-1}{2}}+|y'|^{\frac{h-1}{2}})|y-y'|.
\end{align*}
\end{enumerate}
\end{assumption}

The parameters $p$ and $q$ will be used for temporal and spatial integrability, respectively. Finally, $\delta$ will be related to the order of smoothness of the underlined Sobolev space with integrability $q$.
Although we allow $\s\in [1,2)$, in applications to \eqref{eq:reaction_diffusion_system} it turns out to be enough to consider $\delta\in [1,\frac{h+1}{h})$, see Assumption \ref{ass:admissibleexp} below.

Note that Assumption \ref{ass:reaction_diffusion_global}\eqref{it:regularity_coefficients_reaction_diffusion} and Sobolev embeddings give
\begin{equation*}
\|a_i^{j,k}\|_{C^{\alpha-\frac{d}{\rho}}(\Tor^d)}
+
\|(b^{j}_{n,i})_{n\geq 1}\|_{C^{\alpha-\frac{d}{\rho}}(\Tor^d;\ell^2)}
\lesssim_{\alpha,d,\rho} N.
\end{equation*}

For future convenience, we collect some observations in the following remark.
\begin{remark}\
\label{r:basic_assumptions}
\begin{enumerate}[{\rm(a)}]
\item\label{it:enlarge_delta} If Assumption \ref{ass:reaction_diffusion_global}$(p,q,h,\delta)$ holds for some $\s\in [1, 2)$, then there exists an $\varepsilon>0$ such that it holds for all $\wt{\delta}\in [1, \delta+\varepsilon]$.
\item\label{it:enlarge_h} If Assumption \ref{ass:reaction_diffusion_global}$(p,q,h,\delta)$ holds for some $h$, then it holds for all $\wt{h}\in [h,\infty)$.
\item The growth of $f,F$ and $g$ is chosen in accordance with the scaling argument of Subsection \ref{sss:scaling}.
\item\label{it:d=1} The case $d=1$ is excluded in Assumption \ref{ass:reaction_diffusion_global} to avoid many subcases in our main results. However, it can be deduced by more direct methods (see Section \ref{sec:1dcase}), or from the $d=2$ case by adding a dummy variable (under some restrictions). Often one cannot identify any critical spaces in the case $d=1$.
\item The globally Lipschitz case $h=1$ is excluded in the above. Global well-posedness always holds in this case and can be derived from \cite[Theorem 4.15]{AV19_QSEE_2}. Similar to \eqref{it:d=1}, if $h=1$, then no critical spaces can be identified as no rescaling of solutions can (locally) preserve the structure of \eqref{eq:reaction_diffusion_system}.
\end{enumerate}
\end{remark}

Next we introduce the notion of solution to \eqref{eq:reaction_diffusion_system}. To stress the dependence on $(p,\kappa, \s,q)$ we will keep these parameters in the definition of solutions. The parameter $\a\geq 0$ is used for the power weight $w_{\kappa}(t)=t^{\kappa}$ in time. Finally, let us recall that the sequence $(w^n)_{n\geq 1}$ uniquely induces an $\ell^2$-cylindrical Brownian motion (see e.g.\ \cite[Definition 2.11]{AV19_QSEE_1}) given by $W_{\ell^2}(v):=\sum_{n\geq 1} \int_{\R_+} v_n \,\dd w^n_t$ where $v=(v_n)_{n\geq 1}\in L^2(\R_+;\ell^2)$.

\begin{definition}
\label{def:solution}
Suppose that Assumption \ref{ass:reaction_diffusion_global}$(p,q,h,\s)$ is satisfied for some $h>1$ and let $\a\in [0,\frac{p}{2}-1)$.
\begin{itemize}
\item Let $\sigma$ be a stopping time and let $u=(u_i)_{i=1}^{\ell}:[0,\sigma)\times \O\to H^{2-\s,q}(\Tor^d;\R^\ell)$ be a stochastic process.
We say that $(u,\sigma)$ is a {\em local $(p,\a,\s,q)$-solution} to \eqref{eq:reaction_diffusion_system} if there exists a sequence of stopping times $(\sigma_j)_{j\geq 1}$ such that the following hold for all $i\in \{1,\dots,\ell\}$:
\begin{itemize}
\item $\sigma_j\leq \sigma$ a.s.\ for all $j\geq 1$ and $\lim_{j\to \infty} \sigma_j =\sigma$ a.s.;
\item for all $j\geq 1$ the process $\one_{[0,\sigma_j]\times \O} u_i$ is progressively measurable;
\item a.s.\ for all $j\geq 1$ we have $u_i\in L^p(0,\sigma_j ,w_{\a};H^{2-\s,q}(\Tor^d))$ and
\begin{equation}
\label{eq:integrability_nonlinearity}
\begin{aligned}
\div(F_i(\cdot, u)) +f_i(\cdot, u)\in L^p(0,\sigma_j,w_{\a};H^{-\s,q}(\Tor^d)),&\\
(g_{n,i}(\cdot,u))_{n\geq 1}\in L^p(0,\sigma_j,w_{\a};H^{1-\s,q}(\Tor^d;\ell^2));&
\end{aligned}
\end{equation}
\item a.s.\ for all $j\geq 1$ the following identity holds for all $t\in [0,\sigma_j]$:
\begin{equation}
\label{eq:reaction_diffusion_global_stochastic_integrated_form}
\begin{aligned}
u_i(t)-u_{0,i}
&=\int_{0}^{t} \Big(\div(a_i\cdot\nabla u_i)+ \div(F_i(\cdot, u)) +f_i(\cdot, u)\Big)\,\dd s\\
& \  + \int_{0}^t\Big( \one_{[0,\sigma_j]}\big[ (b_{n,i}\cdot\nabla)u + g_{n,i}(\cdot,u) \big]\Big)_{n\geq 1} \,\dd W_{\ell^2}(s).
\end{aligned}
\end{equation}
\end{itemize}
\item Finally, $(u,\sigma)$ is called a {\em $(p,\a,\s,q)$-solution} to \eqref{eq:reaction_diffusion_system} if for any other local $(p,\a,\s,q)$-solution $(u',\sigma')$ to \eqref{eq:reaction_diffusion_system} we have $\sigma'\leq \sigma$ a.s.\ and $u=u'$ on $[0,\sigma')\times \O$.
\end{itemize}
\end{definition}

Note that a $(p,\a,\s,q)$-solution is \emph{unique} by definition. Later on in Proposition \ref{prop:comp} we will prove a further uniqueness result: a different choice of the coefficients $(p,\kappa,\delta,q,h)$ leads to the same solution.

All the integrals in \eqref{eq:reaction_diffusion_global_stochastic_integrated_form} are well-defined. To see this, fix $i\in \{1,\dots,\ell\}$. By Assumption \ref{ass:reaction_diffusion_global}\eqref{it:regularity_coefficients_reaction_diffusion}, \cite[Proposition 4.1]{AV21_SMR_torus} and $u_i\in L^p(0,\sigma_j,w_{\a};H^{2-\s,q}(\Tor^d))$ a.s.\ for all $j\geq 1$, we get
\begin{equation}
\label{eq:integrability_A_B_u}
\begin{aligned}
\div(a_i\cdot\nabla u_i)\in L^p(0,\sigma_j,w_{\a};H^{-\s,q}(\Tor^d)),&\\
((b_{n,i}\cdot\nabla) u_i)_{n\geq 1}\in L^p(0,\sigma_j,w_{\a};H^{1-\s,q}(\Tor^d;\ell^2)),&
\end{aligned}
\end{equation}
a.s.\ for all $j\geq 1$.
The deterministic integrals are well-defined as $H^{-\s,q}(\Tor^d)$-valued Bochner integrals. For the stochastic integrals, recall that
\begin{equation}
\label{eq:identity_gamma_H}
\g(\ell^2,H^{\zeta,r}(\Tor^d))= H^{\zeta,r}(\Tor^d;\ell^2),  \text{ for all }\zeta\in \R\text{ and }r\in (1,\infty),
\end{equation}
where $\g(\ell^2;X)$ denotes the set of all $\g$-radonifying operators with values in the Banach space $X$ (see \cite[Chapter 9]{Analysis2} and in particular Theorem 9.4.8 there for details). Therefore, due to \eqref{eq:integrability_nonlinearity} and \eqref{eq:integrability_A_B_u}, the stochastic integrals are well-defined as $H^{1-\s,q}(\Tor^d)$-valued stochastic integrals by \eqref{eq:identity_gamma_H}, \cite[Theorem 4.7]{NVW13} and $L^p(0,T;w_{\a})\embed L^2(0,T)$ since $\a<\frac{p}{2}-1$.

\subsection{Local well-posedness and regularity in critical spaces}

Before we state our main local well-posedness result for \eqref{eq:reaction_diffusion_system} in critical spaces, we first introduce the set of \emph{admissible} exponents $(p,q,h,\delta)$.
\begin{assumption}\label{ass:admissibleexp}
Let $d\geq 2$. We say that Assumption \ref{ass:admissibleexp}$(p,q,h,\delta)$ holds if
$h>1$, $\s\in [1,\frac{h+1}{h})$,
$p\in (2,\infty)$, and $q\in [2,\infty)$ satisfy
\begin{align}\label{eq:condqadm}
\frac{1}{p}+\frac{1}{2}\Big(\reg+\frac{d}{q}\Big) \leq \frac{h}{h-1}, \ \ \ \ \text{and} \ \ \  \
\frac{d}{d-\reg} <q<\frac{d(h-1)}{h+1-\reg(h-1)}.
\end{align}
\end{assumption}

In the above assumption we avoided the case $p=2$ since this is an exceptional case, which can be included provided $q=2$. The latter situation is discussed in Section \ref{sec:p=q=2}.
The following lemma characterizes for which exponents $h$ we can find $(p,q,\delta)$ such that Assumption \ref{ass:admissibleexp}$(p,q,h,\delta)$ holds. Recall that we may always enlarge $h$ if needed (see Remark \ref{r:basic_assumptions}\eqref{it:enlarge_h}).
\begin{lemma}\label{lem:admissibleexp}
Let $d\geq 2$ and set
\begin{equation}
\label{eq:hstar_reaction_diffusion}
\hstar := \left\{
             \begin{aligned}
               &3, \ \ & \text{ if } & \ d=2, \\
               &\frac{1}{2}+\frac{1}{d}+\sqrt{\Big(\frac{1}{2}+\frac{1}{d}\Big)^2+\frac{2}{d}},\ \  & \text{ if } & \ d\geq 3.
             \end{aligned}
           \right.
\end{equation}
Then there exist $(p,q,\delta)$ such that Assumption \ref{ass:admissibleexp}$(p,q,h,\delta)$ holds if and only if $h>\hstar$.
\end{lemma}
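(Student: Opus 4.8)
The plan is to treat the two displayed conditions in Assumption \ref{ass:admissibleexp}$(p,q,h,\delta)$ as a feasibility problem in the variables $(p,q,\delta)$ with $h$ fixed, and to identify exactly when this feasibility set is nonempty. First I would observe that enlarging $p$ (towards $\infty$) only helps the first inequality $\frac1p+\frac12(\delta+\frac dq)\le\frac{h}{h-1}$, since the left-hand side is decreasing in $p$; so the binding constraint coming from $p$ is obtained in the limit $p\to\infty$, where it becomes $\frac12(\delta+\frac dq)\le\frac{h}{h-1}$, i.e.\ $\delta+\frac dq\le\frac{2h}{h-1}$. Likewise, since $\delta$ may be taken as small as we like (the constraint is $\delta\in[1,\frac{h+1}{h})$, so $\delta=1$ is always available and is the most permissive choice for the first inequality and for the left endpoint of the $q$-window), I would first try $\delta=1$ and see whether a valid $q$ exists; only if the window degenerates would I need to be more careful about the $\delta$-dependence. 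With $\delta=1$ the second condition reads $\frac{d}{d-1}<q<\frac{d(h-1)}{2}$ and the first (in the limit $p\to\infty$) reads $1+\frac dq\le\frac{2h}{h-1}$, i.e.\ $q\ge\frac{d(h-1)}{h+1}$.

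So with $\delta=1$ the set of admissible $q$ is the intersection of $\bigl(\frac{d}{d-1},\frac{d(h-1)}{2}\bigr)$ with $\bigl[\frac{d(h-1)}{h+1},\infty\bigr)$, i.e.\ $q\in\bigl(\max\{\tfrac{d}{d-1},\tfrac{d(h-1)}{h+1}\},\tfrac{d(h-1)}{2}\bigr)$, and this is nonempty iff
\[
\max\Bigl\{\frac{d}{d-1},\ \frac{d(h-1)}{h+1}\Bigr\}<\frac{d(h-1)}{2}.
\]
I would then resolve this into two scalar inequalities in $h$. The condition $\frac{d(h-1)}{h+1}<\frac{d(h-1)}{2}$ is equivalent to $h+1>2$, i.e.\ $h>1$, hence automatic. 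The condition $\frac{d}{d-1}<\frac{d(h-1)}{2}$ is equivalent to $(h-1)(d-1)>2$, i.e.\ $h>1+\frac{2}{d-1}$. For $d=2$ this gives $h>3$, matching $\hstar=3$; for $d\ge3$ this gives $h>1+\frac2{d-1}$, which is however \emph{not} the claimed threshold, so with $\delta=1$ alone one gets a weaker statement for $d\ge3$. This tells me that for $d\ge3$ one must also check whether the $p\to\infty$, $\delta=1$ relaxation was too generous: the genuine constraint has $p<\infty$ strictly and $\delta\ge1$, and one has to verify that values of $q$ near the two endpoints are actually attainable together with a finite $p$. The key point is that near $q=\frac{d(h-1)}{2}$ (the right end) the first inequality forces $q$ away from that endpoint unless $\delta$ can compensate, and $\delta$ is bounded below by $1$; so the true feasibility threshold is governed by the simultaneous constraints, and a short optimization over $q$ (with $\delta=1$, $p\to\infty$) gives the quadratic whose positive root is exactly the $\hstar$ in \eqref{eq:hstar_reaction_diffusion}.

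Concretely, for $d\ge3$ I would substitute $\delta=1$, $p\to\infty$ and look for $q$ with $\frac{d(h-1)}{h+1}\le q<\frac{d(h-1)}{2}$ and $\frac{d}{d-1}<q$; but I also need to double-check the second inequality of \eqref{eq:condqadm} in its exact form $\frac{d}{d-\delta}<q<\frac{d(h-1)}{h+1-\delta(h-1)}$, which at $\delta=1$ becomes $\frac{d}{d-1}<q<\frac{d(h-1)}{2}$ — consistent with the above. The genuine obstruction for $d\ge3$ comes from combining $q>\frac{d}{d-1}$ with the first inequality at its tightest, which after clearing denominators yields a quadratic in $h$; setting its discriminant/root condition gives $h>\frac12+\frac1d+\sqrt{(\frac12+\frac1d)^2+\frac2d}$. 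So the structure of the proof is: (i) reduce to $\delta=1$ and $p\to\infty$ by monotonicity, justifying that nothing is lost (this needs the openness of the inequalities and Remark \ref{r:basic_assumptions}\eqref{it:enlarge_delta}–\eqref{it:enlarge_h}); (ii) for $d=2$, directly read off $h>3$; (iii) for $d\ge3$, solve the resulting pair of linear-in-$q$ constraints, reduce to a quadratic in $h$, and identify $\hstar$; (iv) conversely, for $h>\hstar$ exhibit explicit admissible $(p,q,\delta)$ (e.g.\ $\delta=1$, $q$ in the open interval just found, and $p$ large but finite) to show the condition is also sufficient. The main obstacle I anticipate is step (iii): being careful that the relaxation $p\to\infty,\ \delta=1$ is genuinely optimal (so that the quadratic one extracts is sharp and not merely sufficient), and handling the boundary cases where the $q$-window could collapse — this requires checking that the first inequality in \eqref{eq:condqadm}, which I relaxed, does not re-tighten things once $p$ is finite; since it is a closed ($\le$) inequality while the $q$-window is open, a small perturbation argument closes the gap.
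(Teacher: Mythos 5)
Your reduction ``take $\delta=1$, since it is the most permissive choice'' is where the argument breaks. Increasing $\delta$ has mixed effects on the constraints of Assumption \ref{ass:admissibleexp}: it tightens the left endpoint $\frac{d}{d-\delta}$ of the $q$-window and tightens the first inequality of \eqref{eq:condqadm}, but it \emph{loosens} the right endpoint $\frac{d(h-1)}{h+1-\delta(h-1)}$, which is strictly increasing in $\delta$. That trade-off cannot be resolved by fixing $\delta=1$; the paper instead lets $\delta\uparrow\frac{h+1}{h}$ to make the right endpoint as large as possible, namely $\frac{dh(h-1)}{h+1}$. Moreover, you never invoke the hypothesis $q\geq 2$ of Assumption \ref{ass:admissibleexp} in your feasibility set, whereas it is exactly this constraint, tested against the maximized right endpoint $\frac{dh(h-1)}{h+1}>2$, that produces the quadratic $h^2-(1+\frac{2}{d})h-\frac{2}{d}>0$ and hence $\hstar$ for $d\geq 3$. (Contrast: the $\delta$-independent comparison $\frac{d}{d-\delta}<\frac{d(h-1)}{h+1-\delta(h-1)}$ gives only the linear threshold $h>\frac{d+1}{d-1}$, which dominates at $d=2$ but is subsumed by the quadratic at $d\geq 3$.) A quick sanity check at $d=3$: your $\delta=1$ window with $q\geq 2$ is nonempty only for $h>1+\frac{4}{3}$, whereas $\hstar=2$, so for $h\in(2,\tfrac73)$ one genuinely needs $\delta>1$; your step (iv), which proposes exhibiting admissible exponents with $\delta=1$, would fail for such $h$.

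You do sense something is wrong for $d\geq 3$, but you misdiagnose it: you suggest the $p\to\infty$ relaxation ``was too generous.'' In fact the $p$-limit is harmless (the first inequality is non-strict and $p$ may be taken arbitrarily large, as the paper's proof also does), and the openness argument you gesture at is not the issue. The missing ideas are (i) varying $\delta$ up to its supremum $\frac{h+1}{h}$, and (ii) confronting the resulting right endpoint with $q\geq 2$. Once those are in place, the rest of your plan (treat the condition as a feasibility problem, read off a threshold, verify sufficiency by exhibiting exponents for $h>\hstar$) is sound and matches the paper's route at a high level.
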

\begin{proof}
Since we can take $p$ as large as we want,
the first part of \eqref{eq:condqadm} is equivalent to $\frac{d(h-1)}{2h-\reg(h-1)}<q$.
Therefore, we can find admissible $(p,q,h,\delta)$ if and only if
there exist $\reg\in [1,\frac{h+1}{h})$ and $q\geq 2$ such that
\begin{align}\label{eq:rangeq}
\max\Big\{\frac{d}{d-\reg},\frac{d(h-1)}{2h-\reg(h-1)}\Big\} <q<\frac{d(h-1)}{h+1-\reg(h-1)}.
\end{align}
By elementary considerations one can see that the range of $q$'s in \eqref{eq:rangeq} is nontrivial if and only if  $h>\frac{d+1}{d-1}$. Since additionally $q\geq 2$, admissibility is equivalent to
\begin{align}\label{eq:admequiv}
h>\frac{d+1}{d-1} \ \ \text{and} \ \  \frac{d(h-1)}{h+1-\reg(h-1)}>2  \ \text{for some}  \ \reg\in \Big[1,\frac{h+1}{h}\Big).
\end{align}
Taking $\reg\uparrow \frac{h+1}{h}$, the second part of \eqref{eq:admequiv} becomes $h^2-(1+\frac{2}{d})h-\frac2d>0$, which is equivalent to
$h>\frac{1}{2}+\frac{1}{d}+\sqrt{\big(\frac{1}{2}+\frac{1}{d}\big)^2+\frac{2}{d}}=:\wt{h}_d$.
In case $d\geq 3$, one can check that $\frac{d+1}{d-1}\leq \wt{h}_d$. In case $d=2$, one has $3=\frac{d+1}{d-1}>\wt{h}_d$. Hence, admissibility is equivalent to $h>h_d= \wt{h}_d$ if $d\geq 3$, and $h>h_d=3$ if $d=2$.
\end{proof}

The numbers $\hstar$ in \eqref{eq:hstar_reaction_diffusion} are connected to the {\em Fujita exponent} $1+\frac{2}{d}$ introduced in the seminal paper \cite{F66} in the study of the \emph{blowing-up} of positive (smooth) solutions to the PDE: $\partial_t u-\Delta u=u^{1+h}$. In the next remark we compare this to our setting.

\begin{remark}[Stochastic Fujita exponent]\label{rem:stochFujita}
Note that $h_d$ in Lemma \ref{lem:admissibleexp} satisfies $1+\frac{2}{d}< h_d \leq 1+\frac{4}{d}$. In particular it is always larger than the classical Fujita exponent $1+\frac{2}{d}$ (note that $h>1+\frac{2}{d}$ corresponds to the fact that the scaling invariant space $L^{\frac{d}{2}(h-1)}$ has integrability $>1$). Moreover, $h_d$
is decreasing in $d$, $h_d\downarrow 1$ as $d\to \infty$, and
\[h_2 = 3,  \ h_3 = 2, \  h_4 \approx 1.781, \  h_5 \approx 1.643, \ h_6\approx 1.549.\]
In case $h\leq \hstar$, then one can still apply Theorem \ref{t:reaction_diffusion_global_critical_spaces} by using one of the following strategies:
\begin{itemize}
\item enlarge $h$ in Assumption \ref{ass:reaction_diffusion_global}, see Remark \ref{r:basic_assumptions}\eqref{it:enlarge_h};
\item add dummy variables to increase the dimension $d_2>d$ in order to have $h>h_{d_2}$ (here we are using that $\lim_{d\to \infty} \hstar=1$).
\end{itemize}

Via Theorem \ref{t:blow_up_criteria}
one can show non-explosion (in probability) on large time intervals for solutions to \eqref{eq:reaction_diffusion_system} in case of small initial data and admissible exponents without further conditions, see Section \ref{sec:globsmall}. Therefore, by Lemma \ref{lem:admissibleexp}, one can allow nonlinearities as in \cite{F66} for $h>\hstar$. Such a threshold $\hstar$ seems optimal for these results to hold in presence of a non-trivial transport noise term, i.e.\ $(b\cdot\nabla) u\,\dd w$. Therefore, it seems natural to call $\hstar$ the {\em stochastic Fujita exponent}.

Recently, there has been an increasing attention in extending \cite{F66} to the stochastic framework, see e.g.\ \cite{C09,C11,ChKh15, FLN19_explosion} and the references therein. In the latter works, equations on $\R^d$ are considered, but transport noise does not appear. In view of the scaling argument in Subsection \ref{sss:scaling}, we expect that the same stochastic Fujita exponent $h_d$ appears in the $\R^d$-case of \eqref{eq:reaction_diffusion_system}.
\end{remark}

The main result of this section is the following local existence and regularity for \eqref{eq:reaction_diffusion_system} in critical spaces, and it will be proved in Subsection \ref{sss:reaction_diffusion_local}. Recall that $B^{s}_{q,p}(\T^d;\R^\ell)$ denotes the Besov space with smoothness $s\in \R$, integrability $q$, and microscopic parameter $p$. To abbreviate notation we write $B^{s}_{q,p}$ and $H^{s,q}$ for the spaces $B^{s}_{q,p}(\T^d;\R^\ell)$ and $H^{s,q}(\T^d;\R^\ell)$.

\begin{theorem}[Local existence and uniqueness in critical spaces, and regularity]
\label{t:reaction_diffusion_global_critical_spaces}
Let Assumptions \ref{ass:reaction_diffusion_global}$(p,q,h,\s)$ and \ref{ass:admissibleexp}$(p,q,h,\s)$ be satisfied. Set $\a:=\a_{\crit}:=p\big(\frac{h}{h-1}-\frac{1}{2}(\reg+\frac{d}{q})\big)-1$. Then for any
\begin{equation}
\label{eq:initial_data_from_critical_space}
u_0\in L^0_{\F_0}(\O;B^{\frac{d}{q}-\frac{2}{h-1}}_{q,p}),
\end{equation}
the problem \eqref{eq:reaction_diffusion_system} has a (unique) $(p,\a_{\crit},\s,q)$-solution $(u,\sigma)$ such that
 $\sigma>0$ a.s.\ and
\begin{align}
\label{eq:regularity_u_reaction_diffusion_critical_spaces}
u&\in C([0,\sigma);B^{\frac{d}{q}-\frac{2}{h-1}}_{q,p}) \  \text{ a.s.\ }
\\ \label{eq:regularity_u_reaction_diffusion_critical_spaces2}
u&\in H^{\theta,p}_{{\rm loc}}([0,\sigma),w_{\a_{\crit}};H^{2-\s-2\theta,q}) \ \text{ a.s.\ for all }\theta\in [0,1/2).
\end{align}
Finally, $u$ instantaneously regularizes in space and time:
\begin{align}
\label{eq:reaction_diffusion_H_theta}
u&\in H^{\theta,r}_{\rm loc}(0,\sigma;H^{1-2\theta,\zeta})  \ \ \text{a.s.\ for all }\theta\in  [0,1/2), \ \  r,\zeta\in (2,\infty),\\
\label{eq:reaction_diffusion_C_alpha_beta}
u&\in C^{\theta_1,\theta_2}_{\rm loc}((0,\sigma)\times \Tor^d;\R^{\ell}) \ \ \text{a.s.\ for all }\theta_1\in  [0,1/2), \ \ \theta_2\in (0,1).
\end{align}
\end{theorem}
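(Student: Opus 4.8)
The plan is to recast the system \eqref{eq:reaction_diffusion_system} componentwise as an abstract semilinear stochastic evolution equation and to apply the local well-posedness and regularization theory of \cite{AV19_QSEE_1, AV19_QSEE_2} together with the stochastic maximal $L^p$-regularity estimates of \cite{AV21_SMR_torus}. For each fixed $i$ I would work with the pair $(X_0,X_1)=(H^{-\s,q},H^{2-\s,q})$ on $\Tor^d$, regard $A_i:=-\div(a_i\cdot\nabla\,\cdot\,)$ and $B_i:=((b_{n,i}\cdot\nabla)\,\cdot\,)_{n\geq1}$ as the linear part, and $u\mapsto \div(F_i(\cdot,u))+f_i(\cdot,u)$, $u\mapsto (g_{n,i}(\cdot,u))_{n\geq1}$ as the nonlinearities, mapping into $H^{-\s,q}$ and $\g(\ell^2;H^{1-\s,q})=H^{1-\s,q}(\ell^2)$ respectively. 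The weight exponent is dictated by the scaling of Subsection \ref{sss:scaling}: with $\a_{\crit}=p(\frac{h}{h-1}-\frac12(\reg+\frac dq))-1$ the weighted trace space of the pair $(X_0,X_1)$ coincides with $B^{\frac dq-\frac2{h-1}}_{q,p}$, which is exactly the space in \eqref{eq:initial_data_from_critical_space}. Here one records that the first inequality in \eqref{eq:condqadm} is equivalent to $\a_{\crit}\geq 0$ and the upper bound on $q$ in the second to $\a_{\crit}<\frac p2-1$, so $\a_{\crit}$ lies in the admissible range $[0,\frac p2-1)$ of Definition \ref{def:solution}.

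First I would verify that $(A_i,B_i)$ has stochastic maximal $L^p$-regularity with the power weight $w_{\a_{\crit}}$: this follows from \cite{AV21_SMR_torus}, whose hypotheses are met since the ellipticity condition Assumption \ref{ass:reaction_diffusion_global}\eqref{it:ellipticity_reaction_diffusion} is precisely the stochastic parabolicity condition $a_i-\frac12 b_i b_i^{*}\geq\ellip_i$, the coefficients are $\Progress\otimes\Borel(\Tor^d)$-measurable and, by Assumption \ref{ass:reaction_diffusion_global}\eqref{it:regularity_coefficients_reaction_diffusion} together with Sobolev embedding, of class $C^{\alpha-d/\rho}$ in space with $\alpha-d/\rho>\s-1$. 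Next I would check the "critical" nonlinearity conditions of \cite{AV19_QSEE_1}: using the polynomial growth bounds of Assumption \ref{ass:reaction_diffusion_global}\eqref{it:growth_nonlinearities}, H\"older's inequality and the Sobolev embeddings (valid under the lower bound on $q$ in \eqref{eq:condqadm}), one shows that $u\mapsto f_i(\cdot,u)$ and $u\mapsto\div(F_i(\cdot,u))$ are locally Lipschitz from $H^{2-\s,q}$ to $H^{-\s,q}$, and $u\mapsto (g_{n,i}(\cdot,u))_n$ from $H^{2-\s,q}$ to $H^{1-\s,q}(\ell^2)$, each with the critical scaling exponent compatible with $\a_{\crit}$. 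The abstract local existence theorem of \cite{AV19_QSEE_1} then yields the unique $(p,\a_{\crit},\s,q)$-solution $(u,\sigma)$ with $\sigma>0$ a.s.\ and the regularity \eqref{eq:regularity_u_reaction_diffusion_critical_spaces}--\eqref{eq:regularity_u_reaction_diffusion_critical_spaces2}.

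For the instantaneous regularization \eqref{eq:reaction_diffusion_H_theta}--\eqref{eq:reaction_diffusion_C_alpha_beta} I would run a bootstrap argument. By \eqref{eq:regularity_u_reaction_diffusion_critical_spaces2} the solution lies, for a.e.\ small $s>0$, in a space strictly better than the critical trace space, so $u(s)$ can serve as initial datum for the same equation but with a larger integrability pair and a suitably modified weight; using the consistency of solutions under changes of the parameters $(p,\a,\s,q,h)$ (the analogue of Proposition \ref{prop:comp}) one checks that the restarted solutions coincide with $u$ on overlapping time intervals and that each step gains integrability and smoothness. Since $h$ may always be enlarged (Remark \ref{r:basic_assumptions}\eqref{it:enlarge_h}), finitely many steps, or a continuous scale as in \cite{AV19_QSEE_2}, bring one to arbitrary $r,\zeta\in(2,\infty)$ and $\theta\in[0,1/2)$ in \eqref{eq:reaction_diffusion_H_theta}; the H\"older statement \eqref{eq:reaction_diffusion_C_alpha_beta} then follows from the embedding of the weighted anisotropic spaces $H^{\theta,r}_{\loc}(0,\sigma;H^{1-2\theta,\zeta})$ into $C^{\theta_1,\theta_2}_{\loc}((0,\sigma)\times\Tor^d;\R^\ell)$ for suitable $\theta_1<1/2$, $\theta_2<1$, upon sending $r,\zeta\to\infty$.

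I expect the main obstacle to be the bootstrap bookkeeping rather than any single estimate: at every restart one must ensure that the new exponents still satisfy Assumption \ref{ass:admissibleexp} (the admissible region has an upper bound on $q$ in terms of $h$ and $\reg$, so the scheme has to be arranged to stay inside it, enlarging $h$ when necessary), that the merely H\"older regularity of $a_i,b_i$ from Assumption \ref{ass:reaction_diffusion_global}\eqref{it:regularity_coefficients_reaction_diffusion} still suffices for stochastic maximal regularity at the higher smoothness level, and that the standing constraint $\s\geq1$ together with the first-order nature of the transport term limits the ultimate gain to roughly one spatial derivative. Finally, the various weighted-in-time restarts must be glued into a single statement on all of $(0,\sigma)$, and trading the temporal weight for spatial integrability at each step, via the embedding theory for weighted Sobolev and Besov spaces, is the delicate part of the argument.
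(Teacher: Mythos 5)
Your treatment of the local well-posedness part matches the paper's: the reformulation on $(X_0,X_1)=(H^{-\s,q},H^{2-\s,q})$, the verification of stochastic maximal $L^p$-regularity for $(A,B)$ via \cite{AV21_SMR_torus}, the identification of the weighted trace space with $B^{\frac dq-\frac2{h-1}}_{q,p}$ under the criticality condition on $\a_{\crit}$, and the critical local-Lipschitz estimates for the nonlinearities obtained from H\"older and Sobolev embeddings. (One small slip: you say you recast the system ``componentwise'', but the nonlinearities $F_i,f_i,g_{n,i}$ couple all components, so the fixed-point must be set up for the full vector $u=(u_i)_{i=1}^\ell$; only the leading operators are diagonal.) The paper actually derives the theorem from a more general result, Proposition \ref{prop:reaction_diffusion_global}, and your exponent bookkeeping corresponds to checking that the admissibility constraints in Assumption \ref{ass:admissibleexp} put $\a_{\crit}$ in $[0,\frac p2-1)$ and select the first alternative \eqref{eq:reaction_diffusion_globali} there.

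Where your proposal departs from the paper, and runs into trouble, is the mechanism for the instantaneous regularization \eqref{eq:reaction_diffusion_H_theta}--\eqref{eq:reaction_diffusion_C_alpha_beta}. You propose to restart the equation at a small time $s>0$ with initial datum $u(s)$ in a better space, solve in a higher-$\zeta$ setting, and identify the restarted solution with $u$ via an analogue of Proposition \ref{prop:comp}. This is circular: both the compatibility result Proposition \ref{prop:comp} and the restart-and-identify step (as carried out in the proof of Theorem \ref{t:blow_up_criteria}) explicitly use the regularization \eqref{eq:reaction_diffusion_H_theta}--\eqref{eq:reaction_diffusion_C_alpha_beta} to check that $u|_{[s,\sigma)}$ is a local solution in the new setting before invoking maximality; one cannot first prove regularization by assuming it for identification. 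The paper avoids this by carrying out the bootstrap \emph{within} the maximal-regularity framework, without restarting, via the abstract results of \cite[Section 6]{AV19_QSEE_2}: first \cite[Proposition 6.8]{AV19_QSEE_2} to pass from the unweighted to the weighted case, \cite[Corollary 6.5]{AV19_QSEE_2} to obtain $H^{\theta,r}$-regularity in time for all $r$, then \cite[Theorem 6.3]{AV19_QSEE_2} applied iteratively to raise spatial integrability $\zeta\uparrow\infty$ (each step gaining a quantified $\gap>0$ from a Sobolev trace embedding), and finally \cite[Theorem 6.3]{AV19_QSEE_2} again with the shifted scale $\wh Y_j=H^{2j-1,\zeta}$ to gain the spatial differentiability from $2-\s$ to $1$. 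These theorems deduce the higher regularity of the existing solution from the critical nonlinearity estimates in Lemma \ref{l:estimate_nonlinearities} and a trace-space inclusion, precisely to sidestep the identification problem your plan would encounter. Also, contrary to your suggestion, no enlargement of $h$ is used in this bootstrap; it is used only to pass to better spaces of initial data (as in Remark \ref{r:reaction_diffusion_critical_spaces}) or in the blow-up criteria.
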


The standard set of initial data  in the theory of reaction-diffusion equations is $L^{\infty}(\Tor^d;\R^{\ell})$  (see e.g.\ \cite{P10_survey}), and it is always included as a special case in the above result (see Remark \ref{r:reaction_diffusion_critical_spaces}\eqref{it:L_xi_data}).

The regularity  \eqref{eq:reaction_diffusion_H_theta}-\eqref{eq:reaction_diffusion_C_alpha_beta} can be improved by imposing further smoothness conditions on $(a,b,F,f,g)$, but keeping the same space of initial data for $u_0$ (see Theorem \ref{t:high_order_regularity} below).
We will prove later on that if Theorem \ref{t:reaction_diffusion_global_critical_spaces} is applicable for two sets of exponents $(p,q,h,\s)$, then the corresponding solutions coincide, see Proposition \ref{prop:comp}.

For future reference, we collect several observations in the following remark.
\begin{remark}\
\label{r:reaction_diffusion_critical_spaces}
\begin{enumerate}[{\rm(a)}]
\item\label{it:Sob_index}
As we have seen in Subsection \ref{sss:scaling}, the space
 $B^{\frac{d}{q}-\frac{2}{h-1}}_{q,p}$ in \eqref{eq:initial_data_from_critical_space} has the right local scaling for \eqref{eq:reaction_diffusion_system}. For this reason it is often called \emph{critical} for \eqref{eq:reaction_diffusion_system}. It coincides with the abstract notion of criticality which will be considered in Section \ref{s:proof_local_existence_regularity}. Note that the \emph{Sobolev index} of the initial value space is $(\frac{d}{q}-\frac{2}{h-1})-\frac{d}{q}=-\frac{2}{h-1}$, which is independent of $q$ and $\s$. Moreover, by Sobolev embeddings
$$
 B^{\frac{d}{q}-\frac{2}{h-1}}_{q,p}(\Tor^d;\R^{\ell}) \embed B^{\frac{d}{r}-\frac{2}{h-1}}_{r,s}(\Tor^d;\R^{\ell}) \  \text{ for all }r\geq q \text{ and }s\geq p.
$$
\item\label{it:negative_smoothness}
The freedom in the choice of $\delta$ allows us to reduce the smoothness of the above critical spaces. Indeed, choosing $\s\uparrow \frac{h+1}{h}$ and letting $q\uparrow\frac{d(h-1)}{h+1-(\frac{h+1}{h})(h-1)}=\frac{dh(h-1)}{h+1}$ it follows that we can treat initial data with smoothness $\frac{d}{q} - \frac{2}{h-1}\downarrow -\frac{1}{h}$.
\item\label{it:L_xi_data} By increasing $h$ (see Remark \ref{r:basic_assumptions}\eqref{it:enlarge_h}) we can suppose that
\begin{equation*}
\text{ either }\quad h>1+\frac{4}{d}
 \quad \text{ or } \quad
\Big[ h= 1+\frac{4}{d},\ \text{ and }\    d\geq 3\Big].
\end{equation*}
Setting $q=\frac{d}{2}(h-1)$, Theorem \ref{t:reaction_diffusion_global_critical_spaces} gives local well-posedness for \eqref{eq:reaction_diffusion_system} for the important class of initial data in
\begin{equation*}
u_0\in L^{0}_{\F_0}(\O;L^{q}(\Tor^d;\R^{\ell})).
\end{equation*}
Indeed, even if Assumptions \ref{ass:reaction_diffusion_global}$(p,q,h,\s)$ and \ref{ass:admissibleexp}$(p,q,h,\delta)$ hold with $\delta=1$, they self-improve to some $\s>1$ (see Remark \ref{r:basic_assumptions}\eqref{it:enlarge_delta}) and $p\geq\max\{ q,\frac{2}{2-\s}\}$.
Thus since
$L^{q}\embed B^{0}_{q,p}
=B^{\frac{d}{q}-\frac{2}{h-1}}_{q,p},
$
local well-posedness with initial data from the space $L^0_{\F_0}(\O;L^q)$ follows from Theorem \ref{t:reaction_diffusion_global_critical_spaces}.
In the above setting, one can also prove that $u\in C([0,\sigma);L^q)$ a.s.\ by using the local continuity w.r.t.\ $u_0$ (see Proposition \ref{prop:local_continuity} below) and a stopped version of the arguments used in \cite[Proposition 6.3]{AVreaction-global} (see also the comments below its statement).
\end{enumerate}
\end{remark}

The next rather technical local continuity result will be used in the proof of positivity of solutions $(u,\sigma)$ (see Theorem \ref{thm:positivity} below).
\begin{proposition}[Local continuity]
\label{prop:local_continuity}
Let Assumptions \ref{ass:reaction_diffusion_global}$(p,q,h,\s)$ and \ref{ass:admissibleexp}$(p,q,h,\s)$ be satisfied. Set $\a:=\a_{\crit}:=p\big(\frac{h}{h-1}-\frac{1}{2}(\reg+\frac{d}{q})\big)-1$.
Assume that $u_0$ satisfies \eqref{eq:initial_data_from_critical_space} and let $(u,\sigma)$ be the $(p,\a_{\crit},\s,q)$-solution to \eqref{eq:reaction_diffusion_system} provided by Theorem \ref{t:reaction_diffusion_global_critical_spaces}. There exist constants $C_0,T_0,\varepsilon_0>0$ and stopping times $\sigma_0,\sigma_1\in (0,\sigma]$ a.s.\ for which the following assertion holds:

For each $v_0\in L^p_{\F_0}(\O;B^{\frac{d}{q}-\frac{2}{h-1}}_{q,p})$ with
$\E\|u_0-v_0\|_{B^{\frac{d}{q}-\frac{2}{h-1}}_{q,p}}^p\leq \varepsilon_0$,
the $(p,\a_{\crit},\s,q)$-solution $(v,\tau)$ to \eqref{eq:reaction_diffusion_system} with initial data $v_0$, has the property that there exists a stopping time $\tau_0\in (0,\tau]$ a.s.\ such that for all $t\in [0,T_0]$ and $\gamma>0$, one has
\begin{align}
\label{eq:local_continuity_1}
\P\Big(\sup_{r\in [0,t]}\|u(r)-v(r)\|_{B^{\frac{d}{q}-\frac{2}{h-1}}_{q,p}}\geq \gamma, \  \sigma_0\wedge \tau_0>t\Big)
&\leq \frac{C_0}{\gamma^p}
\E \|u_0-v_0\|_{B^{\frac{d}{q}-\frac{2}{h-1}}_{q,p}}^p,\\
\label{eq:local_continuity_2}
\P\Big(\|u-v\|_{L^p(0,t,w_{\a_{\crit}};H^{2-\s,q})} \geq\gamma, \  \sigma_0\wedge \tau_0>t\Big)
&\leq \frac{C_0}{\gamma^p}
\E \|u_0-v_0\|_{B^{\frac{d}{q}-\frac{2}{h-1}}_{q,p}}^p,\\
\label{eq:local_continuity_3}
\P(\sigma_0\wedge \tau_0\leq t)&\leq C_0
\big[\E \|u_0-v_0\|_{B^{\frac{d}{q}-\frac{2}{h-1}}_{q,p}}^p+ \P(\sigma_1\leq t)\big].
\end{align}
\end{proposition}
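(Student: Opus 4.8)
\emph{Proof strategy.} The plan is to view \eqref{eq:reaction_diffusion_system} as an abstract stochastic evolution equation and to carry out a localized stochastic maximal regularity argument for the difference of two solutions, as in the continuous-dependence theory behind \cite{AV19_QSEE_1, AV19_QSEE_2}. Since $v_0\in L^p_{\F_0}$ one may assume $\E\|u_0-v_0\|_{B^{d/q-2/(h-1)}_{q,p}}^p<\infty$ (otherwise the three estimates are vacuous), so that also $u_0\in L^p_{\F_0}(\O;B^{d/q-2/(h-1)}_{q,p})$. Write $X_0:=B^{\frac{d}{q}-\frac{2}{h-1}}_{q,p}$, $X_1:=H^{2-\s,q}$, $X_{-1}:=H^{-\s,q}$, and recast \eqref{eq:reaction_diffusion_system} as
\[
\dd u + A(t)u\,\dd t=\mathcal{F}(t,u)\,\dd t+\big(B(t)u+\mathcal{G}(t,u)\big)\,\dd W_{\ell^2}
\]
on the couple $(X_{-1},X_1)$, where $(A,B)$ collects $u\mapsto -\div(a\cdot\nabla u)$ and $u\mapsto((b_n\cdot\nabla)u)_n$ — which satisfy stochastic maximal $L^p$-regularity with the critical weight $w_{\a_{\crit}}$ by \cite{AV21_SMR_torus}, thanks to Assumption~\ref{ass:reaction_diffusion_global}\eqref{it:regularity_coefficients_reaction_diffusion}--\eqref{it:ellipticity_reaction_diffusion} — and $\mathcal{F}(t,u):=\div F(\cdot,u)+f(\cdot,u)$, $\mathcal{G}(t,u):=(g_n(\cdot,u))_n$. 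Let $\mathsf{Z}(0,t)$ denote the associated maximal regularity space, for which $X_0=(X_{-1},X_1)_{1-\frac{1+\a_{\crit}}{p},p}$ is the trace space and $\mathsf{Z}(0,t)\embed C([0,t];X_0)\cap L^p(0,t,w_{\a_{\crit}};X_1)$. Using Assumption~\ref{ass:reaction_diffusion_global}\eqref{it:growth_nonlinearities}, Assumption~\ref{ass:admissibleexp}, H\"older's inequality, Sobolev multiplication and embedding on $\Tor^d$, and the identification \eqref{eq:identity_gamma_H} on the noise side, I would verify that $(\mathcal{F},\mathcal{G})$ is a \emph{critical} nonlinearity in the sense of \cite{AV19_QSEE_1}: there are $\eta>0$, $C>0$, and a space-time seminorm $u\mapsto\mathcal{N}(u;0,t)$ with $\mathsf{Z}(0,t)\embed\{\mathcal{N}(\cdot;0,t)<\infty\}$ and $\mathcal{N}(u;0,t)\to 0$ as $t\downarrow0$ for every $u\in\mathsf{Z}(0,t)$, such that for $u,v\in\mathsf{Z}(0,t)$ with $u(0)=v(0)$,
\[
\|\mathcal{F}(\cdot,u)-\mathcal{F}(\cdot,v)\|_{L^p(0,t,w_{\a_{\crit}};X_{-1})}+\|\mathcal{G}(\cdot,u)-\mathcal{G}(\cdot,v)\|_{L^p(0,t,w_{\a_{\crit}};H^{1-\s,q}(\Tor^d;\ell^2))}\le C\big(t^\eta+(\mathcal{N}(u;0,t)+\mathcal{N}(v;0,t))^{h-1}\big)\|u-v\|_{\mathsf{Z}(0,t)}.
\]

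\emph{Localization.} Since $u$ is in $\mathsf{Z}$ near $0$ by \eqref{eq:regularity_u_reaction_diffusion_critical_spaces2}, the seminorm $t\mapsto\mathcal{N}(u;0,t)$ is a.s.\ continuous and vanishes at $t=0$; choosing first a small deterministic $\eta_0>0$ and then a small deterministic $T_0\in(0,1]$ with $C_{\mathrm{SMR}}\,C\,(T_0^\eta+(3\eta_0)^{h-1})\le\tfrac12$ (where $C_{\mathrm{SMR}}$ is the stochastic maximal regularity constant of $(A,B)$ on $[0,1]$), set $\sigma_1:=\sigma_0:=\inf\{t\in[0,\sigma):\mathcal{N}(u;0,t)\ge\eta_0\}\wedge\sigma$, a stopping time in $(0,\sigma]$ a.s. For $v_0$ as in the statement, Theorem~\ref{t:reaction_diffusion_global_critical_spaces} provides a $(p,\a_{\crit},\s,q)$-solution $(v,\tau)$ with $\tau>0$ a.s.; set $\tau_0:=\inf\{t\in[0,\tau):\mathcal{N}(v;0,t)\ge2\eta_0\}\wedge\tau$, again a stopping time in $(0,\tau]$ a.s.\ (by the same vanishing-at-$0$ property, which holds for \emph{any} initial datum). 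On $\{\sigma_0\wedge\tau_0>t\}$ with $t\le T_0$ one has $\mathcal{N}(u;0,t)<\eta_0$ and $\mathcal{N}(v;0,t)<2\eta_0$. The key point is that $\eta_0$, $T_0$ (and $\varepsilon_0$, fixed below) depend only on the $u$-side and on the structural constants, not on $v_0$.

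\emph{Difference estimate and conclusion.} Truncating the nonlinear forcings by $\one_{[0,\sigma_0\wedge\tau_0]}$ and applying the stopped stochastic maximal regularity estimate for $(A,B)$ (cf.\ \cite{AV19_QSEE_1, AV21_SMR_torus}) to the linear equation satisfied by $w:=u-v$ on $[0,(\sigma_0\wedge\tau_0)\wedge t]$, with initial datum $u_0-v_0$ and forcings $\one_{[0,\sigma_0\wedge\tau_0]}(\mathcal{F}(\cdot,u)-\mathcal{F}(\cdot,v))$ and $\one_{[0,\sigma_0\wedge\tau_0]}(\mathcal{G}(\cdot,u)-\mathcal{G}(\cdot,v))$, and then inserting the critical Lipschitz bound (on $\{\sigma_0\wedge\tau_0>t\}$ the factor $C(t^\eta+(\mathcal{N}(u;0,t)+\mathcal{N}(v;0,t))^{h-1})$ is $\le\tfrac12 C_{\mathrm{SMR}}^{-1}$), the forcing terms are absorbed and one gets $\|w\|_{\mathsf{Z}(0,(\sigma_0\wedge\tau_0)\wedge t)}\le 2C_{\mathrm{SMR}}\|u_0-v_0\|_{X_0}$ a.s., whence $\E\big[\one_{\{\sigma_0\wedge\tau_0>t\}}\|w\|_{\mathsf{Z}(0,t)}^p\big]\le(2C_{\mathrm{SMR}})^p\E\|u_0-v_0\|_{X_0}^p$. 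Estimates \eqref{eq:local_continuity_1} and \eqref{eq:local_continuity_2} follow from this, the embeddings $\mathsf{Z}(0,t)\embed C([0,t];X_0)$ and $\mathsf{Z}(0,t)\embed L^p(0,t,w_{\a_{\crit}};X_1)$, and Markov's inequality (choosing $C_0$ accordingly). For \eqref{eq:local_continuity_3}, note $\{\sigma_0\wedge\tau_0\le t\}\subseteq\{\sigma_1\le t\}\cup\big(\{\sigma_1>t\}\cap\{\tau_0\le t\}\big)$; on the second event $\sigma_0=\sigma_1>t\ge\tau_0$, so $(\sigma_0\wedge\tau_0)\wedge t=\tau_0<\tau$, and $\mathcal{N}(u;0,\tau_0)<\eta_0$ while $\mathcal{N}(v;0,\tau_0)\ge2\eta_0$, forcing $\mathcal{N}(w;0,\tau_0)\ge\eta_0$; combined with $\mathcal{N}(w;0,\tau_0)\le\|w\|_{\mathsf{Z}(0,\tau_0)}\le 2C_{\mathrm{SMR}}\|u_0-v_0\|_{X_0}$ this puts the event inside $\{\|u_0-v_0\|_{X_0}\ge\eta_0/(2C_{\mathrm{SMR}})\}$, of probability $\le(2C_{\mathrm{SMR}}/\eta_0)^p\E\|u_0-v_0\|_{X_0}^p$ by Markov; summing the two contributions gives \eqref{eq:local_continuity_3}. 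Finally $\varepsilon_0>0$ may be taken arbitrarily (it is kept for use in the positivity proof).

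I expect the main obstacle to be the first step, namely establishing that $(\mathcal{F},\mathcal{G})$ is a critical nonlinearity with the displayed bound and a seminorm $\mathcal{N}$ that vanishes at $t=0$: this rests on sharp Sobolev multiplication and embedding estimates \emph{precisely} at the borderline exponents singled out by \eqref{eq:condqadm}, and is delicate for the divergence-form term $\div F(\cdot,\cdot)$ landing in the negative-order space $H^{-\s,q}$ and for the stochastic part via \eqref{eq:identity_gamma_H}. A second, intertwined difficulty is to make the localization uniform: the good region for $v$ is defined through $v$ (hence $v_0$), so one must choose $\eta_0,T_0,\varepsilon_0$ and the stopping times $\sigma_0=\sigma_1$ using only the fixed $u$-data, exploiting that the critical Lipschitz constant depends only on the (vanishing) seminorm threshold and on $t^\eta$; keeping the stopped maximal-regularity bookkeeping consistent throughout is where most of the technical work lies.
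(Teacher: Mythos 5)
Your overall strategy matches the paper's: localize via stopping times defined through a smallness threshold on the solutions, apply stochastic maximal $L^p$-regularity to the difference, absorb the nonlinear contribution using the local Lipschitz estimate, and finish with Markov's inequality. The paper implements the localization via a \emph{cut-off} in the nonlinearity (cf.\ the truncated problem \eqref{eq:SEE_truncated} with $\xi_\lambda$), which produces auxiliary solutions $\uu,\vv$ defined on the \emph{whole} deterministic interval $[0,T_0]$ and a clean stability estimate $\E\|\uu-\vv\|^p_{C([0,T_0];\Xapcrit)}+\E\|\uu-\vv\|^p_{L^p(0,T_0,w_{\a_{\crit}};H^{2-\s,q})}\le C_0\E\|u_0-v_0\|^p_{\Xapcrit}$, from which \eqref{eq:local_continuity_1}--\eqref{eq:local_continuity_3} all follow by Chebyshev and an elementary triangle inequality (and a third threshold $\lambda_0/4$ for $\sigma_1$, so $\sigma_1\le\sigma_0$; your choice $\sigma_1=\sigma_0$ is a legitimate simplification). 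Your version works instead with the original, untruncated solutions stopped at $\sigma_0,\tau_0$. Both are viable, but the cut-off approach avoids a bookkeeping pitfall into which your argument falls.

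The genuine gap is the claimed \emph{pathwise} bound
$\|w\|_{\mathsf{Z}(0,(\sigma_0\wedge\tau_0)\wedge t)}\le 2C_{\mathrm{SMR}}\|u_0-v_0\|_{X_0}$ a.s.
Stochastic maximal $L^p$-regularity for the transport-noise linearization $(\AS,\BS)$ of \cite{AV21_SMR_torus} is an estimate in $L^p(\Omega;\cdot)$; there is no almost sure, $\omega$-wise version of the absorption estimate, since the stochastic integral term cannot be bounded pathwise. This error propagates into your proof of \eqref{eq:local_continuity_3}: there you use the pathwise bound to conclude that $\{\sigma_1>t\}\cap\{\tau_0\le t\}\subseteq\{\|u_0-v_0\|_{X_0}\ge\eta_0/(2C_{\mathrm{SMR}})\}$, which does not follow. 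The fix is standard but must be made: the stopped stochastic maximal regularity gives $\E\|w\|_{\mathsf{Z}(0,(\sigma_0\wedge\tau_0)\wedge t)}^p\le(2C_{\mathrm{SMR}})^p\E\|u_0-v_0\|^p_{X_0}$ (no indicator needed), and on $\{\sigma_1>t\}\cap\{\tau_0\le t\}$ one has $(\sigma_0\wedge\tau_0)\wedge t=\tau_0$ and $\mathcal{N}(w;0,\tau_0)\ge\eta_0$, hence $\{\sigma_1>t\}\cap\{\tau_0\le t\}\subseteq\{\|w\|_{\mathsf{Z}(0,(\sigma_0\wedge\tau_0)\wedge t)}\ge c\eta_0\}$ for the embedding constant $c$, and Markov applied to the $L^p(\Omega)$ bound closes the argument with the correct constant. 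Two smaller imprecisions: the hypothesis $u(0)=v(0)$ you attach to the "critical Lipschitz" estimate is unnecessary (the Lipschitz estimates in Lemma~\ref{l:estimate_nonlinearities} are pointwise in time and initial-data-free) and would make the estimate inapplicable to $w=u-v$ with $u_0\ne v_0$; and in the critical regime $\kappa=\kappa_{\crit}$ the $t^\eta$-gain is only present for the affine part of the nonlinearity (coming from the "$1+$" in Assumption~\ref{ass:reaction_diffusion_global}\eqref{it:growth_nonlinearities}), whereas the superlinear part must be made small solely through $\eta_0$; the way you group both into a single prefactor obscures which parameter controls which term.
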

The stopping time $\tau_0$ depends on $(u_0, v_0)$. To some extend, the estimates \eqref{eq:local_continuity_1}-\eqref{eq:local_continuity_2} show that $(u,\sigma)$
depends continuously on the initial data $u_0$, while \eqref{eq:local_continuity_3} gives a measure of the size of the time interval on which the continuity estimates \eqref{eq:local_continuity_1}-\eqref{eq:local_continuity_2} hold.
The key point is that the right-hand side of \eqref{eq:local_continuity_3} depends on $v_0$, but not on $v$.
In particular, $\{\tau_0\leq t\}$ has small probability if $t\sim 0$ and $v_0$ is close to $u_0$.
We actually prove a slightly stronger result than Proposition \ref{prop:local_continuity}, see Remark \ref{r:refined_local_existence}.

\subsection{Blow-up criteria}
Below we state some \emph{blow-up criteria} for the solution to \eqref{eq:reaction_diffusion_system} provided by Theorem \ref{t:reaction_diffusion_global_critical_spaces}.
Roughly speaking, blow-up criteria ensure that, if there exists a fixed time $T>0$ such  that the stopping time $\sigma$ satisfies $\P(\sigma<T)>0$, then the norm of $u$ in an appropriate space explodes. Blow-up criteria are often used to prove that a certain solution $(u,\sigma)$ is \emph{global} in time, i.e.\ $\sigma=\infty$ a.s. In practice, to prove global existence, it is enough to prove that the norm of $u$ in the above mentioned function space \emph{cannot} explode. In our follow-up paper \cite{AVreaction-global}, we will use this strategy to prove that solutions provided by Theorem \ref{t:reaction_diffusion_global_critical_spaces} are \emph{global} in time in several situations. A version of such results for small initial data can be found in Section \ref{sec:globsmall}.

Blow-up criteria are most powerful when they are formulated in function spaces which are as rough (i.e.\ large) as possible. On the other hand, the regularity cannot be arbitrarily low, since at least the nonlinearities need to be well-defined.
Hence, from a scaling point of view it is natural to ask for blow-up criteria involving function spaces with Sobolev index $-\frac{2}{h-1}$, because such critical threshold (see Subsection \ref{sss:scaling}) is generically optimal for local and global well-posedness of (S)PDEs (see \cite[Section 2.2]{CriticalQuasilinear} for deterministic evidence on this). Our general theory from \cite{AV19_QSEE_2} leads to the following criteria which at the moment is the best we can expect with abstract methods.

\begin{theorem}[Blow-up criteria]
\label{t:blow_up_criteria}
Let the assumptions of Theorem \ref{t:reaction_diffusion_global_critical_spaces} be satisfied and let $(u,\sigma)$ be the $(p,\a_{\crit},\s,q)$-solution to \eqref{eq:reaction_diffusion_system}. Suppose that $p_0\in (2,\infty)$, $q_0\in [2,\infty)$, $h_0\geq h$, $\s_{0}\in [1,2)$  are such that Assumptions \ref{ass:reaction_diffusion_global}$(p_0,q_0,h_0,\s_0)$ and \ref{ass:admissibleexp}$(p_0,q_0,h_0,\s_0)$ hold.
Set
$$
\beta_0:=\frac{d}{q_0}-\frac{2}{h_0-1} \ \ \text{ and } \ \
\g_0:= \frac{d}{q_0}+\frac{2}{p_0}-\frac{2}{h_0-1}.
$$
Then for all $0<s<T<\infty$,
\begin{enumerate}[{\rm(1)}]
\item\label{it:blow_up_not_sharp}
$
\displaystyle{
\P\Big(s<\sigma<T,\, \sup_{t\in [s,\sigma)}\|u(t)\|_{B^{\beta_0}_{q_1,\infty}} <\infty\Big)=0
}
$
for all $q_1>q_0$.
\item\label{it:blow_up_sharp}
$
\displaystyle{
\P\Big(s<\sigma<T,\, \sup_{t\in [s,\sigma)}\|u(t)\|_{B^{\beta_0}_{q_0,p_0}}+
\|u\|_{L^{p_0}(s,\sigma;H^{\gamma_0,q_0})} <\infty\Big)=0.
}
$
\end{enumerate}
\end{theorem}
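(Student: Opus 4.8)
The plan is to reduce the statement to the abstract blow-up criteria for stochastic evolution equations in critical spaces from \cite{AV19_QSEE_2}. The key point is that Theorem \ref{t:reaction_diffusion_global_critical_spaces} identifies \eqref{eq:reaction_diffusion_system} with a stochastic evolution equation of the type treated in that abstract framework, where the pair of exponents $(p_0,q_0,h_0,\s_0)$ determines a choice of trace space $B^{\beta_0}_{q_0,p_0}$ and maximal-regularity space. So first I would recall, from Proposition \ref{prop:comp}, that the $(p,\a_{\crit},\s,q)$-solution $(u,\sigma)$ coincides (on its interval of existence) with the $(p_0,(\a_{\crit})_0,\s_0,q_0)$-solution; this is what allows us to read off blow-up information using the ``$0$''-exponents even though the solution was constructed with the original exponents. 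The role of the hypothesis that Assumptions \ref{ass:reaction_diffusion_global}$(p_0,q_0,h_0,\s_0)$ and \ref{ass:admissibleexp}$(p_0,q_0,h_0,\s_0)$ hold is precisely to make the abstract theory applicable for the new exponents.

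Next I would verify the structural hypotheses of the abstract blow-up theorem: the nonlinearities $u\mapsto \div(F(\cdot,u))+f(\cdot,u)$ and $u\mapsto ((b_{n}\cdot\nabla)u+g_{n}(\cdot,u))_{n}$ map the relevant Bessel-potential scale into the right negative-order spaces with the correct (sub)criticality estimates — this is the same bookkeeping already carried out in the proof of Theorem \ref{t:reaction_diffusion_global_critical_spaces} in Subsection \ref{sss:reaction_diffusion_local}, now re-run with $(p_0,q_0,h_0,\s_0)$. Granting this, part \eqref{it:blow_up_sharp} is the direct ``sharp'' blow-up criterion of \cite{AV19_QSEE_2}: on the event $\{s<\sigma<T\}$, either $\sup_{t\in[s,\sigma)}\|u(t)\|_{X^{\mathrm{Tr}}} = \infty$ in the trace space $X^{\mathrm{Tr}}=B^{\beta_0}_{q_0,p_0}$, or the maximal-regularity norm $\|u\|_{L^{p_0}(s,\sigma;H^{\gamma_0,q_0})}$ blows up, where $\gamma_0 = \frac{d}{q_0}+\frac2{p_0}-\frac2{h_0-1}$ is exactly the regularity index coming from the trace embedding into the $L^{p_0}(w_{\a})$–maximal-regularity space at weight $\a=(\a_{\crit})_0$; I would just check this arithmetic matches the definition of $\g_0$.

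For part \eqref{it:blow_up_not_sharp} I would upgrade to a larger (rougher) space $B^{\beta_0}_{q_1,\infty}$ with $q_1>q_0$, at the cost of losing the maximal-regularity term. The mechanism is the standard one: if $\sup_{t\in[s,\sigma)}\|u(t)\|_{B^{\beta_0}_{q_1,\infty}}<\infty$ on a positive-probability subset of $\{s<\sigma<T\}$, one bootstraps using the instantaneous regularization \eqref{eq:reaction_diffusion_H_theta}–\eqref{eq:reaction_diffusion_C_alpha_beta} to put $u(t_0)$ into a space with strictly better Sobolev index for some $t_0\in(s,\sigma)$, and then restarts the equation from $t_0$ using a parameter choice for which $B^{\beta_0}_{q_1,\infty}$ embeds into a non-critical trace space — applying a Serrin-type / continuation criterion from \cite{AV19_QSEE_2} then shows the solution extends past $\sigma$, contradicting maximality. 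The choice $q_1>q_0$ (strict) together with $\beta_0-\frac{d}{q_1}>-\frac{2}{h_0-1}$ gives the needed slack. The main obstacle, and the step deserving the most care, is checking that the Besov space $B^{\beta_0}_{q_1,\infty}$ — which has the \emph{same} Sobolev index $-\frac{2}{h_0-1}$ as the critical space but higher microscopic parameter $\infty$ and integrability $q_1$ — still embeds appropriately so that the abstract criterion with those exponents applies; equivalently, one must locate admissible $(p_1,q_1,h_1,\s_1)$ whose trace space is dominated by $B^{\beta_0}_{q_1,\infty}$, which is a matter of pushing $p_1\to\infty$ and invoking the Sobolev embeddings noted in Remark \ref{r:reaction_diffusion_critical_spaces}\eqref{it:Sob_index}.
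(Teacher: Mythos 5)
Your high-level plan is close to the paper's: restart the equation at time $s>0$ where $u(s)$ is smooth by \eqref{eq:reaction_diffusion_C_alpha_beta}, view the restarted problem in the $(p_0,\kappa_0,\s_0,q_1)$-setting (with $q_1>q_0$ giving a non-critical trace space for part \eqref{it:blow_up_not_sharp}), and invoke the abstract blow-up criteria from \cite{AV19_QSEE_2}. However, two issues in your write-up would not survive closer inspection.

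First, you open by invoking Proposition \ref{prop:comp} to say the $(p,\a_{\crit},\s,q)$-solution ``coincides with the $(p_0,(\a_{\crit})_0,\s_0,q_0)$-solution.'' That proposition is not available here: it presupposes that $u_0$ lies in \emph{both} trace spaces, whereas the theorem's hypothesis only gives $u_0\in B^{\frac{d}{q}-\frac{2}{h-1}}_{q,p}$, and the $(p_0,q_0,h_0,\s_0)$-trace space need not contain it. (In the paper's logical order, Proposition \ref{prop:comp} is in any case proved \emph{after} and by \emph{adapting} the proof of Theorem \ref{t:blow_up_criteria}, so one cannot lean on it.) For this reason the restart at time $s>0$ is indispensable in part \eqref{it:blow_up_sharp} as well, not only in part \eqref{it:blow_up_not_sharp}: it is precisely the restart which produces data lying in the $(p_0,q_0)$-trace space, and it is also why the supremum and the $L^{p_0}$-integral in \eqref{it:blow_up_sharp} start at $s$ rather than at $0$.

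Second, and more substantively, your treatment of part \eqref{it:blow_up_not_sharp} glosses over the step that actually carries the proof. After producing the restarted $(p_0,\a_0,\s_0,q_1)$-solution $(v,\tau)$ on $[s,\infty)$, you say that boundedness of $u$ ``shows the solution extends past $\sigma$, contradicting maximality.'' But maximality of $(u,\sigma)$ is in the $(p,\a_{\crit},\s,q)$-setting, while $v$ lives in a different setting; one cannot a priori glue $v$ onto $u$ past $\sigma$ without controlling the $(p,\a_{\crit},\s,q)$-integrability of $v$, which is not free. What is needed, and what the paper actually does, is to establish directly that $\tau=\sigma$ a.s.\ on $\V:=\{\sigma>s\}$ and $u=v$ on $[s,\sigma)\times\V$. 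The ``$\leq$'' direction comes from maximality of $(v,\tau)$, using $h_0\geq h$ and the instantaneous regularization of $u$ to check that $u|_{[s,\sigma)\times\V}$ is itself a local $(p_0,\a_0,\s_0,q_1)$-solution of the restarted problem. The ``$\geq$'' direction, i.e.\ $\P(\V\cap\{\sigma<\tau\})=0$, is the nontrivial point: it follows by combining the regularization \eqref{eq:v_regularizes} of $v$ with the $L^p$-information up to time $0$ in \eqref{eq:L_p_up_to_zero}, and then applying the blow-up criterion \eqref{eq:blow_up_criteria_u} for the \emph{original} $(p,\a_{\crit},\s,q)$-solution, which is a prior application of \cite[Theorem 4.10(3)]{AV19_QSEE_2} in the original setting. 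Your proposal does not mention this auxiliary criterion at all, and without it the identification $\tau=\sigma$ — hence the transfer of the blow-up statement from $v$ back to $u$ — is not justified.

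<br><br>

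Once these two points are filled in, the remainder of your plan (choosing $\beta<\beta_0$ so that $B^{\beta_0}_{q_1,\infty}\hookrightarrow B^{\beta}_{q_1,p_0}$, checking that the trace space is non-critical via $\a<\a_{\crit,1}$, and applying \cite[Theorem 4.10(2)]{AV19_QSEE_2} for part \eqref{it:blow_up_not_sharp} and \cite[Theorem 4.10(3)]{AV19_QSEE_2} for part \eqref{it:blow_up_sharp}) is essentially the paper's argument.
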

Note that the norms in the blow-up criteria are well-defined thanks to  \eqref{eq:reaction_diffusion_H_theta}-\eqref{eq:reaction_diffusion_C_alpha_beta} and $s>0$. In particular, the parameter $s$ makes it possible to consider rough initial data. It is possible to take $(p,q,\s,h) = (p_0,q_0,\s_0,h_0)$, but the extra flexibility turns out to be very helpful in proving global well-posedness.

The proof of Theorem \ref{t:blow_up_criteria} will be given in Subsection \ref{ss:proof_blow_up_criteria}. As a consequence we also obtain:
\begin{corollary}\label{cor:blow_up_criteria}
Let the assumptions of Theorem \ref{t:reaction_diffusion_global_critical_spaces} be satisfied and let $(u,\sigma)$ be the $(p,\a_{\crit},\s,q)$-solution to \eqref{eq:reaction_diffusion_system}.
Suppose that $p_0\in (2,\infty)$, $q_0\in [2, \infty)$, $h_0\geq \max\{h, 1+\frac{4}{d}\}$, $\s_{0}\in (1,2)$  are such that Assumptions \ref{ass:reaction_diffusion_global}$(p_0,q_0,h_0,\s_0)$ and \ref{ass:admissibleexp}$(p_0,q_0,h_0,\s_0)$ hold.
Let $\zeta_0 = \frac{d}{2}(h_0-1)$. The following hold for each $0<s<T<\infty$:
\begin{enumerate}[{\rm(1)}]
\setcounter{enumi}{\value{nameOfYourChoice}}
\item\label{it:blow_up_not_sharp_L}
If $q_0 = \zeta_0$, then for all $\zeta_1>q_0$
\[\P\Big(s<\sigma<T,\, \sup_{t\in [s,\sigma)}\|u(t)\|_{L^{\zeta_1}} <\infty\Big)=0.
\]
\item\label{it:blow_up_sharp_L}
If $q_0>\zeta_0$, $p_0\in \big(\frac{2}{\s_0-1},\infty\big)$, $p_0\geq q_0$, and $\frac{d}{q_0}+\frac{2}{p_0}=\frac{2}{h_0-1}$, then
\[
\P\Big(s<\sigma<T,\, \sup_{t\in [s,\sigma)}\|u(t)\|_{L^{\zeta_0}}+
\|u\|_{L^{p_0}(s,\sigma;L^{q_0})} <\infty\Big)=0.
\]
\end{enumerate}
\end{corollary}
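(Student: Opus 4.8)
The plan is to deduce Corollary \ref{cor:blow_up_criteria} directly from Theorem \ref{t:blow_up_criteria} by choosing the exponents in the theorem appropriately and then invoking Sobolev embeddings to translate the Besov/Bessel-potential norms appearing in the theorem into the Lebesgue norms stated in the corollary. The key observation is that with $\zeta_0 = \frac{d}{2}(h_0-1)$ and $q_0 = \zeta_0$ we have $\beta_0 := \frac{d}{q_0} - \frac{2}{h_0-1} = 0$, so part \eqref{it:blow_up_not_sharp} of the theorem becomes a statement about $\sup_t \|u(t)\|_{B^0_{q_1,\infty}}$; and when $q_0 > \zeta_0$ with $\frac{d}{q_0} + \frac{2}{p_0} = \frac{2}{h_0-1}$ we get $\g_0 := \frac{d}{q_0} + \frac{2}{p_0} - \frac{2}{h_0-1} = 0$, so part \eqref{it:blow_up_sharp} becomes a statement about $\sup_t\|u(t)\|_{B^{\beta_0}_{q_0,p_0}}$ together with $\|u\|_{L^{p_0}(s,\sigma;L^{q_0})}$ (since $H^{0,q_0} = L^{q_0}$).

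For item \eqref{it:blow_up_not_sharp_L}: take $(p_0,q_0,h_0,\s_0)$ as in the hypothesis with $q_0=\zeta_0$, so $\beta_0 = 0$. Given $\zeta_1 > q_0$, I would like to pick $q_1$ in Theorem \ref{t:blow_up_criteria}\eqref{it:blow_up_not_sharp} so that $L^{\zeta_1} \embed B^0_{q_1,\infty}$. Since $B^{s}_{q_1,1} \embed L^{q_1} \embed B^0_{q_1,\infty}$ for the same integrability, and more usefully $L^{\zeta_1} \embed B^{0}_{q_1,\infty}$ whenever $q_1 < \zeta_1$ and the Sobolev index is respected — here one uses $L^{\zeta_1} = B^0_{\zeta_1,2}$-type comparisons, or more simply the embedding $L^{\zeta_1}(\Tor^d) \embed B^{\frac{d}{q_1} - \frac{d}{\zeta_1}}_{q_1,\infty}(\Tor^d) \embed B^0_{q_1,\infty}$ valid for any $q_1 \in (q_0,\zeta_1)$ on the torus (finite measure, so also $q_1 \le \zeta_1$ suffices for the Lebesgue step, and the smoothness gain $\frac{d}{q_1}-\frac{d}{\zeta_1}\ge 0$ handles the Besov step). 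Choosing such a $q_1 > q_0 = \zeta_0$, the event $\{\sup_{t\in[s,\sigma)}\|u(t)\|_{L^{\zeta_1}} < \infty\}$ is contained in $\{\sup_{t\in[s,\sigma)}\|u(t)\|_{B^0_{q_1,\infty}} < \infty\}$, so its intersection with $\{s<\sigma<T\}$ has probability zero by Theorem \ref{t:blow_up_criteria}\eqref{it:blow_up_not_sharp}.

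For item \eqref{it:blow_up_sharp_L}: take $(p_0,q_0,h_0,\s_0)$ with $q_0 > \zeta_0$ and $\frac{d}{q_0} + \frac{2}{p_0} = \frac{2}{h_0-1}$, so $\g_0 = 0$ and hence $H^{\g_0,q_0} = L^{q_0}$; the conditions $p_0 \in (\frac{2}{\s_0-1},\infty)$, $p_0 \ge q_0$ ensure Assumption \ref{ass:admissibleexp}$(p_0,q_0,h_0,\s_0)$ is compatible and that the relevant interpolation/trace spaces are the expected ones. It remains to compare $\|u(t)\|_{L^{\zeta_0}}$ with $\|u(t)\|_{B^{\beta_0}_{q_0,p_0}}$, where $\beta_0 = \frac{d}{q_0} - \frac{2}{h_0-1} = \frac{d}{q_0} - \big(\frac{d}{q_0} + \frac{2}{p_0}\big) = -\frac{2}{p_0} < 0$. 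One checks the Sobolev index of $B^{\beta_0}_{q_0,p_0}$ equals $\beta_0 - \frac{d}{q_0} = -\frac{2}{h_0-1} = \frac{d}{\zeta_0} - \frac{d}{\zeta_0} \cdot (\text{const})$... concretely, $\beta_0 - \frac{d}{q_0} = -\frac{2}{h_0-1}$ and $0 - \frac{d}{\zeta_0} = 0 - \frac{2}{h_0-1} = -\frac{2}{h_0-1}$, so the two spaces have the same Sobolev index, and since $\zeta_0 < q_0$ the embedding goes the right way: $L^{\zeta_0}(\Tor^d) = B^0_{\zeta_0,2}$-ish $\embed B^{\beta_0}_{q_0,p_0}(\Tor^d)$ using $B^{s_1}_{q_1,p_1}\embed B^{s_2}_{q_2,p_2}$ when $q_1\le q_2$, $s_1 - \frac{d}{q_1} \ge s_2 - \frac{d}{q_2}$ (here equality) and $p_1 \le p_2$ — the last needing $2 \le p_0$, which holds since $p_0 \in (2,\infty)$ after possibly also using $L^{\zeta_0}\embed B^0_{\zeta_0,\infty}\embed\cdots$; on the torus one can be slightly lossy with microscopic parameters at the cost of an arbitrarily small amount of smoothness, which is harmless since one can always shrink $\s_0$ slightly (Remark \ref{r:basic_assumptions}\eqref{it:enlarge_delta}). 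Thus $\{\sup_t\|u(t)\|_{L^{\zeta_0}} + \|u\|_{L^{p_0}(s,\sigma;L^{q_0})} < \infty\} \subseteq \{\sup_t\|u(t)\|_{B^{\beta_0}_{q_0,p_0}} + \|u\|_{L^{p_0}(s,\sigma;H^{\g_0,q_0})} < \infty\}$, and Theorem \ref{t:blow_up_criteria}\eqref{it:blow_up_sharp} finishes it.

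The main obstacle is purely bookkeeping with the Sobolev embeddings for Besov and Bessel-potential spaces on the torus: making sure that the microscopic parameters ($\infty$ vs.\ $p_0$, $2$ vs.\ $p_0$) and the sign of $\beta_0$ are handled correctly, and that $q_1 > q_0$ (resp.\ the relation $\frac{d}{q_0}+\frac{2}{p_0} = \frac{2}{h_0-1}$) can genuinely be arranged within the admissibility constraints of Assumption \ref{ass:admissibleexp}. The slightly delicate point is the strict inequality $q_1 > q_0$ in part \eqref{it:blow_up_not_sharp} of the theorem versus wanting $L^{\zeta_1}\embed B^0_{q_1,\infty}$ for a given $\zeta_1 > q_0$: one needs $q_1 \in (q_0, \zeta_1)$ nonempty, which is immediate, and then the embedding $L^{\zeta_1}\embed B^0_{q_1,\infty}$ with $q_1 < \zeta_1$ on a finite-measure manifold. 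All of these are standard and I would simply cite \cite{Saw,SchmTr,Tr1} for the embeddings; no genuinely new estimate is required beyond Theorem \ref{t:blow_up_criteria}.
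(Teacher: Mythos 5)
Your approach matches the paper's: apply Theorem~\ref{t:blow_up_criteria} directly with the given $(p_0,q_0,h_0,\s_0)$, observing $\beta_0=0$ in case~\eqref{it:blow_up_not_sharp_L} and $\gamma_0=0$ in case~\eqref{it:blow_up_sharp_L}, and then translate the Lebesgue norms into the theorem's Besov norms via elementary embeddings. This is the right strategy, but two of the embedding steps as you state them are not correct, and the ``shrink $\s_0$'' escape hatch you invoke would not repair them.

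For~\eqref{it:blow_up_not_sharp_L}: the claimed $L^{\zeta_1}(\Tor^d)\embed B^{\frac{d}{q_1}-\frac{d}{\zeta_1}}_{q_1,\infty}(\Tor^d)$ with strictly positive smoothness gain is false -- a function merely in $L^{\zeta_1}$ need not possess any positive Besov regularity. The correct two-step chain, which you also mention in passing and which the paper uses, is $L^{\zeta_1}\embed L^{q_1}\embed B^{0}_{q_1,\infty}$ for any $q_1\in(\zeta_0,\zeta_1]$, the first inclusion by finiteness of $|\Tor^d|$. For~\eqref{it:blow_up_sharp_L}: $L^{\zeta_0}\neq B^{0}_{\zeta_0,2}$ (that is the Triebel--Lizorkin space $F^{0}_{\zeta_0,2}$); the relevant one-sided embedding is $L^{\zeta_0}\embed B^{0}_{\zeta_0,\zeta_0}$, valid because $\zeta_0=\tfrac{d}{2}(h_0-1)\geq 2$ since $h_0\geq 1+\tfrac{4}{d}$. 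Combining with the Besov--Sobolev embedding (equal Sobolev indices, $\zeta_0<q_0$) yields $L^{\zeta_0}\embed B^{\beta_0}_{q_0,\zeta_0}\embed B^{\beta_0}_{q_0,p_0}$, so the microscopic-parameter inequality you actually need is $\zeta_0\leq p_0$ -- automatic from $\zeta_0<q_0\leq p_0$ -- rather than $2\leq p_0$; this is precisely where the hypothesis $p_0\geq q_0$ earns its keep, a point your write-up does not trace. The paper records the same content as $L^{\zeta_0}\embed B^{\beta_0}_{q_0,q_0}\embed B^{\beta_0}_{q_0,p_0}$. Finally, shrinking $\s_0$ cannot patch a deficient embedding here: $\beta_0=\frac{d}{q_0}-\frac{2}{h_0-1}$ does not depend on $\s_0$, and conceding $\varepsilon$ of smoothness only produces $L^{\zeta_0}\embed B^{\beta_0-\varepsilon}_{q_0,p_0}$, a strictly larger target space which is not strong enough to feed into Theorem~\ref{t:blow_up_criteria}\eqref{it:blow_up_sharp}.
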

Although Theorem \ref{t:blow_up_criteria} is more general, in the follow-up work \cite{AVreaction-global} on global well-posedness we mainly use Corollary \ref{cor:blow_up_criteria}.
Considering $T+\varepsilon$ instead of $T$ in Corollary \ref{t:blow_up_criteria}\eqref{it:blow_up_not_sharp_L} and letting $\varepsilon\downarrow 0$, we find
\begin{equation}
\label{eq:include_endpoint_T}
\P\Big(s<\sigma\leq T,\, \sup_{t\in [s,\sigma)}\|u(t)\|_{L^{\zeta_1}} <\infty\Big)=0.
\end{equation}
Note that \eqref{eq:include_endpoint_T} contains also information on the set $\{\sigma=T\}$.
The same also holds for Corollary \ref{t:blow_up_criteria}\eqref{it:blow_up_sharp_L} and the assertions in Theorem \ref{t:blow_up_criteria}.
Such variants of the blow-up criteria can be sometimes useful (see e.g.\ Theorem \ref{t:global_small_data}).

\begin{remark}\
\label{r:blow_up}
\begin{enumerate}[{\rm(a)}]
\item Keeping in mind the parabolic scaling, the spaces $L^{\infty}(s,\sigma;B^{\beta_0}_{q_0,\infty})$ and $L^{p_0}(s,\sigma;H^{\g_0,q_0})$ have (space-time) Sobolev index $-\frac{2}{h-1}$. Thus, from a scaling point of view, Theorem \ref{t:blow_up_criteria}\eqref{it:blow_up_sharp} is optimal, while \eqref{it:blow_up_not_sharp} is only sub-optimal. A similar remark holds for Corollary \ref{cor:blow_up_criteria}.
\item In Theorem \ref{t:blow_up_criteria}\eqref{it:blow_up_not_sharp} and Corollary \ref{cor:blow_up_criteria}\eqref{it:blow_up_not_sharp_L}, $p_0$ does not appear, and thus it can be taken arbitrarily large.
\item\label{it:blow_up_negative_smoothness}
 Choosing $q_0,p_0$ large enough and $\s_0>1$, one has $\beta_0,\g_0< 0$. Thus Theorem \ref{t:blow_up_criteria} yields blow-up criteria in Sobolev spaces of \emph{negative} smoothness. To see how far below zero one can get, as in Remark \ref{r:reaction_diffusion_critical_spaces}\eqref{it:negative_smoothness}, we take $\s_0\uparrow \frac{h_0+1}{h_0}$, $q_0\downarrow \frac{dh_0(h_0-1)}{h_0+1}$. This gives $\beta_0,\g_0\downarrow -\frac{1}{h_0}$.
%
 \item Under the assumptions of Theorem \ref{t:blow_up_criteria} for $p_0$ large enough (depending on $h$) one also has
\[\P\Big(s<\sigma<T,\, \|u\|_{L^{p_0}(s,\sigma;H^{\gamma_0,q_0})} <\infty\Big)=0 \ \ \ \text{for all} \ 0<s<T.\]
To prove this one can argue as in the proof of Theorem \ref{t:blow_up_criteria} below by using \cite[Theorem 4.11]{AV19_QSEE_2} instead. We leave the details to the reader.
\end{enumerate}
\end{remark}

\subsection{Positivity}
In this subsection we investigate the positive preserving property of the stochastic reaction-diffusion equations \eqref{eq:reaction_diffusion_system}. Existence of positive solutions to stochastic reaction-diffusion equations has been studied by many authors see e.g.\ \cite{Ass99,CPT16,CES13,Mar19,MS19} and the references therein.
To the best of our knowledge, positivity of solutions to \eqref{eq:reaction_diffusion_system} is not known in our setting (e.g.\ rough data, transport noise and $(t,\om)$-dependent coefficients).  Considering $(t,\om)$-dependence of the coefficients is also very useful in applications to \emph{quasilinear} SPDEs, in which case $a^{j,k}_i(t,\om,x):=A^{j,k}_i(u(t,\om,x))$ and $A^{j,k}_i(\cdot)$ is a nonlinear map. These applications will be considered in \cite{AV22_quasi}.

The strategy of proof which we use seems to be new in the stochastic setting, but folklore for deterministic reaction-diffusion equations. It is based on a linearization argument, and on the maximum principle. The stochastic version of the maximum principle we use is for \emph{linear} scalar SPDEs and due to \cite{Kry13} (see Lemma \ref{lem:maxprinciple} for a slight variation of the latter). To apply this to obtain positivity in the case of nonlinear systems, an essential ingredient is the instantaneous regularization \eqref{eq:reaction_diffusion_H_theta}-\eqref{eq:reaction_diffusion_C_alpha_beta} of solutions to \eqref{eq:reaction_diffusion_system} proven in Section \ref{sss:reaction_diffusion_local}.

Below we write $v\geq 0$ for $v\in \D'(\Tor^d)$ provided
\begin{equation*}
\l \varphi,v\r \geq 0 \text{ for all }\varphi\in \D(\Tor^d)\text{ such that }\varphi\geq 0\text{ on }\Tor^d.
\end{equation*}
If $v\in L^1(\Tor^d)$, then the above coincides with its natural meaning. Recall that positive distributions can be identified with finite positive measures.  For an $\R^\ell$-valued distribution $v=(v_i)_{i=1}^{\ell}\in \D'(\Tor^d;\R^{\ell})$, we say that $v\geq 0$ provided $v_i\geq 0$ for all $i\in \{1,\dots,\ell\}$.

Our main result on positivity is the following.
\begin{theorem}[Positivity]
\label{thm:positivity}
Let the assumptions of Theorem \ref{t:reaction_diffusion_global_critical_spaces} be satisfied.
Let $(u,\sigma)$ be the $(p,\a_{\crit},\s,q)$-solution to \eqref{eq:reaction_diffusion_system} provided by Theorem \ref{t:reaction_diffusion_global_critical_spaces}. Suppose that
$$
u_0\geq 0 \text{ a.s.,}
$$
and that there exist progressive measurable processes $c_1,\dots,c_{\ell} :\R_+\times \O\to \R$ such that for all $i\in \{1,\dots,\ell\}$, $n\geq 1$, $y=(y_{i})_{i=1}^{\ell}\in [0,\infty)^{\ell}$ and
a.e.\ on $\R_+\times \O\times \T^d$,
\begin{align}
\label{eq:positivity_f}
f_i(\cdot,y_1,\dots,y_{i-1},0,y_{i+1},\dots,y_{\ell})&\geq 0,\\
\label{eq:positivity_F}
F_{i}(\cdot,y_1,\dots,y_{i-1},0,y_{i+1},\dots,y_{\ell})&= c_i(\cdot),\\
\label{eq:positivity_g}
g_{n,i}(\cdot,y_1,\dots,y_{i-1},0,y_{i+1},\dots,y_{\ell})&=0.
\end{align}
Then a.s. for all $x\in \Tor^d$ and $t\in [0,\sigma)$, one has
$
u(t,x)\geq 0.
$
\end{theorem}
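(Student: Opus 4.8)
The plan is to prove positivity componentwise via a linearization and a comparison/maximum principle argument for scalar linear SPDEs. Fix $i$ and freeze the other components: set $\uu_j = u_j$ for $j\neq i$, so that $u_i$ solves a \emph{scalar} SPDE
\begin{equation*}
\dd u_i -\div(a_i\cdot\nabla u_i)\,\dd t = \big[\div(F_i(\cdot,u))+f_i(\cdot,u)\big]\,\dd t + \sum_{n\geq 1}\big[(b_{n,i}\cdot\nabla)u_i + g_{n,i}(\cdot,u)\big]\,\dd w_t^n.
\end{equation*}
The idea is to rewrite the zeroth-order terms using the structural hypotheses \eqref{eq:positivity_f}--\eqref{eq:positivity_g}: since $f_i,F_i,g_i$ are (locally) Lipschitz in $y$ by Assumption \ref{ass:reaction_diffusion_global}\eqref{it:growth_nonlinearities}, one can write, on the region where $u\geq 0$,
\begin{equation*}
f_i(\cdot,u) = f_i(\cdot, u_1,\dots,u_{i-1},0,u_{i+1},\dots,u_\ell) + \widehat{f}_i\, u_i,\qquad
g_{n,i}(\cdot,u) = \widehat{g}_{n,i}\, u_i,\qquad
F_i(\cdot,u) = c_i + \widehat{F}_i\, u_i,
\end{equation*}
where $\widehat{f}_i,\widehat{g}_{n,i},\widehat{F}_i$ are bounded progressively measurable coefficients (the "divided difference" coefficients, obtained by integrating the derivative along the segment from $0$ to $u_i$). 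Crucially the first summand of $f_i$ is $\geq 0$ by \eqref{eq:positivity_f}, the additive part of $F_i$ is a function $c_i$ of $(t,\omega)$ only, so $\div(c_i)=0$, and the additive part of $g_{n,i}$ vanishes. Hence, modulo the nonnegative source $f_i(\cdot,\ldots,0,\ldots)$, the equation for $u_i$ becomes a \emph{linear} scalar SPDE with zero-order and first-order coefficients to which the stochastic maximum principle of Appendix \ref{s:maximum} (Lemma \ref{lem:maxprinciple}, after \cite{Kry13}) applies, yielding $u_i\geq 0$.

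The difficulty is that this linearization is only valid where $u\geq 0$, which is exactly what we are trying to prove, and that the coefficients $\widehat{f}_i,\widehat{g}_{n,i},\widehat{F}_i$ and the maximum principle require enough regularity of $u$ to make pointwise sense of everything. I would handle the regularity issue first: by the instantaneous regularization \eqref{eq:reaction_diffusion_H_theta}--\eqref{eq:reaction_diffusion_C_alpha_beta} from Theorem \ref{t:reaction_diffusion_global_critical_spaces}, for any $s>0$ the solution $u$ is, a.s., Hölder continuous in $(t,x)$ on $[s,\sigma)\times\Tor^d$ and lies in high-regularity parabolic spaces; this is enough to apply the scalar maximum principle on time intervals $[s,\sigma_j]$ and then send $s\downarrow 0$ using continuity at $t=0$ in $B^{d/q-2/(h-1)}_{q,p}$ (note the critical space need not embed into continuous functions, so some care — e.g. a weak/distributional formulation of $u_0\geq0$, which is exactly how the statement is phrased — is needed here). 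The circularity in the sign condition I would break by a truncation/approximation argument: replace $f_i,F_i,g_{n,i}$ by modified nonlinearities $\widetilde{f}_i,\widetilde{F}_i,\widetilde{g}_{n,i}$ that agree with the originals on $[0,\infty)^\ell$ (or on a neighborhood), are globally Lipschitz, and for which the linearization identities above hold \emph{globally} (e.g. extend by the values at the "zeroth" face and keep the divided-difference coefficients bounded). This is the analogue of the classical deterministic trick. For the modified system the scalar linear maximum principle gives $u_i\geq 0$ directly, hence $u\geq 0$, hence $u$ also solves the original system on $[0,\sigma)$; by the uniqueness part of Definition \ref{def:solution} (and the fact that a $(p,\a_{\crit},\s,q)$-solution is maximal/unique) the modified and original solutions coincide up to $\sigma$, so positivity transfers back.

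Concretely the steps, in order, are: (i) record the regularity of $u$ from \eqref{eq:reaction_diffusion_H_theta}--\eqref{eq:reaction_diffusion_C_alpha_beta} and reduce to proving $u_i(t,x)\geq 0$ for $t\in[s,\sigma)$, all $s>0$, then pass to $s=0$; (ii) construct globally Lipschitz modifications of $f_i,F_i,g_{n,i}$ that coincide with the originals on the positive cone and admit the linearization $f_i=f_i^{(0)}+\widehat f_i u_i$ with $f_i^{(0)}\geq0$, $F_i=c_i+\widehat F_i u_i$, $g_{n,i}=\widehat g_{n,i}u_i$, with all hatted coefficients bounded and progressively measurable — use that the coefficients are Lipschitz in $y$, so the divided differences are bounded, and use $\alpha>\max\{d/\rho,\s-1\}$ to keep enough spatial regularity of the resulting linear problem; (iii) freeze $u_j$, $j\neq i$, as known coefficients and apply the scalar stochastic maximum principle (Lemma \ref{lem:maxprinciple}, \cite{Kry13}) to the linear scalar SPDE for $u_i$ on $[s,\sigma_j]$, using $u_{0,i}\geq0$ and the nonnegative forcing $f_i^{(0)}$, to conclude $u_i\geq 0$ on $[s,\sigma_j]$; (iv) let $j\to\infty$, $s\downarrow0$, then $i$ range over $\{1,\dots,\ell\}$; (v) observe that on $\{u\geq0\}$ the modified and original nonlinearities agree, so $(u,\sigma)$ solves the original system, and conclude by uniqueness. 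The main obstacle is step (ii) together with verifying that the maximum principle of \cite{Kry13} applies in our $L^p(L^q)$-with-power-weight setting and with merely Hölder coefficients (correlation structure $\ell^2$ in the noise, $(t,\omega)$-dependence); reconciling the hypotheses of Lemma \ref{lem:maxprinciple} with the regularity actually available from Theorem \ref{t:reaction_diffusion_global_critical_spaces} — particularly near $t=0$ where only the distributional statement $u_0\geq0$ is available — is where the real work lies.
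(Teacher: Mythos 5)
Your overall strategy --- break the circularity by replacing the nonlinearities with versions depending only on $y\vee 0$, linearize the resulting equation for $u_i$ as a scalar SPDE with bounded divided-difference coefficients plus a nonnegative inhomogeneity, and invoke Krylov's maximum principle (Lemma \ref{lem:maxprinciple}) --- is exactly the paper's strategy, and you have correctly identified the two genuine difficulties (rough initial data; validity of the linearization). However, your proposed resolutions for those two difficulties do not actually work as stated, and you omit a third ingredient the paper needs.

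First, the reduction from rough $u_0\in B^{d/q-2/(h-1)}_{q,p}$ to smooth $u_0$ cannot be carried out by ``proving positivity on $[s,\sigma)$ and letting $s\downarrow 0$'': to apply the maximum principle on $[s,\sigma)$ one needs the initial data $u(s)\geq 0$, which is precisely what is to be proved, so the $s\downarrow 0$ limit is circular. The paper instead approximates $u_0$ by smooth nonnegative $u_0^{(n)}$ and compares the corresponding solutions via the \emph{local continuity} estimates of Proposition \ref{prop:local_continuity}; this quantitative stability in $B^{d/q-2/(h-1)}_{q,p}$ and $L^p(w_{\a_{\crit}};H^{2-\s,q})$, together with the bound \eqref{eq:local_continuity_3} on the blow-up probability of the common stopping time, is what lets positivity pass to the limit from smooth data. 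Without it, continuity of $u$ at $t=0$ in the critical Besov space alone does not suffice.

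Second, your step (ii) asks for modifications that simultaneously agree with the original nonlinearities on the positive cone \emph{and} are globally Lipschitz. These two requirements are incompatible: $f_i$ has polynomial growth $h>1$ on $[0,\infty)^\ell$, so any modification agreeing with it there cannot be globally Lipschitz. The paper separates the two truncations. It first replaces $(F,f,g)$ by $(F^+,f^+,g^+)(y):=(F,f,g)(y\vee 0)$, which is still only locally Lipschitz, and produces a solution $(u^+,\sigma^+)$. Then, on the stopped interval $[0,\sigma_j^+]$ where $u^+$ is uniformly bounded by regularization and the explicit stopping time, it applies a second cut-off $\zeta$ at a fixed level to obtain \emph{globally} Lipschitz $\fp,\Fp,\gp$, and linearizes those. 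Finally --- and this is the third missing ingredient --- once $u^+\geq0$ is established, maximality only gives $\sigma^+\leq\sigma$ and $u^+=u$ on $[0,\sigma^+)$; to upgrade to $\sigma^+=\sigma$ one needs the blow-up criterion (Corollary \ref{cor:blow_up_criteria}\eqref{it:blow_up_not_sharp_L}) applied to $(u^+,\sigma^+)$ together with the a.s.\ continuity of $u$ on $[s,\sigma^+]\times\Tor^d$ from \eqref{eq:reaction_diffusion_C_alpha_beta}, which rules out $\{s<\sigma^+<\sigma\}$ having positive probability. Your step (v) asserts the identification ``by uniqueness'' without this argument, which is a real gap.
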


By  \eqref{eq:reaction_diffusion_C_alpha_beta}, the pointwise expression $u(t,x)$  is well-defined in the above. The condition \eqref{eq:positivity_f} is standard in the theory of (deterministic) reaction-diffusion equations (see e.g.\ \cite[eq.\ (1.7)]{P10_survey}), while \eqref{eq:positivity_g} is (almost) optimal since it excludes the additive noise case (in which case positivity \emph{cannot} be preserved). Condition \eqref{eq:positivity_F} might be new. For $\ell=1$ it holds trivially if $F$ is not depending on $x\in \T^d$. In case $\ell = 2$ it is for instance fulfilled for
\[F_i(t,\omega, x,y_1, y_2) = \psi_i(t,\omega, x) \phi_{i,1}(x, y_1) \phi_{i,2}(x, y_2)\]
if $\phi_{i,i}(x,0)=0$ and $\psi_i$ is $\Progress\otimes\Borel(\T^d)$-measurable.

The proof of Theorem \ref{thm:positivity} will be given in Section \ref{ss:positivityproof}. From the proof it will be clear that it is possible to replace $\Tor^d$ by a domain $\Dom\subseteq \R^d$ if one assumes Dirichlet boundary conditions (for instance), and $b_{n,i}|_{\partial\Dom}=0$.

\section{Proofs of the main results}
\label{s:proof_local_existence_regularity}

\subsection{Local well-posedness and regularity}
\label{sss:reaction_diffusion_local}
The aim of this subsection is to prove local well-posedness and smoothness of $(p,\a,\s,q)$-solutions to \eqref{eq:reaction_diffusion_system}. In particular, the next result contains Theorem \ref{t:reaction_diffusion_global_critical_spaces} as a special case.

\begin{proposition}[Local existence, uniqueness, and regularity]
\label{prop:reaction_diffusion_global}
Let Assumption \ref{ass:reaction_diffusion_global}$(p,q,h,\s)$ be satisfied. Suppose that
$q>\max\{\frac{d}{d-\reg},\frac{d(h-1)}{2h-\reg (h-1)}\}$
and that $\a\in [0,\frac{p}{2}-1)$ satisfies one of the following conditions:
\begin{align}
\label{eq:reaction_diffusion_globali}
q<\frac{d(h-1)}{\reg} \ \  \ \text{and} \ \ \ \frac{1+\a}{p}+\frac{1}{2}(\reg+\frac{d}{q})\leq \frac{h}{h-1};
\\ \label{eq:reaction_diffusion_globalii} q\geq\frac{d(h-1)}{\reg}  \ \ \ \text{and} \ \ \
\frac{1+\a}{p}\leq \frac{h}{h-1}(1-\frac{\reg}{2}).
\end{align}
Then for any
$u_0\in L^0_{\F_0}(\O;B^{2-\reg-2\frac{1+\a}{p}}_{q,p})$,
\eqref{eq:reaction_diffusion_system} has a (unique) $(p,\a,\s,q)$-solution satisfying a.s.\ $\sigma>0$ and for all $\theta\in [0,\frac{1}{2})$
\begin{equation}
\label{eq:regularity_u_reaction_diffusion_critical_spaces_1}
u\in H^{\theta,p}_{\rm loc}([0,\sigma),w_{\a};H^{2-\reg-2\theta,q})\cap C([0,\sigma);B^{2-\reg-2\frac{1+\a}{p}}_{q,p}).
\end{equation}
Moreover, $u$ instantaneously regularizes
\begin{align}
\label{eq:reaction_diffusion_H_theta_1}
u&\in H^{\theta,r}_{\rm loc}(0,\sigma;H^{1-2\theta,\zeta})   \ \ \text{a.s.\ for all }\theta\in  [0,1/2), \ \  r,\zeta\in (2,\infty),\\
\label{eq:reaction_diffusion_C_alpha_beta_1}
u&\in C^{\theta_1,\theta_2}_{\rm loc}((0,\sigma)\times \Tor^d;\R^{\ell}) \ \ \ \text{a.s.\ for all }\theta_1\in  [0,1/2), \ \ \ \theta_2\in (0,1).
\end{align}
\end{proposition}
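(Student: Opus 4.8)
The plan is to recast \eqref{eq:reaction_diffusion_system} as a semilinear stochastic evolution equation on a couple of Bessel potential spaces and to apply the abstract existence/regularity theory of \cite{AV19_QSEE_1,AV19_QSEE_2}, the linear input being the stochastic maximal $L^p$-regularity estimates of \cite{AV21_SMR_torus}. Concretely, I would work on the couple
\[
X_0:=H^{-\reg,q}(\Tor^d;\R^{\ell}),\qquad X_1:=H^{2-\reg,q}(\Tor^d;\R^{\ell}),
\]
with the equation
\[
\dd u+A(\cdot)u\,\dd t=\Fc(\cdot,u)\,\dd t+\big(B(\cdot)u+\Gc(\cdot,u)\big)\,\dd W_{\ell^2},\qquad u(0)=u_0,
\]
where $A(t)u:=\big(-\div(a_i(t)\cdot\nabla u_i)\big)_{i=1}^{\ell}$, $B(t)u:=\big((b_{n,i}(t)\cdot\nabla u_i)_{n\geq 1}\big)_{i=1}^{\ell}$, $\Fc(t,u):=\big(\div(F_i(t,u))+f_i(t,u)\big)_{i=1}^{\ell}$, and $\Gc(t,u):=\big((g_{n,i}(t,u))_{n\geq 1}\big)_{i=1}^{\ell}$. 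For this couple the real interpolation trace space is $(X_0,X_1)_{1-\frac{1+\a}{p},p}=B^{2-\reg-2\frac{1+\a}{p}}_{q,p}(\Tor^d;\R^{\ell})$, i.e.\ exactly the space of initial data in the statement, and, by \eqref{eq:identity_gamma_H}, $\gamma(\ell^2,X_{1/2})=H^{1-\reg,q}(\Tor^d;\ell^2\otimes\R^{\ell})$, so that the abstract notion of maximal local solution matches Definition \ref{def:solution}.

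\textbf{Step 1: stochastic maximal regularity of the linear part.} First I would check that the pair $(A,B)$ has stochastic maximal $L^p$-regularity with power weight $w_{\a}$ on the negative smoothness scale $H^{-\reg,q}$. This is precisely what \cite{AV21_SMR_torus} provides: Assumption \ref{ass:reaction_diffusion_global}\eqref{it:regularity_coefficients_reaction_diffusion} furnishes the required regularity of $a$ and $b$ (here $\alpha>\tfrac{d}{\rho}$ gives the pointwise multiplier bounds and $\alpha>\reg-1$ lets one run the argument on a scale of smoothness $-\reg$), while Assumption \ref{ass:reaction_diffusion_global}\eqref{it:ellipticity_reaction_diffusion} is the stochastic parabolicity condition, i.e.\ uniform ellipticity of $a_i-\tfrac12(b_{n,i}\otimes b_{n,i})_{n\geq 1}$, which is exactly what neutralises the It\^o correction coming from the transport term $(b_{n,i}\cdot\nabla)u_i\,\dd w^n$. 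Because there is no cross-diffusion, $A$ and $B$ are block-diagonal in $i$, so the scalar estimates of \cite{AV21_SMR_torus} can be invoked componentwise.

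\textbf{Step 2: nonlinear estimates (the main obstacle).} The core of the proof is to verify that $\Fc$ and $\Gc$ satisfy the structural Lipschitz conditions of \cite[Section~4]{AV19_QSEE_1}: there exist $\beta\in\big(1-\tfrac{1+\a}{p},1\big)$ and a growth order $\rho>0$ (one pair for $\Fc$ mapping into $X_0$, another for $\Gc$ mapping into $\gamma(\ell^2,X_{1/2})$) such that, writing $X_{\beta}:=(X_0,X_1)_{\beta,p}$ and $\Phi\in\{\Fc,\Gc\}$ with $Y_{\Fc}=X_0$, $Y_{\Gc}=\gamma(\ell^2,X_{1/2})$,
\[
\|\Phi(t,u)-\Phi(t,v)\|_{Y_\Phi}\lesssim\big(1+\|u\|_{X_{\beta}}^{\rho}+\|v\|_{X_{\beta}}^{\rho}\big)\,\|u-v\|_{X_{\beta}},
\]
and that the pair $(\rho,\beta)$ is critical or subcritical in the sense of \cite{AV19_QSEE_1}. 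Using the polynomial growth of order $h$ of $f$ and of order $\tfrac{h+1}{2}$ of $F$ and $g$ from Assumption \ref{ass:reaction_diffusion_global}\eqref{it:growth_nonlinearities}, together with Sobolev embeddings of $X_\beta$ (which has smoothness $2\beta-\reg$ and integrability $q$) into Lebesgue spaces, and fractional Leibniz and Nemytskii estimates in Bessel potential spaces, the (sub)criticality inequalities of \cite{AV19_QSEE_1} translate \emph{exactly} into the numerology of the statement: the lower bounds $q>\tfrac{d}{d-\reg}$ and $q>\tfrac{d(h-1)}{2h-\reg(h-1)}$ ensure that $\beta$ and the weight $\a$ can be chosen in the admissible range (the latter bound being the weight condition with $\tfrac{1+\a}{p}\to 0$), while the dichotomy between \eqref{eq:reaction_diffusion_globali} and \eqref{eq:reaction_diffusion_globalii} records whether the scaling-critical trace smoothness $\tfrac{d}{q}-\tfrac{2}{h-1}$ is reachable for the given $q$ (case \eqref{eq:reaction_diffusion_globali}, which occurs when $q<\tfrac{d(h-1)}{\reg}$) or whether one must instead settle for the slightly higher threshold $\tfrac{\reg-2}{h-1}$ (case \eqref{eq:reaction_diffusion_globalii}). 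This bookkeeping is the delicate point; once it is done, the abstract existence and regularity theorem of \cite{AV19_QSEE_1} yields the unique $(p,\a,\reg,q)$-solution $(u,\sigma)$ with $\sigma>0$ a.s.\ and the regularity \eqref{eq:regularity_u_reaction_diffusion_critical_spaces_1}.

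\textbf{Step 3: instantaneous regularization.} Finally I would bootstrap to obtain \eqref{eq:reaction_diffusion_H_theta_1}--\eqref{eq:reaction_diffusion_C_alpha_beta_1}. By \eqref{eq:regularity_u_reaction_diffusion_critical_spaces_1} and Step~1, for each $\varepsilon>0$ one has $u(\varepsilon)\in B^{2-\reg-2/p}_{q,p}$ a.s.\ (the unweighted trace space, which is strictly better than the initial one), so one may re-run Steps~1--2 on $[\varepsilon,\sigma)$ with parameters $(p_1,q_1,h,\reg_1)$ for $q_1>q$, $\reg_1\leq\reg$ and $p_1$ large — admissible by Remark \ref{r:basic_assumptions}\eqref{it:enlarge_delta} and Lemma \ref{lem:admissibleexp}, and compatible with the initial value $u(\varepsilon)$ via Sobolev embedding — obtaining improved regularity on $(\varepsilon,\sigma)$; the compatibility of solutions for different exponents (Proposition \ref{prop:comp}) guarantees one stays with the same $(u,\sigma)$. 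Iterating finitely many times and using that on a positive time interval $p$ may be taken arbitrarily large, one reaches $u\in H^{\theta,r}_{\rm loc}(0,\sigma;H^{1-2\theta,\zeta})$ for all $\theta\in[0,1/2)$ and $r,\zeta\in(2,\infty)$, i.e.\ \eqref{eq:reaction_diffusion_H_theta_1}. Then \eqref{eq:reaction_diffusion_C_alpha_beta_1} follows by Sobolev embeddings applied on a compact $I=[\varepsilon,T]\subset(0,\sigma)$ with suitable parameters, handling the two moduli separately: for the temporal one, $H^{\theta,r}(I;H^{1-2\theta,\zeta})\embed C^{\theta-\frac1r}(I;L^{\infty})$ whenever $1-2\theta>\tfrac{d}{\zeta}$, choosing $\theta\in(\theta_1,1/2)$ and $r,\zeta$ large; for the spatial one, $H^{\theta,r}(I;H^{1-2\theta,\zeta})\embed C(I;H^{1-2\theta,\zeta})$ whenever $\theta>\tfrac1r$, combined with $H^{1-2\theta,\zeta}\embed C^{1-2\theta-d/\zeta}(\Tor^d;\R^{\ell})$, choosing $\theta$ small and $r,\zeta$ large so that $1-2\theta-\tfrac{d}{\zeta}>\theta_2$.
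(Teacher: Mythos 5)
Your Steps 1--2 match the paper's route: recast \eqref{eq:reaction_diffusion_system} as a semilinear SEE on $(X_0,X_1)=(H^{-\reg,q},H^{2-\reg,q})$, feed in the stochastic maximal $L^p(L^q)$-regularity of $(A,B)$ from \cite{AV21_SMR_torus}, verify the structural Lipschitz/growth conditions (HF), (HG) of \cite{AV19_QSEE_1} for $\FS,\GS$ using Sobolev embeddings, and apply the abstract local well-posedness theorem. One small inaccuracy worth flagging is that the paper's Lemma~\ref{l:estimate_nonlinearities} necessarily uses \emph{two} pairs $(\rho_j,\beta_j)_{j=1,2}$, one for $f$ (growth $h$) and one for $\div F$ and $g$ (growth $\tfrac{h+1}{2}$), rather than a single $(\rho,\beta)$ for $\Fc$; if you try to estimate $f$ and $\div F$ with the same $\beta$, you lose the sharp range for $q$ and $\a$, which is the whole point of the two cases \eqref{eq:reaction_diffusion_globali}--\eqref{eq:reaction_diffusion_globalii}.

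The real gap is in Step~3. To obtain the improved regularity on $(\varepsilon,\sigma)$ after restarting with exponents $(p_1,q_1,\reg_1)$, you invoke Proposition~\ref{prop:comp}. But Proposition~\ref{prop:comp} is proved \emph{after} Proposition~\ref{prop:reaction_diffusion_global}, and its proof explicitly relies on the instantaneous regularization \eqref{eq:reaction_diffusion_H_theta_1}--\eqref{eq:reaction_diffusion_C_alpha_beta_1}: both in Step~1 (to identify $(u^{(n)},\sigma^{(n)})$ with $(u_i^{(n)},\sigma_i^{(n)})$) and in Step~3 (to compare $u_1$ and $u_2$ beyond $\tau_1\wedge\tau_2$). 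So your argument is circular. More structurally, after restarting at time $\varepsilon$ with parameters $(p_1,\a_1,\reg_1,q_1)$, to conclude that the new $(p_1,\a_1,\reg_1,q_1)$-solution $(v,\tau)$ equals $u$ on $(\varepsilon,\sigma)$ via maximality, you must first exhibit $u|_{[\varepsilon,\sigma)}$ as a \emph{local} $(p_1,\a_1,\reg_1,q_1)$-solution, which requires knowing that $u$ has the integrability demanded by Definition~\ref{def:solution} in the new scale $H^{2-\reg_1,q_1}$ -- precisely the regularity you are trying to prove. The paper avoids this by using the intrinsic bootstrap machinery of \cite[Section 6]{AV19_QSEE_2} (Proposition~6.8 to create a positive weight when $\a=0$, Corollary~6.5 to bootstrap in time, Theorem~6.3 to bootstrap spatial integrability and then spatial smoothness by shifting the scale from $H^{2j-\reg,q}$ to $H^{2j-1,\zeta}$), which works directly with the given maximal solution $(u,\sigma)$ and never needs to restart or to compare solutions across different parameter settings. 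To fix your Step~3 you should replace the restart-plus-Proposition~\ref{prop:comp} argument by this bootstrap scheme, taking care of the $\a=0$ subcase separately.
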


The weight $\a$ is called {\em critical} if equality holds in the above condition on $\a$ in \eqref{eq:reaction_diffusion_globali} or \eqref{eq:reaction_diffusion_globalii}, i.e.
\begin{align*}
\text{in \eqref{eq:reaction_diffusion_globali}:} \ \ \a&=\a_{\crit}=p\Big(\frac{h}{h-1}-\frac{1}{2}\big(\s+\frac{d}{q}\big)\Big)-1,
\\ \text{in \eqref{eq:reaction_diffusion_globalii}:} \ \ \a&=\a_{\crit}=\frac{p h}{h-1} \Big(1-\frac{\s }{2}\Big) - 1.
\end{align*}
Moreover, the space of initial data $B^{2-\reg-2\frac{1+\a}{p}}_{q,p}$ is called critical as well. For details on criticality we refer to \cite[Section 4]{AV19_QSEE_1}. This explains the subscript `$\crit$' in Theorem \ref{t:reaction_diffusion_global_critical_spaces}. This abstract notion of criticality turns out to be the one that leads to scaling invariant space in many examples.

Before we prove the above result, let us first show how Theorem \ref{t:reaction_diffusion_global_critical_spaces} can be deduced from Proposition \ref{prop:reaction_diffusion_global}.

\begin{proof}[Proof of Theorem \ref{t:reaction_diffusion_global_critical_spaces}]
The upper bound $q<\frac{d(h-1)}{h+1-\reg(h-1)}$ and $\s< \frac{h+1}{h}$ imply
$q<\frac{d(h-1)}{\s}$.
In particular, this is the first case of Proposition \ref{prop:reaction_diffusion_global}. Thus, it remains to check the inequality
\begin{equation*}
\frac{1+\a}{p}+\frac{1}{2}\Big(\reg+\frac{d}{q}\Big)\leq \frac{h}{h-1}.
\end{equation*}
Since $\a_{\crit}=p\big(\frac{h}{h-1}-\frac{1}{2}(\s+\frac{d}{q})\big)-1$, the assumptions $\frac{1}{p}+\frac{1}{2}\Big(\reg+\frac{d}{q}\Big)\leq \frac{h}{h-1} $ and $q<\frac{d(h-1)}{h+1-\reg(h-1)}$ imply $\a_{\crit}\geq 0$ and $\a_{\crit}<\frac{p}{2}-1$, respectively. In other words $\a_{\crit}$ belongs to the admissible range $[0,\frac{p}{2}-1)$.
Hence, the assumptions of Theorem \ref{t:reaction_diffusion_global_critical_spaces} imply that Proposition \ref{prop:reaction_diffusion_global} is applicable with $\a=\a_{\crit}$. It remains to show that the space of initial data $u_0$ is the one claimed in Theorem \ref{t:reaction_diffusion_global_critical_spaces}. To this end, note that
$B^{2-\s-2\frac{1+\a_{\crit}}{p}}_{q,p}=B^{\frac{d}{q}-\frac{2}{h-1}}_{q,p}$ as desired.
\end{proof}

Next we prove Proposition \ref{prop:reaction_diffusion_global}.
The idea is to reformulate the system of SPDEs \eqref{eq:reaction_diffusion_system} as a stochastic evolution equations (SEE in the following) and then use the results in \cite{AV19_QSEE_1,AV19_QSEE_2}. To this end, we need two ingredients:
\begin{itemize}
\item Stochastic maximal $L^p(L^q)$-regularity for the linearized problem (see e.g.\ \cite[Section 3]{AV19_QSEE_1} for the definition);
\item Estimates for the nonlinearities.
\end{itemize}
Recently, we obtained stochastic maximal $L^p(L^q)$-regularity for second order systems on the $d$-dimensional torus \cite{AV21_SMR_torus}.
Required estimates for the nonlinearities will be formulated in Lemma \ref{l:estimate_nonlinearities} below.

Before we state the lemma we reformulate \eqref{eq:reaction_diffusion_system} as an SEE. To this end, throughout this subsection we set
\begin{equation}
\label{eq:def_X_theta}
X_0=H^{-\s,q}, \ \ \ X_1= H^{2-\s,q},\  \ \ \text{and} \ \ \ X_{\lambda}:=[X_0,X_1]_{\lambda}=H^{-\s+2\lambda,q},
\end{equation}
where $\lambda\in (0,1)$, and a.s.\ for all $t\in \R_+$, $v\in X_1$,
\begin{equation}
\begin{aligned}
\label{eq:ABFG_def}
\AS(t)v&= \div(a(t)\cdot \nabla v ),  &\qquad \BS(t)v&=\big((b_n(t)\cdot\nabla)v\big)_{n\geq 1},\\
\FS(t,v)&= \div(F(t,v))+f(t,v), &\qquad \GS(t,v)&= \big(\rnoise_n(t,v)\big)_{n\geq 1}.
\end{aligned}
\end{equation}
With the above notation, \eqref{eq:reaction_diffusion_system} can be rewritten as a \emph{semilinear} SEE on $X_0$:
\begin{equation}
\label{eq:SEE}
\left\{
\begin{aligned}
&\dd u- \AS(t)u \,\dd t = \FS(t,u)\,\dd t+ (\BS(t)u+ \GS(t,u))\,\dd W_{\ell^2}(t), \ \ \ t\in \R_+,\\
&u(0)=u_0,
\end{aligned}
\right.
\end{equation}
where $W_{\ell^2}$ is the $\ell^2$-cylindrical Brownian motion induced by $(w^n)_{n\geq 1}$, see the text before Definition \ref{def:solution}.
Recall that $\g(\ell^2,X_{1/2})=\g(\ell^2,H^{1-\s,q})=H^{1-\s,q}(\ell^2)$, cf.\ \eqref{eq:identity_gamma_H}.

\begin{lemma}
\label{l:estimate_nonlinearities}
Let Assumption \ref{ass:reaction_diffusion_global}$(p,q,h,\s)$ be satisfied.
Let $\FS,\GS$ be as in \eqref{eq:ABFG_def}.
Suppose that
$q>\max\{\frac{d}{d-\reg}\,,\,\frac{d(h-1)}{2h-\reg (h-1)}\}$. Set $\rho_1=h-1$, $\rho_2=\frac{h-1}{2}$ and
\begin{align*}
\beta_1&:=
\left\{
\begin{aligned}
&\frac{1}{2}\Big(\s+\frac{d}{q} \Big) \Big(1-\frac{1}{h} \Big),  \qquad  \ &\text{ if }& \
q< \frac{d(h-1)}{\s},\\
&\frac{\s}{2},  \qquad\ &\text{ if }&\  q\geq \frac{d(h-1)}{\s},
\end{aligned}
\right.
\\
\beta_2&:=
\left\{
\begin{aligned}
&\frac{1}{h+1}+\frac{1}{2}\Big(\s+\frac{d}{q} \Big) \frac{h-1}{h+1} &\text{ if }& \ q< \frac{d(h-1)}{2(\s-1)},\\
&\frac{\s}{2}, &\text{ if } &\ q\geq \frac{d(h-1)}{2(\s-1)}.
\end{aligned}
\right.
\end{align*}
Then $\beta_1,\beta_2\in (0,1)$ and for each $v,v'\in X_1$
\begin{align*}
\|\FS(\cdot,v)-\FS(\cdot,v')\|_{X_0}&\lesssim \textstyle{\sum}_{j\in \{1,2\}}  (1+\|v\|_{X_{\beta_j}}^{\rho_j}+ \|v'\|_{X_{\beta_j}}^{\rho_j}) \|v-v'\|_{X_{\beta_j}},\\
\|\FS(\cdot,v)\|_{X_0}&\lesssim \textstyle{\sum}_{j\in \{1,2\}}  (1+\|v\|_{X_{\beta_j}}^{\rho_j}) \|v\|_{X_{\beta_j}},\\
\|\GS(\cdot,v)-\GS(\cdot,v')\|_{\g(\ell^2,X_{1/2})}&\lesssim  (1+\|v\|_{X_{\beta_2}}^{\rho_2}+ \|v'\|_{X_{\beta_2}}^{\rho_2}) \|v-v'\|_{X_{\beta_2}},\\
\|\GS(\cdot,v)\|_{\g(\ell^2,X_{1/2})}&\lesssim   (1+\|v\|_{X_{\beta_2}}^{\rho_2})
\|v\|_{X_{\beta_2}}.
\end{align*}
\end{lemma}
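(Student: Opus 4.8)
The plan is to reduce the four estimates to the single-component setting and then to a pair of pointwise nonlinear estimates combined with suitable Sobolev embeddings and a product/multiplication rule. Since the system is coupled only through the nonlinearities and $X_0, X_1$ are products of scalar Bessel potential spaces, it suffices to work component-wise; for brevity I would drop the index $i$. Recall $X_0 = H^{-\s,q}$, $X_1 = H^{2-\s,q}$, $X_\lambda = H^{-\s+2\lambda,q}$, and $\g(\ell^2,X_{1/2}) = H^{1-\s,q}(\ell^2)$ via \eqref{eq:identity_gamma_H}. For $\FS(\cdot,v) = \div(F(\cdot,v)) + f(\cdot,v)$ I would split into the two summands: $\div(F(\cdot,v))$ needs to be estimated in $H^{-\s,q}$, so it is enough to estimate $F(\cdot,v)$ in $H^{1-\s,q}$ (since $\div$ maps $H^{1-\s,q} \to H^{-\s,q}$), while $f(\cdot,v)$ must be estimated directly in $H^{-\s,q}$. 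The growth hypotheses in Assumption \ref{ass:reaction_diffusion_global}\eqref{it:growth_nonlinearities} give $|F(\cdot,v) - F(\cdot,v')| \lesssim (1 + |v|^{\rho_2} + |v'|^{\rho_2})|v-v'|$ with $\rho_2 = \frac{h-1}{2}$, and similarly $|f(\cdot,v) - f(\cdot,v')| \lesssim (1+|v|^{\rho_1}+|v'|^{\rho_1})|v-v'|$ with $\rho_1 = h-1$; the constant parts $F(\cdot,0), f(\cdot,0)$ are bounded by hypothesis and contribute the $1+\dots$ terms after using $\one \in H^{s,q}(\Tor^d)$ for $s \le 0$.

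The core analytic step is then: given a map $\Phi$ with $|\Phi(v)-\Phi(v')| \lesssim (1+|v|^\rho+|v'|^\rho)|v-v'|$, estimate $\|\Phi(v)-\Phi(v')\|_{H^{-\sigma_0,q}}$ (with $\sigma_0 = \s$ for $f$, $\sigma_0 = \s-1$ for $F$, $\sigma_0 = \s-1$ for $g$) by $(1 + \|v\|_{X_\beta}^\rho + \|v'\|_{X_\beta}^\rho)\|v-v'\|_{X_\beta}$ for the appropriate $\beta \in (0,1)$. When $\sigma_0 \le 0$ one has $L^r \embed H^{-\sigma_0,q}$ for a suitable $r$, so Hölder's inequality reduces matters to estimating $\|(1+|v|^\rho+|v'|^\rho)|v-v'|\|_{L^r}$, and then by Hölder with exponents $\frac{\rho+1}{\rho}$ and $\rho+1$ this is controlled by $(1 + \|v\|_{L^{r(\rho+1)}}^\rho + \|v'\|_{L^{r(\rho+1)}}^\rho)\|v-v'\|_{L^{r(\rho+1)}}$; finally a Sobolev embedding $X_\beta = H^{-\s+2\beta,q} \embed L^{r(\rho+1)}$ closes the estimate. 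Working out the exponents: for $f$ one needs $H^{2\beta_1 - \s,q} \embed L^{q h}$ (after the $L^r \embed H^{-\s,q}$ step with $\frac1r = \frac1q + \frac{\s}{d}$ gives $r(\rho_1+1) = qh$ in the regime $q < \frac{d(h-1)}{\s}$, so $r < \infty$), which by the Sobolev embedding condition $2\beta_1 - \s - \frac dq \ge -\frac{d}{qh}$ forces precisely $\beta_1 = \frac12(\s + \frac dq)(1 - \frac1h)$; in the other regime $q \ge \frac{d(h-1)}{\s}$ the space $X_{\s/2} = L^q \cdot$ already embeds into the right Lebesgue space (multiplication by $H^{\alpha,\rho}$-type factors or just boundedness arguments), giving $\beta_1 = \frac\s2$. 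The case $\sigma_0 = \s - 1$ with $\s \in [1,2)$: if $\s > 1$ then $-\sigma_0 = 1-\s < 0$ and the same scheme applies, producing $\beta_2 = \frac{1}{h+1} + \frac12(\s+\frac dq)\frac{h-1}{h+1}$ in the subcritical regime $q < \frac{d(h-1)}{2(\s-1)}$ and $\beta_2 = \frac\s2$ otherwise; if $\s = 1$ then $H^{-\sigma_0,q} = L^q$ and one argues directly. For $g$ the $\ell^2$-valued version is identical since $H^{1-\s,q}(\ell^2) = \g(\ell^2, H^{1-\s,q})$ and the Lipschitz bound on $g$ is in $\ell^2$-norm, so Minkowski/Hölder in the $\ell^2$-variable go through verbatim. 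One also has to separately verify $\beta_1, \beta_2 \in (0,1)$: positivity is immediate since $\s \ge 1$ and $q$ is finite, and the upper bound $\beta_j < 1$ follows from the assumed lower bounds $q > \frac{d}{d-\s}$ and $q > \frac{d(h-1)}{2h - \s(h-1)}$ — indeed these are exactly the conditions that make $2\beta_j - \s + \s = 2\beta_j < 2$, i.e. $\beta_j < 1$, after a short computation.

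I expect the main obstacle to be bookkeeping rather than conceptual: getting the Hölder exponents and the Sobolev embedding thresholds to line up so that the resulting $\beta_j$ are exactly the stated values, and in particular handling the two regimes (the "critical-Sobolev-embedding" case $q$ small versus the case $q$ large where one just lands in $L^q$ and the power structure trivializes). A secondary technical point is the borderline $\s = 1$ (where $H^{-(\s-1),q} = L^q$ and one cannot use a strict embedding $L^r \embed H^{1-\s,q}$ with $r < q$, so one argues with $r = q$ directly) and the need to use that pointwise multiplication $H^{\alpha,\rho} \times L^q \to L^q$ is bounded when invoking the coefficient regularity — though for $\FS,\GS$ the coefficients $a,b$ do not enter, only $F,f,g$, so this is only relevant to verifying the constant terms $F(\cdot,0),f(\cdot,0),g(\cdot,0)$ lie in the right spaces, which follows from their $L^\infty$-boundedness and $\Tor^d$ having finite measure. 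Once the exponent arithmetic is pinned down, each of the four displayed inequalities follows by the same two-line Hölder-plus-embedding argument applied to $f$ (with $\rho_1,\beta_1$) and to $F$ and $g$ (with $\rho_2,\beta_2$), summing over the two contributions to $\FS$.
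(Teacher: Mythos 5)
Your overall strategy matches the paper's: split $\FS = \div(F(\cdot,v)) + f(\cdot,v)$, reduce to the Lipschitz differences, chain an embedding $L^r\embed H^{-\sigma_0,q}$ with H\"older and then a second Sobolev embedding $X_{\beta_j}\embed L^{r(\rho_j+1)}$, and treat $\GS$ by the same argument as the $\div F$ term via \eqref{eq:identity_gamma_H}. The case distinction (critical Sobolev index when $q$ is small, plain $L^q$ when $q$ is large) and the verification that $\beta_j\in(0,1)$ are also as in the paper.

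However the exponent arithmetic you wrote out for $f$ is internally inconsistent, and as stated the step would fail to produce the claimed $\beta_1$. With $\frac1r=\frac1q+\frac{\s}{d}$ you cannot have $r(\rho_1+1)=qh$: since $\rho_1+1=h$ and $r=\frac{dq}{d+\s q}<q$, one has $r(\rho_1+1)=\frac{dqh}{d+\s q}$, which is strictly smaller than $qh$ (recall $\s\geq 1$). Consequently the target Lebesgue space is $L^{rh}$, not $L^{qh}$, and the Sobolev embedding condition you then write, $2\beta_1-\s-\frac dq\geq-\frac{d}{qh}$, does not ``force'' the stated value: solving it gives $\beta_1=\frac\s2+\frac{d}{2q}\bigl(1-\frac1h\bigr)$, which exceeds the correct $\frac12\bigl(\s+\frac dq\bigr)\bigl(1-\frac1h\bigr)$ by $\frac{\s}{2h}$. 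A larger $\beta_1$ is a strictly weaker conclusion and destroys the criticality accounting downstream. The correct bookkeeping uses $X_{\beta_1}\embed L^{rh}$, i.e.\ $2\beta_1-\s-\frac dq\geq-\frac{d}{rh}$ with $\frac{d}{rh}=\frac{d}{hq}+\frac\s h$, which indeed yields $2\beta_1=(\s+\frac dq)(1-\frac1h)$ as in the lemma. The same caution applies to the $\beta_2$ computation, which you only stated without deriving (the target there is $L^{\frac{h+1}{2}\eta}$ with $\frac1\eta=\frac1q+\frac{\s-1}{d}$), but the displayed arithmetic for $\beta_1$ as written is wrong.
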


Since $\beta_j<1$, the above result shows that $\FS$ and $\GS$ are \emph{lower-order} nonlinearities.

\begin{proof}
Since $f(\cdot,0),F^j(\cdot,0)\in L^{\infty}$ and $(g_{n,i}(\cdot,0))_{n\geq 1}\in L^{\infty}$ by Assumption \ref{ass:reaction_diffusion_global}\eqref{it:growth_nonlinearities}, it is enough to estimate the differences $\FS(\cdot,v)-\FS(\cdot,v')$ and $\GS(\cdot,v)-\GS(\cdot,v')$.
We break the proof into two steps.

\emph{Step 1: Estimate for $\FS$}. Let us write
$\FS=\FS_0+\FS_1$ where
$$
\FS_0(\cdot,v):=f(\cdot,v) \qquad \text{ and }\qquad \FS_1(\cdot,v)=\div(F(\cdot,v)).
$$

\emph{Substep 1a: Estimate for $\FS_0$}. By Assumption \ref{ass:reaction_diffusion_global}\eqref{it:growth_nonlinearities}, a.e.\ on $\R_+\times \O$ and for all $\vone,\vtwo\in X_1$,
\begin{equation}
\begin{aligned}
\label{eq:estimate_rone_reaction_diffusion}
\|\FS_0(\cdot,\vone)-\FS_0(\cdot,\vtwo)\|_{H^{-\reg,q}}
&\stackrel{(i)}{\lesssim} \|f(t,\cdot,\vone)-f(t,\cdot,\vtwo)\|_{L^{\xi}}	\\
&\lesssim \Big\|(1+|\vone|^{h-1}+|\vtwo|^{h-1})|\vone-\vtwo|\Big\|_{L^{\xi}}\\
&\stackrel{(ii)}{\lesssim} (1+\|\vone\|^{h-1}_{L^{h\xi}}+\|\vtwo\|^{h-1}_{L^{h\xi}})\|\vone-\vtwo\|_{L^{h\xi}}\\
&\stackrel{(iii)}{\lesssim} (1+\|\vone\|^{h-1}_{H^{\theta,q}}+\|\vtwo\|^{h-1}_{H^{\theta,q}})
\|\vone-\vtwo\|_{H^{\theta,q}},
\end{aligned}
\end{equation}
where in $(i)$ we used Sobolev embedding with $-\frac{d}{\xi}= -\reg-\frac{d}{q}$ and $q>\frac{d}{d-\reg}$ to ensure $\xi\in (1,\infty)$. Estimate $(ii)$ follows from H\"{o}lder's inequality. In $(iii)$ we used Sobolev embedding with $\theta-\frac{d}{q}\geq -\frac{d}{h\xi}$, and where we need $\theta<2-\s$ to ensure that $\Phi_0$ is of lower-order (see \eqref{eq:def_X_theta}). To choose $\theta$ we consider two cases:
\begin{itemize}
\item \emph{Case $q<\frac{d(h-1)}{\reg}$}. In this situation we set $\theta=\frac{d}{q}-\frac{d}{h\xi} = \frac{d(h-1)}{hq} -\frac{\delta}{h}>0$. Note that $\theta<2-\reg$ follows from the assumption $q>\frac{d(h-1)}{2h-\reg (h-1)}$;
\item \emph{Case $q\geq\frac{d(h-1)}{\reg}$}. Here we set $\theta=0$. Since $\reg<2$ by Assumption \ref{ass:reaction_diffusion_global}, we also have $\theta=0<2-\reg$.
\end{itemize}
In both of the above cases, $X_{\beta_1}=H^{\theta,q}$ (see \eqref{eq:def_X_theta}). Thus \eqref{eq:estimate_rone_reaction_diffusion} gives
\begin{equation}
\label{eq:F_0_estimate}
\|\FS_0(t,\cdot,\vone)-\FS_0(t,\cdot,\vtwo)\|_{X_0}\lesssim (1+\|\vone\|^{\rho_1}_{X_{\beta_1}}+\|\vtwo\|^{\rho_1}_{X_{\beta_1}})
\|\vone-\vtwo\|_{X_{\beta_1}}.
\end{equation}

\emph{Substep 1b: Estimate for $\FS_1$}. As in substep 1a, by Assumption \ref{ass:reaction_diffusion_global}\eqref{it:growth_nonlinearities} we have, a.e.\ on $\R_+\times \O$ and for all $\vone,\vtwo\in X_1$,
\begin{equation}
\begin{aligned}
\label{eq:estimate_rtwo_rnoise_reaction_diffusion}
\|\FS_1(\cdot,v)-\FS_1(\cdot,v')\|_{H^{-\reg,q}} &
\stackrel{(iv)}{\lesssim}
\|F(t,\cdot,\vone)-F(t,\cdot,\vtwo)\|_{L^{\eta}}\\
&
\lesssim
\Big\|(1+|\vone|^{\frac{h-1}{2}}+|\vtwo|^{\frac{h-1}{2}})|\vone-\vtwo|\Big\|_{L^{\eta}}\\
&
\stackrel{(v)}{\lesssim}
(1+\|\vone\|^{\frac{h-1}{2}}_{L^{\frac{h+1}{2}\eta}}+\|\vtwo\|^{\frac{h-1}{2}}_{L^{\frac{h+1}{2}\eta}})
\|\vone-\vtwo\|_{L^{\frac{h+1}{2}\eta}}\\
&
\stackrel{(vi)}{\lesssim}
(1+\|\vone\|^{\frac{h-1}{2}}_{H^{\phi,q}}+\|\vtwo\|^{\frac{h-1}{2}}_{H^{\phi,q}})
\|\vone-\vtwo\|_{H^{\phi,q}},
\end{aligned}
\end{equation}
where in $(iv)$ we used $\div:H^{1-\reg,q}\to H^{-\reg,q}$ boundedly, and Sobolev embedding with $-\frac{d}{\eta}=1-\reg-\frac{d}{q}$, where $\eta\in (1,q)$ since $q>\frac{d}{d-\reg}$. In $(v)$ we used H\"{o}lder's inequality, and in $(vi)$ the Sobolev embedding with $\phi\in [0,2-\reg)$ and $\phi-\frac{d}{q}\geq -\frac{2d}{\eta(h+1)}$.
As in substep 1a, to choose $\phi$ we distinguish two cases.
\begin{itemize}
\item \emph{Case $q<\frac{d(h-1)}{2(\reg-1)}$}. In this situation we have $\phi:=\frac{d}{q}-\frac{2d}{\eta(h+1)}=\frac{d}{q}\frac{h-1}{h+1}+2\frac{1-\reg}{h+1}>0$. Note that $\phi<2-\reg$ since $q>\frac{d(h-1)}{2h-\reg (h-1)}$ by assumption;
\item\label{it:case_not_sharp_rone} \emph{Case $q\geq \frac{d(h-1)}{2(\reg-1)}$}. Here we set $\phi=0$ and thus $\phi<2-\s$.
\end{itemize}
Again one can check $X_{\beta_2}=H^{\phi,q}$ in both cases. Thus \eqref{eq:estimate_rtwo_rnoise_reaction_diffusion} gives
\begin{equation}
\label{eq:F_1_estimate}
\|\FS_1(\cdot,\vone)-\FS_1(\cdot,\vtwo)\|_{X_0}
\lesssim (1+\|\vone\|^{\rho_2}_{X_{\beta_2}}+\|\vtwo\|^{\rho_2}_{X_{\beta_2}})
\|\vone-\vtwo\|_{X_{\beta_2}}.
\end{equation}
The required estimate for $\FS(\cdot,v)-\FS(\cdot,v')$ follows from \eqref{eq:F_0_estimate} and \eqref{eq:F_1_estimate}, which completes Step 1.

\emph{Step 2: Estimate for $\GS$}. Here we prove that $\GS(\cdot,v)-\GS(\cdot,v')$ satisfies the same bound of $\FS_1(\cdot,v)-\FS_1(\cdot,v')$ in \eqref{eq:estimate_rtwo_rnoise_reaction_diffusion}. Thus the required estimate for $\GS$ follows as in Substep 1b.
Indeed, a.e.\ on $\R_+\times \O$ and for all $\vone,\vtwo\in X_1$,
\begin{equation}
\label{eq:estimate_rnoise_reaction_diffusion_proof_local}
\begin{aligned}
\|\GS(\cdot,\vone)-\GS(\cdot,\vtwo)\|_{\g(\ell^2,X_{1/2})}
&\stackrel{(vii)}{\lesssim} \|\rnoise(t,\cdot,\vone)-\rnoise(t,\cdot,\vtwo)\|_{\g(\ell^2,L^{\eta})}\\
&\stackrel{(viii)}{\eqsim} \|\rnoise(t,\cdot,\vone)-\rnoise(t,\cdot,\vtwo)\|_{L^{\eta}(\ell^2)}\\
&\stackrel{(ix)}{\lesssim}\Big\|(1+|\vone|^{\frac{h-1}{2}}+|\vtwo|^{\frac{h-1}{2}})|\vone-\vtwo|\Big\|_{L^{\eta}},
\end{aligned}
\end{equation}
where in $(vii)$ we used Sobolev embeddings with $-\frac{d}{\eta}=1-\reg-\frac{d}{q}$, in $(viii)$ \eqref{eq:identity_gamma_H} and in $(ix)$ Assumption \ref{ass:reaction_diffusion_global}\eqref{it:growth_nonlinearities}. Comparing \eqref{eq:estimate_rnoise_reaction_diffusion_proof_local} with the second line in \eqref{eq:estimate_rtwo_rnoise_reaction_diffusion}, one can check that the claimed estimate for $\GS$ follows as in Substep 1b.
\end{proof}

Next we prove Proposition \ref{prop:reaction_diffusion_global}.
For the reader's convenience, the proof will be divided into two parts. In Part (A) we prove the existence of a $(p,\a,\s,q)$-solution to \eqref{eq:reaction_diffusion_system} with pathwise regularity as in \eqref{eq:regularity_u_reaction_diffusion_critical_spaces_1} and in Part (B) we prove \eqref{eq:reaction_diffusion_H_theta_1}-\eqref{eq:reaction_diffusion_C_alpha_beta_1}.

\begin{proof}[Proof of Proposition \ref{prop:reaction_diffusion_global} Part (A) -- Local existence and uniqueness]
We break the proof of Part (A) into two steps. Recall that $(\AS,\BS,\FS,\GS)$ are defined in \eqref{eq:ABFG_def}. In the following, we use the definition of criticality of \cite{AV19_QSEE_1} for the trace space of initial data (see e.g.\ \cite{ALV21} for details on trace theory)
\begin{equation}
\label{eq:Besov_spaces}
\Xap:= (X_0, X_1)_{1-\frac{1+\a}{p},p}  = (H^{-\s,q},H^{2-\s,q})_{1-\frac{1+\a}{p},p}= B^{2-\s-2\frac{1+\a}{p}}_{q,p},
\end{equation}
where we used \cite[Theorem 6.4.5]{BeLo}.

\emph{Step 1: The assumptions (HF) and (HG) of \cite[Section 4.1]{AV19_QSEE_1} hold with $(F,G)$ replaced by $(\FS,\GS)$. Moreover, the trace space $\Xap=B^{2-\s-2\frac{1+\a}{p}}_{q,p}$ is critical for \eqref{eq:reaction_diffusion_system} if and only if one of the following conditions holds:}
\begin{itemize}
\item $q<\frac{d(h-1)}{\reg}$ and $\frac{1+\a}{p}+\frac{1}{2}(\reg+\frac{d}{q})= \frac{h}{h-1}$;
\item $q\geq\frac{d(h-1)}{\reg}$ and $\frac{1+\a}{p}= \frac{h}{h-1}(1-\frac{\reg}{2})$.
\end{itemize}
To prove the claim of this step, by Lemma \ref{l:estimate_nonlinearities} it is suffices to show that
\begin{equation}
\label{eq:critical_condition_j}
\frac{1+\a}{p}\leq \frac{\rho_j+1}{\rho_j}(1-\beta_j) \ \ \ \text{ for }j\in \{1,2\},
\end{equation}
where $\rho_j,\beta_j$ are as in Lemma \ref{l:estimate_nonlinearities}. Note that $\frac{d(h-1)}{\s}<\frac{d(h-1)}{2(\s-1)}$ for all $h>1$ and $\s\in [1,2)$. Therefore, to check \eqref{eq:critical_condition_j}, we can split into the following three cases:
\begin{enumerate}[{\rm(a)}]
\item\label{it:sharp_case_local_existence_proof} \emph{Case $q< \frac{d(h-1)}{\s}$}. In this situation one can check that the inequalities in \eqref{eq:critical_condition_j} for $j\in \{1,2\}$ are equivalent to the following restriction:
\begin{equation*}
\frac{1+\a}{p}\leq \frac{h}{h-1}-\frac{1}{2}\Big(\s+\frac{d}{q}\Big).
\end{equation*}
\item \emph{Case $ \frac{d(h-1)}{\s}\leq q <\frac{d(h-1)}{2(\s-1)}$}. Then \eqref{eq:critical_condition_j} for $j\in \{1,2\}$ holds if and only if
\begin{equation*}
\frac{1+\a}{p}\leq \frac{h}{h-1} -\frac{1}{2}\Big(\s+\frac{d}{q}\Big),\  \ \text{ and }\ \
\frac{1+\a}{p}\leq \frac{h}{h-1}\Big(1-\frac{\s}{2}\Big).
\end{equation*}
Note that $q\geq \frac{d(h-1)}{\s}$ implies $\frac{h}{h-1}(1-\frac{\s}{2})\leq \frac{h}{h-1} -\frac{1}{2}(\s+\frac{d}{q})$. Therefore, it is enough to assume the second of the above conditions.
\item\label{it:blow_up_large_case} \emph{Case $ q\geq  \frac{d(h-1)}{2(\s-1)}$}. Then \eqref{it:sharp_case_local_existence_proof},  \eqref{eq:critical_condition_j} for $j\in \{1,2\}$ leads to the same condition
$$
\frac{1+\a}{p}\leq \frac{h}{h-1}\Big(1-\frac{\s}{2}\Big).
$$
\end{enumerate}
One can check that the conditions in the cases \eqref{it:sharp_case_local_existence_proof}-\eqref{it:blow_up_large_case} coincide with the one assumed in Proposition \ref{prop:reaction_diffusion_global}. Moreover, criticality holds if and only if the estimates in cases \eqref{it:sharp_case_local_existence_proof}-\eqref{it:blow_up_large_case} hold with equality.

\emph{Step 2: There exists a (unique) $(p,\a,\s,q)$-solution $(u,\sigma)$ to \eqref{eq:reaction_diffusion_system} such that
\begin{equation*}
u\in H^{\theta,p}_{\loc}([0,\sigma);H^{2\theta-\delta,q}) \text{ a.s.\ for all }\theta\in [0,1/2).
\end{equation*}
}
To prove existence and uniqueness for \eqref{eq:reaction_diffusion_system} we will apply \cite[Theorem 4.8]{AV19_QSEE_1}. Indeed, our notion of $(p,\a,\s,q)$-solution to \eqref{eq:reaction_diffusion_system} (see Definition \ref{def:solution}) is equivalent to the notion of $L^{p}_{\a}$-maximal local solution given in \cite[Definition 4.4]{AV19_QSEE_1} (see also \cite[Remark 5.6]{AV19_QSEE_2}). By \cite[Theorem 5.2 and Remark 5.6]{AV21_SMR_torus}, the linearized problem with leading operator $(A,B)$ (see \eqref{eq:ABFG_def}) has stochastic maximal $L^p$-regularity. More precisely, we have $(A,B)\in \mathcal{SMR}^{\bullet}_{p,\a}(T)$ for all $T\in (0,\infty)$ with $X_0=H^{-\s,q}$ and $X_1=H^{2-\s,q}$ (see \cite[Definition 3.5]{AV19_QSEE_1} for the definition). Now existence and uniqueness follows from Step 1 and \cite[Theorem 4.8]{AV19_QSEE_1}.
\end{proof}

In order to complete the proof of Proposition \ref{prop:reaction_diffusion_global} it remains to show the regularity results \eqref{eq:reaction_diffusion_H_theta_1}-\eqref{eq:reaction_diffusion_C_alpha_beta_1}. For this we will use our new bootstrap technique of \cite[Section 6]{AV19_QSEE_2}. The structure of the proof of the regularity will be follows:
\begin{itemize}
\item Bootstrap regularity in time via \cite[Proposition 6.8]{AV19_QSEE_2} (see Step 1a) and
 \cite[Corollary 6.5]{AV19_QSEE_2} (see Substep 1b).
\item Bootstrap integrability in space via \cite[Theorem 6.3]{AV19_QSEE_2} applied recursively considering \eqref{eq:reaction_diffusion_system} in the $(H^{-\s,q_j},H^{2-\s,q_j},r,\alpha)$-setting where $(q_k)_{k\geq 1}$ is a sequence of increasing numbers $q_k\uparrow \infty$ with $q_1=q$ (see Step 2).
\item
Bootstrap differentiability in space via \cite[Theorem 6.3]{AV19_QSEE_2} by shifting the scale from $Y_j=H^{2j-\s,q}$ to $\wh{Y}_j=H^{2j-1,q}$ (see Step 3).
\end{itemize}

In each of the steps in the proof below and without further mentioning it, we use the stochastic maximal $L^r_{w_{\alpha}}$-regularity result of \cite[Theorem 5.2 and Remark 5.6]{AV21_SMR_torus} for $(A,B)$. By Assumption \ref{ass:reaction_diffusion_global}  the latter holds on $X_0 = H^{-s, \zeta}$ and all $r\in (2, \infty)$, $\zeta\in [2, \infty)$, $\a\in [0,\frac{r}{2}-1)$, and $s$ such that $1\leq s\leq \delta+\gamma$, for some (small) $\gamma>0$.

\begin{proof}[Proof of Proposition \ref{prop:reaction_diffusion_global} Part (B) -- Instantaneous regularization \eqref{eq:reaction_diffusion_H_theta_1}-\eqref{eq:reaction_diffusion_C_alpha_beta_1}. ]
Let $(u,\sigma)$ denote the $(p,\a,\s,q)$-solution to \eqref{eq:reaction_diffusion_system} provided by Part (A).

\emph{Step 1: For all $r\in (2,\infty)$,}
\begin{equation}
\label{eq:u_regularity_step_4_regularity_in_time}
u\in \bigcap_{\theta\in [0,1/2)} H^{\theta,r}_{\loc}(0,\sigma;H^{2-\s-2\theta,q}), \ \text{ a.s. }
\end{equation}

The proof of \eqref{eq:u_regularity_step_4_regularity_in_time} consists of two sub-steps, where Step 1a is not needed if $\a>0$.

\emph{Step 1a: If $\a=0$, then \eqref{eq:u_regularity_step_4_regularity_in_time} holds for some $r>p$. }
Here we apply \cite[Proposition 6.8]{AV19_QSEE_2}. Let $(\beta_j)_{j\in \{1,2\}}$ be as in Lemma \ref{l:estimate_nonlinearities}. Note that  $\beta_1, \beta_2\in (0,1)$ and $p\in (2, \infty)$ under the assumption of Proposition \ref{prop:reaction_diffusion_global}. Fix  $r\in (p,\infty)$ and $\alpha\in (0,\frac{r}{2}-1)$ such that
\begin{equation}
\label{eq:choice_a_0_regularity}
\frac{1}{p}=\frac{1+\alpha}{r} , \qquad
\text{ and }
\qquad
\frac{1}{r}\geq \max_j \beta_j - 1+\frac{1}{p}.
\end{equation}
With the above choice, Step 1 of the proof of Proposition \ref{prop:reaction_diffusion_global} Part (A) ensures that \cite[Proposition 6.8]{AV19_QSEE_2} is applicable with $Y_j =X_j= H^{2-\s,q}$ and $(r,\alpha)$ as above. This yields \eqref{eq:u_regularity_step_4_regularity_in_time} for all $r\in (p,\infty)$ and $\alpha>0$ satisfying \eqref{eq:choice_a_0_regularity}.

\emph{Step 1b: \eqref{eq:u_regularity_step_4_regularity_in_time} holds for all $r\in (2,\infty)$.} Let
$$
\text{ either }\quad [(r,\alpha)=(p,\a) \text{ if }\a >0] \quad \text{ or }\quad
[(r,\alpha)\text{ as in Step 1a if }\a=0].
$$
In all cases $\alpha>0$.  Let $\wh{r}\in (2,\infty)$ be arbitrary and let $\wh{\alpha}\in [0,\frac{\wh{r}}{2}-1)$ be such that $\frac{1+\wh{\alpha}}{\wh{r}}<\frac{1+{\alpha}}{{r}}$. Set $Y_j:=H^{2j-\s,q}$ for $j\in \{0,1\}$. Combining Step 1 of the proof of Proposition Part (A) and Step 1a if $\a=0$, one can check that the assumptions of \cite[Corollary 6.5]{AV19_QSEE_2} hold and this yields the claim of Step 1b.

\emph{Step 2: For all $r,\zeta\in (2,\infty)$,}
\begin{equation*}
u\in \bigcap_{\theta\in [0,1/2)} H^{\theta,r}_{\loc}(0,\sigma;H^{2-\s-2\theta,\zeta}) \ \text{ a.s. }
\end{equation*}

It suffices to consider $r\in (p,\infty)$ such that $\frac{1}{r}+\frac{1}{2}(\reg+\frac{d}{q})< \frac{h}{h-1}$. The latter condition is nonempty since in each of the case of \eqref{eq:reaction_diffusion_globali} and \eqref{eq:reaction_diffusion_globalii} one can check that $\frac{1}{2}(\reg+\frac{d}{q})< \frac{h}{h-1}$.
To prove the above claim for $u$ it is enough to show the existence of $\gap>0$ depending only $(r,\s,q,h,d)$ such that for all $\zeta\in [q,\infty)$,
\begin{equation}
\label{eq:implication_step_5_reaction_diffusion}
u\in \bigcap_{\theta\in [0,1/2)} H^{\theta,r}_{\loc}(0,{\sigma};H^{2-\s-2\theta,\zeta}) \text{ a.s.}
\ \Longrightarrow  \
u\in \bigcap_{\theta\in [0,1/2)} H^{\theta,r}_{\loc}(0,{\sigma};H^{2-\s-2\theta,\zeta+\gap})\text{ a.s.}
\end{equation}
Indeed, by Step 1 we know that the RHS\eqref{eq:implication_step_5_reaction_diffusion} holds with $\zeta =q$ and $r$ as above. Thus the claim of this step follows by iterating \eqref{eq:implication_step_5_reaction_diffusion}.

To prove \eqref{eq:implication_step_5_reaction_diffusion} suppose that $u\in \bigcap_{\theta\in [0,1/2)} H^{\theta,r}_{\loc}(0,{\sigma};H^{2-\s-2\theta,\zeta}) \text{ a.s.}$ We will apply \cite[Theorem 6.3]{AV19_QSEE_2}.
Since $\frac{1}{r}+\frac{1}{2}(\reg+\frac{d}{q})< \frac{h}{h-1}$ by assumption, there exists $\alpha>0$ (depending only on $(r,\s,q,h,d)$) such that $\frac{1+\alpha}{r}+\frac{1}{2}(\reg+\frac{d}{q})< \frac{h}{h-1}$.
By Step 1 of the proof of Proposition \ref{prop:reaction_diffusion_global} Part (A) we know that (HF) and (HG) of \cite[Section 4.1]{AV19_QSEE_1} hold in the $(H^{-\s,\zeta},H^{2-\s,\zeta},\alpha,r)$-setting with $\zeta\in [q,\infty)$, and the corresponding trace space is \emph{not} critical for \eqref{eq:reaction_diffusion_system} in this setting. Next we check the assumptions of \cite[Theorem 6.3]{AV19_QSEE_2} with the choice
$$
Y_i =H^{2j-\s,\zeta},\qquad \wh{Y}_i =H^{2j-\s,\zeta+\gap}, \qquad
r=\wh{r}, \qquad \alpha=\wh{\alpha}
$$
where $\gap$ will be chosen below.
It is easy to see that conditions (1) and (2) of \cite[Theorem 6.3]{AV19_QSEE_2} are satisfied. To check condition (3) of
\cite[Theorem 6.3]{AV19_QSEE_2} note that $\wh{Y}_i\embed Y_i$ and the assumption \cite[(6.1)]{AV19_QSEE_2} is satisfied due to \cite[Lemma 6.1(1)]{AV19_QSEE_2}. It remains to check
\begin{equation}
\label{eq:trace_embedding}
Y_{r}\embed \wh{Y}_{\wh{\alpha},\wh{r}}=\wh{Y}_{\alpha,r}.
\end{equation}
The latter will require $\gap$ to be small enough. Recall that $Y_{r}=B^{2-\s-\frac{2}{r}}_{\zeta,r}$ and
$\wh{Y}_{\alpha,r}=B^{2-\s-2\frac{1+\alpha}{r}}_{\zeta+\gap,r}$ by \eqref{eq:Besov_spaces}. By Sobolev embedding \eqref{eq:trace_embedding} holds provided
\begin{equation}
\label{eq:Sob_embedding_index_gap}
2-\s-\frac{2}{r}-\frac{d}{\zeta}\geq 2-\s-2\frac{1+\alpha}{r}-\frac{d}{\zeta+\gap}
\quad \Leftrightarrow\quad
\frac{1}{\zeta}-\frac{1}{\zeta+\gap}\leq \frac{2\alpha}{dr}.
\end{equation}
For \eqref{eq:Sob_embedding_index_gap} we can for instance take
$\gap=\frac{2\alpha}{d r}>0$.

\emph{Step 3: For all $r,\zeta\in (2,\infty)$,}
\begin{equation}
\label{eq:regularization_final_step}
u\in  \bigcap_{\theta\in [0,1/2)} H^{\theta,r}_{\loc}(0,\sigma;H^{1-2\theta,\zeta}) \ \text{ a.s. }
\end{equation}
Note that, if $\s=1$, then \eqref{eq:regularization_final_step} follows from Step 2. Thus below we may assume $\s\in (1,2)$. It suffices to prove \eqref{eq:regularization_final_step} for $r$ and $\zeta$ large. Therefore, we may suppose that
\[\zeta\geq \max\Big\{\frac{d(h-1)}{\delta},q\Big\}, \ \ r>\max\Big\{p,\frac{2}{2-\delta}\Big\}, \  \ \text{and} \ \ \frac{1}{r} + \frac{\delta-1}{2}<\frac{h}{2(h-1)}.\]
For the latter note that $\frac{\delta-1}{2}<\frac{h}{2(h-1)}$ always holds.

As in the previous step, we use \cite[Theorem 6.3]{AV19_QSEE_2} to improve the differentiability in space. To prove \eqref{eq:regularization_final_step},
for $j\in \{0,1\}$, we let
\begin{equation}
\label{eq:Y_wh_Y_final_regularization}
Y_j = H^{2j-\s,\zeta}, \quad \wh{Y}_j = H^{2j-1,\zeta}, \quad \wh{r} =r, \quad \alpha=0, \quad
\wh{\alpha}= \frac{r(\s-1)}{2}.
\end{equation}
Moreover, $\wh{\alpha}\in [0,\frac{\wh{r}}{2}-1)$ since $\wh{r}>\frac{2}{2-\s}$. %

We claim that and \eqref{eq:reaction_diffusion_globalii} is satisfies in the $(Y_0,Y_1,r,\alpha)$-setting and $(\wh{Y}_0,\wh{Y}_1,\wh{r},\wh{\alpha})$-setting, and both are not critical. Indeed, for $Y$ and $\wh{Y}$ this follows from
\[\frac{1}{r}<1-\frac{\delta}{2}<\frac{h}{h-1}\Big(1-\frac{\delta}{2}\Big) \ \ \text{and} \ \ \frac{1+\wh{\alpha}}{\wh{r}} = \frac{1}{r} + \frac{\delta-1}{2}< \frac{h}{2(h-1)}, \]
respectively.
To apply \cite[Theorem 6.3]{AV19_QSEE_2} it remains to check condition (3) there, which states
\begin{equation}
\label{eq:final_check_regularization_reaction_diffusion}
\text{ (a) } \ \
Y_r^{\Tr}\embed \wh{Y}_{\wh{\alpha},\wh{r}},
\quad \text{ and }\quad \text{ (b)} \ \
\text{ \cite[(6.1)]{AV19_QSEE_2} holds}.
\end{equation}
The choice of $\wh{\alpha}$ in \eqref{eq:Y_wh_Y_final_regularization} immediately yields \eqref{eq:final_check_regularization_reaction_diffusion}$_{\text{(a)}}$ and both spaces equal $B^{2-\delta-\frac2r}_{\zeta,r}$. To check \eqref{eq:final_check_regularization_reaction_diffusion}$_{\text{(b)}}$
we apply \cite[Lemma 6.2(4)]{AV19_QSEE_2}. To this end note that, for $\varepsilon=\frac{\s-1}{2}$,
$$
\wh{Y}_{1-\varepsilon}=
[\wh{Y}_0,\wh{Y}_1]_{1-\varepsilon}
=Y_1, \quad  \wh{Y}_0= [Y_0,Y_1]_{\varepsilon}=Y_{\varepsilon},  \ \ \text{ and }\ \ \frac{1+\wh{\alpha}}{r}=\varepsilon+\frac{1}{r}.
$$
Since $\alpha=0$ and $\varepsilon<\frac{1}{2}-\frac{1}{r}$ by construction, \cite[Lemma 6.2(4)]{AV19_QSEE_2} applies and thus \eqref{eq:final_check_regularization_reaction_diffusion}$_{\text{(b)}}$ follows. Hence \cite[Theorem 6.3]{AV19_QSEE_2} yields \eqref{eq:regularization_final_step}.

\emph{Step 4: Conclusion}. Note that
\eqref{eq:reaction_diffusion_H_theta_1} is equivalent to \eqref{eq:regularization_final_step}. In addition,   \eqref{eq:reaction_diffusion_C_alpha_beta_1} follows from \eqref{eq:reaction_diffusion_H_theta_1} and Sobolev embedding. Hence the proof of Proposition \ref{prop:reaction_diffusion_global} is completed.
\end{proof}

Next we turn to the local continuity result.
\begin{proposition}[Local continuity]
\label{prop:local_continuity_general}
Let the assumptions of Proposition \ref{prop:reaction_diffusion_global} be satisfied. Let $(u,\sigma)$ be the  $(p,\a,\s,q)$-solution to \eqref{eq:reaction_diffusion_system}.
Then there exist positive constants $(C_0,T_0,\varepsilon_0)$ and stopping times $\sigma_0,\sigma_1$ such that $\sigma_0,\sigma_1\in (0,\sigma]$ a.s.\ for which the following assertion holds:

For each $v_0\in L^p_{\F_0}(\O;B^{2-\reg-2\frac{1+\a}{p}}_{q,p})$ with
$\E\|u_0-v_0\|_{B^{2-\reg-2\frac{1+\a}{p}}_{q,p}}^p\leq \varepsilon_0$,
the $(p,\a,\s,q)$-solution $(v,\tau)$ to \eqref{eq:reaction_diffusion_system} with initial data $v_0$ has the property that there exists a stopping time $\tau_0\in (0,\tau]$ a.s.\ such that for all $t\in [0,T_0]$ and $\gamma>0$, one has
\begin{align}
\label{eq:local_continuity_1_prop}
\P\Big(\sup_{r\in [0,t]}\|u(r)-v(r)\|_{B^{2-\s-2\frac{1+\a}{p}}_{q,p}}\geq \gamma, \  \sigma_0\wedge \tau_0>t\Big)
&\leq \frac{C_0}{\gamma^p}
\E \|u_0-v_0\|_{B^{2-\s-2\frac{1+\a}{p}}_{q,p}}^p,\\
\label{eq:local_continuity_2_prop}
\P\Big(\|u-v\|_{L^p(0,t,w_{\a};H^{2-\s,q})}\geq \gamma, \  \sigma_0\wedge \tau_0> t\Big)
&\leq \frac{C_0}{\gamma^p}
\E \|u_0-v_0\|_{B^{2-\s-2\frac{1+\a}{p}}_{q,p}}^p,\\
\label{eq:local_continuity_3_prop}
\P(\sigma_0\wedge \tau_0\leq t)\leq C_0
\big[\E \|u_0-v_0\|_{B^{2-\reg-2\frac{1+\a}{p}}_{q,p}}^p& \ + \P(\sigma_1\leq  t)\big].
\end{align}
\end{proposition}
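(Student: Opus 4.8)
The plan is to adapt the continuous-dependence part of the abstract theory of \cite{AV19_QSEE_1, AV19_QSEE_2} to the present problem, working throughout with the semilinear reformulation \eqref{eq:SEE} of \eqref{eq:reaction_diffusion_system} on $X_0 = H^{-\s,q}$, with $(\AS,\BS,\FS,\GS)$ as in \eqref{eq:ABFG_def}. Recall from Lemma \ref{l:estimate_nonlinearities} that $\FS$ and $\GS$ satisfy critical Lipschitz estimates in the intermediate norms $\|\cdot\|_{X_{\beta_j}}$, and that $(\AS,\BS)$ has stochastic maximal $L^p_{w_{\a}}$-regularity on $X_0$ by \cite[Theorem 5.2]{AV21_SMR_torus}. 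I write $\mathcal{E}(I):=L^p(I,w_{\a};X_1)\cap C(\overline I;\Xap)$ for the relevant maximal-regularity space over a time interval $I$, where $\Xap=B^{2-\s-2\frac{1+\a}{p}}_{q,p}$ is the trace space \eqref{eq:Besov_spaces}, and I use the embedding of the full solution space into $\mathcal{E}(I)$ to pass from maximal-regularity bounds to the supremum bound in \eqref{eq:local_continuity_1_prop}.

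First I would fix a deterministic $T_0>0$ and construct $u$ on $[0,T_0]$ by a fixed-point argument after a stopping-time truncation of the nonlinearities to the $\mathcal{E}([0,\cdot])$-ball of radius $2R$ around the linear evolution $\mathcal{S}u_0$ of $u_0$, for a radius $R>0$ chosen small enough (using only the structure of the estimates in Lemma \ref{l:estimate_nonlinearities}) that the associated self-map is a $\tfrac{1}{2}$-contraction. Since $u_0$ is a \emph{fixed} element of $\Xap$, the $\mathcal{E}([0,t])$-norm of the deterministic and stochastic convolutions with datum $u_0$ tends to $0$ pathwise as $t\downarrow 0$, so
\[
\sigma_1:=\inf\{t\in[0,T_0]:\ \|\mathcal{S}u_0\|_{\mathcal{E}([0,t])}>\tfrac{R}{2}\ \text{ or }\ \|u-\mathcal{S}u_0\|_{\mathcal{E}([0,t])}>R\}\wedge\sigma
\]
is a stopping time with $\sigma_1\in(0,\sigma]$ a.s.\ and $\P(\sigma_1\le t)\downarrow 0$ as $t\downarrow 0$; I set $\sigma_0:=\sigma_1$. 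The essential point (and the first delicate one) is that criticality precludes gaining smallness directly from short time in the nonlinear estimates, so the localisation must be carried out in the $\mathcal{E}$-norm rather than in $T_0$.

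For $v_0$ with $\E\|u_0-v_0\|_{\Xap}^p\le\varepsilon_0$ the same truncated problem produces $(v,\tau)$ and a stopping time $\tau_0\in(0,\tau]$ a.s.\ up to which $v-\mathcal{S}v_0$ stays in the $\mathcal{E}$-ball of radius $2R$. Writing $w:=u-v$ and subtracting the two instances of \eqref{eq:SEE}, stochastic maximal regularity together with the Lipschitz bounds of Lemma \ref{l:estimate_nonlinearities} — whose arguments are controlled by $\sim R$ on $[0,\sigma_0\wedge\tau_0]$ — gives, for $t\in[0,T_0]$,
\[
\|w\|_{\mathcal{E}([0,t\wedge\sigma_0\wedge\tau_0])}\le C\|\mathcal{S}(u_0-v_0)\|_{\mathcal{E}([0,t])}+C\varphi(R)\|w\|_{\mathcal{E}([0,t\wedge\sigma_0\wedge\tau_0])},
\]
with $\varphi(R)\to0$ as $R\to0$. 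After shrinking $R$ once more to absorb the last term and using $\|\mathcal{S}(u_0-v_0)\|_{\mathcal{E}([0,T_0])}\lesssim\|u_0-v_0\|_{\Xap}$ (maximal regularity plus a trace estimate), this yields the pathwise bound $\|w\|_{\mathcal{E}([0,t\wedge\sigma_0\wedge\tau_0])}\le C\|u_0-v_0\|_{\Xap}$. Combining it with the embedding $\mathcal{E}\hookrightarrow C(\overline I;\Xap)$ and the definition of $\mathcal{E}$, and applying Chebyshev's inequality in $\O$, yields \eqref{eq:local_continuity_1_prop} and \eqref{eq:local_continuity_2_prop} with $C_0\sim C^p$.

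It remains to prove \eqref{eq:local_continuity_3_prop}, and this is where I expect the main obstacle. One decomposes $\{\sigma_0\wedge\tau_0\le t\}\subseteq\{\sigma_1\le t\}\cup\{\tau_0\le t,\ \sigma_1>t\}$; the first set contributes the $v_0$-independent term $\P(\sigma_1\le t)$. On the second set $u$ and $\mathcal{S}u_0$ are under control up to time $t$, so $v=u-w$ can leave its $\mathcal{E}$-ball of radius $2R$ only through $w$ and through $\mathcal{S}(u_0-v_0)$; hence $\{\tau_0\le t,\ \sigma_1>t\}$ forces $\|w\|_{\mathcal{E}([0,\tau_0\wedge\sigma_0])}+\|\mathcal{S}(u_0-v_0)\|_{\mathcal{E}([0,T_0])}\ge cR$ for a fixed $c>0$, and the pathwise bound from the previous paragraph (valid up to $\tau_0\wedge\sigma_0$) together with Chebyshev gives $\P(\tau_0\le t,\ \sigma_1>t)\lesssim R^{-p}\E\|u_0-v_0\|_{\Xap}^p$. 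The delicate point is that this controls the exit time of $v$ only via the closeness of $v$ to $u$ that was just established, so the argument is mildly circular and must be organised so that $R$ and $\varepsilon_0$ can be fixed uniformly in $v_0$. Choosing $\varepsilon_0$ small enough that all the smallness requirements on $R$ above are met finishes the proof; the stronger statement alluded to in Remark \ref{r:refined_local_existence} follows by propagating the same estimates through the intermediate scales $X_{\beta_j}$.
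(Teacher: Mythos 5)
Your overall strategy---truncating the nonlinearities, running a contraction/stability estimate for the truncated problem, defining the stopping times as exit times from a ball, and then passing to the claimed probability bounds via Chebyshev---is the same as the paper's, which builds on Steps 1--2 of \cite[Theorem 4.5]{AV19_QSEE_1} and its proof.

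However, there is a concrete flaw in your construction of the stopping time $\sigma_1$. You localise in the maximal-regularity space $\mathcal{E}([0,t]) = L^p(0,t,w_\a;X_1)\cap C([0,t];\Xap)$, and you include the condition $\|\mathcal{S}u_0\|_{\mathcal{E}([0,t])}>\tfrac{R}{2}$ in the definition of $\sigma_1$. But the $C([0,t];\Xap)$-component of that norm does \emph{not} tend to zero as $t\downarrow 0$: since $\mathcal{S}u_0(0)=u_0$ one has $\|\mathcal{S}u_0\|_{\mathcal{E}([0,t])}\geq \|u_0\|_{\Xap}$ for all $t>0$. Hence $\sigma_1=0$ on the event $\{\|u_0\|_{\Xap}>R/2\}$, which has positive probability for a general $u_0\in L^p_{\F_0}(\O;\Xap)$ once $R$ is a fixed small constant. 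This contradicts your (needed) claim that $\sigma_1\in(0,\sigma]$ a.s.\ and that $\P(\sigma_1\le t)\downarrow 0$ as $t\downarrow 0$. The paper avoids exactly this issue by basing the cut-off $\xi_\lambda(t,u)=\xi(\lambda^{-1}\|u\|_{\X(t)})$ and the stopping times on the space $\X(t)$ of \cite[eq.\ (4.14)]{AV19_QSEE_1}, which is a pure $L^{\rho_j p}(0,t,w_\a;X_{\beta_j})$-type integral norm (no $C$-component), so that $\|u\|_{\X(t)}\to 0$ pathwise as $t\downarrow 0$ regardless of the size of $u_0$. That choice is also the natural one because the Lipschitz estimates of Lemma \ref{l:estimate_nonlinearities} are stated in precisely the $X_{\beta_j}$-norms that $\X$ controls; in your setup you would need a separate embedding $\mathcal{E}\hookrightarrow \X$ to use Lemma \ref{l:estimate_nonlinearities} in the contraction.

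Two smaller remarks. First, you set $\sigma_0=\sigma_1$, whereas the paper uses two distinct radii ($\lambda_0$ and $\lambda_0/4$) for the exit times of the same quantity $\|\uu\|_{\X(t)}$; this is what gives the clean decomposition $\P(\sigma_0\wedge\tau_0\le t)\le \P(\|\vv-\uu\|_{\X(t)}\geq \lambda_0/2) + \P(\|\uu\|_{\X(t)}\geq \lambda_0/4)$ leading to \eqref{eq:local_continuity_3_prop}; your decomposition can be made to work but is less transparent. Second, your worry about circularity in the proof of \eqref{eq:local_continuity_3_prop} is unfounded: the pathwise bound on $w=u-v$ is established on $[0,\sigma_0\wedge\tau_0]$ before that estimate is invoked, and on $\{\tau_0\le t,\,\sigma_0>t\}$ one has $\sigma_0\wedge\tau_0=\tau_0$, so the bound applies.
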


Let us first show that Proposition \ref{prop:local_continuity} is included.
\begin{proof}[Proof of Proposition \ref{prop:local_continuity}]
The claim follows from Proposition \ref{prop:local_continuity_general} with the choice of $\a_{\crit}=p(\frac{h}{h-1}-\frac{1}{2}(\s+\frac{d}{q}))-1$, as in the proof of Theorem \ref{t:reaction_diffusion_global_critical_spaces}.
\end{proof}

\begin{proof}[Proof of Proposition \ref{prop:local_continuity_general}]
For the proof of Proposition \ref{prop:local_continuity_general} we need some of the arguments in the abstract local well-posedness result of \cite[Theorem 4.5]{AV19_QSEE_1} (see also \cite[Theorem 4.8]{AV19_QSEE_1}). Let $\xi\in W^{1,\infty}(\R)$ be such that $\xi|_{[0,1]}=1$, $\xi|_{[2,\infty)}=0$ and $\xi$ is linear on $[1,2]$.

For $\lambda>0$ consider the following truncated version of \eqref{eq:SEE}:
\begin{equation}
\label{eq:SEE_truncated}
\left\{
\begin{aligned}
&\dd u- \AS(t)u \,\dd t = \xi_{\lambda}(t,u)\FS(t,u)\,\dd t+ (\BS(t)u+\xi_{\lambda}(t,u) \GS(t,u))\, \dd W_{\ell^2}(t), \ \ \ t\in \R_+,\\
&u(0)=u_0,
\end{aligned}
\right.
\end{equation}
where
\begin{equation}
\label{eq:xi_cut_off_semilinear}
\xi_{\lambda}(t,u):=\xi\Big(\frac{1}{\lambda} \|u\|_{\X(t)}\Big)
\end{equation}
where $\X$ is as in \cite[eq.\ (4.14)]{AV19_QSEE_1} with $(\rho_j,\beta_j)$ as in Lemma \ref{l:estimate_nonlinearities}  and $\varphi_j=\beta_j$.
For the choice of the cut--off in \eqref{eq:xi_cut_off_semilinear} we also uses \cite[Remark 4.14]{AV19_QSEE_1} and that the implicit constants in estimates of Lemma \ref{l:estimate_nonlinearities} are independent of $v,v'$.
As noticed in Step 2 of the proof of Proposition \ref{prop:reaction_diffusion_global} Part (A), the $(p,\a,\s,q)$-solution of \eqref{eq:reaction_diffusion_system} is the $L^p_{\a}$-maximal solution in the terminology of \cite[Definition 4.4]{AV19_QSEE_1} with the choice \eqref{eq:def_X_theta}-\eqref{eq:ABFG_def}. Recall that $\Xap$ has been defined in \eqref{eq:Besov_spaces}.
Now Steps 1--2 in the proof of \cite[Theorem 4.5]{AV19_QSEE_1} show the existence of constants $(\lambda_0,T_0,\varepsilon_0)$ for which the following assertion holds: For all $v_0\in L^p_{\F_0}(\O;\Xap)$ such that $\E\|u_0-v_0\|_{\Xap}^p\leq \varepsilon_0$ there exists a local $(p,\a,\s,q)$-solution $(\vv,T_0)$ to \eqref{eq:SEE_truncated} with initial data $v_0$ and $\lambda=\lambda_0$ satisfying
\begin{equation}
\label{eq:stability_estimate_truncated_equation}
\E\|\uu-\vv\|_{C([0,T_0];\Xap)}^p+ \E\|\uu-\vv\|_{L^p(0,T_0,w_{\a};H^{2-\s,q})}^p\leq C_0\E\|u_0-v_0\|_{\Xap}^p,
\end{equation}
where $(\uu,T_0)$ is the local $(p,\a,\s,q)$-solution to \eqref{eq:SEE_truncated} with $\lambda=\lambda_0$ and initial data $u_0$.

As in Step 4 of \cite[Theorem 3.5]{AV19_QSEE_1}, we set
\begin{align}
\label{eq:def_sigma_0}
\sigma_0 &:=\inf\big\{t\in[0,T_0]\,:\, \|\uu\|_{\X(t)}\geq \lambda_0\big\}, \ \ \ \text{and} \  \ \tau_0:= \inf\big\{t\in[0,T_0]\,:\, \|\vv\|_{\X(t)}\geq \lambda_0\big\}.
\end{align}
Note that a.s.\ $\sigma_0>0$ and  $\tau_0  >0$.
Arguing as in \cite[Step 4]{AV19_QSEE_1}, one can check that $(\uu|_{[0,\sigma_0)\times \O},\sigma_0)$ (resp.\ $(\vv|_{[0,\tau_0)\times \O},\tau_0)$) is a local $(p,\a,\s,q)$-solution to \eqref{eq:reaction_diffusion_system} with initial data $u_0$ (resp.\ $v_0$). By maximality of the $(p,\a,\s,q)$-solutions $(u,\sigma)$ and $(v, \tau)$, we have
$\sigma_0\in (0,\sigma]$, $\tau_0\in (0,\tau]$ a.s., and
\begin{equation}
\label{eq:sigma_0_sigma_u_uu}
\uu=u \text{ a.e.\ on }[0,\sigma_0)\times \O, \quad \text{ and } \quad \vv=v \text{ a.e.\ on }[0,\tau_0)\times \O.
\end{equation}

We are ready to prove \eqref{eq:local_continuity_1_prop}. By \eqref{eq:sigma_0_sigma_u_uu}, for all $t\in [0,T_0]$.
\begin{align*}
\P\Big(\sup_{r\in [0,t]}\|u(r)-v(r)\|_{\Xap}\geq \gamma, \  \sigma_0\wedge \tau_0>t\Big)
&\leq
\P\Big(\sup_{r\in [0,t]}\|\uu(r)-\vv(r)\|_{\Xap}\geq \gamma\Big)\\
&\leq \frac{1}{\gamma^p}\E\|\uu-\vv\|_{C([0,t];\Xap)}^p\leq
\frac{C_0^p}{\gamma^p}\E\|u_0-v_0\|_{\Xap}^p,
\end{align*}
where in the last inequality we used \eqref{eq:stability_estimate_truncated_equation} and $t\leq T_0$.
The same argument also yields \eqref{eq:local_continuity_2_prop}.

Next we prove \eqref{eq:local_continuity_3_prop}. For all $t\in [0,T_0]$,
\begin{align*}
\P(\sigma_0\wedge \tau_0\leq t)
&\leq \P\big( \|\uu\|_{\X(t)}+\|\vv\|_{\X(t)}\geq  \lambda_0\big)\\
&\leq \P\big(2\|\uu\|_{\X(t)}+ \|\vv-\uu\|_{\X(t)} \geq \lambda_0 \big)\\
&\leq
\P\Big( \|\vv-\uu\|_{\X(t)} \geq \frac{\lambda_0}{2} \Big) +
\P\Big(\|\uu\|_{\X(t)}\geq \frac{\lambda_0}{4}\Big)\\
&\leq
\frac{2^pC_0}{\lambda_0^p}\E\|u_0-v_0\|_{\Xap}^p
+ \P(\sigma_1\leq t),
\end{align*}
where in the last step we used \eqref{eq:stability_estimate_truncated_equation} and
$
\sigma_1:=\inf\Big\{t\in[0,T_0]\,:\, \|\uu\|_{\X(t)}\geq \frac{\lambda_0}{4}\Big\}.
$
\end{proof}

\begin{remark}
\label{r:refined_local_existence}
The proof of Proposition \ref{prop:local_continuity_general} also yields the following facts.
\begin{enumerate}[{\rm(a)}]
\item By \eqref{eq:stability_estimate_truncated_equation} and \eqref{eq:sigma_0_sigma_u_uu}, the estimates \eqref{eq:local_continuity_1_prop}--\eqref{eq:local_continuity_3_prop} can be also formulated as $L^p(\O)$--estimates. For instance, \eqref{eq:local_continuity_1_prop} holds in the stronger form:
$$
\E\Big[\one_{\{\sigma_0\wedge \tau_0>t\} }\sup_{s\in [0,t]}\|u(s)-v(s)\|_{B^{2-\delta-2\frac{1+\a}{p}}_{q,p}}^p\Big]\leq C_0 \E\|u_0-v_0\|_{B^{2-\delta-2\frac{1+\a}{p}}_{q,p}}^p, \ \  t\in [0,T_0].
$$
\item\label{it:estimate_Htheta_local} Let $(\uu,\vv)$ be as in \eqref{eq:stability_estimate_truncated_equation}, i.e.\ the $(p,\a,\s,q)$--solutions to \eqref{eq:SEE_truncated} with data $(u_0,v_0)$, respectively. By Steps 1--2 of \cite[Theorem 4.5]{AV19_QSEE_1} and maximal $L^p$--regularity estimates (cf.\ \cite{AV21_SMR_torus}), we have the following stronger version of \eqref{eq:stability_estimate_truncated_equation}:
\begin{align*}
\E\|\uu-\vv\|_{H^{\theta,p}(0,T_0,w_{\a};H^{2-\s-2\theta,q})}\lesssim_{\theta} \E\|u_0-v_0\|_{B^{2-\delta-2\frac{1+\a}{p}}_{q,p}}^{p}, \ \ \text{ for all }\theta\in [0,\tfrac{1}{2}).
\end{align*}
Whence \eqref{eq:local_continuity_2_prop} also holds with $L^p(0,t,w_{\a};H^{2-\delta,q})$ replaced by
$H^{\theta,p}(0,t,w_{\a};H^{2-2\theta-\delta,q})$.
\item The proof of Proposition \ref{prop:local_continuity_general} shows that \eqref{eq:local_continuity_1_prop}--\eqref{eq:local_continuity_3_prop} holds also for quasilinear SPDEs as considered in \cite{AV19_QSEE_1} but taking $F_L=G_L\equiv 0$. The above proofs need the following modifications: \eqref{eq:xi_cut_off_semilinear} needs to be replaced with
$
\xi_{\lambda}(t,u)=\xi(\frac{1}{\lambda}[\|u\|_{\X(t)}+\sup_{s\in [0,t]}\|u(s)\|_{\Xap}]) $ for $ t\in [0,T_0]
$,
and \eqref{eq:def_sigma_0} needs to be replaced by
\begin{align*}
\sigma_0 &=\inf\Big\{t\in[0,T_0]\,:\, \|\uu\|_{\X(t)}+\sup_{s\in [0,t]}\|\uu(s)-u_0\|_{\Xap}\geq \lambda_0\Big\},\\
\tau_0&= \inf\Big\{t\in[0,T_0]\,:\, \|\vv\|_{\X(t)}+\sup_{s\in [0,t]}\|\vv(s)-u_0\|_{\Xap}\geq \lambda_0\Big\}.
\end{align*}
The same assertion as in Proposition \ref{prop:local_continuity_general} holds in the quasilinear setting, but the set $\{\tau_0= 0\}$ might have positive measure as we are only imposing smallness on $\E\|u_0-v_0\|_{\Xap}^p$.
\end{enumerate}
\end{remark}

\subsection{Blow-up criteria}
\label{ss:proof_blow_up_criteria}
Here we prove Theorem \ref{t:blow_up_criteria}. The argument follows the one in \cite[Lemma 6.10]{AV19_QSEE_2}. However, Theorem \ref{t:blow_up_criteria} cannot be deduced from such result since in the present situation we are also considering a parameter $h_0$ that is (possibly) different from $h$. Thus we provide a proof below.
For the reader's convenience, we give a (rough) idea of the argument which is based on the fact that solutions to \eqref{eq:reaction_diffusion_system} instantaneously regularizes, cf.\ \eqref{eq:reaction_diffusion_H_theta}-\eqref{eq:reaction_diffusion_C_alpha_beta}. Indeed, for any $s>0$, $u(s)$ is smooth and we may `restart' the system of SPDEs \eqref{eq:reaction_diffusion_system} considering the solution to such problem on $[s,\infty)$ with data $u(s)$, which will be denoted by $v$. Note that, a-priori, we don't know how $u(t)|_{t>s}$ and $v$ relate. Since $u(s)$ is smooth, the restarted problem \eqref{eq:reaction_diffusion_system} can be considered in a different `setting', i.e.\ replacing the parameters $(p,\a,q,\s,h)$ by (possibly) different ones $(p_0,\a_0,q_0,\s_0,h_0)$. With the latter choice, the results in \cite[Section 4]{AV19_QSEE_2} show that $v$ satisfy a blow-up criterium in the $(p_0,\a_0,q_0,\s_0,h_0)$-setting which is the analogue of the one claimed for $u$.  The conclusion follows by showing that $u=v$ on $[s,\infty)$ and thus the blow-up criteria for $v$ `transfers' to $u$.

\begin{proof}[Proof of Theorem \ref{t:blow_up_criteria}]
\eqref{it:blow_up_not_sharp}: \  We begin by collecting some useful facts. Fix $0<s<T<\infty$ and let $(u,\sigma)$ be the $(p,\a_{\crit},\s,q)$-solution to \eqref{eq:reaction_diffusion_system} provided by Theorem \ref{t:reaction_diffusion_global_critical_spaces}.
By \cite[Theorem 4.10(3)]{AV19_QSEE_2}, \eqref{eq:def_X_theta} and \eqref{eq:Besov_spaces} we have
\begin{equation}
\label{eq:blow_up_criteria_u}
\P\Big(\sigma<\infty,\, \sup_{ t\in [0,\sigma)}\|u(t)\|_{B^{\beta}_{q,p}}+ \|u\|_{L^p(0,\sigma;H^{\g,p})}<\infty\Big)=0,
\end{equation}
where
$$
\beta=\frac{d}{q}-\frac{2}{h-1}, \ \ \
 \g=\frac{d}{q}+\frac{2}{p}-\frac{2}{h-1}, \ \text{ and }\
  \a_{\crit}=p\Big(\frac{h}{h-1}-\frac{1}{2}\Big(\reg+\frac{d}{q}\Big)\Big)-1.
$$
Moreover, let us recall that, by \eqref{eq:regularity_u_reaction_diffusion_critical_spaces2} for $\theta_{\crit}:=\frac{\a_{\crit}}{p}<\frac{1}{2}-\frac{1}{p}$ and the weighted Sobolev embeddings (see e.g.\ \cite[Proposition 2.7]{AV19_QSEE_1}), we have
\begin{equation}
\label{eq:L_p_up_to_zero}
u\in H^{\theta_{\crit},p}_{{\rm loc}}([0,\sigma),w_{\a_{\crit}};H^{2-\s-2\theta_{\crit},q})\embed  L^p_{{\rm loc}}([0,\sigma);H^{\gamma,q})\ \text{ a.s.\ }
\end{equation}
Let $(q_0,p_0,\s_0,h_0,\beta_0)$ be as in Theorem \ref{t:blow_up_criteria} and $q_1>q_0$. Set $\a_0=\a_{\crit,0}=p_0\big(\frac{h_0}{h_0-1}-\frac{1}{2}(\reg_0+\frac{d}{q_0})\big)-1$. Fix $\a\in (\a_{\crit,0}, \a_{\crit,1})$  where $\a_{\crit,1}:= p_0(\frac{h_0}{h_0-1}-\frac{1}{2}(\reg+\frac{d}{q_1}))-1$ for $i\in \{0,1\}$.
Set $\beta:=2-\s-2\frac{1+\a}{p_0}$ and note that $\beta<\beta_0$.
By \eqref{eq:reaction_diffusion_C_alpha_beta} with $\theta_1=0$, $\theta=\theta_2\in (\beta,1)$ and the progressive measurability of $u$, we have
$$
\one_{\{\sigma>s\}}u(s)\in L^{0}_{\F_s}(\O;C^{\theta}).
$$
Combining this with
$C^{\theta}=B^{\theta}_{\infty,\infty}\embed B^{\beta}_{q_1,p_0}$ since $\theta>\beta$, we get
\begin{equation*}
\one_{\{\sigma>s\}}u(s)\in L^{0}_{\F_s}(\O;B^{\beta}_{q_1,p_0}), \ \ \text{ where } \ \ \V:=\{\sigma>s\}.
\end{equation*}
Up to a shift argument, Proposition \ref{prop:reaction_diffusion_global} ensures the existence of a $(p_0,\a_{0},\s_0,q_1)$-solution $(v,\tau)$ on $[s,\infty)$ to
\begin{equation}
\label{eq:reaction_diffusion_global_stochastic_s}
\left\{
\begin{aligned}
&\dd v_i-\div(a_i\cdot\nabla v_i) \,\dd t
= \Big[\div(F_i(\cdot, v)) +f_i(\cdot, v)\Big]\,\dd t \\
&\qquad \qquad \qquad \qquad \ \ \
+ \sum_{n\geq 1}  \Big[(b_{n,i}\cdot \nabla) v_i+ \rnoise_{n,i}(\cdot,v) \Big]\,\dd w_t^n, & \text{on }\Tor^d,\\
&v_i(s)=\one_{\{\sigma>s\}}u_i(s),  & \text{on }\Tor^d,
\end{aligned}
\right.
\end{equation}
where $v=(v_i)_{i=1}^{\ell}$. Moreover, the solution $(v,\tau)$ to \eqref{eq:reaction_diffusion_global_stochastic_s} instantaneously regularizes in time and space:
\begin{equation}
\label{eq:v_regularizes}
v\in H^{\theta,r}_{\rm loc}(s,\tau;H^{1-2\theta,\zeta})\quad  \text{a.s.\ for all }\theta\in  [0,1/2), \  r,\zeta\in (2,\infty).
\end{equation}
The notion of $(p_0,\a_{0},\s_0,q_1)$-solutions to \eqref{eq:reaction_diffusion_global_stochastic_s} follows as in Definition \ref{def:solution}.

By Step 1 of Proposition \ref{prop:reaction_diffusion_global} and the fact that $\a<\a_{\crit,1}$ we know that $B^{\beta}_{q_1,p_0}$ is \emph{not} critical for \eqref{eq:reaction_diffusion_global_stochastic_s}. Thus, applying \cite[Theorem 4.10(2)]{AV19_QSEE_2} to \eqref{eq:reaction_diffusion_global_stochastic_s},
\begin{equation*}
\P\Big(\tau<T,\, \sup_{t\in [s, \tau)}\|v(t)\|_{B^{\beta}_{q_1,p_0}}<\infty\Big)=0.
\end{equation*}
Since $\beta<\beta_0$, we have $ B^{\beta_0}_{q_1,\infty}\embed B^{\beta}_{q_1,p_0}$. Hence the previous implies
\begin{equation}
\label{eq:v_blow_up_criteria}
\P\Big(\tau<T,\, \sup_{t\in [s, \tau)}\|v(t)\|_{B^{\beta_0}_{q_1,\infty}}<\infty\Big)=0.
\end{equation}
Recall that $\V=\{\sigma>s\}$.
Since $\tau>s$ a.s., \eqref{eq:v_blow_up_criteria} shows that \eqref{it:blow_up_not_sharp}  follows as soon as we have shown
\begin{equation}
\label{eq:tau_sigma_u_v_equality}
\tau=\sigma \text{ a.s.\ on }\V \quad \text{ and }\quad u=v \text{ a.e.\ on }[s,\sigma)\times \V.
\end{equation}
The remaining part of this step is devoted to the proof of
\eqref{eq:tau_sigma_u_v_equality}. Let us begin by noticing that, by $h_0\geq h$ and \eqref{eq:reaction_diffusion_H_theta}, $(u|_{[s,\sigma)\times \V}, \one_{\V} \sigma+ \one_{\O\setminus\V} s)$ is a $(p_0,\a_{0},\s_0,q_1)$-solution to \eqref{eq:reaction_diffusion_global_stochastic_s}. The maximality of $(v,\tau)$ yields (see the last item in Definition \ref{def:solution})
\begin{equation}
\label{eq:tau_sigma_u_v_equality_correction}
\sigma\leq \tau \text{ a.s.\ on }\V \quad \text{ and }\quad u=v \text{ a.e.\ on }[s, \sigma)\times \V.
\end{equation}
To conclude it is enough to show that $\P(\V\cap \{\sigma<\tau\})=0$. To this end we employ the blow-up criteria in \eqref{eq:blow_up_criteria_u}. Indeed, by \eqref{eq:v_regularizes} and \eqref{eq:tau_sigma_u_v_equality_correction}, we have $u=v\in L^p_{\loc}((s,\sigma];H^{\gamma,q})$ a.s.\ on $ \V\cap \{\sigma<\tau\}$. Combining this with \eqref{eq:L_p_up_to_zero}, we find $u\in L^p( 0,\sigma;H^{\gamma,q})$ a.s.\ on  $ \V\cap \{\sigma<\tau\}$.  Similarly, one can check that $\sup_{t\in [0,\sigma)}\|u(t)\|_{B^{\beta}_{q,p}}<\infty$ a.s.\ on $ \V\cap \{\sigma<\tau\}$, and therefore
\begin{align*}
\P(\V\cap \{\sigma<\tau\})
&= \P\Big(\V\cap \{\sigma<\tau\} \cap \Big\{\sup_{t\in [0,\sigma)}\|u(t)\|_{B^{\beta}_{q,p}}+ \|u\|_{L^p(0,\sigma;H^{\g,p})}<\infty\Big\}\Big)\\
&\leq \P\Big(\sigma<\infty,\, \sup_{t\in [0,\sigma)}\|u(t)\|_{B^{\beta}_{q,p}}+ \|u\|_{L^p(0,\sigma;H^{\g,p})}<\infty\Big)\stackrel{\eqref{eq:blow_up_criteria_u}}{=}0.
\end{align*}

\eqref{it:blow_up_sharp}: The proof is similar to the one of \eqref{it:blow_up_not_sharp}. Indeed, let us consider the $(p_0,\a_{\crit,0},\s_0,q_0)$-solution to \eqref{eq:reaction_diffusion_global_stochastic_s} where  $\a_{\crit,0}=p(\frac{h_0}{h_0-1}-\frac{1}{2}(\reg_0+\frac{d}{q_0}))-1$. Here the subscript `$\crit$' stresses that the corresponding space for the initial data $B^{\beta_0}_{q_0,p_0}$ is critical for \eqref{eq:reaction_diffusion_global_stochastic_s} (cf.\ Step 1 of Proposition \ref{prop:reaction_diffusion_global} and note that the spatial integrability is $q_0$). Compared to Step 1, the only difference is that instead of \eqref{eq:v_blow_up_criteria} we use \cite[Theorem 4.10(3)]{AV19_QSEE_2} (which holds also in critical situations) and it yields
\begin{equation*}
\P\Big(\tau<T,\, \sup_{t\in [s,\tau)}\|v(t)\|_{B^{\beta_0}_{q_0,p_0}}+\|v\|_{L^{p_0}(s,\tau;H^{\g_0,q_0})}<\infty\Big)=0,
\end{equation*}
where $\beta_0,\g_0$ are as in the statement of Theorem \ref{t:blow_up_criteria}.
\end{proof}

\begin{proof}[Proof of Corollary \ref{cor:blow_up_criteria}]
To prove \eqref{it:blow_up_not_sharp_L} we use Theorem \ref{t:blow_up_criteria}\eqref{it:blow_up_not_sharp_L} with an appropriate choice of $(q_0,q_1)$. Recall that $h_0\geq 1+\frac{4}{d}$, $\zeta_0=\frac{d}{2}(h_0-1)$ and let $\zeta_1>\zeta_0$.
Choose $\s_0>1$ small enough so that Assumption \ref{ass:reaction_diffusion_global}\eqref{it:regularity_coefficients_reaction_diffusion} holds. Fix $q_1\leq \zeta_1$ such that
$$
\zeta_0<q_1<\frac{d(h_0-1)}{h_0+1-\reg_0(h_0-1)}.
$$
The above choice is possible since $\s_0>1$. Since $\s_0<2$, we may fix $p_0\in (q_1,\infty)$ such that
$$
\frac{1}{p_0}+\frac{1}{2}\Big(\reg_0+\frac{d}{\zeta_0}\Big)\leq \frac{h_0}{h_0-1} .
$$
One can check the condition in Theorem \ref{t:reaction_diffusion_global_critical_spaces} with $(p,q,\s,h)$ replaced by $(p_0,q_0,\s_0,h_0)$. By $\zeta_1\geq q_1$ and elementary embeddings for Besov spaces,
$
L^{\zeta_1}\embed L^{q_1}\embed B^{0}_{q_1,\infty}.
$
Hence
$$
\Big\{s<\sigma<T,\,\sup_{t\in [s,\sigma)} \|u(t)\|_{L^{\zeta_1}}<\infty\Big\}
\subseteq
\Big\{s<\sigma<T,\,\sup_{t\in [s,\sigma)} \|u(t)\|_{B^{0}_{q_1,\infty}}<\infty\Big\}
$$
Thus that \eqref{it:blow_up_not_sharp_L} follows from Theorem \ref{t:blow_up_criteria}\eqref{it:blow_up_not_sharp} with $q_0=\zeta_0$ noticing that $\beta_0=\frac{d}{\zeta_0}-\frac{2}{h_0-1}=0$.

To prove \eqref{it:blow_up_sharp_L} we use Theorem \ref{t:blow_up_criteria}\eqref{it:blow_up_sharp}.
Let $\delta_0>1$ be as above.
By assumption
$
q_0<\frac{d(h_0-1)}{h_0+1-\reg_0(h_0-1)}
$
and therefore
$$
\frac{d}{q_0}>\frac{2}{h_0-1}-(\s_0-1).
$$
Hence to ensure the existence of $p_0$ such that $\frac{2}{p_0}+\frac{d}{q_0}=\frac{2}{h_0-1}$ we need $p_0>\frac{2}{\s_0-1}$ as required in \eqref{it:blow_up_sharp_L}. Since $q_0>\zeta_0$,
$$
L^{\zeta_0}\stackrel{(i)}{ \embed} B^{\beta_0}_{q_0,q_0}\stackrel{(ii)}{ \embed} B^{\beta_0}_{q_0,p_0}
$$
where in $(i)$ we used the Sobolev embeddings for Besov spaces (recall $\beta_0=\frac{d}{q_0}-\frac{2}{h_0-1}$) and in $(ii)$ the fact that $p_0\geq q_0$ by assumption. Hence
\begin{align*}
&\Big\{s<\sigma<T,\,\sup_{t\in [s,\sigma)} \|u(t)\|_{L^{\zeta_0}}+\|u\|_{L^{p_0}(s,\sigma;L^{q_0})}<\infty\Big\}\\
&\subseteq
\Big\{s<\sigma<T,\,\sup_{t\in [s,\sigma)} \|u(t)\|_{B^{\beta_0}_{q_0,p_0}}+\|u\|_{L^{p_0}(s,\sigma;L^{q_0})}<\infty\Big\}.
\end{align*}
Thus \eqref{it:blow_up_sharp_L} follows from Theorem \ref{t:blow_up_criteria}\eqref{it:blow_up_sharp} by noticing that $\g_0=\frac{2}{p_0}+\frac{d}{q_0}-\frac{2}{h_0-1} =0$.
\end{proof}

Finally we prove a compatibility result for the solutions obtained in different settings.
\begin{proposition}[Compatibility of different settings]\label{prop:comp}
If Proposition \ref{prop:reaction_diffusion_global} is applicable for two sets of exponents $(p_1,\a_1,\s_1,q_1, h_1)$ and $(p_2,\a_2,\s_2,q_2, h_2)$, then the corresponding solutions $(u_1, \sigma_1)$ and $(u_2,\sigma_2)$ coincide, i.e.\ $\sigma_1=\sigma_2$ a.s.\ and $u_1=u_2$ a.e.\ on $[0,\sigma_1)\times \O$.
\end{proposition}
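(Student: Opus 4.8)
The plan is to show that each of the two solutions is a local solution in the setting of the other, and then invoke the maximality built into Definition \ref{def:solution}. Fix the two settings $(p_i,\a_i,\s_i,q_i,h_i)$, $i=1,2$, for which Proposition \ref{prop:reaction_diffusion_global} applies, with corresponding $(p_i,\a_i,\s_i,q_i)$-solutions $(u_i,\sigma_i)$. By symmetry it suffices to prove $\sigma_1\le\sigma_2$ a.s.\ and $u_1=u_2$ a.e.\ on $[0,\sigma_1)\times\O$. The key point is that by Remark \ref{r:basic_assumptions}\eqref{it:enlarge_h} we may enlarge $h_1$ so that $h_1\ge h_2$ (this does not affect Assumption \ref{ass:reaction_diffusion_global} nor the solution $(u_1,\sigma_1)$, since enlarging $h$ only weakens the growth restriction and Definition \ref{def:solution} does not mention $h$ in the identity \eqref{eq:reaction_diffusion_global_stochastic_integrated_form}).

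First I would use the instantaneous regularization \eqref{eq:reaction_diffusion_H_theta_1}--\eqref{eq:reaction_diffusion_C_alpha_beta_1} of Proposition \ref{prop:reaction_diffusion_global} applied to $(u_1,\sigma_1)$: for every $s>0$ one has $\one_{\{\sigma_1>s\}}u_1(s)\in L^0_{\F_s}(\O;C^{\theta})$ for suitable $\theta$, hence $u_1$ lies, locally away from $t=0$, in $H^{\theta,r}_{\loc}(0,\sigma_1;H^{1-2\theta,\zeta})$ for all $r,\zeta\in(2,\infty)$. In particular $u_1$ enjoys enough integrability and smoothness that, on any interval $[s,\sigma_1)$ with $s>0$, it defines a local $(p_2,\a_2,\s_2,q_2)$-solution to the shifted problem \eqref{eq:reaction_diffusion_global_stochastic_s} with initial datum $\one_{\{\sigma_1>s\}}u_1(s)$ — exactly the shift/restart argument already carried out in the proof of Theorem \ref{t:blow_up_criteria}, using $h_1\ge h_2$ to control the nonlinearities in the $q_2$-scale. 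The only genuinely new check compared to that argument is near $t=0$: one must verify that $u_1$ is in fact a local $(p_2,\a_2,\s_2,q_2)$-solution on all of $[0,\sigma_1)$, i.e.\ that the integrability \eqref{eq:integrability_nonlinearity} and the identity \eqref{eq:reaction_diffusion_global_stochastic_integrated_form} hold in the second setting including a neighbourhood of $0$.

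For the behaviour near $t=0$, I would argue as follows. The identity \eqref{eq:reaction_diffusion_global_stochastic_integrated_form} is an identity between $H^{-\s_i,q_i}$-valued (respectively $H^{1-\s_i,q_i}$-valued) processes; since $u_1$ solves it in the first setting and, by the regularization, also solves the shifted equation in the second setting on $[s,\sigma_1)$ for every $s>0$, one checks by continuity of $u_1$ in the trace norm (cf.\ \eqref{eq:regularity_u_reaction_diffusion_critical_spaces_1}) and a limiting argument $s\downarrow 0$ that $u_1(0)=u_{0}$ in the appropriate Besov space and that the integral identity extends down to $t=0$, provided the trace spaces are comparable, which holds since both equal $B^{\,\cdot}_{q_i,p_i}$ and $u_0$ lies in the intersection by hypothesis. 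The integrability \eqref{eq:integrability_nonlinearity} in the second setting, up to $t=0$, follows by combining the maximal-regularity bound of \cite{AV21_SMR_torus} on $[0,s]$ in the first setting with the regularization estimates on $[s,\sigma_1)$, much as \eqref{eq:L_p_up_to_zero} is obtained in the proof of Theorem \ref{t:blow_up_criteria}. Having established that $(u_1,\sigma_1)$ is a local $(p_2,\a_2,\s_2,q_2)$-solution, maximality of $(u_2,\sigma_2)$ gives $\sigma_1\le\sigma_2$ a.s.\ and $u_1=u_2$ on $[0,\sigma_1)\times\O$; the reverse inclusion (after enlarging $h_2$ instead) gives equality.

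The main obstacle I expect is precisely the endpoint $t=0$: the regularization results only give smoothness on $(0,\sigma)$, so transferring the solution property to the second setting \emph{including} $t=0$ requires care that the datum $u_0$ sits in the right critical Besov space for \emph{both} settings and that the stochastic integral identity passes to the limit $s\downarrow 0$ in the weaker topology $H^{-\max_i\s_i,\min_i q_i}$. Away from zero everything is a routine repetition of the restart argument from Theorem \ref{t:blow_up_criteria}, so the whole proof is short once the $t=0$ matching is handled; one could alternatively phrase the $t=0$ step abstractly by appealing to the uniqueness of $L^p_\a$-maximal solutions in \cite[Section 4]{AV19_QSEE_1} once both processes are recognized as solutions of the same SEE \eqref{eq:SEE} in overlapping function-space scales.
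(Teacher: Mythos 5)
Your approach — showing directly that $(u_1,\sigma_1)$ is a local $(p_2,\a_2,\s_2,q_2)$-solution and then invoking maximality — runs into exactly the obstruction the paper highlights in the paragraph preceding its proof, and your proposed fix for the $t=0$ endpoint does not close the gap. The problem is the integrability requirement \eqref{eq:integrability_nonlinearity} in Definition \ref{def:solution}: to declare $(u_1,\sigma_1)$ a local solution in the second setting you must produce
$u_1\in L^{p_2}(0,\sigma_j,w_{\a_2};H^{2-\s_2,q_2})$ on a full neighbourhood of $t=0$, but what Proposition \ref{prop:reaction_diffusion_global} and the maximal regularity bound of \cite{AV21_SMR_torus} give you is membership in $L^{p_1}(0,\sigma_j,w_{\a_1};H^{2-\s_1,q_1})$ and the associated $H^{\theta,p_1}(w_{\a_1})$-scale. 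As the authors point out, when both weights are critical one has $\tfrac{1+\a_1}{p_1}=\tfrac{1+\a_2}{p_2}$, and the weighted $L^{p_i}(w_{\a_i})$-spaces then do not embed one into the other in either direction (cf.\ \cite[Proposition 2.1(3), Remark 2.2]{AV19_QSEE_2}); nor, in general, does the spatial scale $H^{\cdot,q_1}$ embed into $H^{\cdot,q_2}$ when $q_2>q_1$. So the phrase ``combining the maximal-regularity bound on $[0,s]$ in the first setting with the regularization estimates on $[s,\sigma_1)$'' cannot deliver membership up to $t=0$ in the second setting — the instantaneous-regularization results \eqref{eq:reaction_diffusion_H_theta_1}--\eqref{eq:reaction_diffusion_C_alpha_beta_1} genuinely only live on the open interval $(0,\sigma_1)$, and the part on $[0,s]$ is stuck in the wrong scale. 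Also, enlarging $h_1$ as you propose changes the admissible range for the weight: the right-hand sides of \eqref{eq:reaction_diffusion_globali}--\eqref{eq:reaction_diffusion_globalii} decrease in $h$, so the original $\a_1$ may cease to satisfy the hypotheses of Proposition \ref{prop:reaction_diffusion_global}, and this step needs to be rechecked rather than asserted for free.

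The paper's argument sidesteps the $t=0$ comparison entirely. It first reduces, by localisation, to $L^p(\O)$-integrable data, and then approximates $u_0$ by $u_0^{(n)}\in L^\infty_{\F_0}(\O;C^1)$. For each $n$, the smoothness of $u_0^{(n)}$ allows one to run Proposition \ref{prop:reaction_diffusion_global} in a single \emph{common} strong setting $(r,0,1,r)$ with $r$ large, obtaining a solution $(u^{(n)},\sigma^{(n)})$ that can be identified — via the restart argument, now only for $t>0$ — with the $(p_i,\a_i,\s_i,q_i)$-solution with datum $u_0^{(n)}$ for both $i=1,2$. The local continuity estimates of Proposition \ref{prop:local_continuity_general} (via the truncated equation and \cite[Lemma 4.9]{AV19_QSEE_1}) are then used to pass to the limit $n\to\infty$ separately in each setting and conclude $u_1=u_2$ on a small initial interval; the identity on all of $[0,\sigma_1)$ then follows by a restart from $s>0$ as in Theorem \ref{t:blow_up_criteria}. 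The authors themselves note, in a remark after the proposition, that an argument through embeddings of the $\X$-spaces — close in spirit to what you attempt — is possible but ``technically more difficult'' and ``requires additional assumptions on the parameters'' which must then be removed by iteration. So your plan is not hopeless in principle, but as written the key step of transferring integrability to the second scale near $t=0$ is asserted rather than proved, and is in fact false without further structure; the approximation-plus-local-continuity route is what makes the paper's proof go through cleanly.
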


As Theorem \ref{t:reaction_diffusion_global_critical_spaces} is a special case of Proposition \ref{prop:reaction_diffusion_global}, the above compatibility also holds for solutions provided by Theorem \ref{t:reaction_diffusion_global_critical_spaces}. To explain the difficulty in proving the above result, let us consider two settings where Theorem \ref{t:reaction_diffusion_global_critical_spaces} applies with $p_1\neq p_2$ and $(\s_1,q_1, h_1)=(\s_2,q_2, h_2)$. Note that the corresponding critical weights $\a_{\crit,i}:= p_i\big(\frac{h}{h-1}-\frac{1}{2}(\reg+\frac{d}{q})\big)-1$ satisfy $\frac{1+\a_1}{p_1}=\frac{1+\a_2}{p_2}$. In particular, the $L^{p_i}(w_{\a_{\crit,i}})$--spaces on RHS\eqref{eq:integrability_nonlinearity} of Definition \ref{def:solution} \emph{cannot} be embedded one in the other (cf.\ \cite[Proposition 2.1(3) and Remark 2.2]{AV19_QSEE_2}). Hence, a priori it is unclear how to compare the solutions, and use the uniqueness in one of the two settings. To solve this, we use an approximation argument, local continuity, and regularization results.

\begin{proof}[Proof of Proposition \ref{prop:comp}]

{\em  Step 1: Approximation.}
Note that $u_0\in \cap_{i\in \{1,2\}} B^{2-\s_i-2\frac{1+\a_i}{p_i}}_{q_i,p_i}$ a.s.\ as the assumptions of Proposition \ref{prop:reaction_diffusion_global} are verified in both settings.
By localization in $\O$, see \cite[Theorem 4.7(d)]{AV19_QSEE_1}  with
\[\Gamma = \cap_{i\in \{1,2\}}\big\{\|u_0\|_{B^{2-\s_i-2\frac{1+\a_i}{p_i}}_{q_i,p_i}}\leq n\big\}\in \F_0,\]
it is enough to consider the case
$$
u_0\in \cap_{i\in \{1,2\}} L^{p_i}(\O;B^{2-\s_i-2\frac{1+\a_i}{p_i}}_{q_i,p_i}).
$$
In the following we let $(u^{(n)}_0)_{n\geq 1}\subseteq L^{\infty}_{\F_0}(\O;C^1)$ be such that
$u_0^{(n)}\to u_0$ in $L^{p_i}(\O;B^{2-\s_i-2\frac{1+\a_i}{p_i}}_{q_i,p_i})$ for $i\in \{1, 2\}$.
Fix some $r> \max\{p_1, p_2, q_1, q_2, 2d+2, d(h_1-1), d(h_2-1)\}$ and set $h=\max\{h_1, h_2\}$. Then one can check that Assumption \ref{ass:reaction_diffusion_global}$(r,r,h,1)$ holds, and
\eqref{eq:reaction_diffusion_globalii} holds with $(p,\kappa,q,\delta)$ replaced by $(r,0,r,1)$. Therefore, by Proposition \ref{prop:reaction_diffusion_global}, for each $n\geq 1$ there exists a (unique) $(r,0,1,r)$--solution $(u^{(n)},\sigma^{(n)})$ to \eqref{eq:reaction_diffusion_system} such that a.s.\ $\sigma^{(n)}>0$ and
\begin{equation}
\label{eq:u_n_regularity_compatibility}
u^{(n)}\in H^{\theta,r}_{\loc}([0,\sigma^{(n)});H^{1-2\theta,r})\cap C([0,\sigma^{(n)});B^{1-\frac{2}{r}}_{r,r}), \ \ \theta\in [0,1/2).
\end{equation}
By Sobolev embedding (since $1-\frac{2}{r}>-\frac{d}{r}$) we find that $u^{(n)}\in C([0,\sigma^{(n)});C(\T^d;\R^\ell))$ a.s.
Now let us fix $i\in \{1,2\}$ and $n\geq 1$. Consider the $(p_i,\a_i,\s_i,q_i)$--solution $(u^{(n)}_i,\sigma^{(n)}_i)$ provided by Proposition \ref{prop:reaction_diffusion_global} with the parameters $(p_i,\a_i,\s_i,q_i)$ and initial data $u^{(n)}_0$. By Definition \ref{def:solution}, \eqref{eq:u_n_regularity_compatibility}, and the special choice of $r$, one obtains that $(u^{(n)},\sigma^{(n)})$ is a local $(p_i,\a_i,\s_i,q_i)$--solution to \eqref{eq:reaction_diffusion_system}. Hence  $\sigma^{(n)}\leq \sigma_i^{(n)}$ and $u^{(n)}=u_i^{(n)}$ a.e.\ on $[0, \sigma^{(n)})\times \O$ by maximality of $(u_i^{(n)},\sigma_i^{(n)})$. Now reasoning as in the proof of Theorem \ref{t:blow_up_criteria}, by instantaneous regularization of $(p_i,\a_i,\s_i,q_i)$--solutions (i.e.\ \eqref{eq:reaction_diffusion_H_theta_1}--\eqref{eq:reaction_diffusion_C_alpha_beta_1}), we also obtain
\begin{equation}
\label{eq:equality_sigma_sigma_n_correction}
\sigma_i^{(n)}=\sigma^{(n)} \text{ a.s.}, \quad \text{ and }\quad u^{(n)}_i = u^{(n)}\text{ a.e.\ on }[0,\sigma^{(n)})\times \O.
\end{equation}
Thus in the following we write $(u^{(n)},\sigma^{(n)})$ instead of $(u_i^{(n)},\sigma^{(n)}_i)$.

\emph{Step 2: For all $i\in \{1,2\}$, up to extracting a (not relabeled) subsequence of $((u^{(n)},\sigma^{(n)}))_{n\geq 1}$, there exists a stopping time $\tau_i\in (0,\sigma_i)$ such that a.s.\ $\tau_i<\liminf_{n\to \infty} \sigma^{(n)}$ and}
\begin{equation}
\label{eq:continuity_n}
u_i =\lim_{n\to \infty} u^{(n)}\ \text{ a.e.\ on }[0,\tau_i)\times \O.
\end{equation}

Note that the RHS\eqref{eq:continuity_n} makes sense since $\tau_i<\liminf_{n\to \infty} \sigma^{(n)}$.

In this step we fix $i\in \{1,2\}$.
Moreover, we use the notation introduced in the proof of Proposition \ref{prop:local_continuity_general} with the subscript $i$ to keep track of the setting we are considering. For instance $\X_i$ denotes the space introduced in \cite[eq.\ (4.14)]{AV19_QSEE_1} in the $(p_i,\a_i,\s_i,q_i)$--setting.
Here we prove the claim with $\tau_i$ given by (cf.\ the definition of $\sigma_1$ at the end of the proof of Proposition \ref{prop:local_continuity_general})
$$
\tau_i:=\inf\Big\{t\in[0,T_{0,i}]\,:\, \|\uu_i\|_{\X_i(t)}\geq \frac{\lambda_{0,i}}{4}\Big\},
$$
where $\uu_i$ is the $(p_i,\a_i,\s_i,q_i)$--solution on $[0,T_{0,i}]$ solution to \eqref{eq:SEE_truncated} in the $(p_i,\a_i,\s_i,q_i)$--setting, and where $T_{0,i}$ and $\lambda_{0,i}$ are as in the proof of Proposition \ref{prop:local_continuity_general}.

To prove the claim of Step 2, let $\sigma_{0,i}^{(n)}$ be as in \eqref{eq:def_sigma_0} with $(\uu,\X,\lambda_0)$ replaced by $(\uu_i^{(n)},\X_i,\lambda_{0,i})$.  By \cite[Lemma 4.9]{AV19_QSEE_1}, Remark \ref{r:refined_local_existence}\eqref{it:estimate_Htheta_local}  and \cite[Corollary 5.2]{ALV21} it follows that
\begin{align}
\label{eq:convergence_uu_i_uu}
\uu^{(n)}_i \to \uu_i  \ \text{ a.s.\ in }\X_i(T_{0, i})\cap C([0,T_{0, i}];B^{2-\s_i-2\frac{1+\a_i}{p_i}}_{q_i,p_i}).
\end{align}
Therefore, up to extracting a subsequence, we have $\tau_i <\liminf_{n\to \infty} \sigma_{0,i}^{(n)}$.
Recall that (see \eqref{eq:sigma_0_sigma_u_uu})
\begin{eqnarray*}
\sigma_{0,i}^{(n)} \leq \sigma^{(n)}_i \ \  \ & \text{a.s.\ \ and} & \ \ \uu_i^{(n)}=u_i^{(n)} \  \ \text{a.e.\ on $[0,\sigma_{0,i}^{(n)})\times \O$,}
\\ \sigma_{0,i}  \leq \sigma_i \ \ & \text{a.s.\ \ and} & \ \ \uu_i=u_i\  \ \text{a.e.\ on $[0,\sigma_{0,i})\times \O$}.
\end{eqnarray*}
Hence $\tau _{i}<\liminf _{n\to \infty} \sigma _{i}^{(n)}=\liminf _{n\to \infty} \sigma^{(n)}$ by \eqref{eq:equality_sigma_sigma_n_correction}. Finally, Step 1 and \eqref{eq:convergence_uu_i_uu} give \eqref{eq:continuity_n}.

\emph{Step 3: Conclusion}. By Steps 1 and 2, we deduce that
$$
u_1(t)=u_2(t)\ \  \text{ a.s.\ for all }t\in [0,\tau_1\wedge \tau_2).
$$
Set $\tau:=\tau_1\wedge \tau_2\in (0,\sigma_1\wedge \sigma_2]$ and let $s>0$. Using the instantaneous regularization (i.e.\ \eqref{eq:reaction_diffusion_H_theta_1}-\eqref{eq:reaction_diffusion_C_alpha_beta_1}) we have $\one_{\{\tau>s\}}u_1(s)=\one_{\{\tau>s\}} u_2(s)\in C^{\theta}$ for all $\theta\in (0,1)$. Hence, as in Step 1, the conclusion follows by repeating the argument used in Theorem \ref{t:blow_up_criteria}.
\end{proof}

\begin{remark} \
\begin{enumerate}[{\rm(a)}]
\item (\emph{A proof involving $\X$--space}).
Proposition \ref{prop:comp} can be also proved by using embedding results for the $\X$--spaces (cf.\ the proof of \cite[Proposition 6.8]{AV19_QSEE_2} where $\a_i=0$ for some $i\in \{1,2\}$). Besides being technically more difficult, this approach also requires additional assumptions on the parameters $(p_i,\a_i,\s_i,q_i)$. These restrictions can be removed by tedious iteration arguments. Hence we prefer the above more direct argument based on local continuity.
\item
The proof of Proposition \ref{prop:comp} extends verbatim to other situations such as the Navier--Stokes equations with transport noise as analyzed in \cite{AV20_NS}.
\end{enumerate}
\end{remark}

\subsection{Positivity}\label{ss:positivityproof}

Next we will prove the positivity of the solution stated in Theorem \ref{thm:positivity}. For the proof we need the well-posedness and regularity results of Theorem \ref{t:reaction_diffusion_global_critical_spaces}, Proposition \ref{prop:local_continuity}, the blow-up criteria of Theorem \ref{t:blow_up_criteria}, and a maximum principle for linear scalar equations, which is a variation of \cite{Kry13} (see Lemma \ref{lem:maxprinciple} in the appendix).

In case of smooth initial data the proof below can be shortened considerably. In particular, the approximation argument in Step 1 in the proof below can be omitted. Note that Step 1 relies on the rather technical local continuity result of Proposition \ref{prop:local_continuity}.

\begin{proof}[Proof of Theorem \ref{thm:positivity}]
Below we write $Y:=\Xapcrit=B^{\frac{d}{q}-\frac{2}{h-1}}_{q,p}(\Tor^d;\R^{\ell})$ for convenience.

\emph{Step 0: Reduction to the case $u_0\in L^p(\O;Y)$}.
To prove the claim of this step, assume that the claim of Theorem \ref{thm:positivity} holds for $L^p(\O)$--integrable data.
For any $n\geq 1$, set
$\V_n:=\{\|u_0\|_{Y}\leq n\}
$
and let $(u^{(n)},\sigma^{(n)})$ be the $(p,\a_{\crit},\s,q)$-solution to \eqref{eq:reaction_diffusion_system} with initial data $\one_{\V_n}u_0$. Thus, by assumption, Theorem \ref{thm:positivity} holds for $(u^{(n)},\sigma^{(n)})$ and therefore
\begin{equation}
\label{eq:positivity_u_n}
u^{(n)}(t,x)\geq 0 \ \text{ a.s.\ for all $t\in [0,\sigma^{(n)})$ and }x\in \Tor^d.
\end{equation}
By localization (i.e.\ \cite[Theorem 4.7(d)]{AV19_QSEE_1}), we have
$$
\sigma=\sigma^{(n)} \text{ a.s.\ on }\V_n, \quad \text{and }\quad
u=u^{(n)}\text{ a.e.\ on }[0,\sigma)\times \V_n.
$$
The previous identity, the arbitrariness of $n\geq 1$ and \eqref{eq:positivity_u_n} yield the claim of this step.

\emph{Step 1: Reduction to the case $u_0\in L^0(\O;C^{\alpha}(\Tor^d;\R^{\ell}))$ where $\alpha\in (0,1)$}.
Fix $\alpha\in (0,1)$. By Step 0 we can assume that $u_0\in L^p(\O;Y)$. In the current step we assume that Theorem \ref{thm:positivity} holds for initial data from $ L^p(\O;C^{\alpha}(\R^d;\R^{\ell}))$. Note that from \eqref{eq:reaction_diffusion_C_alpha_beta}, we know that $u$ is smooth on $(0,\sigma)\times \T^d$. This will be used several times below.

Set
$$
\A_+:=\{\varphi\in \D(\Tor^d;\R^{\ell})\,:\, \varphi\geq 0\  \text{ and } \ \|\varphi\|_{Y^*}\leq 1\}.
$$
It is important to note that $\A_+$ is separable due to the separability of $\D(\Tor^d;\R^{\ell})$, in order to have measurable sets below.
Recall that $u_0\geq 0$ by assumption. Let $(C_0,T_0,\varepsilon_0, \sigma_0,\sigma_1)$ be as in the statement of Proposition \ref{prop:local_continuity}. Choose a sequence $(u_{0}^{(n)})_{n\geq 1}$ in $L^p(\O;C^{\alpha}(\T^d;\R^{\ell}))$ such that $u_{0}^{(n)}\geq 0$ a.s.\ on $\Tor^d$ and $u_{0}^{(n)}\to u_0$ in $L^p(\O;Y)$.
Without loss of generality we may assume that $\E\|u_0-u_0^{(n)}\|_{Y}^p\leq \varepsilon_0$ for all $n\geq 1$.
Let $(u^{(n)},\sigma^{(n)})$ be the $(p,\a_{\crit},\s,q)$-solution to \eqref{eq:reaction_diffusion_system} with initial data $u_0^{(n)}$.
The reductive assumption ensures that
\begin{equation}
\label{eq:positivity_u_n_2}
u^{(n)}(t,x)\geq 0 \ \text{ a.s.\ for all $t\in (0,\sigma^{(n)})$ and }x\in \Tor^d.
\end{equation}
Note that, for all $t\in (0,T_0]$, $n\geq 1$ and $\gamma>0$,
\begin{align*}
&\P\Big(\inf_{r\in [0, t]}\int_{\Tor^d} u(r)\cdot\varphi\,\dd x \leq -\gamma \text{ for some }\varphi\in \A_+ ,\, \sigma_0>t\Big)\\
&\leq \P\Big(\inf_{r\in [0,t]}\int_{\Tor^d} u(r)\cdot\varphi\,\dd x \leq -\gamma \text{ for some }\varphi\in \A_+  ,\, \sigma_0\wedge \tau_0^{(n)}> t\Big)+\P(\sigma_0\wedge \tau_0^{(n)}\leq t),
\end{align*}
where $\tau_0^{(n)}$ is as in Theorem \ref{t:reaction_diffusion_global_critical_spaces} with $v_0$ replaced by $u_0^{(n)}$. Note that, by combining \eqref{eq:regularity_u_reaction_diffusion_critical_spaces}, \eqref{eq:positivity_u_n_2} and $\|\varphi\|_{Y^*}\leq 1$ for $\varphi\in \A_+$,
$$
\Big\{\inf_{r\in [0,t]}\int_{\Tor^d} u(r)\cdot\varphi\,\dd x \leq -\gamma ,\, \sigma_0\wedge \tau_0^{(n)}\geq t\Big\}\cap
\Big\{\sup_{r\in [0,t]}\| u(r)-u^{(n)}(r)\|_{Y} <\gamma\Big\}=\emptyset.
$$
Hence
\begin{align*}
& \P\Big(\inf_{r\in [0,t]}\int_{\Tor^d} u(r)\cdot\varphi\,\dd x \leq -\gamma \text{ for some }\varphi\in \A_+  ,\, \sigma_0>t\Big)	\\
 &\leq \P\Big(\sup_{r\in [0,t]}\|u(r)-u^{(n)}(r)\|_{Y} \geq \gamma ,\, \sigma_0\wedge \tau_0^{(n)}>t\Big)
 +\P(\sigma_0\wedge \tau_0^{(n)}\leq t)
 \\
&\leq  C_0 (1+\gamma^{-p})\E\|u_0-u_0^{(n)}\|_{Y}^p+C_0\P(\sigma_1\leq t),
\end{align*}
where in the last estimate we used \eqref{eq:local_continuity_1} and \eqref{eq:local_continuity_3}. Letting $n\to \infty$ and $\gamma=k^{-1}\downarrow 0$, the above estimate yields
\begin{equation}
\label{eq:estimate_A_t}
\P(\U_t) \leq
C_0 \P(\sigma_1\leq t),
\end{equation}
where
$$
\U_t:=\Big\{\inf_{r\in [0, t]}\int_{\Tor^d} u(r)\cdot\varphi\,\dd x < 0\text{ for some }\varphi\in \A_+ ,\, \sigma_0>t\Big\}.
$$
Note that
$
\{\sigma_0>t\}=\U_t \cup \V_t
$
where
\begin{align*}
\V_t
&:=\Big\{\inf_{r\in [0, t]}\int_{\Tor^d} u(r)\cdot\varphi\,\dd x \geq   0 \text{ for all }\varphi\in \A_+,\, \sigma_0>t\Big\}\\
&\ = \big\{ u(r,x)\geq    0 \text{  for all } x\in \T^d, r\in (0,t]\big\}\cap\{ \sigma_0>t\},
\end{align*}
where we used the smoothness of $u$.
By definition $\V_s\supseteq \V_t$ as $s\leq t$ and $\V_t\in \F_t$ for all $t\in [0,T_0]$.
The estimate \eqref{eq:estimate_A_t} gives
\begin{equation}
\label{eq:V_t_has_probability_almost_1}
\P(\V_t) = \P(\sigma_0>t) - \P(\U_t)\geq  \P(\sigma_0>t) - C_0 \P(\sigma_1\leq t)\to 1 \ \ \text{as $t\downarrow0$},
\end{equation}
where we used $\sigma_0>0$ and $\sigma_1>0$ a.s.

Fix $t\in [0,T_0]$ and consider $(v,\tau)$ the $(p,\a_{\crit},\s,q)$-solution to \eqref{eq:reaction_diffusion_system} on $[t,\infty)$ with initial data $v(t)=v_t:=\one_{\V_t}u(t)$.
By definition of $\V_t$, we have a.s.\ $v_t\geq 0$ on $\Tor^d$, and by the smoothness of $(u,\sigma)$ (see  \eqref{eq:reaction_diffusion_C_alpha_beta}), we have
$
v_t\in L^0_{\F_t}(\O;C^{\alpha}(\Tor^d;\R^{\ell})) .
$

In particular, by the reductive assumption (applied at initial time $t$ instead of $0$), we have a.s.
\begin{equation*}
v(r,x)\geq 0 \ \text{ for all $r\in [t,\tau)$ and }x\in \Tor^d.
\end{equation*}
As before, by localization  (i.e.\ \cite[Theorem 4.5(4)]{AV19_QSEE_1}), $\tau=\sigma$ a.s.\ on $\V_t$ and $v=u$ a.e.\ on $[t,\tau)\times \V_t$.
It follows that
\begin{align*}
\P\big(u(r,x)\geq 0 \ \forall r\in [t,\sigma) \ \text{and} \ x\in \T^d\big)
& = \lim_{s\downarrow 0}\P\big(\big\{u(r,x)\geq 0 \ \ \forall r\in [t,\sigma) \ \text{and} \ x\in \T^d\big\}\cap \V_s\big)
\\ &= \lim_{s\downarrow 0}\P\big(\big\{v(r,x)\geq 0 \ \forall r\in [t,\tau) \ \text{and} \ x\in \T^d\big\}\cap \V_s\big)
\\ &= \lim_{s\downarrow 0}\P(\V_s) = 1,
\end{align*}
by \eqref{eq:V_t_has_probability_almost_1}.  Therefore, letting $t\downarrow 0$ we obtain that a.s. $u(r,x)\geq 0$ for all $r\in (0,\sigma)$ and $x\in \T^d$.

\emph{Step 2: Reduction to the case $u_0\in L^\infty(\O;C^{\alpha}(\Tor^d;\R^{\ell}))$ where $\alpha\in (0,1)$}. Due to Step 1, the claim of Step 2 follows by localization as in Step 0.

{\em Step 3: Reduction to positivity of a new function $u^+$.}
By the previous steps we may suppose that $u_0\in L^\infty(\Omega;C^{\alpha}(\T^d;\R^\ell))$ for some $\alpha>0$.
For all $(t,\om,x)\in \R_+\times\O\times \Tor^d$, $y\in \R^{\ell}$ and
$i\in \{1,\dots,\ell\}$, let
\begin{align*}
&f_i^+(t,\om,x,y):=f_i(t,\om,x, (y\vee 0)), \qquad
F_i^+(t,\om,x,y):=F_i(t,\om,x, (y\vee 0)),\\
& \quad \quad \quad \quad\quad \quad \quad \quad
g_{n,i}^+(t,\om,x,y):=g_{n,i}(t,\om,x, (y\vee 0)).
\end{align*}
We denote by \eqref{eq:reaction_diffusion_system}$^{+}$ the system of SPDEs \eqref{eq:reaction_diffusion_system} with $(F,f,g)$ replaced by $(F^+,f^+,g^+)$. Since the assignment $y\mapsto y\vee 0$ is globally Lipschitz, $(F^+,f^+,g^+)$ satisfies Assumption \ref{ass:reaction_diffusion_global}\eqref{it:growth_nonlinearities} with the same parameters. Thus, by Theorem \ref{t:reaction_diffusion_global_critical_spaces} there exists a $(p,\a_{\crit},\s,q)$-solution $(u^+,\sigma^+)$ to \eqref{eq:reaction_diffusion_system}$^{+}$. Moreover, Theorem \ref{t:blow_up_criteria}\eqref{it:blow_up_not_sharp_L} implies (with $T\uparrow \infty$)
\begin{equation}
\label{eq:blow_up_u_plus}
\P\Big(s<\sigma^+<\infty,\, \sup_{t\in [s,\sigma^+)} \|u^+(t)\|_{L^{\infty}}<\infty\Big)=0, \ \ \text{ for all }s>0.
\end{equation}
We claim that
\begin{equation}
\label{eq:positivity_on_sigma_j_u_plus}
u^+\geq 0 \ \ \text{ a.e.\ on }[0,\sigma^+)\times \O\times \Tor^d.
\end{equation}

Next we show that \eqref{eq:positivity_on_sigma_j_u_plus} yields the claim of Theorem \ref{thm:positivity}. More precisely we prove that, if \eqref{eq:positivity_on_sigma_j_u_plus} holds, then
\begin{equation}
\label{eq:positivity_equality_on_sigma_j_u_plus}
\sigma^+ =\sigma  \ \text{ a.s.\ } \quad \text{ and }\quad
u^+= u \ \text{ a.e.\ on }[0,\sigma)\times \O\times \Tor^d.
\end{equation}
Suppose that \eqref{eq:positivity_on_sigma_j_u_plus} holds.
Thus, by Definition \ref{def:solution}, $(u^+,\sigma^+)$ is a \emph{local} $(p,\a_{\crit},\s,q)$-solution to the original problem \eqref{eq:reaction_diffusion_system}. Since $(u,\sigma)$ is a $(p,\a_{\crit},\s,q)$-solution to \eqref{eq:reaction_diffusion_system}, we have
\begin{equation*}
\sigma^+ \leq \sigma \ \text{ a.s.\ } \quad \text{ and }\quad
u^+= u \  \text{ a.e.\ on }[0,\sigma^+)\times \O\times \Tor^d.
\end{equation*}
Since $\sigma^+>0$ a.s., to prove \eqref{eq:positivity_equality_on_sigma_j_u_plus} it remains to show that $\P(s<\sigma^+<\sigma)=0$ for all $s>0$. To this end, fix $s\in (0,\infty)$. Note that $u^+=u$ a.s.\ on $[s, \sigma^+\wedge \sigma)$, and $u\in C([s,\sigma^+]\times \Tor^d;\R^{\ell})$ a.s.\ on $\{s<\sigma^+<\sigma\}$ by \eqref{eq:reaction_diffusion_C_alpha_beta}. Thus
\begin{align*}
\P(s<\sigma^+<\sigma)
&=
\P\Big(\{s<\sigma^+<\sigma\}\cap \sup_{t\in [0,\sigma^+)}\|u^+(t)\|_{L^{\infty}}<\infty\Big)\\
&\leq \P\Big(s<\sigma^+<\infty,\, \sup_{t\in [s,\sigma^+)} \|u^+(t)\|_{L^{\infty}}<\infty\Big)
\stackrel{\eqref{eq:blow_up_u_plus}}{=}
0.
\end{align*}
This proves \eqref{eq:positivity_equality_on_sigma_j_u_plus} in case \eqref{eq:positivity_on_sigma_j_u_plus} holds.

{\em Step 4: Proof of \eqref{eq:positivity_on_sigma_j_u_plus}.}
Fix $i\in \{1, \ldots, \ell\}$. As usual, for all $j\geq 1$, we set
$$
\sigma^+_j:=
\inf\Big\{t\in [0,\sigma) \,:\, \|u^+(t)-u_0\|_{L^{\infty}}+ \|u^+\|_{L^2(0,t;H^1)}\geq j\Big\} \wedge j\ \ \text{ where }\ \ \inf\emptyset:=\sigma.
$$
By \eqref{eq:reaction_diffusion_H_theta}-\eqref{eq:reaction_diffusion_C_alpha_beta} (applied with $(F,f,g)$ replaced by $(F^+,f^+,g^+)$) we have $\lim_{j\to \infty}\sigma_j^+= \sigma^+$. Therefore, to show \eqref{eq:positivity_on_sigma_j_u_plus} it is suffices to prove
\begin{equation*}
u^+\geq 0 \ \ \text{ a.e.\ on }[0,\sigma_j^+]\times \O\times \Tor^d.
\end{equation*}
In the following we fix $j\geq 1$ such that $\|u_0\|_{L^\infty(\Omega;L^\infty)}<j$, and we drop it from the notation. Moreover, we set $\tau^+:=\sigma_j^+$. Note that
\begin{equation}
\label{eq:u_j_bound_positivity}
\sup_{t\in [0,\tau^+)}\|u^+(t)\|_{L^{\infty}}\leq 2j \text{ a.s.}
\qquad \text{ and }\qquad
\|u^+\|_{L^2(0,\tau^+;H^1)}\leq j \text{ a.s. }
\end{equation}

Next we turn the nonlinearities into globally Lipschitz function by a cut-off argument. Let $\zeta:\R^\ell \to \R^\ell$ be a smooth map such that $\zeta|_{\{|y|\leq 2j\}}=1$ and $\zeta|_{\{|y|\geq 2j+1\}}=0$. Set
\begin{align*}
&\fp_i(\cdot, y):=f_i^+(\cdot, \zeta(y)), \qquad
\Fp_i(\cdot, y):=F^+_i(\cdot, \zeta(y)), \qquad
\gp_{n,i}(\cdot, y):=g^+_{n,i}(\cdot, \zeta(y)).
\end{align*}
Then $\fp_i,\Fp_i,\gp_{n,i}$ are globally Lipschitz w.r.t.\ $y\in \R^\ell$ uniformly in $(t,\om,x)\in [0,\infty)\times\Omega\times\T^d$.

For a vector $y\in \R^{\ell}$ we set $\wh{y}_i=(y_1,\dots,y_{i-1},0,y_{i+1},\dots,y_{\ell})$.
Note that, a.e.\ on $[0,\tau^+)\times \O\times \Tor^d$,
\begin{align*}
f_i^+(\cdot,u)
=\big[f^+_i(\cdot,u)-f^+_i(\cdot,\wh{u}_i)\big]+f^+_i(\cdot,\wh{u}_i)
\stackrel{\eqref{eq:u_j_bound_positivity}}{=} \big[\fp_i(\cdot,u)-\fp_i(\cdot,\wh{u}_i)\big]+\fp_i(\cdot,\wh{u}_i).
\end{align*}
Below we will exploit that $\fp_i(\cdot,\wh{u}_i)\geq 0$ by \eqref{eq:positivity_f}.
Similarly, by \eqref{eq:positivity_F}--\eqref{eq:positivity_g},
\begin{align*}
\div (F_{i}^+(\cdot,u))&=  \div [\Fp_{i}^+(\cdot,u)-\Fp_i^+(\cdot,\wh{u}_i)],\\
g_{n,i}^+(\cdot,u)&= \gp_{n,i}(\cdot,u)-\gp_{n,i}(\cdot,\wh{u}_i).
\end{align*}
Recall that Lipschitz functions are weakly differentiable. Hence, for a Lipschitz function $R$ writing $R(u)-R(v)=\int_{0}^1 \frac{\dd }{\dd s}[R(u+s(v-u))] \,\dd s=\big( \int_0^1 R'(u+s(v-u))\,\dd s \big) (v-u)$, one can check that there exists bounded $\Progress\otimes \Borel(\Tor^d)$-measurable maps $r_{\fp_i},r_{\Fp_i} ,r_{\gp_i,n}$ (depending on $u$ on $[0,\tau^+)\times \O\times \Tor^d$) such that
\begin{equation}
\label{eq:linearization_positive_r_functions}
\begin{aligned}
&\Fp_i^+(\cdot, u)-\Fp_i^+(\cdot,\wh{u}_i)= r_{\Fp_i}u_i,\quad
\fp_i(\cdot,u)-\fp_i(\cdot,\wh{u}_i)=  r_{\fp_i} u_i, \quad
\gp_{n,i}(\cdot,u)-\gp_{n,i}(\cdot,\wh{u}_i)=  r_{\gp_i,n} u_i
\end{aligned}
\end{equation}
a.e.\ on $[0,\tau^+)\times \O\times \Tor^d$.

Now consider the following linearization of \eqref{eq:reaction_diffusion_system}:
\begin{equation}
\label{eq:v_problem_positivity}
\left\{
\begin{aligned}
\dd v_i -\div(a_i\cdot\nabla v_i) \,\dd t
&= \one_{[0,\tau^+)}\Big[\div(r_{\Fp_i}  v_i)+ r_{\fp_i,n} v_i +\fp_i(\cdot,\wh{u}_i)\Big]\,\dd t \\
&+ \sum_{n\geq 1}  \Big[(b_{n,i}\cdot \nabla) v_i+\one_{[0,\tau^+)} r_{\gp_i,n} v_i\Big]\,\dd w_t^n, & \text{ on }\Tor^d,\\
v_i(0)&=u_{i,0},  & \text{ on }\Tor^d.
\end{aligned}
\right.
\end{equation}
Let $v_i\in L^2(\O;C([0,j];L^2))\cap L^2((0,j)\times \O;H^1)$ be the global $(2,0,1,2)$-solution to the linear problem \eqref{eq:v_problem_positivity} (well-posedness follows from \cite[Chapter 4]{LR15}). By \eqref{eq:linearization_positive_r_functions}, $u^+_i$ is a solution to the problem \eqref{eq:v_problem_positivity} on $[0,\tau^+)$. Therefore, by uniqueness $u^+_i = v_i$ on $[0,\tau^+)$. Thus it remains to show $v_i\geq 0$ on $[0,j]$. By \eqref{eq:positivity_f}, the inhomogeneity satisfies $\one_{[0,\tau^+)}\fp_i(\cdot,\wh{u}_i)\geq 0$ a.e., and the coefficients of the linear parts are bounded,. Therefore, the conditions of the maximum principle of Lemma \ref{lem:maxprinciple} are fulfilled, and thus a.e.\ on
$[0,j]\times \Omega$, $v_{i}\geq 0$ on ${\mathbb T}^d$. Hence,  a.e.\ on
$[0,\tau^+]\times \Omega$, we have $u^+_i\geq 0$ on ${\mathbb T}^d$ as desired.
\end{proof}

\section{Higher order regularity}
In this section we briefly explain higher order regularity of the solution to \eqref{eq:reaction_diffusion_system} provided by Theorem \ref{t:reaction_diffusion_global_critical_spaces}.

The next assumption roughly says that $F, f$ and $(g_n)$ are $C^{\lceil \alpha+1\rceil}$ in the $y$-variable, where $\alpha>0$ is some fixed number.

\begin{assumption}
\label{ass:high_order_nonlinearities}
Let $\alpha>0$, $F, f$ and $g_n$ be as in Assumption \ref{ass:reaction_diffusion_global}\eqref{it:growth_nonlinearities}.
We assume that $F,f$ and $g_n$ are $x$-independent, $C^{\lceil \alpha+1 \rceil}$ in $y$ and, for all $N\geq 1$ there is a $C_N>0$ such that a.s.\
$$
\sum_{j=1}^{\lceil \alpha+1\rceil } |\partial_y^j F_i(t,y)|
+
|\partial_y^j f_i(t,y)|
+
\|(\partial_y^j g_{n,i}(t,y))_{n\geq 1}\|_{\ell^2}
\leq C_N, \ \  |y|\leq N, \ i\in \{1, \ldots, \ell\}, \ t\geq 0.
$$
\end{assumption}

\begin{theorem}[Higher order regularity]
\label{t:high_order_regularity}
Let the assumptions of Theorem \ref{t:reaction_diffusion_global_critical_spaces} be satisfied, where
$(\eta, \rho)$ are such that Assumption \ref{ass:reaction_diffusion_global}\eqref{it:regularity_coefficients_reaction_diffusion} holds, i.e. $\alpha>\max\{d/\rho,\s-1\}$, $\rho\in [2, \infty)$, and there exists an $N$ such that
\begin{align*}
\|a^{j,k}_i(t,\cdot)\|_{H^{\alpha,\rho}(\Tor^d)}
+
\|(\btwod_{n,i}^{j}(t,\cdot))_{n\geq 1}\|_{H^{\alpha,\rho}(\Tor^d;\ell^2)}
\leq N, \ \ t\geq 0, \ i\in \{1, \ldots, \ell\}, \ \text{a.s.}
\end{align*}
Furthermore, suppose that Assumption \ref{ass:high_order_nonlinearities} holds.
Let $(u,\sigma)$ be the $(p,\a_{\crit},\s,q)$-solution to \eqref{eq:reaction_diffusion_system} provided by Theorem \ref{t:reaction_diffusion_global_critical_spaces}. Then a.s.
\begin{align}
\label{eq:H_regularization_NS_improved}
u&\in H^{\theta,r}_{\rm loc}(0,\sigma;H^{1+\alpha-2\theta,\rho}(\Tor^d;\R^{\ell}))
\ \ \text{for all } \theta\in [0,1/2), \ r\in (2,\infty), \ \
\\ u& \in  C^{\theta_1,\theta_2+\alpha-\frac{d}{\rho}}_{\rm loc} ((0,\sigma)\times\Tor^d;\R^{\ell})  \ \ \text{for all }  \theta_1\in [0,1/2), \ \theta_2\in (0,1).
\label{eq:C_regularization_NS_improved}
\end{align}
\end{theorem}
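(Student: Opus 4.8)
The strategy is to bootstrap the spatial smoothness of $u$ by iterating \cite[Theorem 6.3]{AV19_QSEE_2}, in complete analogy with Part (B) of the proof of Proposition \ref{prop:reaction_diffusion_global}, now exploiting the additional regularity of the coefficients and of the nonlinearities. First, by Theorem \ref{t:reaction_diffusion_global_critical_spaces} (in particular \eqref{eq:reaction_diffusion_H_theta}--\eqref{eq:reaction_diffusion_C_alpha_beta}) we already know that a.s.\ $u\in H^{\theta,r}_{\rm loc}(0,\sigma;H^{1-2\theta,\rho})$ for all $\theta\in[0,1/2)$ and $r\in(2,\infty)$ (take $\zeta=\rho$ in \eqref{eq:reaction_diffusion_H_theta}; the case $\rho=2$ is covered by the same maximal regularity results), and that $u\in C^{\theta_1,\theta_2}_{\rm loc}((0,\sigma)\times\Tor^d;\R^\ell)$, so in particular $u$ is locally bounded on $(0,\sigma)\times\Tor^d$ a.s. Hence it suffices to prove \eqref{eq:H_regularization_NS_improved}: then \eqref{eq:C_regularization_NS_improved} follows from it by Sobolev embedding, exactly as in Step 4 of Part (B), the gain $\alpha-\frac{d}{\rho}$ in the spatial H\"older exponent coming from $H^{1+\alpha-2\theta,\rho}\embed C^{1+\alpha-2\theta-\frac{d}{\rho}}$. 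We may assume $\alpha\notin\N$.

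Choose an integer $M\geq 1$ and reals $1=\gamma_0<\gamma_1<\dots<\gamma_M=1+\alpha$ with $\gamma_{k+1}-\gamma_k<1$ for every $k$ (possible for $M$ large). We show by induction on $k$ that
\begin{equation}
\label{eq:high_order_induction}
u\in\bigcap_{\theta\in[0,1/2)}H^{\theta,r}_{\rm loc}(0,\sigma;H^{\gamma_k-2\theta,\rho})\quad\text{a.s., for all sufficiently large }r\in(2,\infty),
\end{equation}
the case $k=0$ being the regularity recalled above and the case $k=M$ being \eqref{eq:H_regularization_NS_improved} (the restriction ``$r$ large'' is then removed as in Step 1b of Part (B) via \cite[Corollary 6.5]{AV19_QSEE_2}). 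For the induction step we apply \cite[Theorem 6.3]{AV19_QSEE_2} as in Step 3 of Part (B): set $\varepsilon_k:=\tfrac12(\gamma_{k+1}-\gamma_k)\in(0,\tfrac12)$ and, for $j\in\{0,1\}$,
\[
Y_j=H^{\gamma_k+2(j-1),\rho},\qquad \wh Y_j=H^{\gamma_{k+1}+2(j-1),\rho},\qquad \wh r=r,\qquad \alpha=0,\qquad \wh\alpha=r\varepsilon_k,
\]
with $r$ large enough that $\varepsilon_k<\tfrac12-\tfrac1r$, hence $\wh\alpha\in[0,\tfrac{\wh r}{2}-1)$. Then $\wh Y_{1-\varepsilon_k}=Y_1$, $\wh Y_0=Y_{\varepsilon_k}$, $\tfrac{1+\wh\alpha}{r}=\varepsilon_k+\tfrac1r$, and the trace spaces of the two settings coincide, $Y^{\rm Tr}_r=\wh Y_{\wh\alpha,\wh r}=B^{\gamma_k-\frac2r}_{\rho,r}$, so condition (3) of \cite[Theorem 6.3]{AV19_QSEE_2} holds by \cite[Lemma 6.2(4)]{AV19_QSEE_2} (using $\varepsilon_k<\tfrac12-\tfrac1r$). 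It remains to check, in both settings $(Y_0,Y_1,r,0)$ and $(\wh Y_0,\wh Y_1,\wh r,\wh\alpha)$, that $(\AS,\BS)$ has stochastic maximal $L^r_{w_\cdot}$-regularity and that (HF), (HG) of \cite[Section 4.1]{AV19_QSEE_1} hold for $(\FS,\GS)$ with non-critical trace space.

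For the leading operator, since $a^{j,k}_i,(b^{j}_{n,i})_{n\geq1}\in H^{\alpha,\rho}$ with $\alpha>\tfrac d\rho$, the space $H^{\alpha,\rho}$ is a pointwise multiplier on $H^{t,\rho}$ for $|t|\leq\alpha$, and as $\gamma_{k+1}-1\leq\alpha$ the maps $v\mapsto\div(a_i\cdot\nabla v)$ and $v\mapsto((b_{n,i}\cdot\nabla)v)_{n\geq1}$ are bounded from $H^{\gamma_{k+1},\rho}$ into $H^{\gamma_{k+1}-2,\rho}$, resp.\ $H^{\gamma_{k+1}-1,\rho}(\ell^2)$; stochastic maximal regularity in these scales then follows from \cite[Theorem 5.2, Remark 5.6]{AV21_SMR_torus} together with a standard perturbation argument. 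For the nonlinearities, recall $\FS(\cdot,v)=\div(F(\cdot,v))+f(\cdot,v)$ and $\GS(\cdot,v)=(g_n(\cdot,v))_{n\geq1}$. Since $F,f,g$ are $x$-independent and $C^{\lceil\alpha+1\rceil}$ in $y$ with derivatives bounded on bounded sets (Assumption \ref{ass:high_order_nonlinearities}), and $u$ takes values in a fixed bounded set on each $[s,\sigma')\times\Tor^d$ with $0<s<\sigma'$ by the first step, a Moser-type composition estimate in Bessel potential spaces gives, for $0\leq t\leq\alpha$ and $\|v\|_{L^\infty},\|v'\|_{L^\infty}\leq N$,
\[
\|F(\cdot,v)-F(\cdot,v')\|_{H^{t,\rho}}+\|f(\cdot,v)-f(\cdot,v')\|_{H^{t,\rho}}+\|(g_n(\cdot,v)-g_n(\cdot,v'))_{n\geq1}\|_{H^{t,\rho}(\ell^2)}\lesssim_N\|v-v'\|_{H^{t,\rho}},
\]
and likewise for the norms of $F(\cdot,v),f(\cdot,v),(g_n(\cdot,v))_{n\geq1}$ themselves. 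Taking $t=\gamma_{k+1}-1$ (resp.\ $t=\gamma_{k+1}-2$ for the $f$-term, which is of even lower order), and noting $\gamma_k>\gamma_{k+1}-1$ so that $u$ does lie in the relevant intermediate space, this shows that $\FS$ maps $H^{\gamma_{k+1}-1,\rho}$ into $H^{\gamma_{k+1}-2,\rho}=Y_0=\wh Y_0$ and $\GS$ maps $H^{\gamma_{k+1}-1,\rho}$ into $H^{\gamma_{k+1}-1,\rho}(\ell^2)=\g(\ell^2,\wh Y_{1/2})$, with the required local Lipschitz bounds and with regularity parameters $\beta_1=0$, $\beta_2=\tfrac12<1$ (the growth exponents are as in Lemma \ref{l:estimate_nonlinearities}); thus (HF), (HG) hold, the nonlinearities are of lower order, and — since $\alpha=0$, $\wh\alpha=r\varepsilon_k<\tfrac r2-1$ and $r$ is large — the trace spaces are non-critical in both settings (argue as in Step 1 of Part (A)). Therefore \cite[Theorem 6.3]{AV19_QSEE_2} applies and yields \eqref{eq:high_order_induction} with $k+1$ in place of $k$, completing the induction; for $k=M$ this is \eqref{eq:H_regularization_NS_improved}, and \eqref{eq:C_regularization_NS_improved} follows as explained.

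The main obstacle is the verification that $\FS$ and $\GS$ remain admissible lower-order nonlinearities once we work in the high-regularity scales $H^{\gamma_k-\bullet,\rho}$, i.e.\ the composition (Moser/Schauder) estimates in Bessel potential spaces: these crucially use both the $x$-independence and $C^{\lceil\alpha+1\rceil}$-regularity of $F,f,g$ and the a priori local boundedness of $u$ obtained in the first step (which makes the nonlinearities effectively globally $C^{\lceil\alpha+1\rceil}$-bounded along the solution). A secondary point is the mapping property of the leading operator in these scales, for which the hypothesis $\alpha>d/\rho$ — ensuring that $H^{\alpha,\rho}$ acts as a multiplier algebra — is essential.
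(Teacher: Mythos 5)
Your proof follows the same overall strategy as the paper's (which only sketches the argument, referring to \cite[Theorem 2.7]{AV20_NS}): bootstrap the spatial smoothness via \cite[Theorem 6.3]{AV19_QSEE_2}, with the scale-shifting device of Step 3 of Part (B) of the proof of Proposition \ref{prop:reaction_diffusion_global}, and verify stochastic maximal regularity in the higher scales through the multiplier-algebra property of $H^{\alpha,\rho}$ ($\alpha>d/\rho$). That part of the outline is sound and matches the intended argument.

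There is, however, a gap in your verification of the nonlinearity hypotheses (HF), (HG) in the high-regularity scales. The Moser-type composition estimate you invoke is stated as: for $\|v\|_{L^\infty},\|v'\|_{L^\infty}\leq N$, $\|F(\cdot,v)-F(\cdot,v')\|_{H^{t,\rho}}\lesssim_N\|v-v'\|_{H^{t,\rho}}$. First, this is false as written: already for $F(y)=y^2$ and $t\geq 1$, the Lipschitz constant must grow (polynomially) in $\|v\|_{H^{t,\rho}},\|v'\|_{H^{t,\rho}}$ and not only in $\|v\|_{L^\infty}$; the correct bound has the form $\lesssim_N(1+\|v\|_{H^{t,\rho}}+\|v'\|_{H^{t,\rho}})^{m}\|v-v'\|_{H^{t,\rho}}$. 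More seriously, the $L^\infty$ premise cannot be imported into (HF), (HG): these are deterministic estimates required for \emph{all} $v,v'\in X_1$, so the pathwise ($\omega$-dependent) local boundedness of $u$ obtained in the first step is not admissible there. What is missing is one of the following: either restrict the bootstrap to those $\gamma_{k+1}$ for which $X_{\beta_j}=H^{\gamma_{k+1}-2+2\beta_j,\rho}\embed L^\infty$ with $\beta_j<1$ (this forces $\gamma_{k+1}>d/\rho$, so the chain gets stuck in the range $\gamma_{k+1}\in(2,d/\rho]$ whenever $d/\rho\geq 2$), or — the standard device, used for instance in Step 4 of the proof of Theorem \ref{thm:positivity} — replace $F,f,g$ by $F(\cdot,\zeta_K(y))$, $f(\cdot,\zeta_K(y))$, $g(\cdot,\zeta_K(y))$ for a smooth cut-off $\zeta_K$, verify (HF), (HG) for the truncated nonlinearities (which now have globally bounded derivatives, so the Moser estimate requires no $L^\infty$ hypothesis), run the bootstrap for the truncated problem, and finally use a stopping-time argument together with the blow-up criterion and the known local boundedness of $u$ to identify the truncated solution with $u$. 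Without this truncation/stopping-time step the argument, as written, does not close.
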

From the above theorem one can see how the regularity of order $\eta$ of the coefficients appears in \eqref{eq:H_regularization_NS_improved} and \eqref{eq:C_regularization_NS_improved}. In particular, \eqref{eq:H_regularization_NS_improved} with $\theta=0$ shows that the regularity of $u$ is one order higher than the regularity of $(a,b,h)$.
In the above, we can also allow $x$-dependency of the nonlinearities $F_i$, $f_i$ and $g_{n,i}$ under suitable smoothness assumptions on the spatial variable.

\begin{remark}
If $u_0\in L^0_{\F_0}(\Omega;B^{1+\alpha - \frac{2}{r}}_{\rho,r}(\Tor^d;\R^{\ell}))$ for some fixed $r\in (2,\infty)$, then one can check from the proofs that the regularity result \eqref{eq:H_regularization_NS_improved} (for the fixed $r$) holds locally on $[0,\sigma)$ instead of $(0,\sigma)$. However, this will not be used in the sequel.
\end{remark}

To prove the result one can argue in the same way as in \cite[Theorem 2.7]{AV20_NS}. Similar as in the proof of \eqref{eq:reaction_diffusion_H_theta_1}, the ingredients in the proof are stochastic maximal $L^p$-regularity (see \cite{AV21_SMR_torus}) and mapping properties for the nonlinearities as we have encountered in the proof of Proposition \ref{prop:reaction_diffusion_global}. Since the proofs go through almost verbatim, details are left to the reader.

\section{Existence and uniqueness for large times in presence of small data}\label{sec:globsmall}
\label{s:global_small_data}
In this section we prove
 that the solution of reaction-diffusion equations provided by Theorem \ref{t:reaction_diffusion_global_critical_spaces} exists on large time intervals whenever the initial data is sufficiently small.

\begin{theorem}[Existence and uniqueness for large times in presence of small data]
\label{t:global_small_data}
Suppose that Assumptions \ref{ass:reaction_diffusion_global}$(p,q,h,\s)$ and  \ref{ass:admissibleexp}$(p,q,h,\s)$ hold and set $\a:=\a_ {\crit}:=p(\frac{h}{h-1}-\frac{1}{2}(\s+\frac{d}{q}))-1$.
Assume that there are $M_{1}, M_2>0$ such that a.s.\ for all $t\geq 0$ and $y\in \R^{\ell}$,
\begin{equation}
\begin{aligned}
\label{eq:condfggrowtheq}
|f(t,x,y)|&\leq M_1 + M_2(|y|+|y|^h),\\
 |F(t,x,y)| + \|(g_n(t,x,y))_{n\geq 1}\|_{\ell^2} &\leq M_1  + M_2(|y|+|y|^{\frac{h+1}{2}}).
\end{aligned}
\end{equation}
Fix $u_0\in L^p_{\F_0}(\Omega;B^{\frac{d}{q}-\frac{2}{h-1}}_{q,p})$.
Let $(u,\sigma)$ be the $(p,\a_{\crit},\s,q)$-solution to \eqref{eq:reaction_diffusion_system} provided by Theorem \ref{t:reaction_diffusion_global_critical_spaces}. For all $\varepsilon\in (0,1)$ and $T\in (0,\infty)$, there exists $C_{\varepsilon,T}>0$, independent of $u_0$ such that
\begin{equation}
\label{eq:smallness_implies_global}
\E\|u_0\|_{B^{\frac{d}{q}-\frac{2}{h-1}}_{q,p}}^p+M_1^p\leq C_{\varepsilon,T}
\quad \Longrightarrow \quad \P(\sigma > T)>1-\varepsilon.
\end{equation}
\end{theorem}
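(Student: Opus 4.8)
The plan is to combine the sharp blow-up criterion of Theorem~\ref{t:blow_up_criteria} (more precisely a version involving only the $L^{p_0}(s,\sigma;H^{\gamma_0,q_0})$-norm, as noted in Remark~\ref{r:blow_up}(d), together with the critical-space part of Theorem~\ref{t:blow_up_criteria}\eqref{it:blow_up_sharp}) with a smallness-propagation argument based on stochastic maximal $L^p$-regularity. The key point is that the nonlinearities $\FS,\GS$ are \emph{subcritical-homogeneous}: by Lemma~\ref{l:estimate_nonlinearities} they obey estimates of the form $\|\FS(\cdot,v)\|_{X_0}\lesssim \sum_j(1+\|v\|_{X_{\beta_j}}^{\rho_j})\|v\|_{X_{\beta_j}}$, so for the scale-invariant weight $\a_{\crit}$ one has, heuristically, that if the solution enters the regime where its critical norm is $\lesssim R$ then it stays controlled on $[0,T]$ with a bound depending only on $R$, $T$, and $M_1$.

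First I would set up the functional-analytic framework exactly as in the proof of Proposition~\ref{prop:reaction_diffusion_global}: write \eqref{eq:reaction_diffusion_system} as the semilinear SEE \eqref{eq:SEE} on $X_0=H^{-\s,q}$ with $(\AS,\BS)\in\mathcal{SMR}^{\bullet}_{p,\a_{\crit}}(T)$ for every $T$, and recall from Lemma~\ref{l:estimate_nonlinearities} and Step~1 of the proof of Proposition~\ref{prop:reaction_diffusion_global} Part~(A) that $\Xap=B^{\frac{d}{q}-\frac{2}{h-1}}_{q,p}$ is critical. Next, for a truncation parameter $\lambda>0$ consider the globally-truncated equation \eqref{eq:SEE_truncated}; by stochastic maximal regularity and the smallness of the truncation, the solution $\uu_\lambda$ of \eqref{eq:SEE_truncated} on $[0,T]$ satisfies an a~priori bound
\[
\E\|\uu_\lambda\|_{L^p(0,T,w_{\a_{\crit}};X_1)}^p + \E\|\uu_\lambda\|_{C([0,T];\Xap)}^p \;\le\; C(\lambda,T)\big(\E\|u_0\|_{\Xap}^p + M_1^p\big),
\]
where the $M_1^p$ enters from the affine parts $F(\cdot,0),f(\cdot,0),g(\cdot,0)$ via \eqref{eq:condfggrowtheq}. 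The solution $\uu_\lambda$ coincides with the true solution $u$ up to the stopping time $\sigma_\lambda := \inf\{t : \|u\|_{\X(t)} \ge \lambda\}$ (exactly the mechanism used in the proof of Proposition~\ref{prop:local_continuity_general}), and on $\{\sigma_\lambda\ge T\}$ the blow-up criterion of Theorem~\ref{t:blow_up_criteria}\eqref{it:blow_up_sharp}/Remark~\ref{r:blow_up}(d) forces $\sigma > T$. Therefore
\[
\P(\sigma\le T)\;\le\;\P(\sigma_\lambda < T)\;\le\;\P\big(\|u\|_{\X(\sigma_\lambda\wedge T)}\ge\lambda\big)\;\le\;\lambda^{-p}\,\E\|\uu_\lambda\|_{\X(T)}^p\;\le\;\lambda^{-p}C(\lambda,T)\big(\E\|u_0\|_{\Xap}^p+M_1^p\big),
\]
using Chebyshev and the maximal-regularity bound (the $\X$-norm is dominated by the mixed $L^p(w_{\a_{\crit}};X_1)\cap C(\Xap)$-norm because $\varphi_j=\beta_j<1$). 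Choosing first $\lambda$ fixed and then requiring $\E\|u_0\|_{\Xap}^p+M_1^p\le C_{\varepsilon,T}:=\varepsilon\lambda^p/C(\lambda,T)$ gives $\P(\sigma\le T)\le\varepsilon$, i.e. \eqref{eq:smallness_implies_global}.

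\textbf{Main obstacle.} The delicate point is making the a~priori estimate for $\uu_\lambda$ on a \emph{fixed} time horizon $[0,T]$ genuinely quantitative and $T$-uniform in the right way — the constant $C(\lambda,T)$ must depend only on $\lambda,T$ (and the structural data $p,q,\s,h,d,N$), \emph{not} on $u_0$ — and it must correctly absorb the inhomogeneous terms governed by $M_1$. This requires care because the maximal-regularity estimate on $[0,T]$ combined with the Lipschitz constant of the truncated nonlinearity $\lambda\mapsto$(something) must be closed by a fixed-point/continuity-in-$T$ argument of the type carried out in \cite[Theorem 4.5]{AV19_QSEE_1}; one cannot simply iterate the local bound. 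A clean way around this is to observe that on $[0,T]$ the truncated problem \eqref{eq:SEE_truncated} has globally Lipschitz (with constant $\le c(\lambda)$) and bounded-at-zero (by $c\,M_1$) nonlinearities in the relevant $\X$-norm, so \cite[Theorem 4.5]{AV19_QSEE_1} (or its proof) yields global existence on $[0,T]$ together with the stated bound with $C(\lambda,T)$ of the form $e^{c(\lambda)T}$ — then one spells out the dependence and proceeds as above. The remaining steps (localization in $\Omega$ to reduce to $u_0\in L^p_{\F_0}(\Omega;\Xap)$, which is already the hypothesis, and identifying $\sigma_\lambda$-truncated solution with $u$) are routine and follow verbatim the arguments in the proof of Proposition~\ref{prop:local_continuity_general}.
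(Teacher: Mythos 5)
Your strategy (truncated equation $+$ quantitative maximal-regularity bound $+$ Chebyshev $+$ blow-up criterion) takes a genuinely different route from the paper's written proof. The paper establishes Theorem~\ref{t:global_small_data} only under the additional hypotheses that $u\geq 0$ and a mass-conservation condition \eqref{eq:mass_conservation_ass} holds, via a barrier-function argument: it first derives $\E\big[\psi_R(\|u\|_{\X(\mu)}^p)\big]\leq \E\|u_0\|_{\Xapcrit}+\E\|u_0\|_{\Xapcrit}^p+M_1^p$ with $\psi_R(x)=\tfrac{x}{R}-x^{h}$, and then exploits the unique maximum of $\psi_R$ to deduce that the set $\W=\{\|u\|_{\X(\sigma\wedge T)}\leq r_T\}$ has high probability. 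The fully general case (no mass conservation, no positivity) is explicitly deferred to the iteration argument of \cite[Theorem~2.11]{AV20_NS}; your proposal is in the spirit of that iteration, so it is a plausible direct attack. The paper's barrier-function approach is cleaner when available; your Chebyshev approach would, if closed, be more widely applicable.

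There is, however, a genuine gap that your own ``main obstacle'' paragraph does not resolve. The a~priori bound $\E\|\uu_\lambda\|_{\X(T)}^p\leq C(\lambda,T)\big(\E\|u_0\|_{\Xap}^p+M_1^p\big)$ does not follow from the truncation in the naive way, because the nonlinearity estimate behind Lemma~\ref{l:estimate_nonlinearities} (in the sharper form \eqref{eq:estimate_f_small}, which uses \eqref{eq:condfggrowtheq}) contributes two problematic pieces after integration in time: a linear term $\sim M_2\,\E\|u\|_{L^p(0,\mu,w_{\a_{\crit}};L^{\xi})}^p$ whose coefficient is not diminished by shrinking $\lambda$ and which cannot be absorbed into the $\X$-norm by a contraction argument with a fixed constant $K$; and a superlinear term which, once cut off, still produces an additive offset $\sim \lambda^{p(\rho_j+1)}$ in the estimate, so that the claimed inequality with right-hand side $C(\lambda,T)(\E\|u_0\|^p+M_1^p)$ is simply not available. (The latter offset becomes harmless after dividing by $\lambda^p$ if $\lambda\to 0$ is tracked carefully, but ``choosing first $\lambda$ fixed'' does not deal with it.) The paper's Step~2 handles the linear piece by interpolating $L^{\xi}$ between $L^{h\xi}$ and $L^{1}$ with a free parameter, and then controls the $L^{1}$-part via the mass-conservation estimate of Step~1 -- this is precisely the device you have no substitute for. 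Without some replacement, the Chebyshev argument does not close. A secondary point: the step ``$\sigma_\lambda\geq T \Rightarrow \sigma>T$'' does not follow from the proof of Proposition~\ref{prop:local_continuity_general} as you claim; it requires promoting the $\X$-bound on $\W$ to a bound in $C([s,T];B^{\beta_0}_{q_0,p_0})\cap L^{p_0}(s,T;H^{\gamma_0,q_0})$ via stochastic maximal regularity, and then invoking Theorem~\ref{t:blow_up_criteria}\eqref{it:blow_up_sharp} (this is Step~4 of the paper's proof; the passage in your display $\eqref{eq:upathproprtylocO}$--$\eqref{eq:upathproprtylocO_2}$ is exactly this).
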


Roughly speaking,
Theorem \ref{t:global_small_data} shows that if $u_0$ and $M_1$ are close to $0$, then $u$ exists up to $T$ with probability $>1-\varepsilon$. Reasoning as in Remark \ref{r:reaction_diffusion_critical_spaces}\eqref{it:L_xi_data}, the above result also implies existence for large time of unique solutions with small data in $L^{\frac{d}{2}(h-1)}(\Tor^d;\R^{\ell})$.
Under the assumptions of Theorem \ref{t:global_small_data}, the proof below also yields the following assertion:

If $(u_0,M_1)$ satisfies the condition on LHS\eqref{eq:smallness_implies_global},
then there exists a stopping time $\tau\in (0,\sigma]$ a.s.\ such that $\P(\tau\geq T)>1-\varepsilon$ and
\begin{equation}
\label{eq:stopping_time_tau_estimate}
\E\Big[\one_{\{\tau\geq T\}}\|u\|^p_{H^{\theta,p}(0,T,w_{\a_{\crit}};H^{2+\s-2\theta,q})}\Big]
\lesssim_{\theta} \E\|u_0\|_{B^{\frac{d}{q}-\frac{2}{h-1}}_{q,p}}^p+M_1^p, \ \text{ for all }\theta\in [0,\tfrac{1}{2}).
\end{equation}
To prove Theorem \ref{t:global_small_data} and \eqref{eq:stopping_time_tau_estimate} one can modify the arguments used in the proof of \cite[Theorem 2.11(1)]{AV20_NS} and \cite[Theorem 2.11(2)]{AV20_NS}, respectively. Instead of repeating the technical iteration argument used in \cite[Theorem 2.11]{AV20_NS}, we present an alternative approach under the additional assumption that $u\geq 0$, and the \emph{mass conservation} property: there exist $\alpha_1, \ldots, \alpha_\ell, C_0>0$ such that, for all $t\geq 0$, $x\in \Tor^d$ and $y\in [0,\infty)^{\ell}$,
\begin{equation}
\label{eq:mass_conservation_ass}
\sum_{i=1}^{\ell}\alpha_i f_i(t,x,y) \leq C_0\Big(1+\sum_{i=1}^{\ell} y_i\Big).
\end{equation}
Both conditions are natural for reaction-diffusion equations, see
Subsection \ref{ss:mass} and \cite{P10_survey}.

Due to assumption \eqref{eq:mass_conservation_ass} we can control the lower order term $M_2|y|$ on the RHS\eqref{eq:condfggrowtheq} by exploiting the mass balance, i.e.\ for all $T<\infty$ and $i\in \{1,\dots,\ell\}$,
\begin{equation}
\label{eq:mass_conservation}
\E\int_{\Tor^d} u_i(\tau,x)\,\dd x\lesssim_T \E\int_{\Tor^d} u_{0,i}(x) \,\dd x, \quad \text{ for any stopping time }\tau\in (0,\sigma\wedge T].
\end{equation}
We refer to Step 1 in the proof of Theorem \ref{t:global_small_data} for the precise statement.

Before going into the proof of the simplified version of Theorem \ref{t:global_small_data}, we introduce some more notation. Recall that $(X_{\lambda},A,B,\FS,\GS)$ and $\a_{\crit}=p(\frac{h}{h-1}-\frac{1}{2}(\s+\frac{d}{q}))-1$ have been introduced in \eqref{eq:def_X_theta}--\eqref{eq:ABFG_def} and Theorem \ref{t:reaction_diffusion_global_critical_spaces}, respectively.
Moreover, $\Xapcrit=B^{\frac{d}{q}-\frac{2}{h-1}}_{q,p}$, and for $\beta_1,\beta_2$ as in Lemma \ref{l:estimate_nonlinearities} (with $q<\frac{d(h-1)}{\s}$ and thus $q< \frac{d(h-1)}{2(\s-1)}$), we let
\begin{equation}
\label{eq:def_X_space}
\X(t):=L^{h p} (0,t,w_{\a_{\crit}};X_{\beta_1})\cap L^{\frac{h+1}{2}p } (0,t,w_{\a_{\crit}};X_{\beta_2}).
\end{equation}
One can readily check that the above space coincides with the one introduced in \cite[Subsection 4.3, eq.\ (4.14)]{AV19_QSEE_1}. By \cite[Lemma 4.19]{AV19_QSEE_1}, there exists $\theta\in [0,\frac{1}{2})$ such that
\[ H^{\theta,p}(0,t;w_{\a_{\crit}};X_{1-\theta})\cap L^p(0,t,w_{\a_{\crit}};X_1)\subseteq \X(t), \quad t>0.\]
In particular, the solution $(u, \sigma)$ provided by Theorem \ref{t:reaction_diffusion_global_critical_spaces} satisfies a.s.\ for all $t\in (0,\sigma)$,  $u\in \X(t)$.

As in \cite{AV20_NS}, we need the following special case of \cite[Lemma 5.3]{AV19_QSEE_2} and the maximal $L^p$--regularity estimates of \cite{AV21_SMR_torus}.

\begin{lemma}
\label{l:linear_estimate_X}
Let Assumption \ref{ass:reaction_diffusion_global}$(p,q,h,\s)$ be satisfied. Fix $T\in (0,\infty)$.
Let $(A,B)$ be as in \eqref{eq:ABFG_def}.
Then there exists $K> 0$ such that for every stopping time $\tau\in [0,T]$, every
$$
v_0\in L^0_{\F_0}(\O;X^{\mathsf{Tr}}_{\a_{\crit},p}),\  f\in L^p_{\Progress}(( 0,\tau)\times \O,w_{\a_{\crit}};X_0),
\  g\in L^p_{\Progress}(( 0,\tau)\times \O,w_{\a_{\crit}};\g(\ell^2,X_{1/2})),
$$
and every $(p, \a_{\crit},q,\delta)$-solution $v\in L^p_{\Progress}(( 0,\tau)\times \O ,w_{\a_{\crit}};X_1)$ to
\begin{equation*}
\left\{
\begin{aligned}
\dd v + A v \,\dd t&=f \,\dd  t+ \big(Bv +  g\big)\, \dd W_{\ell^2},
\\ v(0)&=v_0,
\end{aligned}\right.
\end{equation*}
on $( 0,\tau)\times \O$, the following estimate holds
\begin{align*}
\|v\|_{L^p(\Omega;\X(\tau))}^p \leq K^p\big( \|v_0\|_{L^p(\Omega;X^{\mathsf{Tr}}_{\a_{\crit},p})}^p
+\|f\|_{L^p(( 0,\tau)\times \O,w_{\a_{\crit}};X_{0})}^p
 +\|g\|_{L^p(( 0,\tau)\times \O,w_{\a_{\crit}};\g(\ell^2,X_{1/2}))}^p\big).
\end{align*}
\end{lemma}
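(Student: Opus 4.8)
The plan is to obtain Lemma~\ref{l:linear_estimate_X} as the special case of \cite[Lemma 5.3]{AV19_QSEE_2} corresponding to our choice of spaces and exponents, the only input to be checked being the stochastic maximal $L^p$-regularity of the linearized operator. As already recalled several times above, Assumption~\ref{ass:reaction_diffusion_global} together with \cite[Theorem 5.2 and Remark 5.6]{AV21_SMR_torus} yields $(A,B)\in\mathcal{SMR}^{\bullet}_{p,\a_{\crit}}(T)$ on $X_0=H^{-\s,q}$, $X_1=H^{2-\s,q}$, with a constant depending only on $p,q,\s,d,T$ and the quantities $N,\ellip_i,\alpha,\rho$ appearing in Assumption~\ref{ass:reaction_diffusion_global}. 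Hence the cited abstract lemma applies; below I indicate the two ingredients that enter its proof.

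The first ingredient is the maximal regularity estimate, uniform over stopping times. Given $\tau\in[0,T]$ and the data $(v_0,f,g)$, one extends $f$ and $g$ by zero on $(\tau,T)\times\O$ (which does not change their weighted $L^p$-norms), solves the linear problem on $(0,T)$, and invokes uniqueness of $(p,\a_{\crit},q,\delta)$-solutions to identify the given $v$ with the restriction to $(0,\tau)$ of that solution. The $\mathcal{SMR}^{\bullet}$-estimate on $(0,T)$ then gives, for every $\theta\in[0,\tfrac12)$,
\begin{align*}
\|v\|_{L^p(\O;H^{\theta,p}(0,\tau,w_{\a_{\crit}};X_{1-\theta})\cap L^p(0,\tau,w_{\a_{\crit}};X_1))}^p
&\lesssim_{\theta} \|v_0\|_{L^p(\O;\Xapcrit)}^p \\
&\quad + \|f\|_{L^p((0,\tau)\times\O,w_{\a_{\crit}};X_0)}^p
+\|g\|_{L^p((0,\tau)\times\O,w_{\a_{\crit}};\g(\ell^2,X_{1/2}))}^p ,
\end{align*}
with implicit constant independent of $\tau$. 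The second ingredient is the embedding of the maximal regularity space into $\X$ recorded just before the statement: by \cite[Lemma 4.19]{AV19_QSEE_1} --- applicable since the definition of $\X$ already presupposes $q<\tfrac{d(h-1)}{\s}$, so that $\beta_1,\beta_2$ are the ``sharp-case'' values strictly below $1$ --- there is $\theta\in[0,\tfrac12)$ and a constant, uniform in $t\in(0,T]$, with
\begin{align*}
H^{\theta,p}(0,t,w_{\a_{\crit}};X_{1-\theta})\cap L^p(0,t,w_{\a_{\crit}};X_1)\embed\X(t).
\end{align*}
This is proved by interpolating the two factors on the left to reach spatial smoothness $1-\sigma\in\{\beta_1,\beta_2\}$, leaving a surplus of temporal smoothness that the weighted Sobolev embedding in time converts into the higher integrability $hp$, resp.\ $\tfrac{h+1}{2}p$; the power weight $w_{\a_{\crit}}$, anchored at $t=0$, makes the embedding constant insensitive to the upper endpoint $t$. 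Combining the two displays with $\theta$ taken from the embedding yields the claim with $K$ the product of the two constants.

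The routine part is the interpolation/Sobolev bookkeeping behind \cite[Lemma 4.19]{AV19_QSEE_1}. The one point genuinely requiring care --- and the reason the lemma is phrased with a single constant valid for all $\tau$ --- is the uniformity over stopping times of both constants: for the maximal regularity part this is exactly what the $\mathcal{SMR}^{\bullet}$ class encodes (alternatively the zero-extension plus uniqueness argument above), and for the embedding part it follows since the $(0,t)$-embedding constant is dominated by the $(0,T)$-one after zero extension.
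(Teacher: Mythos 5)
Your proposal is correct and matches the paper's approach exactly: the paper gives no proof of this lemma beyond the remark preceding it, which states that it is the special case of \cite[Lemma 5.3]{AV19_QSEE_2} obtained from the stochastic maximal $L^p$-regularity in \cite[Theorem 5.2 and Remark 5.6]{AV21_SMR_torus} together with the embedding $H^{\theta,p}(0,t,w_{\a_{\crit}};X_{1-\theta})\cap L^p(0,t,w_{\a_{\crit}};X_1)\embed\X(t)$ from \cite[Lemma 4.19]{AV19_QSEE_1}. You correctly identify both ingredients, the role of the $\mathcal{SMR}^{\bullet}$ class in furnishing a constant uniform over stopping times, and the reason $\beta_1,\beta_2$ are the sharp-case values, so your reconstruction of the underlying argument is faithful to what the cited results deliver.
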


\begin{proof}[Proof of Theorem \ref{t:global_small_data} -- Case $u\geq 0$ a.e.\ on $[0,\sigma)\times \O\times \Tor^d$ and the mass conservation \eqref{eq:mass_conservation_ass} holds]
Through the proof we fix $\varepsilon\in (0,1)$ and $T\in (0,\infty)$. Let
\begin{equation*}
\sigma_n:=\inf\big\{t\in [0,\sigma)\,:\,\|u\|_{L^p(0,t,w_{\a_{\crit}};X_1)}+ \|u\|_{\X(t)}\geq n\big\}\wedge T, \ \ n\geq 1,
\end{equation*}
where $\inf\emptyset:=\sigma$. We split the proof into several steps.

{\em Step 1: (Mass conservation). There exists $L>0$, depending only on $(C_0,\alpha_i,T)$ in \eqref{eq:mass_conservation_ass} such that,  for all $t\in [0,T]$ and $n\geq 1$,}
\begin{equation}
\label{eq:mass_conservation_small_global}
\E\int_{\Tor^d} \sum_{i=1}^\ell u_i(t\wedge \sigma_n,x)\,\dd x
\leq L \E\int_{\Tor^d} \sum_{i=1}^\ell u_{0,i}(x)\,\dd x.
\end{equation}
On the RHS\eqref{eq:mass_conservation_small_global} we understood $\displaystyle{\int_{\Tor^d} u_0(x)\,\dd x:=\l \one_{\Tor^d},u_0\r}$. Note that
$
\displaystyle{\int_{\Tor^d} u_0(x)\,\dd x \lesssim \|u_0\|_{\Xapcrit}.}
$

To see \eqref{eq:mass_conservation_small_global} it is enough to stop \eqref{eq:reaction_diffusion_system} at time $t\wedge \sigma_n$, multiply each equation in \eqref{eq:reaction_diffusion_system} by $\alpha_i$ and then sum them up. After integrating over $\T^d$ and canceling the divergence terms and martingale terms, and using the mass conservation \eqref{eq:mass_conservation_ass}, we find that
\begin{align*}
&\E \int_{\Tor^d} \sum_{i=1}^\ell  \alpha_i u_i(t\wedge \sigma_n,x)\,\dd x\\
&=
\E \int_{\Tor^d} \sum_{i=1}^\ell \alpha_i u_{0,i}(x)\,\dd x
+ \E  \int_{\Tor^d}\int_0^{t\wedge \sigma_n} \alpha_i f_i(s, x, u) \dd s\, \dd x
\\ & \leq
\E \int_{\Tor^d} \sum_{i=1}^\ell \alpha_i u_{0,i}(x)\,\dd x + C_0 \E \int_0^{t\wedge \sigma_n}\Big(1+ \int_{\Tor^d} \sum_{i=1}^{\ell} u_i(s,x)\,\dd x\Big) \dd s
\\ & \leq \E \int_{\Tor^d} \sum_{i=1}^\ell \alpha_i u_{0,i}(x)\,\dd x + C_0 \E \int_0^{t}\Big(1+ \int_{\Tor^d} \sum_{i=1}^{\ell} u_i(s\wedge \sigma_n,x)\,\dd x\Big) \dd s,
\end{align*}
where we used the positivity of $u_i$ in the last step, which holds by assumption.
Now \eqref{eq:mass_conservation_small_global} follows from Gronwall's inequality applied to the function $U(s):=\E \int_{\T^d}\sum_{i=1}^{\ell} u_i(s\wedge \sigma_n,x) \dd x$ and the fact that $\alpha_i>0$.

\emph{Step 2: (Estimates for the nonlinearities). Let $K$ be as in Lemma \ref{l:linear_estimate_X}. There exists $c_0,c_1>0$ independent of $M_1$ such that, for all stopping times $0\leq \mu\leq \sigma\wedge T$ a.s., one has}
\begin{align*}
\E\|\FS(\cdot,u)\|_{L^p(0,\mu,w_{\a_{\crit}};H^{2-\s,q})}^p
&+
\E\|\GS(\cdot,u)\|_{L^p(0,\mu,w_{\a_{\crit}};H^{1-\s,q}(\ell^2))}^p \\
&\leq c_0 M_1^p + \frac{1}{2K^p} \E\|u\|_{\X(\mu)}^p+ c_1M_2^p \big(\E\|u_0\|_{\Xapcrit} + \E\|u\|_{\X(\mu)}^{ph}\big),
\end{align*}
\emph{
where $(\FS,\GS)$ is as in \eqref{eq:ABFG_def}. Finally, $c_0$ is also independent of $M_2$.}

Recall that $\Phi=f+\div(F)$. We only provide the details for the estimate of $f$. The estimates for $\div F$ and $\GS$ are similar.
Following the proof of the $\Phi$--estimates in \eqref{eq:estimate_rone_reaction_diffusion}, we obtain that
\begin{equation}
\label{eq:estimate_f_small}
\|f(\cdot,v)\|_{H^{2-\s,q}}
\leq c_0( M_1 + M_2\|v\|_{L^{\xi}}+ M_2\|v\|_{L^{h \xi}}^h),  \ \  v\in H^{2-\s,q},
\end{equation}
where $c_0$ is a constant independent of $(M_1,M_2,v)$ and $\xi$ is as in \eqref{eq:estimate_rone_reaction_diffusion}.

By Fatou's lemma it is enough to show the claim of Step 2 where $\mu$ is replaced by $\mu_n:=\sigma_n\wedge \mu$ for $n\geq 1$ and with constants independent of $n\geq 1$.
Hence, by using \eqref{eq:estimate_f_small} we have
\begin{align*}
\E\|f(\cdot,u)\|_{L^p(0,\mu_n,w_{\a_{\crit}};H^{2-\s,q})}^p
&\leq c_0 T M_1+ c_1 M_2\big( \E\|u\|_{L^{p}(0,\mu_n,w_{\a_{\crit}};L^{\xi})}^p+ \E\|u\|_{L^{ph}(0,\mu_n,w_{\a_{\crit}};L^{h \xi})}^{ph}).
\end{align*}
Next we conveniently estimate the lower order term $\E\|u\|_{L^{p}(0,\mu_n;L^{\xi})}^p$ appearing on the RHS of the above estimate.
Let $\lambda>0$ be arbitrary for the moment. Note that, by standard interpolation,
\begin{align*}
\E\|u\|_{L^{p}(0,\mu_n,w_{\a_{\crit}};L^{\xi})}^p
&\leq \frac{1}{\lambda} \E\|u\|_{L^{p}(0,\mu_n,w_{\a_{\crit}};L^{h\xi})}^p
+ C_1\E\|u\|_{L^{p}(0,\mu_n;L^{1})}^p\\
&\leq \frac{1}{\lambda} \E\|u\|_{L^{p}(0,\mu_n,w_{\a_{\crit}};L^{h\xi})}^p
+C_2\big( \E\|u\|_{L^{1}(0,\mu_n,w_{\a_{\crit}};L^{1})} +\E\|u\|_{L^{ph}(0,\mu_n,w_{\a_{\crit}};L^{1})}^{ph}\big),
\end{align*}
where $C_1,C_2$ are constants which depend only on $(p,c_1,\lambda,M_2,h,\xi,d)$ and we used \eqref{eq:def_X_space}.
Now, let $C_T$ be the constant of the embedding
\[L^{h p} (0,t,w_{\a_{\crit}};X_{\beta_1})\embed L^{p} (0,t,w_{\a_{\crit}};H^{-\delta+2\beta_1,q}) \embed  L^p(0,t,w_{\a_{\crit}};L^{h\xi})\]
for any $t\in (0,T]$. Then, the previous shows
$$
\E\|u\|_{L^{p}(0,\mu_n,w_{\a_{\crit}};L^{\xi})}^p\leq  \frac{C_T}{ \lambda} \E\|u\|_{\X(\mu_n)}^p
+C_2\big( \E\|u\|_{L^{1}(0,\mu_n,w_{\a_{\crit}};L^{1})} +\E\|u\|_{\X(\mu_n)}^{ph}\big) .
$$
To conclude, recall that, $\mu_n\leq \sigma_n\leq T$ a.s.\ and therefore
\begin{align*}
\E\|u\|_{L^{1}(0,\mu_n,w_{\a_{\crit}};L^{1})}
& \leq  \E\int_0^{T}\int_{\Tor^d} |u(t\wedge \sigma_n,x)| w_{\a_{\crit}}(t)\,\dd x \, \dd t \lesssim \E\|u_0\|_{\Xapcrit},
\end{align*}
where in the last inequality we used Step 1. Note that, by Step 1, the implicit constant in the above estimate is independent of $n\geq 1$ as desired. Putting together the previous estimates, one obtains the claim for $f(\cdot,u)$ by choosing $\lambda$ large enough. The remaining ones are similar.

\emph{Step 3: Let $K$ be as in Lemma \ref{l:linear_estimate_X}. Then there exists $\Constant>0$, independent of  $(M_1,u_0)$, such that for any $N\geq 1$ and any stopping time $\mu$ satisfying $0\leq  \mu \leq \sigma\wedge T$ and $\|u\|_{\X(\mu)}\leq N$ a.s.,}
\begin{align*}
\E \big[\psi_R(\|u\|_{\X(\mu)}^p)\big]\leq\E\|u_0\|_{\Xapcrit}+ \E\|u_0\|_{\Xapcrit}^p+ M_1^p, \ \ \text{with} \ \ \psi_R(x)=\frac{x}{R}-x^{h}.
\end{align*}

As in the previous step we may prove the claim with $\mu$ replaced by $\mu_n:=\sigma_n\wedge \mu$ since $\|u\|_{\X(\mu)}\leq N$ a.s.\ for some $N \geq 1$.
The estimates of Lemma \ref{l:linear_estimate_X} and Step 3 readily implies
\begin{align*}
\E &\|u\|_{\X(\mu_n)}^p
\leq K^p\big( \E \|u_0\|_{\Xapcrit}^p+ \E\|\FS(\cdot,u)\|_{L^p(0,\mu_n,w_{\crit};X_0)}^p  +\E\|\GS(\cdot,u)\|_{L^p( 0,\mu_n,w_{\crit};X_{1/2})}^p
\big)
\\ &\leq K^p c_0 M_1^p + K^p\big( \E\|u_0\|_{\Xapcrit} +c_1M_2\E\|u_0\|_{\Xapcrit}^p\big) + \frac{1}{2} \E \|u\|_{\X(\mu_n)}^p + K^pc_1 M_2 \E\|u\|_{\X(\mu_n)}^{ph}.
\end{align*}
Since $\|u\|_{\X(\mu_n)}\leq n$ a.s.\ by definition of $\sigma_n$, the term $\frac{1}{2} \E \|u\|_{\X(\mu_n)}^p$ can be absorbed on the LHS and hence
\begin{align*}
\E \|u\|_{\X(\mu_n)}^p
\leq 2K^p c_0 M_1^p +2K^p( \E\|u_0\|_{\Xapcrit} +c_1M_2\E\|u_0\|_{\Xapcrit}^p)  + 2K^p c_1 M_2 \E\|u\|_{\X(\mu_n)}^{ph}.
\end{align*}
Letting $n\to \infty$, the desired estimate follows after division by $R=2K^p\max\{c_0,1+c_1M_2\}$.

{\em Step 4: (A reduction). To prove Theorem \ref{t:global_small_data} (i.e.\ the implication \eqref{eq:smallness_implies_global}) it is sufficient to prove the existence of $C_{\varepsilon,T},r_T>0$ independent of $u_0$ such that}
\begin{equation}
\label{eq:alternative_implication_global_perturbation}
\E\|u_0\|_{\Xapcrit}+\E\big\|u_0\big\|_{\Xapcrit}^p + M_1^p
\leq C_{\varepsilon,T}
\ \
\Longrightarrow \ \
\P(\W)>1-\varepsilon,
\end{equation}
\emph{where}
$$
\W = \{\|u\|_{\X(\sigma\wedge T)}\leq r_{T}\}.
$$
Arguing as in Step 2 (cf.\ \eqref{eq:estimate_f_small} and the text before it), one can check that there exists $C_{*}$ depending only on $(M_1,M_2,K,c_0,c_1,r_T)$ such that
\begin{equation*}
\|\FS(\cdot,u)\|_{L^p(0,\mu,w_{\a_{\crit}};H^{2-\s,q})}
+
\|\GS(\cdot,u)\|_{L^p(0,\mu,w_{\a_{\crit}};H^{1-\s,q}(\ell^2))}  \leq C_{*} \ \ \text{on $\W$},
\end{equation*}
where $\mu\in [0,\sigma]$ is a stopping time.
Define the stopping time $\tau$ by
$$
\tau = \inf\big\{t\in [0,\sigma): \|\FS(\cdot,u)\|_{L^p(0,\mu,w_{\a_{\crit}};H^{2-\s,q})}+ \|\GS(\cdot,u)\|_{L^p(0,\mu,w_{\a_{\crit}};H^{1-\s,q}(\ell^2))}\geq C_{*}+1\big\}\wedge T,
$$
where we set $\inf\emptyset =\sigma\wedge T$. Then $\tau = \sigma\wedge T$ on $\W$.

By \cite[Theorem 1.2]{AV21_SMR_torus}, $(A,B)$ has stochastic maximal $L^p$-regularity. Since $(u,\sigma)$ is a $(p,\a_{\crit},q,\s)$-solution to \eqref{eq:reaction_diffusion_system}, as in \cite[Proposition 3.12(2)]{AV19_QSEE_1} it follows that a.s. on $[0,\tau)$
\begin{align*}
\dd u +A u\, \dd t = \one_{[0,\tau)} F(\cdot, u)\, \dd t + \big(Bu + \one_{[0,\tau)} G(\cdot, u)\big)\, \dd W_{\ell^2},
\end{align*}
and $u(0) = u_0$.
Now \cite[Theorem 1.2]{AV21_SMR_torus} (see also Theorem 5.2 there) also gives
\begin{equation}
\label{eq:regularity_small_implies_global_tau}
u\in L^p(\O;H^{\frac{\a_{\crit}}{p},p}(0,\tau,w_{\a_{\crit}};X_{1-\frac{\a_{\crit}}{p}}))\cap L^p(\O; C([0,\tau];\Xapcrit)).
\end{equation}
Using $\tau=\sigma\wedge T$ on $\W$, by Sobolev embedding \cite[Proposition 2.7]{AV19_QSEE_1} we obtain
\begin{equation}
\label{eq:upathproprtylocO}
\begin{aligned}
u\in H^{\frac{\a_{\crit}}{p},p}(0,\sigma\wedge T,w_{\a_{\crit}};X_{1-\frac{\a_{\crit}}{p}})
&\hookrightarrow L^p(0,\sigma\wedge T;X_{1-\frac{\a_{\crit}}{p}})\\
& = L^p(0,\sigma\wedge T;H^{\gamma,q}) \quad \text{a.s.\ on $\W$},
\end{aligned}
\end{equation}
where $\gamma = 1+\delta-2\frac{\a_{\crit}}{p}=\frac{2}{p}+\frac{d}{q}-\frac{2}{h-1}$. Let $\beta=\frac{d}{q}-\frac{2}{h-1}$,  and note that $\Xapcrit =B^{\beta}_{q,p}$, see \eqref{eq:Besov_spaces}. Thus \eqref{eq:regularity_small_implies_global_tau} also implies
\begin{equation}
\label{eq:upathproprtylocO_2}
u\in C([0,\sigma\wedge T];B^{\beta}_{q,p}) \ \ \text{a.s.\ on $\W$}.
\end{equation}
Hence, it follows that
\begin{equation*}
\begin{aligned}
&\P\big(\{\sigma\leq T\}\cap \W\big)\\
&\stackrel{(i)}{=}\lim_{s\downarrow 0}\P\Big(\Big\{s<\sigma\leq T,\,
 \sup_{t\in [s,\sigma)}\|u(s)\|_{B^{\beta}_{q,p}} + \|u\|_{L^p(s,\sigma;H^{\gamma,q})}<\infty\Big\}\cap\W \Big)\\
&\leq  \limsup_{s\downarrow 0}
\P\Big(s<\sigma\leq T ,\, \sup_{t\in [s,\sigma)}\|u(s)\|_{B^{\beta}_{q,p}}+ \|u\|_{L^p(s,\sigma;H^{\gamma,q})}<\infty\Big)\stackrel{(ii)}{=} 0.
\end{aligned}
\end{equation*}
Here in $(i)$ we used $\sigma>0$ a.s.\ (see Theorem \ref{t:reaction_diffusion_global_critical_spaces}) and \eqref{eq:upathproprtylocO}-\eqref{eq:upathproprtylocO_2}. In $(ii)$ we used Theorem \ref{t:blow_up_criteria}\eqref{it:blow_up_sharp} with $p_0 = p$, $q_0 = q$, $\gamma_0 = \gamma$ and $\beta_0=\beta$ (see also the comments below \eqref{eq:include_endpoint_T} on the set $\{\sigma=T\}$). Therefore, $\sigma >  T$ on $\W$ and therefore we showed that \eqref{eq:alternative_implication_global_perturbation} implies the claim of Theorem \ref{t:global_small_data}.

\textit{Step 5: Conclusion, i.e.\ \eqref{eq:alternative_implication_global_perturbation} holds}.
Let $\psi_R$ be as in Step 3. It is easy to check that
$\psi_R$ has a unique maximum on $\R_+$ attained in $\xm:= (Rh)^{-1/(h-1)}$ and it is given by $\psim:=\frac{\xm}{R}\frac{h-1}{h}$. Set $r_{\varepsilon,T} = \xm^{-1/p}$ and hence
$\W=\{\|u\|_{\X(\sigma)}\leq \xm^{-1/p}\}$. Define
\begin{align}
\label{eq:def_lambda_proof_small}
\mu&:=
\inf\{t\in [0, \sigma)\,:\, \|u\|_{\X(t)}\geq \xm^{-1/p}\}\wedge T,
\end{align}
where  $\inf\emptyset:=\sigma\wedge T$.
We prove \eqref{eq:alternative_implication_global_perturbation} with $C_{\varepsilon,T} = \frac{\varepsilon \psim}{2}$.
To derive a contradiction suppose that
\begin{equation}
\label{eq:smallness_initial_data}
\E\|u_0\|_{\Xapcrit}+\E\|u_0\|_{\Xapcrit}^p+M_1^p\leq\frac{\varepsilon \psim}{2}, \ \ \ \
\text{and}  \ \
\ \ \P(\W)\leq 1-\varepsilon.
\end{equation}
From the definition of $\W$ and \eqref{eq:def_lambda_proof_small}, we find that $\mu< \sigma\wedge T$ a.s.\ on $ \O\setminus \W$. Moreover,
\begin{align}
\label{eq:f_reach_max_1}
\psi_R(\|u\|_{\X(\mu)}^p)&=\psim \ \ \text{ a.s.\ on }\O\setminus \W, \ \  \text{and} \ \
  \psi_R(\|u\|_{\X(\mu)}^p) \geq 0,\   \text{ a.s.\ on } \W.
 \end{align}
Therefore,
\begin{align*}
\E\Big[\psi_R(\|u\|_{\X(\mu)}^p)\Big]
&=\E\Big[\psi_R(\|u\|_{\X(\mu)}^p)\one_{ \W}\Big]
+\E\Big[\psi_R(\|u\|_{\X(\mu)}^p)\one_{\O\setminus  \W}\Big]\\
&\stackrel{\eqref{eq:f_reach_max_1}}{\geq}
\P(\O\setminus  \W) \psim \stackrel{\eqref{eq:smallness_initial_data}}{\geq } \varepsilon \psim\stackrel{\eqref{eq:smallness_initial_data}}{\geq } 2\big( \E\|u_0\|_{\Xapcrit}^p+M_1^p\big).
\end{align*}
The latter contradicts Step 2. Thus $\P(\W)>1-\varepsilon$ as desired.
\end{proof}

\section{Extension to the one-dimensional case}\label{sec:1dcase}
\label{s:one_d}
Many of the results of the previous sections extend to the one-dimensional setting. However, different restrictions on the parameters will appear. The reason for this is that certain sharp Sobolev embeddings become invalid. An example is the condition on $\xi$ in \eqref{eq:estimate_rone_reaction_diffusion}: $-\frac{d}{\xi} = -\delta-\frac{d}{q}$. The latter can no longer hold for $\delta\in [1, 2)$ and $d=1$, and therefore one takes the best possible choice $\xi=1$, which in turn leads to other conditions on $h$ and $\delta$ in the Sobolev embedding $H^{\theta,q}\hookrightarrow L^{h\xi}$ used in \eqref{eq:estimate_rone_reaction_diffusion}. Similar changes are needed for Subset 2. As these restrictions lead to sub-optimal exponents, it is not really interesting to consider critical spaces anymore. Therefore, there is no need to state Theorem \ref{t:reaction_diffusion_global_critical_spaces} for $d=1$. However, we will include the conditions on the exponents under which the one-dimension variant of Proposition \ref{prop:reaction_diffusion_global} holds:
\begin{proposition}[Local existence, uniqueness, and regularity for $d=1$]
\label{prop:reaction_diffusion_globald=1}
Let Assumption \ref{ass:reaction_diffusion_global}$(p,q,h,\s)$ be satisfied for $d=1$.
Suppose that $q\geq 2$ and $\frac1q - \frac1h<2-\delta$ and that one of the following holds:
\begin{enumerate}[(1)]
\item\label{it1:reaction_diffusion_globald=1} $\delta+\frac1q >2$ and $\frac{1+\a}{p}\leq \frac{h}{h-1}\min\Big\{1-\frac{\delta}{2}, 1-\frac{\delta}{2} + \frac{1}{2h} - \frac{1}{2q}\Big\}$.
\item\label{it2:reaction_diffusion_globald=1} $\delta+\frac1q <2$ and
$\frac{1+\a}{p}\leq \frac{h}{h-1}\min\Big\{1-\frac{\delta}{2}, 1-\frac{\delta}{2} + \frac{1}{2h} - \frac{1}{2q}, 1-\frac{h-1}{2h} (\delta+\frac1q)\Big\}$.
\end{enumerate}
Then for any
$u_0\in L^0_{\F_0}(\O;B^{2-\reg-2\frac{1+\a}{p}}_{q,p})$,
\eqref{eq:reaction_diffusion_system} has a (unique) $(p,\a,\s,q)$-solution satisfying a.s.\ $\sigma>0$ and
\begin{equation*}
u\in L^p_{\rm loc}([0,\sigma),w_{\a};H^{2-\reg,q})\cap C([0,\sigma);B^{2-\reg-2\frac{1+\a}{p}}_{q,p}).
\end{equation*}
Moreover, $u$ instantaneously regularizes
\begin{align*}
u&\in H^{\theta,r}_{\rm loc}(0,\sigma;H^{1-2\theta,\zeta})  &\text{a.s.\ for all }\theta\in  [0,1/2), \ & r,\zeta\in (2,\infty),\\
u&\in C^{\theta_1,\theta_2}_{\rm loc}((0,\sigma)\times \Tor^d;\R^{\ell}) &\text{a.s.\ for all }\theta_1\in  [0,1/2), \ &\theta_2\in (0,1).
\end{align*}

Moreover, the assertions of Proposition \ref{prop:local_continuity_general} hold under these conditions as well.
\end{proposition}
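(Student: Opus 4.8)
The plan is to follow the same scheme as in the proof of Proposition \ref{prop:reaction_diffusion_global}, reformulating \eqref{eq:reaction_diffusion_system} as the semilinear stochastic evolution equation \eqref{eq:SEE} on $X_0 = H^{-\s,q}$ with $X_1 = H^{2-\s,q}$, operators $(\AS,\BS)$ and nonlinearities $(\FS,\GS)$ as in \eqref{eq:def_X_theta}--\eqref{eq:ABFG_def}, and then invoking the abstract local well-posedness and regularization results of \cite{AV19_QSEE_1, AV19_QSEE_2} together with the stochastic maximal $L^p$-regularity for $(\AS,\BS)$ from \cite{AV21_SMR_torus}. The latter input is insensitive to the dimension (it only requires $\alpha > \max\{d/\rho,\s-1\}$, which is granted by Assumption \ref{ass:reaction_diffusion_global}\eqref{it:regularity_coefficients_reaction_diffusion}), so the one-dimensional character of the problem enters \emph{only} through the mapping properties of $\FS$ and $\GS$, i.e.\ through the $d=1$ analogue of Lemma \ref{l:estimate_nonlinearities}.

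First I would re-derive the nonlinearity estimates. For the zeroth-order part $\FS_0 = f(\cdot,v)$ one cannot use the sharp embedding $L^\xi \embed H^{-\s,q}$ with $-1/\xi = -\s - 1/q$ as in \eqref{eq:estimate_rone_reaction_diffusion}, since this would force $\xi < 1$; instead one takes the endpoint $\xi = 1$, using $L^1 \embed H^{-\s,q}$ (valid because $\s + \tfrac1q \geq 1$), Hölder's inequality $\||v|^{h-1}|v-v'|\|_{L^1} \lesssim (1 + \|v\|_{L^h}^{h-1} + \|v'\|_{L^h}^{h-1})\|v-v'\|_{L^h}$, and the Sobolev embedding $H^{\theta,q} \embed L^h$ with $\theta = \big(\tfrac1q - \tfrac1h\big)_+$. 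The lower-order requirement $\theta < 2 - \s$ becomes exactly the standing hypothesis $\tfrac1q - \tfrac1h < 2 - \s$, and one reads off $\beta_1 = \tfrac12(\s + \theta)$, $\rho_1 = h-1$. For $\FS_1 = \div(F(\cdot,v))$ and for $\GS$ one proceeds analogously: if $\s + \tfrac1q < 2$ one may still use $L^\eta \embed H^{1-\s,q}$ with $-1/\eta = 1 - \s - 1/q$ followed by $H^{\phi,q} \embed L^{\frac{h+1}{2}\eta}$, whereas if $\s + \tfrac1q \geq 2$ one is forced to take the endpoint $\eta = 1$. This dichotomy is precisely the split between cases \eqref{it1:reaction_diffusion_globald=1} and \eqref{it2:reaction_diffusion_globald=1}; tracking the resulting values of $\beta_2$ and imposing the solvability/criticality condition $\tfrac{1+\a}{p} \leq \tfrac{\rho_j+1}{\rho_j}(1-\beta_j)$ for $j\in\{1,2\}$ (cf.\ Step~1 of Part~(A) of the proof of Proposition \ref{prop:reaction_diffusion_global}) produces exactly the minima appearing in \eqref{it1:reaction_diffusion_globald=1}--\eqref{it2:reaction_diffusion_globald=1}. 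Along the way one checks $\beta_1,\beta_2 \in (0,1)$ in every subcase, using $\s\in[1,2)$ and $q\geq 2$.

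With these estimates in hand, existence and uniqueness of the $(p,\a,\s,q)$-solution together with the pathwise regularity $u \in L^p_{\loc}([0,\sigma),w_\a;H^{2-\s,q}) \cap C([0,\sigma);B^{2-\s-2\frac{1+\a}{p}}_{q,p})$ follow from \cite[Theorem 4.8]{AV19_QSEE_1} just as in Step~2 of Part~(A). The instantaneous regularization statements are obtained by running the bootstrap of Part~(B) of the proof of Proposition \ref{prop:reaction_diffusion_global} --- regularity in time via \cite[Proposition 6.8]{AV19_QSEE_2} and \cite[Corollary 6.5]{AV19_QSEE_2}, then integrability and differentiability in space via \cite[Theorem 6.3]{AV19_QSEE_2} --- noting that these arguments only need the nonlinearities to be lower-order in the relevant scales plus the appropriate trace embeddings, both of which go through once the $d=1$ versions of the embeddings used above are substituted (the harmless condition $\tfrac1q - \tfrac1h < 2-\s$ propagates at each step). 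Finally, the local continuity assertion (the $d=1$ analogue of Proposition \ref{prop:local_continuity_general}) is proved by exactly the truncation-and-stability argument given there, since it uses only the structure of the cut-off equation \eqref{eq:SEE_truncated}, the bounds from the ($d=1$) nonlinearity lemma, and maximal regularity. The main obstacle I expect is the bookkeeping in the second paragraph: in $d=1$ the Sobolev embeddings are no longer sharp, so one must repeatedly choose between an interior exponent and a forced endpoint, and the several competing upper bounds for $\tfrac{1+\a}{p}$ that arise have to be compared carefully to see that they collapse to the two displayed minima and that the lower-order conditions $\theta,\phi < 2-\s$ reduce to the clean hypothesis $\tfrac1q - \tfrac1h < 2-\s$ (respectively to the case distinction on $\s+\tfrac1q$). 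Beyond this no genuinely new analytic difficulty arises; everything downstream is a transcription of the higher-dimensional proof.
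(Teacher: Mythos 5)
Your proposal follows essentially the same route as the paper's proof: replace the sharp $d\geq 2$ Sobolev embeddings in Lemma \ref{l:estimate_nonlinearities} by their one-dimensional endpoints (taking $\xi=1$ for $\FS_0$ and, in the case $\delta+\tfrac1q>2$, also $\eta=1$ for $\FS_1,\GS$), recompute $\beta_1,\beta_2$, impose the sub-criticality conditions $\tfrac{1+\a}{p}\leq \tfrac{\rho_j+1}{\rho_j}(1-\beta_j)$, and then run the abstract machinery of \cite{AV19_QSEE_1,AV19_QSEE_2} unchanged; your $\beta_1 = \tfrac12(\delta+\theta)$ with $\theta=(\tfrac1q-\tfrac1h)_+$ agrees with the paper's \eqref{eq:beta1d=1}. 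One small imprecision: you write that $\eta=1$ is forced whenever $\delta+\tfrac1q\geq 2$, but at the borderline $\delta+\tfrac1q=2$ the embedding $L^1\embed H^{1-\delta,q}(\T)$ fails (it is the $L^1$-endpoint of the Sobolev embedding), which is exactly why the proposition excludes that case and why both alternatives \eqref{it1:reaction_diffusion_globald=1} and \eqref{it2:reaction_diffusion_globald=1} have strict inequalities.
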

We left out the case $\delta+\frac1q =2$ since it leads to slightly different conditions because one needs $\eta = 1+\varepsilon$ in this case, because the Sobolev embedding $L^1\hookrightarrow H^{1-\delta, q}$ does not hold for the $L^1$-endpoint (here $\eta$ is as in Substep 1b of Lemma \ref{l:estimate_nonlinearities}).

\begin{proof}
First we discuss the required changes in the proof of Lemma \ref{l:estimate_nonlinearities}.
Taking $\xi = 1$ in \eqref{eq:estimate_rone_reaction_diffusion} we can set $\theta = \max\{\frac1q - \frac1h,0\}$.
We need the condition $\frac1q - \frac1h<2-\delta$ to ensure $\FS_0$ is of lower order. This leads to the choice
\begin{align}\label{eq:beta1d=1}
\beta_1 = \max\Big\{\frac{1}{2q}-\frac{1}{2h},0\Big\} + \frac{\delta}{2}.
\end{align}

For $\FS_1$ and $\GS$ we consider two cases:

\emph{Case $\delta+\frac1q >2$}. In this case we choose $\eta=1$ and $\phi = \max\{0, \frac{1}{q} - \frac{2}{h+1}\}$. We need the condition $\frac{1}{q} - \frac{2}{h+1}<2-\delta$ to ensure that $\FS_1$ is of lower order, but the latter is automatically satisfied since $\frac{2}{h+1}>\frac1h$. This leads to
\[\beta_2 = \max\Big\{\frac{1}{2q}-\frac{1}{h+1},0\Big\} + \frac{\delta}{2}.\]
It turns out that the sub-criticality condition (see \eqref{eq:critical_condition_j})
\begin{equation}
\label{eq:critical_condition_jd=1}
\frac{1+\a}{p}\leq \frac{\rho_j+1}{\rho_j}(1-\beta_j) \ \ \ \text{ for }j\in \{1,2\},
\end{equation}
is most restrictive for $j=1$, and this leads to the condition as stated in \eqref{it1:reaction_diffusion_globald=1}.

\emph{Case $\delta+\frac1q<2$}. In this case we can take $\eta$, $\phi$ and $\beta_2$ as in the proof of Lemma \ref{l:estimate_nonlinearities}. Since $\delta+\frac1q <2<\frac{2h}{h-1}$, elementary computations show that the condition $\phi<2-\delta$ is automatically satisfied. This time the condition \eqref{eq:critical_condition_jd=1} gets an additional restriction as stated in \eqref{it2:reaction_diffusion_globald=1}. It only plays a role if $q<\frac{h-1}{2(\delta-1)}$.

Now the rest of the assertions follow in the same way as in Propositions \ref{prop:reaction_diffusion_global} and \ref{prop:local_continuity_general}.
\end{proof}

The following analogues of the previous results hold in the case $d=1$ as well:
\begin{remark}\label{rem:d=1}
Let the conditions of Proposition \ref{prop:reaction_diffusion_globald=1} be satisfied with exponents $(p, q, h, \delta,\kappa)$.
\begin{enumerate}
\item {\em (Blow-up criteria)}. Suppose that Assumption \ref{ass:reaction_diffusion_global}$(p_0, q_0, h_0, \delta_0)$ holds with $h_0\geq h$, and that Proposition \ref{prop:reaction_diffusion_globald=1}\eqref{it1:reaction_diffusion_globald=1} or \eqref{it2:reaction_diffusion_globald=1} hold for $(p_0, q_0, h_0, \delta_0, \kappa_0)$. Let $\beta_0= 2-\delta_0-2\frac{1+\kappa_0}{p_0}$ and $\gamma_0 = 2-\delta_0-\frac{2\kappa_0}{p_0}$.
Then for all $0<s<T<\infty$, Theorem \ref{t:blow_up_criteria}\eqref{it:blow_up_not_sharp}-\eqref{it:blow_up_sharp} for $d=1$ hold.
\item {\em (Positivity)}. The assertion of Theorem \ref{thm:positivity} holds for $d=1$ if the conditions of Theorem \ref{t:reaction_diffusion_global_critical_spaces} are replaced by the conditions of Proposition \ref{prop:reaction_diffusion_globald=1}.
\item In a similar way Theorems \ref{t:high_order_regularity} and \ref{t:global_small_data}  hold for $d=1$ in the setting of Proposition \ref{prop:reaction_diffusion_globald=1}. Here Assumption \ref{ass:admissibleexp} should be omitted  and $\kappa$ should be as in Proposition \ref{prop:reaction_diffusion_globald=1} instead of $\kappa_\crit$. Some changes are required in the arguments.
\end{enumerate}
\end{remark}

In Remark \ref{r:basic_assumptions}\eqref{it:d=1} we mentioned an alternative way to include the case $d=1$ by adding a dummy variable. However, this leads to additional restrictions on the parameters.

\section{Extensions to the case $p=q=2$}\label{sec:p=q=2}

In this section we explain how to extend the results of the previous sections to $p=q=2$ and $\kappa=0$. This end-point case follows in the same way as in \cite{AV22_variational}, where we cover the so-called variational setting which can be seen as an abstract version of the case $p=q=2$ and $\kappa=0$.
Its importance lies in the fact that it often allows to prove energy bounds which lead to global existence. All results in Sections \ref{s:main_results} and \ref{s:one_d} extend to $p=q=2$ and $\kappa=0$ under suitable restrictions which we explain below.

The variational framework is very effective in the weak setting (i.e.\ $\delta = 1$), where coercivity conditions are easy to check. The case $\delta\in (1, 2)$ allowed in Theorem \ref{t:reaction_diffusion_global_critical_spaces}, is typically not included as the fractional scale leads to difficulties with coercivity conditions. The results of this section (e.g.\ Proposition \ref{prop:globalp2q2rough}) might be combined to some of the results in \cite{AV22_variational} for instance to allow rougher initial data and/or to obtain higher order regularity (see Theorem \ref{t:reaction_diffusion_global_critical_spaces} and  \ref{t:high_order_regularity}, respectively).
However, one should be aware that using the case $p=q=2$ and $\kappa=0$ requires low dimension, or nonlinearities which do not grow too rapidly (see Subsection \ref{sss:scaling} and \cite[Subsection 5.2]{AVreaction-global}).

As in \cite[Subsection 5.3]{AV22_variational} one can check that Definition \ref{def:solution} can be extended to $p=q=2$, $\kappa=0$ and $\delta=1$ if Assumption \ref{ass:reaction_diffusion_global}\eqref{it:reaction_diffusion_global1},\eqref{it:ellipticity_reaction_diffusion}, and \eqref{it:growth_nonlinearities} hold and
there exists a constant $K$ such that
\begin{align}\label{eq:boundednesscoef}
|\am^{j,k}_i| + \|b_i^j\|_{\ell^2}\leq K, \ \ \text{for all} \  i,j,k \text{ and a.e.\ on }\R_+\times\Omega\times\T^d.
\end{align}

Note that the regularity conditions on the coefficients in Assumption \ref{ass:reaction_diffusion_global}\eqref{it:regularity_coefficients_reaction_diffusion} are left out.
In this section we often use the abbreviation $H^{s}=H^{s,2}(\Tor^d;\R^{\ell})$ for $s\in \R$.

\begin{proposition}[Local existence and uniqueness, and blow-up criteria for $p=q=2$]\label{prop:p=q=2}
Suppose that
\begin{align}\label{eq:condh}
h\in \left\{
  \begin{aligned}
    &(1,4], & \text{ if } & \ d=1, \\
    &(1,3), & \text{ if } & \ d=2,\\
    &\Big(1,\frac{4+d}{d}\Big], & \text{ if }&\ d\geq 3.
  \end{aligned}
\right.
\end{align}
Suppose that for all $i\in \{1,\dots,\ell \}$ parts \eqref{it:reaction_diffusion_global1},\eqref{it:ellipticity_reaction_diffusion}, and \eqref{it:growth_nonlinearities} of Assumption \ref{ass:reaction_diffusion_global}
hold, and \eqref{eq:boundednesscoef} holds.
Let $u_0\in L^0_{\F_0}(\O;L^2)$. Then there exists a unique $(2, 0, 1, 2)$-solution $(u,\sigma)$ to \eqref{eq:reaction_diffusion_system} satisfying $\sigma>0$ a.s.\ and
\[u\in L^2_{\rm loc}([0,\sigma);H^{1})\cap C([0,\sigma);L^2) \ \ \text{a.s.}\]
Moreover, for all $T<\infty$,
\begin{align}\label{eq:condglobalL2}
\P\Big(\sigma<T, \sup_{t\in [0,\sigma)} \|u(t)\|_{L^2} + \|u\|_{L^2(0,\sigma;H^{1})}<\infty\Big) =0.
\end{align}
\end{proposition}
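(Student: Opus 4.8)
The plan is to mirror the proof of Proposition \ref{prop:reaction_diffusion_global}, replacing the weighted torus maximal regularity of \cite{AV21_SMR_torus} by the weight-free $p=q=2$ maximal regularity underlying the variational framework of \cite{AV22_variational}. The first point to record is that $(p,\a)=(2,0)$ lies outside the admissible range $\a\in[0,\tfrac p2-1)$, so Proposition \ref{prop:reaction_diffusion_global} cannot be applied directly and this borderline case genuinely requires a separate argument. As in \eqref{eq:def_X_theta}--\eqref{eq:ABFG_def}, I would recast \eqref{eq:reaction_diffusion_system} as the semilinear SEE \eqref{eq:SEE} with $X_0=H^{-1}$, $X_1=H^1$, so that $X_\theta=H^{-1+2\theta}$, $X_{1/2}=L^2$ and $\g(\ell^2,X_{1/2})=L^2(\Tor^d;\ell^2)$, with $(\AS,\BS,\FS,\GS)$ as in \eqref{eq:ABFG_def}.

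First I would establish the $(p,q,\s)=(2,2,1)$ analogue of Lemma \ref{l:estimate_nonlinearities}: there exist $\beta_1,\beta_2\in(0,1)$, with $\rho_1=h-1$ and $\rho_2=\tfrac{h-1}{2}$, such that $\FS$ maps $X_1$ into $X_0$ and $\GS$ maps $X_1$ into $\g(\ell^2,X_{1/2})$ with the same Lipschitz-type estimates as in Lemma \ref{l:estimate_nonlinearities}, and with the non-difference bounds following from Assumption \ref{ass:reaction_diffusion_global}\eqref{it:growth_nonlinearities}. This is proved exactly as there, using $\div\colon L^2\to H^{-1}$, the dual Sobolev embedding $L^\xi\embed H^{-1,2}$ and $H^{\theta,2}\embed L^{r}$, plus H\"older's inequality; the only change is that for $d\in\{1,2\}$ the sharp relation $-\tfrac{d}{\xi}=-1-\tfrac{d}{2}$ is no longer attainable, so one takes $\xi=1$ (if $d=1$) or $\xi\downarrow1$ (if $d=2$) and extracts the Sobolev exponent from the corresponding embeddings into $L^{h\xi}$ and $L^{\frac{h+1}{2}\eta}$. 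The decisive step is then to check that, since now $\tfrac{1+\a}{p}=\tfrac12$, the sub-criticality condition \eqref{eq:critical_condition_j}, namely $\tfrac12\le\tfrac{\rho_j+1}{\rho_j}(1-\beta_j)$ for $j\in\{1,2\}$, is equivalent to \eqref{eq:condh}; the $j=1$ inequality is the binding one and reduces to $\tfrac{h}{h-1}\ge\tfrac{d+4}{4}$, i.e.\ $h\le\tfrac{d+4}{d}$, for $d\ge3$, while the endpoint embedding forces the strict bound $h<3$ in $d=2$ and $h\le4$ in $d=1$. When $h$ attains the right end-point of the interval in \eqref{eq:condh} the trace space $L^2=(X_0,X_1)_{1/2,2}$ is critical for \eqref{eq:reaction_diffusion_system}, otherwise it is sub-critical.

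Next I would invoke stochastic maximal $L^2$-regularity for the linearization: under the boundedness \eqref{eq:boundednesscoef} and the ellipticity Assumption \ref{ass:reaction_diffusion_global}\eqref{it:ellipticity_reaction_diffusion} (which is precisely the variational coercivity condition, the It\^o correction already being subtracted), the pair $(\AS,\BS)$ belongs to $\mathcal{SMR}^{\bullet}_{2,0}(T)$ on $(H^{-1},H^1)$ for every $T<\infty$; unlike in \cite{AV21_SMR_torus}, no H\"older regularity of the coefficients is needed, which is why Assumption \ref{ass:reaction_diffusion_global}\eqref{it:regularity_coefficients_reaction_diffusion} is dropped. Feeding these two ingredients into the abstract semilinear theory of \cite[Theorem 4.8]{AV19_QSEE_1} (equivalently, the well-posedness part of \cite{AV22_variational}) in the $(2,0,1,2)$-setting then yields, exactly as in Step~2 of the proof of Proposition \ref{prop:reaction_diffusion_global} Part~(A), a unique $(2,0,1,2)$-solution $(u,\sigma)$ with $\sigma>0$ a.s.\ and $u\in L^2_{\loc}([0,\sigma);H^1)\cap C([0,\sigma);L^2)$. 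Finally, the blow-up criterion \eqref{eq:condglobalL2} follows from \cite[Theorem 4.10(3)]{AV19_QSEE_2}, which is valid also in critical situations: with trace space $L^2$ and $\gamma_0:=2-\s-\tfrac{2\a}{p}=1$, $q_0=2$, it gives $\P\big(\sigma<\infty,\ \sup_{t\in[0,\sigma)}\|u(t)\|_{L^2}+\|u\|_{L^2(0,\sigma;H^1)}<\infty\big)=0$, whose restriction to $\{\sigma<T\}$ is \eqref{eq:condglobalL2}.

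The main obstacle I anticipate is not conceptual but bookkeeping: carefully tracking the Sobolev exponents in the low-dimensional endpoint cases $d=1,2$ so as to confirm that the degenerate embeddings still produce lower-order nonlinearities ($\beta_1,\beta_2<1$) and that the resulting sub-criticality inequalities collapse exactly to the thresholds in \eqref{eq:condh}, strictness included. A secondary point worth double-checking is that the $p=q=2$, $\kappa=0$ maximal regularity and abstract well-posedness results of \cite{AV22_variational} and \cite{AV19_QSEE_1} genuinely cover this edge-of-range weight; once that is settled, the remaining steps are verbatim transcriptions of arguments already given in Section \ref{s:proof_local_existence_regularity}.
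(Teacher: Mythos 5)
Your overall strategy matches what the paper does: the paper's own proof is a one-line deferral to \cite[Theorems~3.3, 3.4 and Section~5.3]{AV22_variational}, and your proposal correctly unpacks what this deferral entails (nonlinearity estimates in the $(2,2,1)$-setting, stochastic maximal $L^2$-regularity under \eqref{eq:boundednesscoef} and \eqref{it:ellipticity_reaction_diffusion} without smoothness of coefficients, and then the abstract local well-posedness and blow-up machinery). Your bookkeeping of the Sobolev exponents and the reduction of sub-criticality to the thresholds \eqref{eq:condh} is also essentially right.

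There is, however, an internal inconsistency that is not merely ``secondary'': having correctly observed at the outset that $(p,\kappa)=(2,0)$ lies outside the admissible range $\kappa\in[0,\tfrac p2-1)$ of \cite{AV19_QSEE_1,AV19_QSEE_2}, you nevertheless later invoke \cite[Theorem~4.8]{AV19_QSEE_1} for local well-posedness and \cite[Theorem~4.10(3)]{AV19_QSEE_2} for the blow-up criterion, writing ``(equivalently, the well-posedness part of \cite{AV22_variational})''. These are not equivalent: the theorems in \cite{AV19_QSEE_1,AV19_QSEE_2} genuinely exclude the unweighted $L^2$ endpoint (the power weight $w_\kappa$ is there precisely to stay away from it), and \cite{AV22_variational} is the dedicated extension of the framework to this case. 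The proof must be run inside the variational framework of \cite{AV22_variational} from start to finish — its analogues of the local existence and blow-up theorems, not the weighted ones — which is exactly what the paper's one-line citation signals. A minor additional point: for $d\ge3$ your assertion that $j=1$ is the ``binding'' sub-criticality inequality is slightly off; a short computation shows that the $j=1$ and $j=2$ conditions both reduce to $h\le\frac{d+4}{d}$ in that regime, so neither dominates (they coincide). For $d\in\{1,2\}$ your identification of $j=1$ as binding, and of the strictness of $h<3$ at $d=2$ from the failure of $L^1\hookrightarrow H^{-1}(\T^2)$, is correct.
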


Note that in $d=2$, one cannot reach the scaling invariant case $h=3$, cf.\ Subsection \ref{sss:scaling}.
\begin{proof}
This follows by the same reasoning as in \cite[Theorems 3.3, 3.4 and Section 5.3]{AV22_variational}.
\end{proof}

Of course a natural question whether under further conditions on the coefficients, the solution of Proposition \ref{prop:p=q=2} is a $(p,\kappa,\delta,q)$-solution and has higher regularity than stated in Proposition \ref{prop:p=q=2}. This indeed turns out to be the case.
\begin{proposition}[Regularity for $p=q=2$]\label{prop:p=q=2-reg}
Suppose that \eqref{eq:condh} holds with the additional restriction that $h<4$ for $d=1$. Suppose Assumption \ref{ass:reaction_diffusion_global}$(p,q,h,\delta)$ holds for some $\delta\in (1, 2)$.  Let $u_0\in L^0_{\F_0}(\O;L^2)$. Let $(u,\sigma)$ be the $(2, 0, 1, 2)$-solution to \eqref{eq:reaction_diffusion_system} provided by Proposition \ref{prop:p=q=2}. Then the regularity assertions \eqref{eq:reaction_diffusion_H_theta}-\eqref{eq:reaction_diffusion_C_alpha_beta} hold, i.e.
\begin{align*}
u&\in H^{\theta,r}_{\rm loc}(0,\sigma;H^{1-2\theta,\zeta})  \ \ \text{a.s.\ for all }\theta\in  [0,1/2), \ \  r,\zeta\in (2,\infty),\\
u&\in C^{\theta_1,\theta_2}_{\rm loc}((0,\sigma)\times \Tor^d;\R^{\ell}) \ \ \text{a.s.\ for all }\theta_1\in  [0,1/2), \ \ \theta_2\in (0,1).
\end{align*}
\end{proposition}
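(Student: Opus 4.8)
The plan is to bootstrap the regularity of the $(2,0,1,2)$-solution $(u,\sigma)$ from Proposition~\ref{prop:p=q=2} by re-entering the problem in the $(2-\delta,q)$-scale and then applying exactly the same machinery as in the proof of Proposition~\ref{prop:reaction_diffusion_global} Part (B). First I would show that, since $u_0\in L^2=B^0_{2,2}$ and $h$ satisfies \eqref{eq:condh}, the solution $(u,\sigma)$ is also a $(2,0,\delta,2)$-solution for the given $\delta\in(1,2)$; concretely, one checks that Assumption~\ref{ass:admissibleexp}$(2,2,h,\delta)$ (or the relevant sub-critical version \eqref{eq:reaction_diffusion_globali}/\eqref{eq:reaction_diffusion_globalii} from Proposition~\ref{prop:reaction_diffusion_global}) holds with $p=q=2$, $\kappa=0$ --- this is precisely where the dimension restriction \eqref{eq:condh} and the extra condition $h<4$ for $d=1$ are needed, to guarantee $L^2\hookrightarrow B^{2-\delta-2\frac{1+\kappa}{p}}_{q,p}=B^{1-\delta}_{2,2}$ and $\beta_1,\beta_2\in(0,1)$ in Lemma~\ref{l:estimate_nonlinearities}. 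Then Proposition~\ref{prop:reaction_diffusion_global} (applied in the $(2,0,\delta,2)$-setting, noting that the coefficient regularity Assumption~\ref{ass:reaction_diffusion_global}\eqref{it:regularity_coefficients_reaction_diffusion} is now available by hypothesis, so \cite{AV21_SMR_torus} applies) produces a $(2,0,\delta,2)$-solution $(u',\sigma')$ with the instantaneous regularization \eqref{eq:reaction_diffusion_H_theta_1}--\eqref{eq:reaction_diffusion_C_alpha_beta_1}.

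The second step is to identify $(u,\sigma)$ with $(u',\sigma')$. This is a compatibility statement in the spirit of Proposition~\ref{prop:comp}: the $(2,0,1,2)$-solution sits in $L^2_{\rm loc}([0,\sigma);H^1)\cap C([0,\sigma);L^2)$, hence by Sobolev embedding it is a local $(2,0,\delta,2)$-solution (since $H^{2-\delta,2}\hookleftarrow H^{1,2}$ and $H^{-\delta,2}\hookleftarrow L^2$ for $\delta\in[1,2)$), so maximality of $(u',\sigma')$ gives $\sigma\le\sigma'$ and $u=u'$ on $[0,\sigma)\times\Omega$; conversely, once we know $u'$ instantaneously regularizes, the blow-up criterion \eqref{eq:condglobalL2} of Proposition~\ref{prop:p=q=2} --- transferred along the equality $u=u'$ on $[0,\sigma)$ --- rules out $\{\sigma<\sigma'\}$ by the same argument as in the proof of Theorem~\ref{t:blow_up_criteria} (restart at time $s>0$, use that $u(s)$ is smooth, compare on $[s,\infty)$, and close via the sharp $L^2(H^1)$-criterion). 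Hence $\sigma=\sigma'$ a.s.\ and $u=u'$, so $u$ inherits \eqref{eq:reaction_diffusion_H_theta_1}--\eqref{eq:reaction_diffusion_C_alpha_beta_1}, which are exactly \eqref{eq:reaction_diffusion_H_theta}--\eqref{eq:reaction_diffusion_C_alpha_beta}.

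Concretely, the write-up would read roughly as follows: ``Since \eqref{eq:condh} holds with $h<4$ when $d=1$, one verifies that Assumption~\ref{ass:admissibleexp}$(2,2,h,\delta)$ holds (equivalently, condition \eqref{eq:reaction_diffusion_globali} of Proposition~\ref{prop:reaction_diffusion_global} holds with $(p,\kappa,q)=(2,0,2)$), so Proposition~\ref{prop:reaction_diffusion_global} applied in the $(2,0,\delta,2)$-setting --- using Assumption~\ref{ass:reaction_diffusion_global}$(p,q,h,\delta)$ for the coefficient regularity and \cite[Theorem~5.2]{AV21_SMR_torus} for stochastic maximal $L^2$-regularity on $X_0=H^{-\delta,2}$ --- yields a $(2,0,\delta,2)$-solution $(u',\sigma')$ with the regularity \eqref{eq:reaction_diffusion_H_theta_1}--\eqref{eq:reaction_diffusion_C_alpha_beta_1}. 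By Sobolev embedding the $(2,0,1,2)$-solution $(u,\sigma)$ of Proposition~\ref{prop:p=q=2} is a local $(2,0,\delta,2)$-solution, so $\sigma\le\sigma'$ and $u=u'$ on $[0,\sigma)\times\Omega$ by maximality. Arguing as in the proof of Theorem~\ref{t:blow_up_criteria} (restarting at $s>0$, where $u'(s)\in C^{\theta}$) and using the blow-up criterion \eqref{eq:condglobalL2} transferred along $u=u'$, one excludes $\P(\sigma<\sigma')>0$, whence $\sigma=\sigma'$ a.s.\ and $u=u'$. The regularity assertions now follow from \eqref{eq:reaction_diffusion_H_theta_1}--\eqref{eq:reaction_diffusion_C_alpha_beta_1}.''

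The main obstacle I expect is the compatibility/identification step: the two solution concepts live in $L^2$-scales with different smoothness parameters ($\delta=1$ versus $\delta\in(1,2)$) and on the critical-weight side the integrability spaces need not embed into one another, so one cannot simply quote uniqueness in either setting. The clean way around this is the restart-and-regularize trick already used for Theorem~\ref{t:blow_up_criteria} and Proposition~\ref{prop:comp}, combined with the sharp $L^2(H^1)$-blow-up criterion \eqref{eq:condglobalL2}; one should double-check that the hypotheses $h\le \frac{4+d}{d}$ (resp.\ $h<3$ for $d=2$, $h<4$ for $d=1$) are precisely what is required both for the $(2,0,\delta,2)$-setting to be admissible in Proposition~\ref{prop:reaction_diffusion_global} and for the nonlinearity estimates of Lemma~\ref{l:estimate_nonlinearities} to give lower-order terms with $\beta_j<1$. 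Everything else is routine once the identification is in place.
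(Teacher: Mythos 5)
There is a genuine gap in the first step. You plan to ``apply Proposition~\ref{prop:reaction_diffusion_global} in the $(2,0,\delta,2)$-setting'' and to check ``Assumption~\ref{ass:admissibleexp}$(2,2,h,\delta)$ (or the relevant sub-critical version \eqref{eq:reaction_diffusion_globali}/\eqref{eq:reaction_diffusion_globalii})''. But both Assumption~\ref{ass:reaction_diffusion_global} and Assumption~\ref{ass:admissibleexp} explicitly require $p\in(2,\infty)$, and Proposition~\ref{prop:reaction_diffusion_global} inherits this restriction (its very first hypothesis is ``Let Assumption~\ref{ass:reaction_diffusion_global}$(p,q,h,\delta)$ be satisfied''); indeed the paper says this is exactly the exceptional case that forced the separate Section~\ref{sec:p=q=2}. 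So there is no $(2,0,\delta,2)$-solution $(u',\sigma')$ ``provided by Proposition~\ref{prop:reaction_diffusion_global}'' to compare against, and the compatibility argument you propose has nothing to attach itself to. The same objection defeats the appeal to Proposition~\ref{prop:comp}, whose hypothesis is precisely that Proposition~\ref{prop:reaction_diffusion_global} is applicable in \emph{both} settings, which again demands $p_i>2$.

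The paper's actual proof handles the $p=2$ base case differently: it never invokes Proposition~\ref{prop:reaction_diffusion_global} at $p=2$. Instead it applies the abstract time-integrability extrapolation result \cite[Proposition~6.8]{AV19_QSEE_2} (the same tool used in Step~1a of Part~(B) of the proof of Proposition~\ref{prop:reaction_diffusion_global}), with $X_i=H^{-1+2i}$, a shifted scale $Y_i = H^{-1+2i-\varepsilon}$ (i.e.\ $\delta_0=1+\varepsilon$ small), $p=2$, $\kappa=0$, and $r>2$, $\alpha>0$ chosen so that $\tfrac12 = \tfrac{1+\alpha}{r}+\tfrac{\varepsilon}{2}$. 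That proposition simultaneously produces the upgrade to $r>2$ \emph{and} the identification of $(u,\sigma)$ with the new $(r,\alpha,\delta_0,2)$-solution, after which Proposition~\ref{prop:reaction_diffusion_global} (now legitimately, since $r>2$) yields the regularity. Your restart-and-regularize identification using \eqref{eq:condglobalL2} is a sound technique in other places of the paper (and indeed is used in the proof of Proposition~\ref{prop:globalp2q2rough}), but here you would still need the missing extrapolation step to even enter a $p>2$ regime where the nonlinearity estimates and maximal regularity framework of Sections~\ref{s:main_results}--\ref{s:proof_local_existence_regularity} are available, and you would also need the $d$-dependent modifications of Lemma~\ref{l:estimate_nonlinearities} (taking $\xi=1$, adjusted $\beta_j$) that the paper works out for $d=1,2$. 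The dimension restriction \eqref{eq:condh} and the extra condition $h<4$ for $d=1$ are used precisely to make these modified $\beta_j\in(0,1)$ and to keep the $(r,\alpha,\delta_0,2)$-setting sub-critical, not merely to check an $L^2\hookrightarrow B^{1-\delta}_{2,2}$ embedding.
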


\begin{proof}
First consider $d\geq 3$. Then without loss of generality we can assume $h = 1+ \frac{4}{d}$. Fix $\varepsilon\in (0,1/2)$ is so small that $\delta_0 := \varepsilon+1<\delta$.

To prove the claim we will apply \cite[Proposition 6.8]{AV19_QSEE_2} with
\begin{equation}
\begin{aligned}
\label{eq:frac_12_alpha_grather_than_0}
Y_i=H^{-1+2i-\varepsilon},  \   X_i=H^{-1+2i}, \  p=2 ,  \ r\in (2,\infty), \ \frac12 = \frac{1+\alpha}{r} + \frac{\varepsilon}{2}.
\end{aligned}
\end{equation}
Note that since $\varepsilon\in (0,\frac{1}{2})$, \eqref{eq:frac_12_alpha_grather_than_0} yields $\alpha\in (0,\frac{r}{2}-1)$.  First note that the conditions of Proposition \ref{prop:reaction_diffusion_global} with variant \eqref{eq:reaction_diffusion_globali} hold with $(p,q,\kappa,\delta)$ replaced by $(r, 2, \alpha, \delta_0)$. Therefore,  Part (A) of the  proof of Proposition \ref{prop:reaction_diffusion_global} shows that the conditions of \cite[Proposition 6.8]{AV19_QSEE_2} are satisfied if we choose $r$ such that $\frac1r  = \max_{j\in \{1, 2\}}\beta_j -\frac12$, where $\beta_j$ is as in Lemma \ref{l:estimate_nonlinearities}. From the proof of \cite[Proposition 6.8]{AV19_QSEE_2} one sees that $(u,\sigma)$ coincides with the $(r,\alpha,\delta_0,2)$-solution. Therefore, the required regularity follows from Proposition \ref{prop:reaction_diffusion_global} (or equivalently the extrapolation result of \cite[Lemma 6.10]{AV19_QSEE_2}).

Next let $d=2$. Without loss of generality we can assume $h\in (2, 3)$. In this case we need a slight modification of Lemma \ref{l:estimate_nonlinearities}. To this end, let $1<\delta_0 \leq \min\{\delta, 5/3\}$ be fixed but arbitrary. The nonlinearity $\Phi_0$ satisfies the required estimates with $h<3$, $\beta_1 = \frac{\delta_0}{2}+\frac12 - \frac1h$. Indeed, to see this in \eqref{eq:estimate_rone_reaction_diffusion} one can take $\xi=1$ (using $\delta_0>1$), and $\theta = \frac{d}{q} -\frac{d}{h\xi} = 1-\frac{2}{h}<\frac13$. Note that we are in the case $q<\frac{d(h-1)}{\reg}$ and $\theta<2-\delta_0$ follows from $\delta_0\leq 5/3$. The estimates for $\Phi_1$ and $\Gamma$ can be done by taking the optimal choices for $\eta$ and $\phi$ in the Sobolev embeddings where we replace $\delta$ by $\delta_0$.

Note that in Step 1 of the proof of  Proposition \ref{prop:reaction_diffusion_global} we have
$q<\frac{d(h-1)}{\delta_0}$ and $\frac{1+\a}{p}+\frac{1}{2}(\delta_0+\frac{d}{q})\leq \frac{h}{h-1}$ with $d=p=q=2$ and $\kappa=0$ if we take $\delta_0 \leq \frac{h+1}{h-1}$.
Now we are in the situation that we can repeat the argument of the case $d\geq 3$, where we take $\delta_0 = 1+\varepsilon$ with $\varepsilon\in (0,1/2)$ small enough.

In the case $d=1$, we argue in a similar way as for $d=2$. We may suppose that $h\in (3, 4)$. We first check Proposition \ref{prop:reaction_diffusion_globald=1}\eqref{it2:reaction_diffusion_globald=1}. One can check that the minimum is  attained at the middle expression. Let $r>2$, $\alpha\in (0,\frac{r-1}{2})$ and $\delta_0\in (1, \delta\wedge \frac{7}{4}]$ be arbitrary but fixed. Using $h<4$
and that the right-hand side is strictly decreasing in $h$, we find that for $\delta_0$ small enough
\[\frac{1+\alpha}{r}<\frac12< \frac{h}{h-1} \Big(1-\frac{\delta_0}{2} + \frac{1}{2h} - \frac{1}{4} \Big).\]
Thus Proposition \ref{prop:reaction_diffusion_globald=1} is applicable with $(p,q,\kappa,\delta)$ replaced by $(r,2,\alpha,\delta_0)$. Recall from \eqref{eq:beta1d=1} that
$\beta_1 =  \max\Big\{\frac{1}{2q}-\frac{1}{2h},0\Big\} + \frac{\delta_0}{2}\in\big(\frac12, 1\big)$. Also recall that from the proof of Proposition \ref{prop:reaction_diffusion_globald=1} one can see that $\beta_2$ can be taken as in Lemma \ref{l:estimate_nonlinearities}. Therefore, we can repeat the argument of the case $d\geq 3$ once more.
\end{proof}

The following complements the blow-up criteria of Theorem \ref{t:blow_up_criteria} and of Corollary \ref{cor:blow_up_criteria}. In particular, it shows that the solutions provided by Proposition \ref{prop:reaction_diffusion_global} (or Theorem \ref{t:reaction_diffusion_global_critical_spaces}) are \emph{global} in time if one can obtain energy estimates in an $L^2$-setting. Here the initial data can be from space with lower smoothness than $L^2(\T^d;\R^\ell)$. Thus the result extends the class of initial data covered by Proposition \ref{prop:p=q=2} under some smoothness conditions on the coefficients.

\begin{proposition}[Global existence for rough initial data]\label{prop:globalp2q2rough}
Suppose that the conditions of Proposition \ref{prop:reaction_diffusion_global} are satisfied, in particular $u_0\in L^0_{\F_0}(\O;B^{2-\reg-2\frac{1+\a}{p}}_{q,p})$. Let $(u,\sigma)$ be the $(p,\kappa,\delta,q)$-solution obtained there.
Suppose that \eqref{eq:condh} holds with $h<4$ if $d=1$. Then, for all $0<s<T<\infty$,
\begin{align}\label{eq:condglobalL2s}
\P\Big(s<\sigma<T, \,  \sup_{t\in [s,\sigma)} \|u(t)\|_{L^2} + \|u\|_{L^2(s,\sigma;H^{1})}<\infty\Big) =0.
\end{align}
\end{proposition}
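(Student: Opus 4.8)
The plan is to bootstrap the regularity of the $(p,\kappa,\delta,q)$-solution $(u,\sigma)$ from Proposition \ref{prop:reaction_diffusion_global} up to the $(2,0,1,2)$-setting via the instantaneous regularization \eqref{eq:reaction_diffusion_H_theta_1}--\eqref{eq:reaction_diffusion_C_alpha_beta_1}, and then invoke the $L^2$-blow-up criterium \eqref{eq:condglobalL2} coming from Proposition \ref{prop:p=q=2}. The structure mirrors the proof of Theorem \ref{t:blow_up_criteria}: we restart the equation at a time $s>0$ where the solution is already smooth, and transfer the blow-up criterium valid in the restarted (rough-data, $p=q=2$) setting back to $(u,\sigma)$ via a uniqueness/maximality argument.

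Concretely, fix $0<s<T<\infty$ and set $\V:=\{\sigma>s\}$. By \eqref{eq:reaction_diffusion_C_alpha_beta_1} (applied with $\theta_1=0$) and progressive measurability, $\one_{\{\sigma>s\}}u(s)\in L^0_{\F_s}(\O;C^{\theta})$ for any $\theta\in(0,1)$, hence in particular $\one_{\{\sigma>s\}}u(s)\in L^0_{\F_s}(\O;L^2)$. Proposition \ref{prop:p=q=2} (with a shift of the time origin to $s$; note that \eqref{eq:condh} and \eqref{eq:boundednesscoef} are in force by hypothesis, the latter following from Assumption \ref{ass:reaction_diffusion_global}\eqref{it:regularity_coefficients_reaction_diffusion} and the Sobolev embedding recorded after that assumption) yields a $(2,0,1,2)$-solution $(v,\tau)$ on $[s,\infty)$ to \eqref{eq:reaction_diffusion_system} with $v(s)=\one_{\{\sigma>s\}}u(s)$, satisfying $\tau>s$ a.s., $v\in L^2_{\rm loc}([s,\tau);H^1)\cap C([s,\tau);L^2)$ a.s., and the blow-up criterium
\begin{equation*}
\P\Big(\tau<T,\ \sup_{t\in[s,\tau)}\|v(t)\|_{L^2}+\|v\|_{L^2(s,\tau;H^1)}<\infty\Big)=0.
\end{equation*}
The next step is to show $\tau=\sigma$ a.s.\ on $\V$ and $u=v$ a.e.\ on $[s,\sigma)\times\V$. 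One inclusion: by \eqref{eq:reaction_diffusion_H_theta_1} the restriction $u|_{[s,\sigma)\times\V}$ is, after localization, a local $(2,0,1,2)$-solution to the restarted problem (it lies in $L^2_{\rm loc}([s,\sigma);H^1)$ a.s.\ and solves the equation), so maximality of $(v,\tau)$ gives $\sigma\le\tau$ a.s.\ on $\V$ and $u=v$ a.e.\ on $[s,\sigma)\times\V$. The reverse inequality $\P(\V\cap\{\sigma<\tau\})=0$ is obtained exactly as in the proof of Theorem \ref{t:blow_up_criteria}\eqref{it:blow_up_not_sharp}: on $\V\cap\{\sigma<\tau\}$ one has $u=v\in L^2_{\rm loc}((s,\sigma];H^1)$ and, combining with the regularity of $u$ near $0$ coming from \eqref{eq:regularity_u_reaction_diffusion_critical_spaces_1} and weighted Sobolev embeddings, one concludes $u\in L^2(0,\sigma;H^1)$ and $\sup_{t\in[0,\sigma)}\|u(t)\|_{L^2}<\infty$ a.s.\ on that event; since such behaviour is incompatible with the (standard, non-sharp) blow-up criterium for $(u,\sigma)$ from \cite[Theorem 4.10(3)]{AV19_QSEE_2}, the event has probability zero. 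Wait — here I must be a bit careful: the blow-up criterium available for $(u,\sigma)$ in the $(p,\kappa,\delta,q)$-setting is formulated in $B^{\beta}_{q,p}$ and $L^p(H^{\gamma,q})$, not directly in $L^2$ and $L^2(H^1)$. However, by Proposition \ref{prop:p=q=2-reg} (applicable since $h<4$ when $d=1$, and $\delta\in(1,2)$ as in Proposition \ref{prop:reaction_diffusion_global}) the $(2,0,1,2)$-solution also instantaneously regularizes, so on $\V\cap\{\sigma<\tau\}$ we in fact have $u=v\in H^{\theta,r}_{\rm loc}((s,\sigma];H^{1-2\theta,\zeta})$ for all $r,\zeta$, which feeds into the $B^{\beta}_{q,p}$- and $L^p(H^{\gamma,q})$-norms exactly as in Theorem \ref{t:blow_up_criteria}; combined with \eqref{eq:regularity_u_reaction_diffusion_critical_spaces_1} near $t=0$, this contradicts \cite[Theorem 4.10(3)]{AV19_QSEE_2} unless $\P(\V\cap\{\sigma<\tau\})=0$.

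Granting $\tau=\sigma$ a.s.\ on $\V$ and $u=v$ on $[s,\sigma)\times\V$, the criterium \eqref{eq:condglobalL2s} follows immediately: on the event $\{s<\sigma<T\}\subseteq\V$ we have
\begin{align*}
\Big\{s<\sigma<T,\ \sup_{t\in[s,\sigma)}\|u(t)\|_{L^2}+\|u\|_{L^2(s,\sigma;H^1)}<\infty\Big\}
=\Big\{s<\tau<T,\ \sup_{t\in[s,\tau)}\|v(t)\|_{L^2}+\|v\|_{L^2(s,\tau;H^1)}<\infty\Big\}\cap\V,
\end{align*}
which has probability zero by the blow-up criterium for $(v,\tau)$. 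The main obstacle — and the step deserving the most care — is the identification $u=v$ together with $\tau=\sigma$ on $\V$: one must check that $u|_{[s,\sigma)\times\V}$ genuinely qualifies as a local $(2,0,1,2)$-solution of the restarted equation (this uses \eqref{eq:reaction_diffusion_H_theta_1} to get the $H^1$-integrability in space and the matching of integrability exponents, handled via the compatibility philosophy of Proposition \ref{prop:comp}), and conversely that $v$ cannot strictly extend $u$, which is precisely where one exploits the regularization of $v$ (Proposition \ref{prop:p=q=2-reg}) to make the $(p,\kappa,\delta,q)$-blow-up criterium of \cite{AV19_QSEE_2} applicable on $\V\cap\{\sigma<\tau\}$. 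Once this is set up, all remaining steps are routine and parallel to the proof of Theorem \ref{t:blow_up_criteria}.
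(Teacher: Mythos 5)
Your proposal follows the same strategy as the paper's proof essentially step for step: restart the equation at time $s$ in the $(2,0,1,2)$-setting using the smoothness of $u(s)$ from the instantaneous regularization, invoke Proposition \ref{prop:p=q=2} for existence and the $L^2$-blow-up criterium for $(v,\tau)$, use Proposition \ref{prop:p=q=2-reg} for the regularization of $v$ (which is precisely where the extra restriction $h<4$ when $d=1$ enters), deduce $\sigma\le\tau$ from maximality, and then rule out $\P(\V\cap\{\sigma<\tau\})>0$ by feeding the regularity of $v$ near $\sigma$ and of $u$ near $0$ into the $(p,\a_{\crit},\s,q)$-blow-up criterium \eqref{eq:blow_up_criteria_u} from \cite[Theorem 4.10(3)]{AV19_QSEE_2}. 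Your brief worry about the blow-up criterium being formulated in $B^{\beta}_{q,p}$ and $L^p(H^{\gamma,q})$ rather than in $L^2$ and $L^2(H^1)$ is exactly the point the paper also handles by bootstrapping through Proposition \ref{prop:p=q=2-reg}, so the proposal is correct and matches the paper's argument.
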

\begin{proof}
We extend the argument in Theorem \ref{t:blow_up_criteria} to $p=q=2$.
First note that $u$ satisfies the regularity stated in \eqref{eq:reaction_diffusion_H_theta_1} and \eqref{eq:reaction_diffusion_C_alpha_beta_1}. In particular, the $L^2$-norm and $H^{1}$-norm appearing in \eqref{eq:condglobalL2s} are well-defined.

Proposition \ref{prop:p=q=2} (up to translation) yields the existence of a $(2,0,1,2)$-solution $(v,\tau)$ on $[s,\infty)$ to \eqref{eq:reaction_diffusion_global_stochastic_s} with initial data $\one_{\{\sigma>s\}}u(s)$ which satisfies $\tau>s$ a.s., and by Proposition \ref{prop:p=q=2-reg} (recall that $h<4$ if $d=1$),
\begin{equation}
\label{eq:v_regularizesp=2}
v\in H^{\theta,r}_{\rm loc}(s,\tau;H^{1-2\theta,\zeta})\quad  \text{a.s.\ for all }\theta\in  [0,1/2), \  r,\zeta\in (2,\infty).
\end{equation}
Moreover, by Proposition \ref{prop:p=q=2} (up to translation),
\begin{equation}\label{eq:tausmallT}
\P\Big(\tau<T,\, \sup_{t\in [s, \tau)}\|v(t)\|_{L^2} +  \|v\|_{L^2(s,\tau;H^{1})} <\infty\Big)=0.
\end{equation}

We claim that
\begin{equation}
\label{eq:tau_sigma_u_v_equalityp=q=2}
\tau=\sigma \text{ a.s.\ on }\{\sigma>s\} \quad \text{ and }\quad u=v \text{ a.e.\ on }[s,\sigma)\times \{\sigma>s\}.
\end{equation}
Hence \eqref{eq:condglobalL2s} follows from \eqref{eq:tausmallT} and \eqref{eq:tau_sigma_u_v_equalityp=q=2}.

It remains to prove the claim \eqref{eq:tau_sigma_u_v_equalityp=q=2}. Since $(u|_{[s,\sigma)\times \V}, \one_{\V} \sigma+ \one_{\O\setminus\V} s)$ is a \emph{local} $(2,0,1,2)$-solution to \eqref{eq:reaction_diffusion_global_stochastic_s} with initial data with initial data $\one_{\{\sigma>s\}}u(s)$, the maximality of $(v,\tau)$ yields
\begin{equation}\label{eq:maximp2q}
\sigma\leq \tau \text{ a.s.\ on }\{\sigma>s\} \quad \text{ and }\quad u=v \text{ a.e.\ on }[s,\sigma).
\end{equation}
To conclude it is enough to show that $\P(s<\sigma<\tau)=0$. To this end we will apply the blow-up criteria \eqref{eq:blow_up_criteria_u}. Indeed, by \eqref{eq:v_regularizesp=2} and \eqref{eq:maximp2q} we have $u=v\in L^p_{\loc}((s,\sigma];H^{\gamma,q})$ a.s.\ on $\{s<\sigma<\tau\}$. Combining this with \eqref{eq:L_p_up_to_zero} we find $u\in L^p(0,\sigma;H^{\gamma,q})$ a.s.\ on  $\{s<\sigma<\tau\}$. Similarly, one can check that $\sup_{t\in [0,\sigma)}\|u(t)\|_{B^{\beta}_{q,p}}<\infty$ a.s.\ on $\{s<\sigma<\tau\}$, and therefore
\begin{align*}
\P(s<\sigma<\tau)
&= \P\Big(\{s<\sigma<\tau\} \cap \Big\{\sup_{t\in [0,\sigma)}\|u(t)\|_{B^{\beta}_{q,p}}+ \|u\|_{L^p(0,\sigma;H^{\g,p})}<\infty\Big\}\Big)\\
&\leq \P\Big(\sigma<T,\, \sup_{t\in [0,\sigma)}\|u(t)\|_{B^{\beta}_{q,p}}+ \|u\|_{L^p(0,\sigma;H^{\g,p})}<\infty\Big)\stackrel{\eqref{eq:blow_up_criteria_u}}{=}0.
\end{align*}
\end{proof}

\begin{remark}
From the proof of Proposition \ref{prop:p=q=2-reg} it follows that the compatibility result of Proposition \ref{prop:comp} extends to $p=q=2$ and $\delta=1$ under the restrictions on $h$ and $d$ stated in Proposition \ref{prop:p=q=2-reg}.
\end{remark}

\begin{remark}\label{rem:weakerh}
By splitting the locally Lipschitz and growth conditions on $f$, $F$ and $g$ stated in Assumption \ref{ass:reaction_diffusion_global}\eqref{it:growth_nonlinearities} into three different growth conditions with parameters $h_f, h_F$ and $h_g$ instead of $h$, one can further weaken the conditions in Propositions \ref{prop:p=q=2}-\ref{prop:globalp2q2rough}. Indeed, from \cite[Section 5.3]{AV22_variational} one sees that in Proposition \ref{prop:p=q=2}  it is enough to assume $h_F, h_g\leq \frac{d+4}{d}$ for $d\geq 1$. The assumption on $h_f$ remains as it was for $h$. This leads to a slightly weaker assumptions on $F$ and $g$ for $d\in \{1,2\}$. The same actually applies to the more general of Lemma \ref{l:estimate_nonlinearities}.
\end{remark}

\begin{remark}\label{rem:Hk}
One can also replace $(L^2,H^{1})$ by $(H^{s},H^{s+1})$ in the above results. This gives a wider range of nonlinearities which can be treated if $s$ is large, but at the same time this choice requires more restrictions on the regularity of the coefficients, the spatial smoothness of the nonlinearities $f,F,g$ and on the initial data (see e.g.\ \cite[Section 5.4]{AV22_variational}).
\end{remark}

\appendix
\section{A maximum principle for SPDEs}
\label{s:maximum}
In \cite{Kry13}, Krylov presented a maximum principle for linear scalar second order SPDEs, which are allowed to be degenerate. In the proof of the positivity result of Theorem \ref{thm:positivity} we need such a result in the non-degenerate setting, but with coefficients which have less smoothness. Below we extend the maximum principle to the case of non-smooth coefficients as one can use an approximation argument in the non-degenerate case.
As before Theorem \ref{thm:positivity}, here we say that $v\in \D'(\Tor^d)$ is positive (or $v\geq 0$) if $\l \varphi,v\r\geq 0$ for all test functions $\varphi$ satisfying $\varphi\geq 0$ on $\Tor^d$.

\begin{lemma}[Maximum principle for second order SPDEs of scalar type]\label{lem:maxprinciple}
Suppose that $a^{ij}, a^i, b^i, c:[0,T]\times \Omega\times\T^d\to \R$, $(\sigma^{ik})_{k\geq 1}, (\nu^k)_{k\geq 1}:[0,T]\times \Omega\times\T^d\to \ell^2$  are bounded and $\Progress\otimes \Borel(\T^d)$-measurable, and there is a $\gamma>0$ such that a.s.\
\begin{align}\label{eq:parabolcoeff}
\sum_{i,j=1}^d \Big(a^{ij}-\frac{1}{2} \alpha^{ij}\Big)
 \xi_i \xi_j
\geq  \gamma |\xi|^2 \ \ \text{on $[0,T]\times \T^d$,}
\end{align}
where $\alpha^{ij} = (\sigma^i, \sigma^j)_{\ell^2}$. Let $u_0\in L^2(\Omega;L^2(\T^d))$ and $f\in L^2(\Omega\times (0,T);H^{-1}(\T^d))$ be such that a.e.\ $u_0\geq 0$ and $f\geq 0$. Let $u\in L^2(\Omega;L^2(0,T;H^1(\T^d)))\cap L^2(\Omega;C([0,T];L^2(\T^d)))$ be the solution to
\begin{equation}
\label{eq:parabolic_lin_problem}
\left\{
\begin{aligned}
&\dd u - A u\, \dd t = f \, \dd t + \sum_{k\geq 1} B^k u \, \dd w^k_t,\\
&u(0)=u_0,
\end{aligned}
\right.
\end{equation}
where
\begin{align*}
A u &= \sum_{i,j=1}^d  \partial_i(a^{ij} \partial_j u) + \sum_{i=1}^d\partial_i (a^i u) + \sum_{i=1}^d b^i \partial_i u + cu, \ \ \text{and} \ \ B^k u = \sum_{i=1}^d \sigma^{ik} \partial_i u +  \nu^k u.
\end{align*}
Then a.s. for all $t\in [0,T]$, one has $u\geq 0$.
\end{lemma}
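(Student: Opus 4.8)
The plan is to deduce the statement from Krylov's maximum principle \cite{Kry13}, which is available when the coefficients are smooth in the spatial variable, via a spatial mollification argument. This reduction is possible precisely because \eqref{eq:parabolcoeff} enforces non-degeneracy, so the approximating problems will be uniformly parabolic; in the genuinely degenerate case this shortcut would fail, which is why the lemma is stated under \eqref{eq:parabolcoeff}.

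Concretely, I would fix a standard mollifier $(\rho_\varepsilon)_{\varepsilon>0}$ on $\T^d$ (so $\rho_\varepsilon\geq 0$, $\int_{\T^d}\rho_\varepsilon\,\dd x=1$, $\supp\rho_\varepsilon\to\{0\}$) and set $g_\varepsilon:=\rho_\varepsilon*g$, convolution in $x$ only, applied to $a^{ij},a^i,b^i,c$, to $(\sigma^{ik})_{k}$, $(\nu^k)_{k}$, and to $u_0$ and $f$. Then each $g_\varepsilon$ is $\Progress\otimes\Borel(\T^d)$-measurable, $C^\infty$ in $x$, and bounded by the same constant as $g$; moreover $u_{0,\varepsilon}\geq 0$, $f_\varepsilon\geq 0$, $u_{0,\varepsilon}\to u_0$ in $L^2(\Omega;L^2(\T^d))$, and $f_\varepsilon\to f$ in $L^2(\Omega\times(0,T);H^{-1}(\T^d))$. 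The key observation is that the mollified data still satisfy \eqref{eq:parabolcoeff} with the \emph{same} constant $\gamma$: writing $\alpha^{ij}_\varepsilon:=(\sigma^i_\varepsilon,\sigma^j_\varepsilon)_{\ell^2}$, Jensen's inequality for the convex map $v\mapsto\|v\|_{\ell^2}^2$ gives, for every $\xi\in\R^d$ and a.e.\ $x\in\T^d$,
\[
\sum_{i,j=1}^d\alpha^{ij}_\varepsilon(x)\xi_i\xi_j
=\Big\|\int_{\T^d}\rho_\varepsilon(y)\sum_{i=1}^d\xi_i\sigma^i(x-y)\,\dd y\Big\|_{\ell^2}^2
\leq\int_{\T^d}\rho_\varepsilon(y)\Big\|\sum_{i=1}^d\xi_i\sigma^i(x-y)\Big\|_{\ell^2}^2\,\dd y
=\sum_{i,j=1}^d(\alpha^{ij})_\varepsilon(x)\xi_i\xi_j,
\]
hence $a^{ij}_\varepsilon-\tfrac12\alpha^{ij}_\varepsilon\geq a^{ij}_\varepsilon-\tfrac12(\alpha^{ij})_\varepsilon=\big(a^{ij}-\tfrac12\alpha^{ij}\big)_\varepsilon\geq\gamma$ as quadratic forms, the last step by positivity and unit mass of $\rho_\varepsilon$.

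Next I would let $u_\varepsilon$ be the solution of \eqref{eq:parabolic_lin_problem} with $(A,B,u_0,f)$ replaced by the mollified objects; existence, uniqueness and an energy bound in $L^2(\Omega;C([0,T];L^2))\cap L^2(\Omega\times(0,T);H^1)$ follow from the variational theory (e.g.\ \cite[Ch.~4]{LR15}), with constants uniform in $\varepsilon$ by the uniform bounds on the coefficients and the preserved coercivity constant $\gamma$. Since the mollified coefficients are smooth in $x$, Krylov's maximum principle \cite{Kry13} applies and yields $u_\varepsilon(t)\geq 0$ a.s.\ for all $t\in[0,T]$. To pass to the limit, set $r_\varepsilon:=u_\varepsilon-u$, which solves a linear equation with operators $(A_\varepsilon,B_\varepsilon)$, initial value $u_{0,\varepsilon}-u_0$, and inhomogeneities $f_\varepsilon-f+(A_\varepsilon-A)u$ (in $H^{-1}$) and $(B_\varepsilon-B)u$ (in $L^2(\ell^2)$); the uniform energy estimate then gives
\[
\E\|r_\varepsilon\|_{C([0,T];L^2)}^2
\lesssim \E\|u_{0,\varepsilon}-u_0\|_{L^2}^2+\E\|f_\varepsilon-f\|_{L^2(0,T;H^{-1})}^2
+\E\|(A_\varepsilon-A)u\|_{L^2(0,T;H^{-1})}^2+\E\|(B_\varepsilon-B)u\|_{L^2(0,T;L^2(\ell^2))}^2.
\]
The first two terms vanish in the limit by construction; the last two vanish by dominated convergence, since $a^{ij}_\varepsilon\to a^{ij}$ a.e.\ (and likewise for the other coefficients) with a uniform bound while $u\in L^2(\Omega\times(0,T);H^1(\T^d))$, so that e.g.\ $(a^{ij}_\varepsilon-a^{ij})\partial_j u\to 0$ in $L^2(\Omega\times(0,T)\times\T^d)$. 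Extracting a subsequence along which $\sup_{t\in[0,T]}\|r_{\varepsilon_n}(t)\|_{L^2}\to 0$ a.s.\ and using that the nonnegative cone of $L^2(\T^d)$ is closed, one concludes that a.s.\ $u(t)\geq 0$ for all $t\in[0,T]$.

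I expect the main obstacle to be exactly the point highlighted above: because the coefficients are merely bounded and measurable, $\sigma^i_\varepsilon\to\sigma^i$ only in $L^p$ for finite $p$, not uniformly, so the mollified leading symbol $a^{ij}_\varepsilon-\tfrac12\alpha^{ij}_\varepsilon$ is \emph{not} close to $a^{ij}-\tfrac12\alpha^{ij}$, and it is the Jensen bound $\alpha^{ij}_\varepsilon\leq(\alpha^{ij})_\varepsilon$ that keeps the approximating problems uniformly parabolic. The remaining ingredients, namely the uniform energy estimate, the precise hypotheses under which \cite{Kry13} applies (progressively measurable, $x$-smooth coefficients, $H^{-1}$-valued $f\geq 0$ and $u_0\geq 0$), and the dominated-convergence limit, are routine.
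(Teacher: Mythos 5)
Your proposal is correct and follows the same overall strategy as the paper's proof: mollify all coefficients in the spatial variable, apply Krylov's maximum principle \cite{Kry13} to the mollified (hence spatially smooth) problem, and pass to the limit using a uniform $L^2$ energy estimate and dominated convergence. The one place where you genuinely diverge is the verification that the mollified problem remains uniformly parabolic. The paper replaces $a^{ij}_n$ by the corrected coefficient $\widetilde{a}^{ij}_n = a^{ij}_n + \tfrac12(\sigma^i_n,\sigma^j_n)_{\ell^2} - \tfrac12\alpha^{ij}_n$, which makes the parabolicity identity hold by construction but then requires noting that the extra term $\tfrac12(\sigma^i_n,\sigma^j_n)_{\ell^2} - \tfrac12\alpha^{ij}_n$ vanishes a.e.\ in the limit so that $\widetilde{a}^{ij}_n\to a^{ij}$. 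You instead keep the untouched mollified coefficient $a^{ij}_\varepsilon$ and observe, via Jensen's inequality for the convex map $v\mapsto\|v\|_{\ell^2}^2$, that $\alpha^{ij}_\varepsilon:=(\sigma^i_\varepsilon,\sigma^j_\varepsilon)_{\ell^2}\leq(\alpha^{ij})_\varepsilon$ as quadratic forms, whence $a^{ij}_\varepsilon-\tfrac12\alpha^{ij}_\varepsilon\geq(a^{ij}-\tfrac12\alpha^{ij})_\varepsilon\geq\gamma$ with the \emph{same} constant. This is cleaner: it removes the need to introduce and control the correction term $\widetilde{a}^{ij}_n - a^{ij}_n$ in the convergence step, at the cost of one use of Jensen. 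Both arguments are sound; yours is slightly more economical.
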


In the above solutions to \eqref{eq:parabolic_lin_problem} are understood in the sense of Defintion \ref{def:solution} with $q=p=2$ and $\a=0$.
A similar result holds for general domains $\Dom \subseteq \R^d$ with (for instance) Dirichlet boundary conditions.

\begin{proof}
For convenience of the reader we give the details of the approximation argument. Note that a unique solution exists by the classical variational setting (see e.g.\ \cite[Theorem 4.2.4]{LR15}) applied to the linear problem \eqref{eq:parabolic_lin_problem}. In case of smooth coefficients and smooth $f$, it follows from that $u\geq 0$ (see \cite{Kry13}). In order to prove $u\geq 0$ in the above setting, it suffices to construct $(u_n)_{n\geq 1}$ such that $u_n\geq 0$ and $u_n\to u$ in $L^2(\Omega;C([0,T];L^2(\T^d)))$.

To approximate $u$ we use a standard mollifier argument. Let $\rho\in C^\infty(\T^d)$ be such that $\rho\geq 0$ and $\int_{\T^d} \rho \dd x = 1$. Set $\rho_n(x) = n^d\rho(n x)$, $h_n = \rho_n*h$ for $h\in \{a^{ij}, a^i, b^i, c, \sigma^{ik}, \nu^k, \alpha^{ij}, f\}$ and
\begin{align*}
A_n v &= \sum_{i,j=1}^d  \partial_i(\wt{a}^{ij}_n \partial_j v) + \sum_{i=1}^d\partial_i (a^i_n v) + \sum_{i=1}^d b^i_n \partial^i v + c_nv, \ \ \text{and} \ \ B^k_n v = \sum_{i=1}^d \sigma^{ik}_n \partial_i v +  \nu^k_n v,
\end{align*}
where $\wt{a}^{ij}_n = a^{ij}_n +\frac{1}{2} (\sigma^i_n, \sigma^j_n)_{\ell^2} - \frac12 \alpha^{ij}_n$. Note that in general $(\sigma^i_n, \sigma^j_n)_{\ell^2} \neq \alpha^{ij}_n$, but the equality holds pointwise a.e.\ in the limit as $n\to \infty$, (possibly) up to a subsequence. Due to this seemingly unnatural definition
we can again check the parabolicity condition  \eqref{eq:parabolcoeff}:
\[\sum_{i,j=1}^d \big(\wt{a}^{ij}_n-\frac{1}{2} (\sigma^i_n, \sigma^j_n)_{\ell^2}\big)
 \xi_i \xi_j = \sum_{i,j=1}^d \big(a^{ij}_n-\frac12 \alpha^{ij}_n\big)
 \xi_j \xi_j = \Big[\sum_{i,j=1}^d \big(a^{ij}-\frac12 \alpha^{ij}\big)\xi_i \xi_j\Big]*\rho_n
\geq  \gamma |\xi|^2.
\]

Let $u_n\in Z:=L^2((0,T)\times \O;H^1(\T^d))\cap L^2(\Omega;C([0,T];L^2(\T^d)))$ be the unique solution to
\begin{equation*}
\left\{
\begin{aligned}
&\dd u_n - A_n u_n\,\dd  t = f_n \, \dd t + \sum_{k\geq 1} B^k_n u_n \, \dd w^k_t,\\
&u_n(0)=u_0.
\end{aligned}
\right.
\end{equation*}
Since the coefficients in the above linear SPDE are smooth, we apply the periodic case of \cite[Theorem 4.3]{Kry13} to obtain $u_n\geq 0$. It remains to show $u_n\to u$ in $L^2(\Omega;C([0,T];L^2(\T^d)))$.

Note that $v_n = u- u_n$ satisfies
\begin{equation*}
\left\{
\begin{aligned}
&\dd v_n - A_n v_n\, \dd t
= F_n \, \dd t + \sum_{k\geq 1} \big(B^k_n v_n  +  G^k_n\big) \dd w^k_t,\\
&v_n(0)=0,
\end{aligned}
\right.
\end{equation*}
where
\[F_n := (A - A_n)u  + f-f_n \ \ \text{and} \ \  G^k_n := (B^k - B_n^k)u.\]
Therefore, by standard regularity estimates (see \cite[Theorem 4.2.4]{LR15} and its proof),
\begin{align*}
\|v_n\|_Z& \leq C\|F_n\|_{L^2((0,T)\times \Omega;H^{-1}(\T^d))} + C\|(G_n^k)_{k\geq 1}\|_{L^2((0,T)\times \Omega;L^2(\T^d;\ell^2))}
\\ & \leq  C\|(A - A_n)u + f - f_n\|_{L^2((0,T)\times \Omega;H^{-1}(\T^d))} \\
&+ C\|((B^k - B_n^k )u)_{k\geq 1}\|_{L^2((0,T)\times \Omega;L^2(\T^d;\ell^2))}.
\end{align*}
Since for each $h\in \{a^{ij}, a^i, b^i, c, \sigma^{ik}, \nu^k, \alpha^{ij}, f\}$,  we have $h_{n_m}\to h$ a.e.\ for a suitable subsequence, and since $u\in L^2((0,T)\times \O;H^1(\T^d))$, it follows from the dominated convergence theorem that
\[\|(A - A_n)u\|_{L^2((0,T)\times \Omega;H^{-1}(\T^d))}\to 0, \ \ \text{and} \ \
\|((B^k - B_n^k )u)_{k\geq 1}\|_{L^2((0,T)\times \Omega;L^2(\T^d;\ell^2))}\to 0.
\]
Note that in the above we used that $u\in L^2((0,T)\times \O;H^1(\Tor^d))$ as $\g>0$ in \eqref{eq:parabolcoeff}.
For the inhomogeneity $f$, writing $\wt{f} = (1-\Delta)^{-1/2} f$, we have
\[\|f-f_n\|_{L^2((0,T)\times \Omega;H^{-1}(\T^d))}  \eqsim \|\wt{f} -\rho_n * \wt{f}\|_{L^2((0,T)\times \Omega;L^2(\T^d))}\to 0.\]
Combining the above we have $\|v_n\|_Z\to 0$, as required.
\end{proof}

\def\polhk#1{\setbox0=\hbox{#1}{\ooalign{\hidewidth
  \lower1.5ex\hbox{`}\hidewidth\crcr\unhbox0}}} \def\cprime{$'$}


\begin{thebibliography}{HNVW17}

\bibitem[Agr22]{Agr22}
A.~Agresti.
\newblock {Delayed blow-up and enhanced diffusion by transport noise for
  systems of reaction-diffusion equations}.
\newblock {\em arXiv preprint arXiv:2207.08293}, 2022.

\bibitem[ALV21]{ALV21}
A.~Agresti, N.~Lindemulder, and M.~Veraar.
\newblock On the trace embedding and its applications to evolution equations.
\newblock {\em Online first in {M}athematische {N}achrichten}, 2021.

\bibitem[AV21]{AV20_NS}
A.~Agresti and M.C. Veraar.
\newblock Stochastic {N}avier-{S}tokes equations for turbulent flows in
  critical spaces.
\newblock {\em arXiv preprint arXiv:2107.03953}, 2021.

\bibitem[AV22a]{AV22_variational}
A.~Agresti and M.C. Veraar.
\newblock The critical variational setting for stochastic evolution equations.
\newblock {\em arXiv preprint arXiv:2206.00230}, 2022.

\bibitem[AV22b]{AV19_QSEE_1}
A.~Agresti and M.C. Veraar.
\newblock Nonlinear parabolic stochastic evolution equations in critical spaces
  part {I}. {S}tochastic maximal regularity and local existence.
\newblock {\em Nonlinearity}, 35(8):4100--4210, 2022.

\bibitem[AV22c]{AV19_QSEE_2}
A.~Agresti and M.C. Veraar.
\newblock Nonlinear parabolic stochastic evolution equations in critical spaces
  part {II}: {B}low-up criteria and instataneous regularization.
\newblock {\em J. Evol. Equ.}, 22(2):Paper No. 56, 96, 2022.

\bibitem[AV23a]{AV22_quasi}
A.~Agresti and M.C. Veraar.
\newblock {Global existence and regularity for quaslinear {SPDEs} with
  transport noise}.
\newblock In preparation, 2023.

\bibitem[AV23b]{AVreaction-global}
A.~Agresti and M.C. Veraar.
\newblock Reaction-diffusion equations with transport noise and critical
  superlinear diffusion: Global well-posedness of weakly dissipative systems.
\newblock {\em arXiv preprint arXiv:2301.06897}, 2023.

\bibitem[AV23c]{AV21_SMR_torus}
A.~Agresti and M.C. Veraar.
\newblock {Stochastic maximal $L^p(L^q)$-regularity for second order systems
  with periodic boundary conditions}.
\newblock To appear in Annales de l'Institut Henri Poincaré (B) Probabilités
  et Statistiques, 2023.

\bibitem[Ass99]{Ass99}
S.~Assing.
\newblock Comparison of systems of stochastic partial differential equations.
\newblock {\em Stochastic Process. Appl.}, 82(2):259--282, 1999.

\bibitem[BL76]{BeLo}
J.~Bergh and J.~L{\"o}fstr{\"o}m.
\newblock {\em Interpolation spaces. {A}n introduction}.
\newblock Springer-Verlag, Berlin, 1976.
\newblock Grundlehren der Mathematischen Wissenschaften, No. 223.

\bibitem[BCF92]{BCF92}
Z.~Brze\'{z}niak, M.~Capi\'{n}ski, and F.~Flandoli.
\newblock Stochastic {N}avier-{S}tokes equations with multiplicative noise.
\newblock {\em Stochastic Anal. Appl.}, 10(5):523--532, 1992.

\bibitem[CGV19]{CGV19}
M.C. Caputo, T.~Goudon, and A.F. Vasseur.
\newblock Solutions of the 4-species quadratic reaction-diffusion system are
  bounded and {$C^\infty$}-smooth, in any space dimension.
\newblock {\em Anal. PDE}, 12(7):1773--1804, 2019.

\bibitem[Cer03]{C03}
S.~Cerrai.
\newblock Stochastic reaction-diffusion systems with multiplicative noise and
  non-{L}ipschitz reaction term.
\newblock {\em Probab. Theory Related Fields}, 125(2):271--304, 2003.

\bibitem[Cer05]{Cer05}
S.~Cerrai.
\newblock Stabilization by noise for a class of stochastic reaction-diffusion
  equations.
\newblock {\em Probab. Theory Related Fields}, 133(2):190--214, 2005.

\bibitem[CR05]{CR05}
S.~Cerrai and M.~R\"{o}ckner.
\newblock Large deviations for invariant measures of stochastic
  reaction-diffusion systems with multiplicative noise and non-{L}ipschitz
  reaction term.
\newblock {\em Ann. Inst. H. Poincar\'{e} Probab. Statist.}, 41(1):69--105,
  2005.

\bibitem[CPT16]{CPT16}
M.D. Chekroun, E.~Park, and R.~Temam.
\newblock The {S}tampacchia maximum principle for stochastic partial
  differential equations and applications.
\newblock {\em J. Differential Equations}, 260(3):2926--2972, 2016.

\bibitem[Cho09]{C09}
P.-L. Chow.
\newblock Unbounded positive solutions of nonlinear parabolic {I}t\^{o}
  equations.
\newblock {\em Commun. Stoch. Anal.}, 3(2):211--222, 2009.

\bibitem[Cho11]{C11}
P.-L. Chow.
\newblock Explosive solutions of stochastic reaction-diffusion equations in
  mean {$L^p$}-norm.
\newblock {\em J. Differential Equations}, 250(5):2567--2580, 2011.

\bibitem[CK15]{ChKh15}
P.-L. Chow and R.~Khasminskii.
\newblock Almost-sure explosive solutions of some nonlinear parabolic {I}t\^{o}
  equations.
\newblock {\em Commun. Stoch. Anal.}, 9(2):159--168, 2015.

\bibitem[CES13]{CES13}
J.~Cresson, M.~Efendiev, and S.~Sonner.
\newblock On the positivity of solutions of systems of stochastic {PDE}s.
\newblock {\em ZAMM Z. Angew. Math. Mech.}, 93(6-7):414--422, 2013.

\bibitem[DKZ19]{DKZ19}
R.C. Dalang, D.~Khoshnevisan, and T.~Zhang.
\newblock Global solutions to stochastic reaction-diffusion equations with
  super-linear drift and multiplicative noise.
\newblock {\em Ann. Probab.}, 47(1):519--559, 2019.

\bibitem[DP22]{DP22}
A.~Debussche and U.~Pappalettera.
\newblock Second order perturbation theory of two-scale systems in fluid
  dynamics.
\newblock {\em arXiv preprint arXiv:2206.07775}, 2022.

\bibitem[FB18]{FB18_turbulence}
M.~Farokhi and M.~Birouk.
\newblock A new {EDC} approach for modeling turbulence/chemistry interaction of
  the gas-phase of biomass combustion.
\newblock {\em Fuel}, 220:420--436, 2018.

\bibitem[FMT20]{FMT20}
K.~Fellner, J.~Morgan, and B.~Q. Tang.
\newblock Global classical solutions to quadratic systems with mass control in
  arbitrary dimensions.
\newblock {\em Ann. Inst. H. Poincar\'{e} Anal. Non Lin\'{e}aire},
  37(2):281--307, 2020.

\bibitem[Fis15]{F15_global_renormalized}
J.~Fischer.
\newblock Global existence of renormalized solutions to entropy-dissipating
  reaction-diffusion systems.
\newblock {\em Arch. Ration. Mech. Anal.}, 218(1):553--587, 2015.

\bibitem[Fis17]{F17_weak_strong_uniqueness}
J.~Fischer.
\newblock Weak-strong uniqueness of solutions to entropy-dissipating
  reaction-diffusion equations.
\newblock {\em Nonlinear Anal.}, 159:181--207, 2017.

\bibitem[Fla91]{F91}
F.~Flandoli.
\newblock A stochastic reaction-diffusion equation with multiplicative noise.
\newblock {\em Appl. Math. Lett.}, 4(4):45--48, 1991.

\bibitem[Fla08]{F_intro}
F.~Flandoli.
\newblock An introduction to 3{D} stochastic fluid dynamics.
\newblock In {\em SPDE in hydrodynamic: recent progress and prospects}, pages
  51--150. Springer, 2008.

\bibitem[Fla11]{F15_book}
F.~Flandoli.
\newblock {\em Random perturbation of {PDE}s and fluid dynamic models}, volume
  2015 of {\em Lecture Notes in Mathematics}.
\newblock Springer, Heidelberg, 2011.
\newblock Lectures from the 40th Probability Summer School held in Saint-Flour,
  2010, \'{E}cole d'\'{E}t\'{e} de Probabilit\'{e}s de Saint-Flour.
  [Saint-Flour Probability Summer School].

\bibitem[FGL21a]{FGL21a}
F.~Flandoli, L.~Galeati, and D.~Luo.
\newblock Delayed blow-up by transport noise.
\newblock {\em Comm. Partial Differential Equations}, 46(9):1757--1788, 2021.

\bibitem[FGL21b]{FGL21_dissipation}
F.~Flandoli, L.~Galeati, and D.~Luo.
\newblock {Mixing, dissipation enhancement and convergence rates for scaling
  limit of SPDEs with transport noise}.
\newblock {\em arXiv preprint arXiv:2104.01740}, 2021.

\bibitem[FGP10]{FGP10}
F.~Flandoli, M.~Gubinelli, and E.~Priola.
\newblock Well-posedness of the transport equation by stochastic perturbation.
\newblock {\em Invent. Math.}, 180(1):1--53, 2010.

\bibitem[FL21]{FL19}
F.~Flandoli and D.~Luo.
\newblock High mode transport noise improves vorticity blow-up control in 3{D}
  {N}avier-{S}tokes equations.
\newblock {\em Probab. Theory Related Fields}, 180, 2021.

\bibitem[FP22]{FP21}
F.~Flandoli and U.~Pappalettera.
\newblock From additive to transport noise in 2{D} fluid dynamics.
\newblock {\em Stoch. Partial Differ. Equ. Anal. Comput.}, 10(3):964--1004,
  2022.

\bibitem[FLN19]{FLN19_explosion}
M.~Foondun, W.~Liu, and E.~Nane.
\newblock Some non-existence results for a class of stochastic partial
  differential equations.
\newblock {\em J. Differential Equations}, 266(5):2575--2596, 2019.

\bibitem[Fuj66]{F66}
H.~Fujita.
\newblock On the blowing up of solutions of the {C}auchy problem for
  {$u_{t}=\Delta u+u^{1+\alpha }$}.
\newblock {\em J. Fac. Sci. Univ. Tokyo Sect. I}, 13:109--124 (1966), 1966.

\bibitem[GY21]{GY21}
B.~Gess and I.~Yaroslavtsev.
\newblock Stabilization by transport noise and enhanced dissipation in the
  {K}raichnan model.
\newblock {\em arXiv preprint arXiv:2104.03949}, 2021.

\bibitem[GJ63]{GE63_turbulence}
I.~Glassman and Eberstein~I. J.
\newblock Turbulence effects in chemical reaction kinetics measurements.
\newblock {\em AIAA Journal}, 1(6):1424--1426, 1963.

\bibitem[HNVW16]{Analysis1}
T.P. Hyt\"onen, J.M.A.M.~van Neerven, M.C. Veraar, and L.~Weis.
\newblock {\em Analysis in {B}anach spaces. {V}ol. {I}. {M}artingales and
  {L}ittlewood-{P}aley theory}, volume~63 of {\em Ergebnisse der Mathematik und
  ihrer Grenzgebiete. 3. Folge.}
\newblock Springer, 2016.

\bibitem[HNVW17]{Analysis2}
T.P. Hyt\"onen, J.M.A.M.~van Neerven, M.C. Veraar, and L.~Weis.
\newblock {\em Analysis in {B}anach spaces. {V}ol. {II}. {P}robabilistic
  {M}ethods and {O}perator {T}heory.}, volume~67 of {\em Ergebnisse der
  Mathematik und ihrer Grenzgebiete. 3. Folge.}
\newblock Springer, 2017.

\bibitem[Kan90]{K90}
Y.~I. Kanel.
\newblock Solvability in the large of a system of reaction-diffusion equations
  with the balance condition.
\newblock {\em Differentsial\cprime nye Uravneniya}, 26(3):448--458, 549, 1990.

\bibitem[KD86]{KD86_turbulence_chemical_reactions}
M.~M. Koochesfahani and P.~E. Dimotakis.
\newblock Mixing and chemical reactions in a turbulent liquid mixing layer.
\newblock {\em Journal of Fluid Mechanics}, 170:83–112, 1986.

\bibitem[Kra68]{K68}
R.~H. Kraichnan.
\newblock Small-scale structure of a scalar field convected by turbulence.
\newblock {\em The Physics of Fluids}, 11(5):945--953, 1968.

\bibitem[Kra94]{K94}
R.~H. Kraichnan.
\newblock Anomalous scaling of a randomly advected passive scalar.
\newblock {\em Physical review letters}, 72(7):1016, 1994.

\bibitem[Kry13]{Kry13}
N.V. Krylov.
\newblock A relatively short proof of {I}t\^{o}'s formula for {SPDE}s and its
  applications.
\newblock {\em Stoch. Partial Differ. Equ. Anal. Comput.}, 1(1):152--174, 2013.

\bibitem[KN20]{KN19}
C.~Kuehn and A.~Neamtu.
\newblock Dynamics of stochastic reaction-diffusion equations.
\newblock In {\em Infinite dimensional and finite dimensional stochastic
  equations and applications in physics}, pages 1--60. World Sci. Publ.,
  Hackensack, NJ, [2020] \copyright 2020.

\bibitem[KvN12]{KvN12}
M.~Kunze and J.~van Neerven.
\newblock Continuous dependence on the coefficients and global existence for
  stochastic reaction diffusion equations.
\newblock {\em J. Differential Equations}, 253(3):1036--1068, 2012.

\bibitem[LW76]{LW76}
P.A. Libby and F.A. Williams.
\newblock Turbulent flows involving chemical reactions.
\newblock {\em Annual Review of Fluid Mechanics}, 8(1):351--376, 1976.

\bibitem[LR15]{LR15}
W.~Liu and M.~R\"{o}ckner.
\newblock {\em Stochastic partial differential equations: an introduction}.
\newblock Universitext. Springer, Cham, 2015.

\bibitem[Luo21]{Luo21}
D.~Luo.
\newblock Enhanced dissipation for stochastic {N}avier-{S}tokes equations with
  transport noise.
\newblock {\em arXiv preprint arXiv:2111.12931}, 2021.

\bibitem[MK99]{MK99_simplified}
A.J. Majda and P.R. Kramer.
\newblock Simplified models for turbulent diffusion: theory, numerical
  modelling, and physical phenomena.
\newblock {\em Phys. Rep.}, 314(4-5):237--574, 1999.

\bibitem[Mar18]{M18}
C.~Marinelli.
\newblock On well-posedness of semilinear stochastic evolution equations on
  {$L_p$} spaces.
\newblock {\em SIAM J. Math. Anal.}, 50(2):2111--2143, 2018.

\bibitem[Mar19]{Mar19}
C.~Marinelli.
\newblock Positivity of mild solution to stochastic evolution equations with an
  application to forward rates.
\newblock {\em arXiv preprint arXiv:1912.12472}, 2019.

\bibitem[MS21]{MS19}
C.~Marinelli and L.~Scarpa.
\newblock On the positivity of local mild solutions to stochastic evolution
  equations.
\newblock In {\em Geometry and invariance in stochastic dynamics}, volume 378
  of {\em Springer Proc. Math. Stat.}, pages 231--245. Springer, Cham, [2021]
  \copyright 2021.

\bibitem[MC98]{MC98}
M.P. Mart{\i}n and G.~V. Candler.
\newblock Effect of chemical reactions on decaying isotropic turbulence.
\newblock {\em Physics of Fluids}, 10(7):1715--1724, 1998.

\bibitem[MS06]{MS06_turbulence}
A.S. Mikhailov and K.~Showalter.
\newblock Control of waves, patterns and turbulence in chemical systems.
\newblock {\em Physics Reports}, 425(2-3):79--194, 2006.

\bibitem[MR01]{MR01}
R.~Mikulevicius and B.~Rozovskii.
\newblock On equations of stochastic fluid mechanics.
\newblock In {\em Stochastics in finite and infinite dimensions}, Trends Math.,
  pages 285--302. Birkh\"{a}user Boston, Boston, MA, 2001.

\bibitem[MR04]{MiRo04}
R.~Mikulevicius and B.~Rozovskii.
\newblock Stochastic {N}avier-{S}tokes equations for turbulent flows.
\newblock {\em SIAM J. Math. Anal.}, 35(5):1250--1310, 2004.

\bibitem[NVW15]{NVW13}
J.M.A.M.~van Neerven, M.C. Veraar, and L.W. Weis.
\newblock Stochastic integration in {B}anach spaces---a survey.
\newblock In {\em Stochastic analysis: a series of lectures}, volume~68 of {\em
  Progr. Probab.}, pages 297--332. Birkh\"{a}user/Springer, Basel, 2015.

\bibitem[Pie10]{P10_survey}
M.~Pierre.
\newblock Global existence in reaction-diffusion systems with control of mass:
  a survey.
\newblock {\em Milan J. Math.}, 78(2):417--455, 2010.

\bibitem[PSY19]{PSY19}
M.~Pierre, T.~Suzuki, and Y.~Yamada.
\newblock Dissipative reaction diffusion systems with quadratic growth.
\newblock {\em Indiana Univ. Math. J.}, 68(1):291--322, 2019.

\bibitem[PSW18]{CriticalQuasilinear}
J.~Pr\"{u}ss, G.~Simonett, and M.~Wilke.
\newblock Critical spaces for quasilinear parabolic evolution equations and
  applications.
\newblock {\em J. Differential Equations}, 264(3):2028--2074, 2018.

\bibitem[Rot84]{R84_global}
F.~Rothe.
\newblock {\em Global solutions of reaction-diffusion systems}, volume 1072 of
  {\em Lecture Notes in Mathematics}.
\newblock Springer-Verlag, Berlin, 1984.

\bibitem[Sal21]{S21}
M.~Salins.
\newblock Existence and uniqueness for the mild solution of the stochastic heat
  equation with non-{L}ipschitz drift on an unbounded spatial domain.
\newblock {\em Stoch. Partial Differ. Equ. Anal. Comput.}, 9(3):714--745, 2021.

\bibitem[Sal22a]{S21_dissipative}
M.~Salins.
\newblock Global solutions for the stochastic reaction-diffusion equation with
  super-linear multiplicative noise and strong dissipativity.
\newblock {\em Electron. J. Probab.}, 27:Paper No. 12, 17, 2022.

\bibitem[Sal22b]{S21_superlinear_no_sign}
M.~Salins.
\newblock Global solutions to the stochastic reaction-diffusion equation with
  superlinear accretive reaction term and superlinear multiplicative noise term
  on a bounded spatial domain.
\newblock {\em Trans. Amer. Math. Soc.}, 375(11):8083--8099, 2022.

\bibitem[Saw18]{Saw}
Y.~Sawano.
\newblock {\em Theory of {B}esov spaces}, volume~56 of {\em Developments in
  Mathematics}.
\newblock Springer, Singapore, 2018.

\bibitem[ST87]{SchmTr}
H.-J. Schmeisser and H.~Triebel.
\newblock {\em Topics in {F}ourier analysis and function spaces}.
\newblock A Wiley-Interscience Publication. John Wiley \& Sons, Ltd.,
  Chichester, 1987.

\bibitem[SM91]{SH91_turbulence}
P.~Stapountzis, H.and~Tzavellas and T.~Moros.
\newblock Effects of turbulence on the mixing and chemical reaction for cross
  flow and coflowing jets.
\newblock In {\em Advances in Turbulence 3}, pages 300--311. Springer Berlin
  Heidelberg, 1991.

\bibitem[Tri95]{Tr1}
H.~Triebel.
\newblock {\em Interpolation theory, function spaces, differential operators}.
\newblock Johann Ambrosius Barth, Heidelberg, second edition, 1995.

\bibitem[VC12]{VC12_turbulence}
M.~Vascellari and G.~Cau.
\newblock Influence of turbulence--chemical interaction on cfd pulverized coal
  mild combustion modeling.
\newblock {\em Fuel}, 101:90--101, 2012.

\bibitem[ZCB20]{ZCB20_turbulence}
B.~Zhang, X.~Chang, and C.~Bai.
\newblock End-wall ignition of methane-air mixtures under the effects of
  {{$\mathrm{CO}_2/\mathrm{Ar}/\mathrm{N}_2$}} fluidic jets.
\newblock {\em Fuel}, 270:117485, 2020.

\end{thebibliography}
\end{document}